 \newcommand{\newdownarrow}{{{\rlap{$\ $}\hbox{$\downarrow$}}}}
 \newcommand{\newuparrow}{{{\rlap{$\ $}\hbox{$\uparrow$}}}}
 \newcommand{\twoheaddownarrow}{{\rlap{\rlap{$\ $}\raise .25ex\hbox{$\downarrow$}}\raise-.25ex\hbox{$\downarrow$}}}
 \newcommand{\twoheaduparrow}{{\rlap{\rlap{$\ $}\raise .25ex\hbox{$\uparrow$}}\raise-.25ex\hbox{$\uparrow$}}}
\def\zerohedgehog{\boldsymbol{0}}
\newcommand{\tbigwedge}{\mathop{\textstyle \bigwedge }}
\newcommand{\tbigcap}{\mathop{\textstyle \bigcap }}
\newcommand{\tbigcup}{\mathop{\textstyle \bigcup }}
\newcommand{\tbigprod}{\mathop{\textstyle \prod }}
\newcommand{\tbigvee}{\mathop{\textstyle \bigvee }}
 \newcommand{\tbigsum}{\mathop{\textstyle \sum }}
\newcommand*{\@old@slash}{}\let\@old@slash\slash
\def\slash{\relax\ifmmode\delimiter"502F30E\mathopen{}\else\@old@slash\fi}
\def\R{\mathbb{R}}
\def\Q{\mathbb{Q}}
\def\N{\mathbb{N}}
\def\Z{\mathbb{Z}}
\def\NN{\mathcal{N}}
\newtheorem{theorem}{Theorem}[section]
\newtheorem{proposition}[theorem]{Proposition}
\newtheorem{lemma}[theorem]{Lemma}
\newtheorem{corollary}[theorem]{Corollary}
\newtheorem{fact}[theorem]{Fact}
\newtheorem*{ffact}{Fact \Roman{ffact}}
\theoremstyle{definition}
\newtheorem{definition}[theorem]{Definition}
\newtheorem{example}[theorem]{Example}
\newtheorem{examples}[theorem]{Examples}
\theoremstyle{remark}
\newtheorem{remark}[theorem]{Remark}
\newtheorem{remarks}[theorem]{Remarks}
\newtheorem{properties}[theorem]{Properties}
\newcounter{ffact}
\begin{document}
\title{A tale of three hedgehogs}
\author{Igor Arrieta Torres}
\maketitle

\begin{center}\textbf{Supervisors}: Jon Gonz\'alez S\'anchez and Javier Guti\'errez Garc\'ia\end{center}

\tableofcontents 

\newpage 
\section{Introduction}
In this work we study three topologies defined over the same set: the \emph{hedgehog}. As the name suggests, the hedgehog can be described as a set of spines identified at a single point.  The first topology on the hedgehog will be a quotient topology, and the resulting space will said to be the \emph{quotient hedgehog}. The main feature of the next topology, which we shall refer to as the \emph{compact hedgehog}, will, of course, be compactness.  The third and last topology will be generated by a metric, and thus the resulting space will said to be the \emph{metric hedgehog}.
Each of the spaces has its particular properties and several interesting implications in Topology; so let us start by figuring out the importance of these spaces:
\medskip

\noindent {\bf Motivation}
\medskip 

There are a number of reasons why topological hedgehogs are worthy of being studied in depth. Firstly, hedgehog spaces are a nice source of counterexamples in topology. For instance, the quotient hedgehog will turn out to be one of the easiest examples of a quotient of a first countable space which is not first countable. We will also prove that the quotient hedgehog is an example of a Fr\'echet--Urysohn space which is not first countable. Furthermore, a classical counterexample in topology will be studied, namely the Fr\'echet--Urysohn fan, which is strongly related to the quotient hedgehog. Many other interesting examples will arise as a consequence of the study of the topological properties of the three hedgehogs. \\[2mm]
Secondly, the hedgehogs are of great importance in point-set topology, and they have a number of interesting applications. More precisely, we will prove some deep results concerning the hedgehogs, one of them being the Kowalsky's hedgehog theorem, which surprisingly asserts that every met\-rizable space is embeddable into a countable cartesian power of the metric hedgehog.  Besides, the concept of collectionwise normality will be studied and we will provide a full answer to the task of determining whether a given space is collectionwise normal.  The key in that answer will turn out to be the metric hedgehog. Additionally, we are going to obtain infinitely many different characterizations of normality in topological spaces, all of them based on the metric hedgehog.\\[2mm]
Last but not least, the topological hedgehogs provide a good opportunity to learn about many different facts from general point-set topology. In fact, almost all the concepts learnt in the course in Topology of the UPV/EHU have appeared throughout these notes, and many others have also been introduced and studied. Therefore, our three hedgehogs are the perfect partners for a pleasant journey through the main areas of general topology.
\medskip

\noindent {\bf Organization of the memory}
\medskip

These notes are organized as follows. In Section~\ref{ch1}, we give a number of preliminary results in order to make this notes self-contained.

In Section~\ref{ch2} the hedgehog is defined as a set and its most immediate properties are studied. Besides, the hedgehog is given a structure of partially ordered set and complete semilattice. Further, in Subsection~\ref{axescube} we provide an alternative description of the the hedgehog as a subset of the cube $[0,1]^I$. 

Section~\ref{ch3} deals with the first topology on the hedgehog, namely the quotient topology. Firstly, a complete description of the topology is given, and then its most remarkable topological features are explained. Among others, in Subsection~\ref{secfre} we explain its relation to the Fr\'echet--Urysohn fan.

In Section~\ref{ch4}, we introduce the compact topology on the hedgehog and the main properties are studied.

The last topology on the hedgehog is presented in Section~\ref{ch5}. More precisely, the hedgehog will be seen as a metric space. Among others, we shall give a proof for the Kowalsky's Hedgehog Theorem (Subsection~\ref{secKo}) and a Tietze-type extension theorem for the metric hedgehog will also be shown (Subsection~\ref{secNor}).
\medskip

\noindent {\bf Personal work}
\medskip

These notes were originally inspired by \cite{ASOTH}, a homework exercise set proposed by Mikhail Matveev (George Mason University). However, we have gone much further in the study of the three hedgegos than what it is asked in \cite{ASOTH}. In fact, several properties about the quotient and compact hedgehog have been proved by the author. Regarding the metric hedgehog, we have mainly followed the book \cite{Eng}. On the one hand,  details of the proofs left to the reader have been completed by the author. On the other hand, the dissertation also includes a number of solved exercises, most of them proposed in \cite{Eng}, which have been presented together with the text, since they accompany the theory and the development of these notes. Examples of work carried out by the author include the proofs of Proposition \ref{Rembeds} (whose proof is only sketched in \cite{Prz}), certain properties of the metric hedgehog, some results shown in Section \ref{secKo} (particularly Theorems~\ref{app1} and~\ref{app2}), as well as Theorem~\ref{Fsigmahereditary} (as far as the author knows the proof of this result has only been published in Russian, cf.~\cite{Sed}) and Theorem~\ref{colnorTh}.
\section{Notation}

Given a topological space $(X,\tau_X)$ and $x\in X$, the family of all the neighborhoods of $x$ will be  denoted by $\NN_x$. The closure (resp. interior) of a set $A\subseteq X$ will be denoted by $\overline{A}$ (resp. $\textrm{int } A$). If $B\subseteq A \subseteq X$, we may write $\overline{B}^{A}$ for the closure of $B$ in $A$, i.e. $\overline{B}^{A}= \overline{B}\cap A$, where $\overline{B}$ is the closure of $B$ in $X$. If $(Y,\tau_Y)$ is a further topological space, we may write $X\cong Y$ if $X$ and $Y$ are homeomorphic.

Let $A$ be a set and $f\colon A\longrightarrow[0,1]$ a mapping. Throughout these notes, we shall write  
\begin{align*}
 &[f > t] = f^{-1} \left( (t,1] \right)\ \text{and}\ [f \geq t] = f^{-1} \left( [t,1] \right)\ \text{for each }t\in[0,1)\text{ and}\\
 &[f<t] =f^{-1}\left( [0,t) \right)\ \text{and}\ [f\leq t] =f^{-1}\left( [0,t] \right)\ \text{for each }t\in(0,1].
\end{align*}

Given a partially ordered set $P$ and $x,y \in P$, we will denote by $x\vee y$ (resp. $x \wedge y$)  the supremum (resp. infimum) of $x$ and $y$ when it exists. Similarly, if $S\subseteq P$, we will write  $\tbigvee S$ (resp. $\tbigwedge S$) for the supremum (resp. infimum) of $S$, whenever it exists.

For $X$ a set and $A\subseteq X$, the symbol $\chi_A$ stands for the characteristic function of $A$, i.e. the mapping $\chi_A \colon X\longrightarrow \{0,1\}$ given by  $\chi_A (x)=0$ if $x\not\in A$ and $\chi_A(x)=1$ if $x\in A$. 

In these notes, $\kappa$ will always denote some cardinal. Besides, the symbol $\aleph_0$ will be written for the cardinality of the set of natural numbers $\N$, i.e. it will denote a countably infinite cardinal. Similarly, the symbol $\mathfrak{c}$ will be written for the cardinality of the set of all  real numbers. Given a set $X$, we will usually write $|X|$ for the cardinality of $X$.
\section{Preliminaries}
\label{ch1}
In this section we provide all the preliminary results needed for a smooth development and understanding of this work. We will specially focus on the concepts and results of general topology which are not covered in the degree in mathematics of the UPV/EHU.

Recall that a topological space $(X,\tau)$ is \emph{separable} if there is a countable dense subset $D\subseteq X$.

 \begin{lemma}\label{lemmasep} Let $(X,\tau_X)$ and $(Y,\tau_Y)$ be topological spaces, $f\colon X \longrightarrow Y$ a continuous map and assume that $X$ is separable. Then $f(X)$ is separable. 
\end{lemma}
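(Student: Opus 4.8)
The plan is to produce an explicit countable dense subset of $f(X)$, namely the image under $f$ of a countable dense subset of $X$. First I would use separability of $X$ to fix a countable set $D\subseteq X$ with $\overline{D}=X$, and set $E=f(D)$. Since $E$ is the image of a countable set, it is itself countable, so the only thing left to verify is that $E$ is dense in the subspace $f(X)$, i.e. that $\overline{E}^{f(X)}=f(X)$.

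The key step is exactly this density claim, and I would obtain it from the standard consequence of continuity that $f(\overline{A})\subseteq\overline{f(A)}$ for every $A\subseteq X$. Applying this with $A=D$ gives
\[
f(X)=f(\overline{D})\subseteq\overline{f(D)}=\overline{E},
\]
where the closure is taken in $Y$. Intersecting both sides with $f(X)$ and recalling the notation $\overline{E}^{f(X)}=\overline{E}\cap f(X)$ from Section~2, we get $f(X)\subseteq\overline{E}^{f(X)}$; the reverse inclusion $\overline{E}^{f(X)}\subseteq f(X)$ is immediate since $\overline{E}^{f(X)}$ is by definition a subset of $f(X)$. Hence $\overline{E}^{f(X)}=f(X)$, so the countable set $E=f(D)$ is dense in $f(X)$ and $f(X)$ is separable.

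There is essentially no genuine obstacle here; the only point requiring a word of justification is the inclusion $f(\overline{A})\subseteq\overline{f(A)}$, which I would either quote as a basic equivalent formulation of continuity or prove directly. The direct argument, which also avoids the closure identity altogether, runs as follows: let $V\subseteq f(X)$ be nonempty and relatively open, so $V=W\cap f(X)$ for some open $W\subseteq Y$. Then $f^{-1}(W)$ is open in $X$ by continuity and nonempty (it contains any preimage of a point of $V$), so density of $D$ gives a point $d\in D\cap f^{-1}(W)$; consequently $f(d)\in E\cap W\subseteq E\cap V$, showing that every nonempty relatively open subset of $f(X)$ meets $E$. Either route completes the proof.
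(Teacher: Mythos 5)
Your proposal is correct and follows essentially the same route as the paper: take the image $f(D)$ of a countable dense set $D$, note it is countable, and use $f(\overline{D})\subseteq\overline{f(D)}$ together with intersection with $f(X)$ to conclude density in the subspace. The extra direct argument via relatively open sets is a fine alternative justification but does not change the substance.
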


\begin{proof}
Let $D$ be  a countable dense subset of $X$. Since $D$ is countable, $f(D)$ is countable. By continuity of $f$, $f(\overline{D})\subseteq \overline{f(D)}$.  Note that $f(\overline{D})=f(X)$ because $D$ is dense. Let $\overline{f(D)}^{f(X)}$ denote the closure of $f(D)$ in $f(X)$. One has
$$ f(X)=  f(X)\cap f(\overline{D}) \subseteq f(X)\cap \overline{f(D)} =  \overline{f(D)}^{f(X)} \subseteq f(X), $$
which shows that $f(D)$ is dense in $f(X)$, and hence $f(X)$ is separable.
\end{proof}

\begin{lemma}\label{SepCountProd} 
The product of countably many separable topological spaces is separable with the Tychonoff  topology.
\end{lemma}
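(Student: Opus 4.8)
The plan is to exhibit an explicit countable dense subset. Write the family as $\{(X_n,\tau_n)\}_{n\in\N}$ and let $X=\prod_{n\in\N}X_n$ carry the Tychonoff topology. If some factor $X_n$ is empty then $X=\emptyset$ is trivially separable, so I may assume every $X_n\neq\emptyset$ and, using separability, choose for each $n$ a countable dense set $D_n\subseteq X_n$ together with a distinguished point $d_n\in D_n$. The candidate for a countable dense subset is the set of all sequences that take values in the $D_n$'s and agree with the fixed point $(d_n)_n$ outside finitely many coordinates, namely
\[
 D=\left\{(x_n)_{n\in\N}\in X : x_n\in D_n\ \text{for all }n,\ \text{and}\ x_n=d_n\ \text{for all but finitely many }n\right\}.
\]

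First I would verify that $D$ is countable. For each finite $F\subseteq\N$, the set of points of $D$ whose coordinates may differ from $(d_n)_n$ only at indices in $F$ is in bijection with the finite product $\prod_{i\in F}D_i$, which is countable as a finite product of countable sets. Since the collection of finite subsets of $\N$ is itself countable and $D$ is the union over all such $F$ of these pieces, $D$ is a countable union of countable sets, hence countable. Next I would show $D$ is dense, and for this it suffices to meet every nonempty basic open set, since these form a base for the Tychonoff topology. Such a set has the form $U=\prod_{n\in\N}U_n$ with each $U_n$ open and nonempty in $X_n$ and with $U_n=X_n$ for all $n$ outside some finite $F\subseteq\N$. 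For each $n\in F$, density of $D_n$ in $X_n$ provides a point $x_n\in U_n\cap D_n$; setting $p_n=x_n$ for $n\in F$ and $p_n=d_n$ for $n\notin F$ defines a point $p\in D$ with $p\in U$, because $p_n\in U_n$ holds on $F$ by construction and on the complement since there $U_n=X_n$. Thus $D\cap U\neq\emptyset$.

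The argument is elementary, and the only genuinely delicate point is the counting step: it is precisely countability of the index set that makes $D$ countable, since it guarantees both that there are only countably many finite supports $F$ and that each corresponding product $\prod_{i\in F}D_i$ is countable. (For uncountable index sets this fails in general, and one needs the far more delicate Hewitt--Marczewski--Pondiczery theorem, which is not required here.) I expect no real obstacle beyond keeping the bookkeeping of finite supports straight; everything else reduces to the defining property of the product topology and the density of each $D_n$.
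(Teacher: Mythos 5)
Your proof is correct and follows essentially the same route as the paper: both exhibit a countable dense set of sequences drawn from the $D_n$ that agree with a fixed reference point outside a finite set of coordinates, and both verify density against basic open sets of the Tychonoff topology. The only cosmetic differences are that you allow arbitrary finite supports (the paper uses initial segments $\{1,\dots,n\}$) and you insist the reference point lie in $\prod_n D_n$, neither of which changes the substance of the argument.
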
 

\begin{proof}
Let $\{X_n\}_{n\in\N}$ be a countable family of separable topological spaces and set $X=\tbigprod_{n\in\N}X_n$. For each $n\in\N$, let $D_n$ denote a countable dense subset of $X_n$.  Fix $x=\{x_n\}_{n\in\N}$ in $X$ and define
$$D= \tbigcup_{n\in\N} \left( \tbigprod_{m=1}^{n} D_m \right)\times\left(\tbigprod_{m=n+1}^{+\infty}\{x_m\}\right).$$
Note that $D$ is countable because it is a countable union of countable sets. We now show that $D$ is also dense in $X$. Let $y=\{y_n\}_{n\in\N}$ in $X$. Let $U= \tbigprod_{n\in\N} U_n$ be a basic open neighborhood of $y$, where $U_n$ is open in $X_n$ for every $n\in\N$ and $U_j=X_j$ whenever $j\not\in J$ for some finite subset $J\subseteq \N$. Let $n_0=\tbigvee J \in \N$. For every $n=1,\dots, n_0$, since $D_n$ is dense, there is $z_n \in D_n \cap U_n$. For each $n>n_0$, set $z_n=x_n$. Then $z=\{z_n\}_{n\in\N} \in D\cap U$ and $D$ is dense.
\end{proof}

\begin{definition} 
Let $X$ be a set,  $\{(Y_i,\tau_i)\}_{i\in I}$  a family of topological spaces, and $\{ f_i \colon X\longrightarrow (Y_i,\tau_i) \}_{i\in I}$ a family of maps. The \emph{initial topology} on $X$  is the coarsest topology which makes all the maps $f_i$ continuous. 
\end{definition}

It is easy to show that the initial topology coincides with the topology on $X$ which is generated by the subbasis $\{ f_{i}^{-1} (U) \mid  U\in \tau_i,i\in I \}$. 

\begin{examples}\label{exampinitial}
(1) Let $\{(X_i,\tau_i)\}_{i\in I}$ be a family of topological spaces. The product topology is the initial topology with respect to the coordinate projections $\left\{ \varphi_i \colon \tbigprod_{j\in I} X_j \longrightarrow (X_i,\tau_i) \right\}_{i\in I}$.\\[2mm]
(2) Let $(X,\tau_X)$ be a topological space and $A\subseteq X$. The initial topology with respect to the inclusion map $\iota \colon A\longrightarrow (X,\tau_X)$ is precisely the subspace topology of $A$. 
\end{examples}

Later, the following universal property concerning the initial topology will be fundamental. 

\begin{proposition}\label{univpropinitial} Let $(X,\tau_X)$ be the initial topology with respect to the family $\{(Y_i,\tau_i)\}_{i\in I}$  of topological spaces and the family $\{ f_i \colon X\longrightarrow (Y_i,\tau_i) \}_{i\in I}$ of maps. Let $(Z,\tau_Z)$ be a further topological space. Then, a mapping $g\colon (Z,\tau_Z)\longrightarrow (X,\tau_X)$ is continuous if and only if $f_i \circ g$ is continuous for every $i\in I$.
\[\begin{tikzcd}
X \arrow{r}{f_i} &Y_i\\
Z\arrow{u}{g}\arrow{ur}[swap]{f_i \circ g}
\end{tikzcd}\] 
\end{proposition}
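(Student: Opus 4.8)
The plan is to prove the two implications separately, with the forward direction being essentially immediate and the reverse direction resting on the subbasis description of the initial topology recorded just before the statement.

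For the ``only if'' direction, I would assume that $g$ is continuous. Since $(X,\tau_X)$ carries the initial topology, every $f_i$ is continuous by the very definition of that topology. Hence each $f_i \circ g$ is a composition of two continuous maps and is therefore continuous.

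For the converse, suppose that $f_i \circ g$ is continuous for every $i\in I$; I must show that $g^{-1}(V)$ is open in $Z$ for every $V\in\tau_X$. The key reduction is that continuity can be tested on a subbasis: if $g^{-1}$ sends every subbasic open set to an open set, then the same holds for every open set, because taking preimages commutes with arbitrary unions and finite intersections, while every member of $\tau_X$ is a union of finite intersections of subbasic sets. I would first record this elementary fact. With it in hand, it suffices to treat a subbasic open set, which by the description of the initial topology has the form $f_i^{-1}(U)$ with $i\in I$ and $U\in\tau_i$. The heart of the argument is then the set-theoretic identity
$$g^{-1}\bigl(f_i^{-1}(U)\bigr) = (f_i \circ g)^{-1}(U),$$
whose right-hand side is open in $Z$ precisely because $f_i \circ g$ is continuous and $U$ is open in $Y_i$. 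This shows that $g^{-1}$ carries subbasic open sets to open sets, and the reduction above delivers the continuity of $g$.

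I expect no serious obstacle here; the only point requiring a little care is the justification that testing preimages on a subbasis suffices, and that is where the small amount of content in the argument lies.
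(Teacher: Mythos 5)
Your proposal is correct and follows essentially the same route as the paper: the forward direction by composition of continuous maps, and the converse by checking preimages of the subbasic sets $f_i^{-1}(U)$ via the identity $g^{-1}\bigl(f_i^{-1}(U)\bigr)=(f_i\circ g)^{-1}(U)$. You are merely a bit more explicit than the paper about justifying that continuity may be tested on a subbasis, which is a reasonable point to spell out.
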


\begin{proof}
$\Rightarrow$) This implication is clear because the composition of continuous maps is continuous.\\[2mm]
$\Leftarrow$) Assume that $f_i \circ g$ is continuous for every $i\in I$. Let $f_{i}^{-1}(U)$ be a subbasic open set in $(X,\tau_X)$, where $U\in \tau_i$ and $i\in I$. Then $g^{-1} \left( f_{i}^{-1}(U) \right) = (f_i \circ g)^{-1}(U)$, which is open by continuity of $f_i \circ g$.  Hence, $g$ is continuous.
\end{proof}

\begin{lemma}\label{bijecthomeom}
Let $X$ be a set and $(Y,\tau_Y)$  a topological space. Assume that $f\colon X\longrightarrow Y$ is a bijective map. Then, the family $\tau_X=\{ f^{-1}(U) \mid U\in \tau_Y \}$ is a topology defined on $X$ which makes $f\colon (X,\tau_X) \longrightarrow (Y,\tau_Y)$ a homeomorphism. Moreover, if $\beta_Y$ is a base (resp. subbase) of $(Y,\tau_Y)$, then $\beta_X=\{ f^{-1}(B) \mid B\in \beta_Y\}$ is a base (resp. subbase) of $( X,\tau_X)$. 
\end{lemma}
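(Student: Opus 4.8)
The plan is to verify the three assertions in turn, the recurring workhorse being the elementary fact that taking preimages under $f$ commutes with arbitrary unions, with arbitrary intersections, and with complements; the bijectivity of $f$ will enter only through the identity $f(f^{-1}(U))=U$, which uses surjectivity. First I would check that $\tau_X$ is a topology on $X$. Since $f$ is surjective we have $X=f^{-1}(Y)\in\tau_X$, and trivially $\emptyset=f^{-1}(\emptyset)\in\tau_X$. For a family $\{f^{-1}(U_j)\}_{j}$ with each $U_j\in\tau_Y$, the identity $\bigcup_j f^{-1}(U_j)=f^{-1}\!\left(\bigcup_j U_j\right)$ exhibits the union as a preimage of an open set, so $\tau_X$ is closed under arbitrary unions; likewise $\bigcap_{k=1}^{n} f^{-1}(U_k)=f^{-1}\!\left(\bigcap_{k=1}^{n} U_k\right)$ gives closure under finite intersections. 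This establishes the first claim.

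Next I would show that $f$ is a homeomorphism. Continuity is immediate from the very definition of $\tau_X$: for any $U\in\tau_Y$ the preimage $f^{-1}(U)$ is by construction a member of $\tau_X$. For openness, take an arbitrary $V\in\tau_X$, say $V=f^{-1}(U)$ with $U\in\tau_Y$; then, using surjectivity, $f(V)=f(f^{-1}(U))=U\in\tau_Y$, so $f$ carries open sets to open sets. Being a continuous, open bijection, $f$ is a homeomorphism, which settles the second assertion.

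Finally, for the claim about bases and subbases I would prove the base case directly and reduce the subbase case to it. Suppose $\beta_Y$ is a base of $\tau_Y$. Since $\beta_Y\subseteq\tau_Y$ we have $\beta_X\subseteq\tau_X$, so every member of $\beta_X$ is open. Given any $V=f^{-1}(U)\in\tau_X$, write $U=\bigcup_\alpha B_\alpha$ with $B_\alpha\in\beta_Y$; then $V=\bigcup_\alpha f^{-1}(B_\alpha)$ is a union of members of $\beta_X$, so $\beta_X$ is a base of $\tau_X$. If instead $\beta_Y$ is merely a subbase, then the finite intersections of its members form a base of $\tau_Y$; since preimages commute with finite intersections, the finite intersections of members of $\beta_X$ are precisely the preimages of those, and so by the base case already proved they form a base of $\tau_X$. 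Hence $\beta_X$ is a subbase of $\tau_X$.

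I do not anticipate any genuine obstacle: the argument is routine bookkeeping with preimage identities. The only points deserving a little care are to remember that the step $f(f^{-1}(U))=U$ in the openness proof genuinely consumes the surjectivity hypothesis, and that in the subbase case one must route the argument through the associated base of finite intersections rather than attempting to manipulate subbasic sets directly.
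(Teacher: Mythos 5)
Your proof is correct and follows essentially the same route as the paper: verify the topology axioms via preimage identities, get continuity from the definition of $\tau_X$ and openness from surjectivity via $f(f^{-1}(U))=U$, and handle bases by pulling back a union of basic sets. You simply fill in the details the paper leaves to the reader (the topology axioms and the subbase case), and your reduction of the subbase case to the base case via finite intersections is exactly the intended argument.
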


\begin{proof}
It is very easy to show that $\tau_X$ is a topology on $X$, which makes $f\colon (X,\tau_X) \longrightarrow (Y,\tau_Y)$ continuous.  Since $f$ is surjective, we deduce that $f\colon (X,\tau_X) \longrightarrow (Y,\tau_Y)$ is open, and thus a homeomorphism. Assume now that $\beta_Y$ is a basis of $(Y,\tau_Y)$ and set $\beta_X = \{f^{-1}(B) \mid B\in \beta_Y\}$. Then $\beta_X\subseteq \tau_X$, and for each $V=f^{-1}(U) \in\tau_X$ (where $U\in\tau_Y$), one can write $U=\tbigcup_{i\in I} B_i$ where $B_i\in \beta_Y$ because $\beta_Y$ is a basis. Thus $V=\tbigcup_{i\in I} f^{-1}(B_i)$ and $\beta_X$ is a basis. The assertion corresponding to the subbase can be proved in the same way.
\end{proof}

\begin{definition}
A topological space is $X$ is \emph{universal} in a class $\mathcal{C}$ of topological spaces if
\begin{enumerate}[\normalfont (i)]
\item $X$ belongs to the class $\mathcal{C}$;
\item Every topological space in $\mathcal{C}$ can be embedded into $X$.
\end{enumerate}
\end{definition}

\begin{definition}
A subset in a topological space is said to be an \emph{$F_{\sigma}$-set} if it is a countable union of closed sets. Dually, a subset is said to be a \emph{$G_{\delta}$-set} if it is a countable intersection of open sets. 
\end{definition}

Let us also recall the following fact from general topology.
\begin{proposition}\label{comhaus}
Every compact Hausdorff space is normal.
\end{proposition}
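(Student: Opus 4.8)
The plan is to prove normality in two stages: first establish regularity (separation of a point from a closed set), and then bootstrap this to full normality (separation of two disjoint closed sets). The engine behind both stages is the same compactness argument---a closed subset of a compact space is compact, so any open cover of it admits a finite subcover, and the finiteness is exactly what allows us to intersect the ``witnessing'' neighbourhoods while keeping the result open.

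First I would show that $X$ is regular. Fix a point $x\in X$ and a closed set $F$ with $x\notin F$. Since $F$ is closed in the compact space $X$, it is itself compact. For each $y\in F$, the Hausdorff property yields disjoint open sets $U_y\ni x$ and $V_y\ni y$. The family $\{V_y\}_{y\in F}$ is an open cover of $F$, so by compactness finitely many $V_{y_1},\dots,V_{y_n}$ already cover $F$. Setting $V=\bigcup_{i=1}^{n}V_{y_i}$ and $U=\bigcap_{i=1}^{n}U_{y_i}$ produces disjoint open sets separating $x$ from $F$; the reason for passing to finitely many indices is precisely that the intersection $U$ then remains open.

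With regularity in hand, I would run the very same argument one level up to obtain normality. Let $A$ and $B$ be disjoint closed, hence compact, subsets of $X$. For each $x\in A$, regularity (applied to the point $x$ and the closed set $B$, which does not contain it) gives disjoint open sets $U_x\ni x$ and $W_x\supseteq B$. The family $\{U_x\}_{x\in A}$ covers $A$; extracting a finite subcover $U_{x_1},\dots,U_{x_m}$ and putting $U=\bigcup_{i=1}^{m}U_{x_i}$ and $W=\bigcap_{i=1}^{m}W_{x_i}$ yields disjoint open sets with $A\subseteq U$ and $B\subseteq W$, which is exactly normality.

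I do not expect any serious obstacle, since this is a standard two-step bootstrapping. The only point that demands care is the bookkeeping of disjointness when passing to finite unions and intersections: after the finite subcover is fixed one must verify that $U\cap V=\emptyset$ (and, in the second stage, $U\cap W=\emptyset$). This holds because each $U_{y_i}$ is disjoint from the matching $V_{y_i}$, so the intersection $U$ misses every $V_{y_i}$ and therefore misses their union $V$. This is the one place where finiteness is genuinely used, and it is the crux to get right.
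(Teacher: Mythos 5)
Your proof is correct: both the regularity step and the bootstrapping to normality are carried out properly, and the disjointness bookkeeping (each $U_{y_i}$ missing $V_{y_i}$, hence the finite intersection missing the finite union) is exactly right. The paper states this proposition as a recalled fact from general topology and gives no proof at all, so there is nothing to compare against; your argument is the standard one and fills that gap correctly.
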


\subsection{Metric spaces}We shall need to recall now some definitions and properties concerning metric spaces.

\begin{definition}
A metric space $X$ is said to be \emph{totally bounded} if for every $\varepsilon>0$ there exists a finite collection of open balls of radius $\varepsilon$ whose union equals $X$. 
\end{definition}

\begin{lemma}\label{2ndcountiffsep}
A metric space is separable if and only if it is second countable.
\end{lemma}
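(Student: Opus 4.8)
The plan is to prove the two implications separately. The forward direction, that second countability implies separability, in fact holds for arbitrary topological spaces and requires no metric; only the reverse direction genuinely uses the metric structure, and it is there that I expect the real work to lie.

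First I would treat second countable $\Rightarrow$ separable. Let $\mathcal{B}=\{B_n\}_{n\in\N}$ be a countable base. For each $n$ with $B_n\neq\emptyset$, choose a point $x_n\in B_n$, and let $D$ be the (countable) set of all such $x_n$. To see that $D$ is dense, take any nonempty open set $U$; since $\mathcal{B}$ is a base, $U$ contains some nonempty basic set $B_n$, and hence contains the chosen point $x_n\in D$. Thus $D$ meets every nonempty open set, so $\overline{D}=X$.

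For the converse, separable $\Rightarrow$ second countable, let $D=\{d_n\}_{n\in\N}$ be a countable dense subset and consider the countable family
$$\mathcal{B}=\left\{\, B\!\left(d_n,\tfrac{1}{m}\right) \;:\; n,m\in\N \,\right\}$$
of open balls centred at points of $D$ with radii of the form $1/m$. I claim $\mathcal{B}$ is a base. Given an open set $U$ and a point $x\in U$, choose $\varepsilon>0$ with $B(x,\varepsilon)\subseteq U$, then pick $m\in\N$ with $2/m<\varepsilon$, and by density of $D$ select $d_n$ with $d(x,d_n)<1/m$. Then $x\in B(d_n,1/m)$, and for any $y\in B(d_n,1/m)$ the triangle inequality gives $d(x,y)\le d(x,d_n)+d(d_n,y)<2/m<\varepsilon$, so $B(d_n,1/m)\subseteq B(x,\varepsilon)\subseteq U$. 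Hence every open $U$ is a union of members of $\mathcal{B}$.

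The main obstacle is precisely the verification in this last step that $\mathcal{B}$ is a base: it is the triangle inequality that lets a ball centred at a nearby dense point be trapped inside an arbitrary ball around $x$. This is exactly the ingredient absent in a general topological space, which is why the implication separable $\Rightarrow$ second countable can fail without a metric.
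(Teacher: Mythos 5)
Your proof is correct and follows essentially the same route as the paper: in one direction you pick a point from each nonempty basic set, and in the other you use balls of radius $1/m$ centred at points of the dense set, trapping such a ball inside a given $\varepsilon$-ball via the triangle inequality exactly as the paper does. No gaps.
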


\begin{proof}
$\Rightarrow)$ Let $D=\{x_n\mid n\in\N\}$ be a countable dense subset in a separable metric space $(X,d)$. Denote by $B(x,r)$ the ball centered at $x\in X$ of radius $r>0$, and define
$$\beta = \{ B(x_n , 1/k ) \mid n,k\in \N\}.$$
We will show that $\beta$ is a countable basis of $X$. The countability is clear, so let $U$ be an open subset of $X$ and take $x\in U$. Then there is an $r>0$ such that $B(x,r) \subseteq U$. Select $k\in \N$ such that $1/k < r/2$. By density of $D$, one has $D \cap B(x,1/k) \ne \varnothing$,  so there is an $n\in \N$ which satisfies $x_n \in D \cap B(x,1/k)$.  Note that $x\in B(x_n, 1/k)$, so the only task remaining is to show that $B(x_n,1/k) \subseteq U$. Indeed, let $y\in B(x_n,1/k)$. One has 
$$d(y,x) \leq d(y,x_n)+ d(x_n,x) < 1/k +1/k = 2/k < r,$$
 and hence $y\in B(x,r) \subseteq U$, as desired. \\[2mm]
$\Leftarrow$) Let $\beta=\{B_n\}_{n\in\N}$ be a countable basis of a second countable metric space $(X,d)$. For all $n\in\N$, choose $x_n\in B_n$ and set $D=\{x_n\mid n\in\N\}$. It is clear that $D$ is a countable dense subset of $X$. 
\end{proof}

\begin{corollary}\label{SepHered}
Every subspace of a separable metric space is separable.
\end{corollary}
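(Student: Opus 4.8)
The plan is to prove Corollary~\ref{SepHered} by combining Lemma~\ref{2ndcountiffsep} with the elementary fact that second countability is hereditary. Let $(X,d)$ be a separable metric space and let $A\subseteq X$ be an arbitrary subspace, equipped with the induced metric (equivalently, the subspace topology, which by Examples~\ref{exampinitial}(2) is the initial topology with respect to the inclusion).

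First I would invoke Lemma~\ref{2ndcountiffsep} in the forward direction to obtain a countable basis $\beta=\{B_n\}_{n\in\N}$ of $X$. Next I would form the family $\beta_A=\{B_n\cap A \mid n\in\N\}$ and argue that it is a countable basis for the subspace $A$: it is countable as the image of $\beta$ under intersection with $A$, and given any open $V\subseteq A$ one can write $V=U\cap A$ for some open $U\subseteq X$, express $U=\bigcup_{n\in S}B_n$ for suitable $S\subseteq\N$ since $\beta$ is a basis, and conclude $V=\bigcup_{n\in S}(B_n\cap A)$. Hence $A$ is second countable.

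Finally, since $A$ with the induced metric is itself a metric space, I would apply Lemma~\ref{2ndcountiffsep} in the reverse direction to conclude that $A$ is separable, completing the proof.

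There is no serious obstacle here; the only point requiring mild care is the verification that restricting a basis to a subspace yields a basis, which is the standard fact that second countability is a hereditary property. The essential content is simply that separability and second countability coincide for metric spaces, so that one can transfer through the manifestly hereditary notion of second countability. It is worth emphasizing that this argument is special to metric spaces: separability is \emph{not} hereditary in general topological spaces, and it is precisely Lemma~\ref{2ndcountiffsep} that rescues the metric case.
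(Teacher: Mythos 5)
Your proposal is correct and follows exactly the paper's argument: pass to second countability via Lemma~\ref{2ndcountiffsep}, use that second countability is hereditary, and return to separability via the same lemma. The only difference is that you spell out the (standard) verification that restricting a countable basis to a subspace yields a countable basis, which the paper leaves implicit.
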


\begin{proof}
Let $(X,d)$ be a separable metric space and $A\subseteq X$. By the previous lemma, $X$ is second countable, which is an hereditary property. Thus $A$ is second countable, and again by  the previous lemma, $A$ is separable.
\end{proof}

Let $(X,d)$ be a metric space. Recall that one defines $ d(x,A) = \inf\{ d(x,y) \mid y\in A\}$ for all nonempty subsets $A\subseteq X$ and $x\in X$.  For convenience, we set $d(x,\varnothing)=1$ for all $x\in X$. 

\begin{lemma}{\rm (Cf.\ \cite[page\ 254,\ Proposition\ 4.1.9,\ Corollary\ 4.1.12]{Eng})} \label{distcontclosed}
Let $(X,d)$ be a metric space and $A\subseteq X$. Then the map 
 \[\begin{array}{rcclll}
 f_A \colon&X&\longrightarrow&\R &&\cr
 &x&\longmapsto&f_A(x) = d(x,A)
 \end{array}\]
 is continuous. Moreover, if $A$ is closed, then $A=f_{A}^{-1}\left(\{0\}\right)$.
\end{lemma}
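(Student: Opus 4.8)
The plan is to prove continuity by establishing the stronger fact that $f_A$ is $1$-Lipschitz, and then to settle the ``moreover'' clause by a double inclusion exploiting the closedness of $A$. Before anything else I would dispose of the degenerate case $A=\varnothing$: by the convention $d(x,\varnothing)=1$, the map $f_A$ is constant, hence trivially continuous, while $f_A^{-1}(\{0\})=\varnothing=A$, so the statement holds vacuously. From now on I may assume $A\neq\varnothing$.

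For continuity, the key step is the estimate $|f_A(x)-f_A(y)|\leq d(x,y)$ for all $x,y\in X$. To obtain it, I would fix $x,y\in X$ together with an arbitrary $a\in A$ and apply the triangle inequality in the form $d(x,a)\leq d(x,y)+d(y,a)$, so that $f_A(x)=d(x,A)\leq d(x,y)+d(y,a)$. Since this bound holds for \emph{every} $a\in A$, passing to the infimum over $a$ on the right-hand side yields $f_A(x)\leq d(x,y)+f_A(y)$, that is, $f_A(x)-f_A(y)\leq d(x,y)$. Exchanging the roles of $x$ and $y$ gives the reverse inequality, and the two together establish the Lipschitz estimate. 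Continuity (in fact uniform continuity) of $f_A$ is then immediate: given $\varepsilon>0$, the choice $\delta=\varepsilon$ works at every point.

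For the ``moreover'' part, assuming $A$ closed, I would prove $A=f_A^{-1}(\{0\})$ by two inclusions. The inclusion $A\subseteq f_A^{-1}(\{0\})$ is immediate, since $x\in A$ forces $0\leq f_A(x)=d(x,A)\leq d(x,x)=0$. For the converse, suppose $f_A(x)=0$; then for each $n\in\N$ there is some $a_n\in A$ with $d(x,a_n)<1/n$, so the sequence $\{a_n\}$ converges to $x$, and since $A$ is closed we conclude $x\in A$. This gives $f_A^{-1}(\{0\})\subseteq A$ and completes the argument.

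I do not anticipate a genuine obstacle here; the only point that demands a little care is the manipulation of the infimum in the Lipschitz estimate. One must first fix $a\in A$ to obtain a valid inequality for each such $a$ and only afterwards pass to the infimum, rather than attempting to subtract two infima directly, which would not be justified.
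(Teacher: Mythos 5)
Your proof is correct and follows essentially the same route as the paper's: the $1$-Lipschitz estimate $|d(x,A)-d(y,A)|\leq d(x,y)$ obtained by fixing $a\in A$ and then passing to the infimum, followed by the sequential argument for $f_A^{-1}(\{0\})\subseteq A$ using closedness of $A$. Your explicit treatment of the case $A=\varnothing$ is a small bonus the paper dispatches with a single sentence.
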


\begin{proof}
Clearly we can assume that $A\ne\varnothing$. First we shall show that for all $x,y\in X$
$$| d(x,A) -d(y,A) | \leq d(x,y).$$
Let $x,y\in X$. For all $a\in A$ the triangular inequality yields $d(x,A)\leq d(x,a)  \leq d(x,y)+ d(y,a)$, and taking the infimum, one gets $d(x,A)\leq d(x,y)+ d(y,A)$, that is,  $d(x,A)-d(y,A)\leq d(x,y)$. By symmetry, we also have $d(y,A)-d(x,A)\leq d(x,y)$ and thus $| d(x,A) -d(y,A) | \leq d(x,y)$ follows. Then, for every $\varepsilon > 0$, we have $|f_A(x) - f_A(y)| \leq  d(x,y)<\varepsilon$ whenever $d(x,y)<\varepsilon$, i.e. $f_A$ is continuous. 

Now we show the second part of the statement. Suppose that $A$ is closed. The inclusion $A\subseteq f_A^{-1}\left( \{0\} \right)$ is clear  and always holds, so let us prove the reverse one.  Suppose that $x\in f_A^{-1}\left( \{0\} \right)$, i.e.  $d(x,A)=0$. Then for each $n\in \N$ there is a $y_n\in A$ such that $d(x,y_n)<1/n$. This means that the sequence $\{y_n\}_{n\in \N}\subseteq A$ converges to $x$. Hence $x\in \overline{A}=A$, as we wanted to prove. 
\end{proof}

The following result asserts that there is no loss of generality in assuming that the metric in a metric space is bounded.

\begin{lemma}{\rm (Cf.\ \cite[page\ 250,\ Theorem\ 4.1.3,\ Corollary\ 4.1.12]{Eng})} \label{lemmabd1}
For every metric space $(X,d)$ there exists a metric $d_1$ on the set $X$ which is bounded by $1$ and induces the same topology as $d$ does.
\end{lemma}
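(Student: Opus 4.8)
The plan is to define the bounded metric explicitly and then verify that it has the three required properties in turn. First I would set
$$d_1(x,y) = \min\{d(x,y),1\}\quad\text{for all }x,y\in X,$$
which is bounded by $1$ by construction, so that property comes for free. The first genuine task is to check that $d_1$ is indeed a metric on $X$. Symmetry and the identity of indiscernibles are inherited immediately from $d$, since $\min\{d(x,y),1\}=0$ holds precisely when $d(x,y)=0$, i.e.\ when $x=y$.

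The only axiom requiring any work is the triangle inequality $d_1(x,z)\le d_1(x,y)+d_1(y,z)$, which I would settle by a short case distinction. If either $d(x,y)\ge 1$ or $d(y,z)\ge 1$, then the right-hand side is at least $1\ge d_1(x,z)$ and the inequality is trivial. Otherwise both quantities lie below $1$, and then $d_1(x,y)+d_1(y,z)=d(x,y)+d(y,z)\ge d(x,z)\ge \min\{d(x,z),1\}=d_1(x,z)$, where the middle step is just the triangle inequality for $d$. This is the step I expect to demand the most attention, although it remains entirely elementary.

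Finally, I would argue that $d$ and $d_1$ induce the same topology. The crucial observation is that for every radius $0<\varepsilon<1$ and every $x\in X$ the corresponding open balls coincide, since $d_1(x,y)<\varepsilon$ holds if and only if $d(x,y)<\varepsilon$ once $\varepsilon<1$. Because balls of radius smaller than $1$ already form a neighborhood basis at each point in both metrics, the two metrics share the same open sets, and hence induce the same topology. This completes the verification. (An alternative construction, namely $d_1(x,y)=d(x,y)/(1+d(x,y))$, works just as well; in that case the triangle inequality would instead follow from the monotonicity and subadditivity of the map $t\mapsto t/(1+t)$, and the topologies agree because this map is a homeomorphism of $[0,\infty)$ onto $[0,1)$ fixing $0$.)
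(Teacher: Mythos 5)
Your proposal is correct and follows essentially the same route as the paper: the same truncated metric $d_1=\min\{d,1\}$, with the topology comparison reduced to comparing small balls. You actually supply more detail than the paper on the triangle inequality (which the paper dismisses as straightforward), and your observation that balls of radius $\varepsilon<1$ coincide is an equivalent, slightly tidier packaging of the paper's two ball inclusions.
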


\begin{proof}
It is straightforward to verify that 
$$d_1(x,y) = \min \{ 1 , d(x,y) \} \quad \textrm{ for } x,y\in X$$ 
defines a metric on $X$. Let us check that they induce the same topology. Denote by $B_{d}(x,r)$ and $B_{d_1}(x,r)$ the open balls of radius $r$ centered at $x$ with respect to the metrics $d$ and $d_1$ respectively. Since $d_1 \leq d$, one has $B_{d} (x,r) \subseteq B_{d_1} (x,r)$ for every $r>0$ and $x\in X$.  Thus the topology induced by $d$ is finer than the one induced by $d_1$. Let us now check that the topology induced by $d_1$ is finer than the one induced by  $d$. Let $x\in X$ and $r>0$. Select $r_1 = \min \{r,1\}$. Then one can easily check that $B_{d_1}(x,r_1) \subseteq B_d (x,r)$.
\end{proof}

\begin{theorem}{\rm (Cf.\ \cite[page 259, Theorem~4.2.2]{Eng})} \label{countablemetrizable} Countable products of met\-rizable spaces are met\-rizable with the Tychonoff topology.
\end{theorem}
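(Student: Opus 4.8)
The plan is to exhibit an explicit metric on the product that induces the Tychonoff topology. Let $\{(X_n,\tau_n)\}_{n\in\N}$ be a countable family of metrizable spaces and let $X=\tbigprod_{n\in\N}X_n$ carry the product topology, fixing for each $n$ a metric $\rho_n$ generating $\tau_n$. First I would invoke Lemma~\ref{lemmabd1} to replace each $\rho_n$ by the metric $d_n=\min\{1,\rho_n\}$, which is bounded by $1$ and induces the same topology $\tau_n$; this costs nothing and guarantees convergence of the series below. I would then define
$$ d(x,y) = \tbigsum_{n\in\N} \frac{1}{2^n}\, d_n(x_n,y_n) \qquad \text{for } x=\{x_n\}_{n\in\N},\ y=\{y_n\}_{n\in\N}\in X, $$
observing that the series is dominated by $\tbigsum_{n\in\N} 2^{-n}=1$, so it converges and is itself bounded by $1$.

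Next I would verify that $d$ is a metric. Symmetry and the equivalence $d(x,y)=0 \iff x_n=y_n$ for all $n$ (that is, $x=y$) follow at once from the corresponding properties of the $d_n$, while the triangle inequality is obtained by summing the termwise inequalities $d_n(x_n,z_n)\le d_n(x_n,y_n)+d_n(y_n,z_n)$ weighted by $2^{-n}$.

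The heart of the proof is to show that the topology $\tau_d$ induced by $d$ coincides with the product topology $\tau_X$. For the inclusion $\tau_X\subseteq\tau_d$ I would appeal to Proposition~\ref{univpropinitial}: since the product topology is the initial topology for the projections, it suffices to check that each projection $\varphi_n\colon (X,d)\to (X_n,d_n)$ is continuous, which is immediate from $d_n(x_n,y_n)\le 2^n\, d(x,y)$ (indeed each $\varphi_n$ is Lipschitz). For the reverse inclusion $\tau_d\subseteq\tau_X$ I would show that every ball $B_d(x,\varepsilon)$ is a product-open neighborhood of each of its points. Given $x\in X$ and $\varepsilon>0$, I would first choose $N\in\N$ with $\tbigsum_{n>N}2^{-n}<\varepsilon/2$ (the tail of a convergent series), and then fix $\delta$ with $0<\delta<\varepsilon/2$. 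The basic open set $U=\left(\tbigprod_{n\le N} B_{d_n}(x_n,\delta)\right)\times\left(\tbigprod_{n>N} X_n\right)$ is a product-open neighborhood of $x$, and for every $y\in U$ one estimates
$$ d(x,y) = \tbigsum_{n\le N} \frac{d_n(x_n,y_n)}{2^n} + \tbigsum_{n>N} \frac{d_n(x_n,y_n)}{2^n} < \delta\,\tbigsum_{n\le N} \frac{1}{2^n} + \tbigsum_{n>N} \frac{1}{2^n} < \delta + \frac{\varepsilon}{2} < \varepsilon, $$
using $d_n\le 1$ to bound the tail. Hence $U\subseteq B_d(x,\varepsilon)$, so the ball is product-open.

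I expect the inclusion $\tau_d\subseteq\tau_X$ to be the main obstacle, since it is the only place where the infinite product genuinely enters: one must truncate the series at a finite stage $N$ so that the remaining tail is uniformly small -- this is exactly where the weights $2^{-n}$ and the boundedness of the $d_n$ are used -- and only the first $N$ coordinates can be constrained by a basic product-open set. The opposite inclusion is essentially formal once the universal property of the initial topology is invoked. (The case of a finite product is entirely analogous, or may be regarded as a subcase by padding with trivial factors.)
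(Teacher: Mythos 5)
Your proposal is correct and follows essentially the same route as the paper: bound each metric by $1$ via Lemma~\ref{lemmabd1}, form the weighted sum $d(x,y)=\tbigsum_n 2^{-n}d_n(x_n,y_n)$, get $\tau_X\subseteq\tau_d$ from continuity of the projections together with the initial-topology characterization of the product, and get $\tau_d\subseteq\tau_X$ by truncating the series and constraining only finitely many coordinates. The only cosmetic difference is that you show each ball is product-open while the paper works pointwise inside an arbitrary $d$-open set; the estimates are the same.
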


\begin{proof}
Let $\{(X_n,d_n)\}_{n\in \N}$ be a countable family of metric spaces and set \linebreak $X=\tbigprod_{n\in \N} X_n$.  By the previous lemma we may assume that $d_n$ is bounded by $1$ for every $n\in\N$.  We define
$$d(x,y) = \tbigsum_{n\in \N} \frac{d_n(x_n,y_n)}{2^n} , \quad \textrm{for } x=\{x_n\}_{n\in \N} \textrm{ and } y=\{y_n\}_{n\in \N} \textrm{ in } X.$$
First note that the series converges because of the term $1/2^n$ and because $d_n(x_n,y_n) \leq 1$ for every $n\in \N$ . It is straightforward to verify that $d$ is a metric in the cartesian product. Let us now check that it precisely induces the product topology. 

Denote by $\varphi_n$ the usual coordinate projection for each $n\in \N$. Given $n\in\N$,  $\varepsilon> 0$, $x=\{x_m\}_{m\in \N}$ and $y=\{y_m\}_{m\in \N}$ in $X$, we clearly have $d_n( \varphi_n(x),\varphi_n(y))=d_n(x_n,y_n) < \varepsilon$ whenever $d(x,y) <\varepsilon /2^n$. Therefore,  \linebreak$\varphi_n \colon (X,d) \longrightarrow (X_n,d_n)$ is continuous for every $n\in \N$.  Since the Tychonoff topology is the initial topology with respect to the coordinate projections (see Examples~\ref{exampinitial}), we deduce that the Tychonoff topology is coarser than the topology generated by $d$. Now let us show that any open set $U$ in $(X,d)$ is also open with the topology of the Cartesian product.

Let $x=\{x_n\}_{n\in\N} \in U$. Then there is an $r>0$ such that $B_d(x,r) \subseteq U$. Select  $k\in \N$ such that
$$ \tbigsum_{n=k+1}^\infty \frac{1}{2^n} = \frac{1}{2^k} < \frac{r}{2}.$$
For each $n=1,\dots k$, define $U_n = B_{d_n} (x_n, r/2)$. We then have
$$x\in \tbigcap_{n=1}^{k} \varphi_{n}^{-1}(U_n) \subseteq B(x,r) \subseteq U.$$
Indeed, for every $y= \{y_m\}_{m\in \N}\in \tbigcap_{n=1}^{k} \varphi_{n}^{-1}(U_n)$, we have   $d_n(x_n,y_n)< r/2$ whenever $n \leq k$.
Hence,
$$d(x,y) =\tbigsum_{n=1}^k \frac{d_n (x_n,y_n)}{2^n}  +  \tbigsum_{n=k+1}^\infty \frac{d_n (x_n,y_n)}{2^n}  < \frac{r}{2} + \frac{r}{2}=r,$$
and so $y\in B(x,r)$. Since $ \tbigcap_{n=1}^{k} \varphi_{n}^{-1}(U_n)$ is open in the Cartesian product, we conclude that so is $U$.
\end{proof}

We recall the following elementary result concerning Cauchy sequences.

\begin{lemma}\label{ChConv}
A Cauchy sequence in a metric space is convergent if and only if it has a convergent subsequence.
\end{lemma}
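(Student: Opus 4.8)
The forward implication is immediate and requires essentially no work: if a Cauchy sequence $\{x_n\}_{n\in\N}$ converges, then it has a convergent subsequence, for instance $\{x_n\}_{n\in\N}$ itself (any sequence is a subsequence of itself). So the entire content of the statement lies in the reverse implication, and that is where I would focus.

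For the reverse implication, suppose $\{x_n\}_{n\in\N}$ is a Cauchy sequence in a metric space $(X,d)$ that admits a subsequence $\{x_{n_k}\}_{k\in\N}$ converging to some point $x\in X$. The plan is to show directly that the whole sequence converges to this same limit $x$. Fix $\varepsilon>0$. First I would invoke the Cauchy condition to obtain an index $N\in\N$ such that $d(x_m,x_n)<\varepsilon/2$ whenever $m,n\ge N$. Then I would use convergence of the subsequence to choose an index $k$ large enough that simultaneously $n_k\ge N$ and $d(x_{n_k},x)<\varepsilon/2$; this is possible because $n_k\to\infty$ and $x_{n_k}\to x$.

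The heart of the argument is then a single application of the triangle inequality: for every $n\ge N$ one has
$$d(x_n,x)\le d(x_n,x_{n_k})+d(x_{n_k},x)<\frac{\varepsilon}{2}+\frac{\varepsilon}{2}=\varepsilon,$$
where the bound $d(x_n,x_{n_k})<\varepsilon/2$ comes from the Cauchy property (since both $n\ge N$ and $n_k\ge N$). As $\varepsilon>0$ was arbitrary, this shows $x_n\to x$, completing the reverse implication.

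There is no serious obstacle here; the only point requiring a little care is the coordination of the two thresholds, namely ensuring that the index $n_k$ selected from the subsequence also lies beyond the Cauchy bound $N$, so that the Cauchy estimate and the subsequential estimate can be combined at the same stage. Once that choice is made, the triangle inequality closes the argument.
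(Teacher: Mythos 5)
Your proof is correct and is the standard argument; the paper itself states this lemma without proof (it merely ``recalls'' it), so there is nothing to compare against. Both directions are handled properly, and the key point you flag --- choosing the subsequence index $n_k$ beyond the Cauchy threshold $N$ so that the two $\varepsilon/2$ estimates can be combined --- is exactly the only place where care is needed.
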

 
 \begin{lemma}\label{lemmaconvergentlimit}
 Let $\{x_n\}_{n\in\N}$ be a sequence in a metric space $(X,d)$,  set $S=\{x_n \mid n\in\N\}\subseteq X$, and let $x$ be a limit point of $S$. Then, $\{x_n\}_{n\in\N}$ has a subsequence converging to $x$. 
 \end{lemma}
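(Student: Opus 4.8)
The plan is to construct the subsequence inductively by trapping terms of $\{x_n\}_{n\in\N}$ inside balls around $x$ of shrinking radius $1/k$. The crucial preliminary observation, which I would establish first, is that since we are working in a metric space, every ball $B(x,\varepsilon)$ centered at the limit point $x$ must contain \emph{infinitely many distinct} points of $S$. I would prove this by contradiction: if some $B(x,\varepsilon)$ contained only finitely many points $y_1,\dots,y_m$ of $S$ other than $x$, then $\delta=\min\{d(x,y_i)\mid i=1,\dots,m\}$ would be strictly positive (each $y_i\ne x$), and the smaller ball $B(x,\delta)$ would meet $S$ only possibly at $x$ itself, contradicting the assumption that $x$ is a limit point of $S$.

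From this I would immediately deduce the statement I actually need, namely that for every $\varepsilon>0$ the index set $\{n\in\N\mid x_n\in B(x,\varepsilon)\}$ is infinite: the values $x_n$ lying in $B(x,\varepsilon)$ already realize infinitely many distinct points of $S$, and since each such value is attained by at least one index, the set of these indices cannot be finite.

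With this in hand the construction is routine. I would pick $n_1\in\N$ with $d(x_{n_1},x)<1$, and, assuming $n_1<\cdots<n_k$ have been chosen with $d(x_{n_j},x)<1/j$ for each $j\le k$, use the infinitude of $\{n\mid x_n\in B(x,1/(k+1))\}$ to select some $n_{k+1}>n_k$ with $d(x_{n_{k+1}},x)<1/(k+1)$. Then $\{x_{n_k}\}_{k\in\N}$ is a genuine subsequence (the indices are strictly increasing) and $d(x_{n_k},x)<1/k\to 0$, so $x_{n_k}\to x$.

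The main obstacle is not the geometric shrinking-ball idea but the bookkeeping with indices: a priori the sequence may repeat values, so a point of $S$ near $x$ need not supply a \emph{new} term of the sequence. It is precisely the metric (in fact $T_1$) separation used in the first paragraph that upgrades ``every neighborhood contains a point of $S$ distinct from $x$'' to ``every neighborhood contains infinitely many terms of the sequence,'' which is exactly what makes the inductive choice of a strictly increasing index $n_{k+1}$ possible.
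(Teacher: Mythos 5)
Your proof is correct and takes essentially the same route as the paper's: both construct strictly increasing indices $n_k$ with $d(x_{n_k},x)<1/k$, using the fact that every ball centered at the limit point $x$ contains infinitely many points of $S$ to guarantee that an index larger than $n_k$ is always available. The only difference is that you actually prove this infinitude (and spell out why infinitely many distinct values of $S$ in a ball force infinitely many indices), whereas the paper asserts it without proof; your version is the more complete of the two.
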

 
 \begin{proof}
 We build the desired subsequence $\{x_{n_k}\}_{k\in\N}$ iteratively, as follows. Let $n_1=1$ and for every $k\geq 2$ we define 
 $$n_{k+1}=\min \bigl\{n\in\N \mid n>n_k 	\textrm{ and } d(x,x_n)<\frac{1}{k+1}\bigr\}.$$
 We have to check that $n_{k+1}$ exists, i.e. that the set
$$\bigl\{n\in\N \mid n>n_k 	\textrm{ and } d(x,x_n)<\frac{1}{k+1}\bigr\}$$
 is nonempty. Indeed, since $x$ is a limit point (in a metric space) of $S$, the open ball $B(x,1/(k+1))$ has infinitely many points of $S$, and, in particular it contains a point $x_{n}$ of $S$ with $n>n_k$. Thus, $n_{k+1}$ is well-defined.  
 
 Now we show that the subsequence $\{x_{n_k}\}_{k\in\N}$ converges to $x$. Let $\varepsilon>0$ and choose $k_0\in\N$ with $1/k_0 < \varepsilon$. Then $d(x,x_{n_k})<1/k \leq 1/k_0<\varepsilon$ whenever $k\geq k_0$, which concludes the proof.
  \end{proof}

\subsection{Complete metric spaces}

\begin{definition}
A metric space $(X,d)$ is said to be \emph{complete} if every Cauchy sequence converges to a point of $X$. In that case we say that $d$ is a \emph{complete metric}.
\end{definition}

\begin{definition}
A topological space $(X,\tau)$ is said to be \emph{completely met\-rizable} if there exists a complete metric defined on $X$ which induces the topology $(X,\tau)$.
\end{definition}

\begin{remark}\label{compmetrtopprop}
Completeness is not a topological property (since it does not make sense in non-met\-rizable spaces). However, one can easily check that complete metrizability is a topological property. 
\end{remark}
Finally we recall here two results on $G_\delta$ subsets in met\-rizable spaces that we will need later on. We omit the proofs due to lack of space.

 \begin{lemma}{\rm (Cf.\ \cite[page\ 274,\ Lemma\ 4.3.22]{Eng})} \label{closedofcartesian}
Every $G_\delta$ subset in a met\-rizable space $X$ is homeomorphic to a closed subspace of the cartesian product $X \times \mathbb{R}^{\aleph_0}$. 
\end{lemma}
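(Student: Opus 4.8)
The plan is to prove that every $G_\delta$ subset $A$ of a metrizable space $X$ is homeomorphic to a closed subspace of $X \times \R^{\aleph_0}$. First I would fix a metric $d$ on $X$ inducing the topology, and use the $G_\delta$ assumption to write $A = \tbigcap_{n\in\N} U_n$, where each $U_n$ is open in $X$. Since the complement $X\setminus U_n$ is closed, Lemma~\ref{distcontclosed} provides continuous functions $f_n\colon X \longrightarrow \R$ with $f_n(x)=d(x,X\setminus U_n)$, so that $f_n(x)=0$ precisely when $x\notin U_n$. On the set $A$, every $f_n$ is strictly positive, which lets me form the continuous maps $g_n\colon A \longrightarrow \R$ defined by $g_n(x)=1/f_n(x)$; these blow up as $x$ approaches the boundary of $U_n$ and are the key device for turning the open conditions into a closed embedding.

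Next I would define the embedding $e\colon A \longrightarrow X\times\R^{\aleph_0}$ by $e(x)=\bigl(x,\{g_n(x)\}_{n\in\N}\bigr)$. The map $e$ is continuous, since its composition with the projection onto $X$ is the inclusion (continuous) and its composition with each coordinate projection of $\R^{\aleph_0}$ is $g_n$ (continuous); continuity then follows from the universal property of the product topology (Proposition~\ref{univpropinitial}). The map $e$ is clearly injective because its first coordinate already is, and $e$ is a homeomorphism onto its image because the first-coordinate projection is a continuous left inverse, so $e$ is an embedding.

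The crucial step is to verify that $e(A)$ is \emph{closed} in $X\times\R^{\aleph_0}$. Here I would take a point $(x,\{t_n\}_{n\in\N})$ in the closure of $e(A)$ and show it lies in $e(A)$. The first coordinate $x$ must belong to $A$: if instead $x\notin U_{n_0}$ for some $n_0$, then $f_{n_0}(x)=0$, so by continuity of $f_{n_0}$ the values $f_{n_0}$ take on points of $A$ near $x$ become arbitrarily small, forcing $g_{n_0}=1/f_{n_0}$ to be unbounded on any neighborhood of $x$ intersected with $A$; this contradicts the convergence of the $n_0$-th coordinate to the finite value $t_{n_0}$. Hence $x\in\tbigcap_n U_n = A$, and then by continuity of $e$ on $A$ the remaining coordinates are forced to satisfy $t_n=g_n(x)$, so $(x,\{t_n\}_{n\in\N})=e(x)\in e(A)$.

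The main obstacle I anticipate is precisely this closedness argument: making rigorous the claim that $g_{n_0}$ is unbounded near a boundary point of $U_{n_0}$, and converting that into a genuine contradiction with convergence in the product topology. The cleanest way to handle it is to argue by sequences (legitimate since $X\times\R^{\aleph_0}$ is metrizable by Theorem~\ref{countablemetrizable}): a sequence in $e(A)$ converging to $(x,\{t_n\})$ has first coordinates $x^{(k)}\to x$ with $f_{n_0}(x^{(k)})\to f_{n_0}(x)=0$, so $g_{n_0}(x^{(k)})\to+\infty$, contradicting $g_{n_0}(x^{(k)})\to t_{n_0}<\infty$.
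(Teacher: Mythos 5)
Your proof is correct; the paper itself omits the proof of this lemma (deferring to Engelking), and your argument is precisely the standard one: the reciprocal-distance functions $g_n = 1/d(\cdot,\, X\smallsetminus U_n)$ supplied by Lemma~\ref{distcontclosed}, the graph embedding $x \longmapsto \bigl(x,\{g_n(x)\}_{n\in\N}\bigr)$ inverted by the first-coordinate projection, and closedness of the image checked sequentially, which is legitimate because $X\times\R^{\aleph_0}$ is metrizable by Theorem~\ref{countablemetrizable}. The only point worth flagging explicitly is the degenerate case $U_n=X$, where $X\smallsetminus U_n=\varnothing$; the paper's convention $d(x,\varnothing)=1$ gives $f_n\equiv 1$ there, so $g_n$ is still well defined and continuous and your argument goes through unchanged.
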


\begin{theorem}{\rm (Cf.\ \cite[page\ 274,\ Theorem\ 4.3.24]{Eng})} \label{complmetrizGdelta}
If a subspace $M$ of a met\-rizable space $X$ is completely met\-rizable, then $M$ is a $G_\delta$-set in $X$. 
\end{theorem}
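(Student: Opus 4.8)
The plan is to exhibit $M$ as the intersection of two $G_\delta$-subsets of $X$: its closure $\overline M$ and a set manufactured from the local behaviour of a complete metric on $M$. I fix a metric $d$ on $X$ inducing $\tau_X$ and a complete metric $\rho$ on $M$ inducing the subspace topology; by Lemma~\ref{lemmabd1} we may assume both are bounded by $1$, so that all $\rho$-diameters below are finite. First I would record that $\overline M$ is already a $G_\delta$-set: by Lemma~\ref{distcontclosed} the map $x\mapsto d(x,\overline M)$ is continuous, so $\overline M=\bigcap_{n\in\N}\{x\in X : d(x,\overline M)<1/n\}$ is a countable intersection of open sets. Hence it suffices to produce open sets $U_n\subseteq X$ with $M=\overline M\cap\bigcap_{n\in\N}U_n$.

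For each $n\in\N$ I set
\[
U_n=\bigcup\{V\subseteq X : V\ \text{open in }X,\ \textrm{diam}_\rho(V\cap M)<1/n\},
\]
which is open as a union of open sets. I claim $M\subseteq U_n$ for every $n$. Indeed, if $x\in M$, then the $\rho$-ball $B_\rho(x,1/(3n))$ is open in the subspace topology of $M$, hence of the form $V\cap M$ for some $V$ open in $X$; since $\textrm{diam}_\rho(V\cap M)\le 2/(3n)<1/n$, we get $x\in U_n$. This yields $M\subseteq\overline M\cap\bigcap_{n}U_n$, and the substantive task is the reverse inclusion.

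For the reverse inclusion I take $x\in\overline M\cap\bigcap_{n}U_n$. For each $n$ I pick an open $V_n\ni x$ with $\textrm{diam}_\rho(V_n\cap M)<1/n$ and form the decreasing neighbourhoods $W_n=V_1\cap\dots\cap V_n\cap B_d(x,1/n)$. Since $x\in\overline M$, each $W_n$ meets $M$, so I may choose $x_n\in W_n\cap M$. On the one hand $d(x,x_n)<1/n$, so $x_n\to x$ in $(X,d)$; on the other hand, for $m,k\ge n$ both $x_m,x_k$ lie in $W_n\cap M$, whence $\rho(x_m,x_k)\le\textrm{diam}_\rho(W_n\cap M)<1/n$, so $\{x_n\}_{n\in\N}$ is $\rho$-Cauchy. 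Completeness of $\rho$ then gives $x_n\to y$ in $(M,\rho)$ for some $y\in M$, and since $\rho$ induces the subspace topology, this convergence also holds in $(X,d)$. By uniqueness of limits in the metric space $(X,d)$ I conclude $x=y\in M$, so $\overline M\cap\bigcap_{n}U_n\subseteq M$ and equality follows.

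The step I expect to be the crux is this reverse inclusion, and within it the decision to cut the neighbourhoods $W_n$ down with the extra factor $B_d(x,1/n)$. Without that intersection the constructed sequence would still be $\rho$-Cauchy, but it need not converge to $x$ in $(X,d)$, and the identification of its $\rho$-limit $y$ with the prescribed point $x$ would break down. The $d$-balls are precisely what synchronises the two metrics, so that the completeness of $\rho$ can be transported back into membership in $M$; everything else (the openness of the $U_n$, the inclusion $M\subseteq\bigcap_n U_n$, and the $G_\delta$-ness of $\overline M$) is routine.
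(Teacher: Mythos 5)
Your proof is correct. Note that the paper itself does not prove this theorem at all --- it is stated with the remark ``we omit the proofs due to lack of space'' and a citation to Engelking --- so there is no in-paper argument to compare against. What you give is the standard self-contained oscillation argument: writing $M=\overline M\cap\bigcap_n U_n$ with $U_n$ the set of points admitting an open neighbourhood whose trace on $M$ has small $\rho$-diameter, and then using $x\in\overline M$ to extract a sequence in $M$ that is simultaneously $d$-convergent to $x$ and $\rho$-Cauchy. All the steps check out, including the one you flag as the crux: cutting $W_n$ down by $B_d(x,1/n)$ is exactly what forces the $\rho$-limit supplied by completeness to coincide with $x$. (The appeal to Lemma~\ref{lemmabd1} to bound the metrics is harmless but not needed, since the diameter conditions already exclude any sets of infinite diameter.) This is a perfectly good substitute for the omitted proof, and arguably more elementary than Engelking's own route through Lavrentiev's extension theorem.
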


\subsection{Fr\'echet--Urysohn spaces} 
In general topological spaces, the following result is well known.

\begin{proposition}\label{sequenceclosure}
Let $X$ be a topological space, $A\subseteq X$ and $\{x_n\}_{n\in \N}\subseteq A$ a sequence converging to $x$. Then $x\in \overline{A}$.
\end{proposition}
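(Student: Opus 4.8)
The plan is to argue directly from the definition of closure, namely that $x\in\overline{A}$ precisely when every neighborhood of $x$ meets $A$. So I would fix an arbitrary neighborhood $U\in\NN_x$ and aim to produce a point of $A$ lying in $U$. The only ingredient needed is the translation of convergence into neighborhood language: in an arbitrary topological space, $\{x_n\}_{n\in\N}$ converging to $x$ means that for every $U\in\NN_x$ there is an index $N\in\N$ with $x_n\in U$ for all $n\geq N$. This formulation makes no use of first countability or any separation axiom, so the argument will be valid in full generality.

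Concretely, first I would take $U\in\NN_x$ arbitrary and apply the convergence hypothesis to obtain $N\in\N$ such that $x_n\in U$ whenever $n\geq N$; in particular $x_N\in U$. Second, since by assumption the whole sequence lies in $A$, we have $x_N\in A$, and therefore $x_N\in U\cap A$, which shows $U\cap A\neq\varnothing$. Because $U$ was an arbitrary neighborhood of $x$, this establishes that every neighborhood of $x$ intersects $A$, which is exactly the condition $x\in\overline{A}$.

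There is no real obstacle here; the statement is elementary and the proof is a one-line verification once convergence is unwound. The only point that merits a word of care — and the reason the result is worth stating — is that one must use the neighborhood-based definition of limit rather than any metric or sequential characterization, so that the conclusion holds in every topological space. I would therefore phrase the convergence hypothesis explicitly in terms of $\NN_x$ to make the generality transparent, and note that the converse implication (that every point of $\overline{A}$ is a limit of a sequence from $A$) is the one that genuinely requires extra hypotheses such as first countability, which is precisely the distinction that motivates the later discussion of Fr\'echet--Urysohn spaces.
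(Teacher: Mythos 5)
Your argument is correct and is exactly the standard one the paper leaves implicit (the proposition is stated there without proof as a well-known fact): every neighborhood of $x$ contains a tail of the sequence, hence a point of $A$, so $x\in\overline{A}$. Your closing remark correctly identifies that the substance lies in the converse, which is what motivates the Fr\'echet--Urysohn discussion.
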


The converse is not true in general. However, for the class of first countable spaces, one can easily prove that the result is positive.

\begin{proposition}
If $X$ is a first countable topological space,  for all $A\subseteq X$ and $x\in \overline{A}$ there is a sequence $\{x_n\}_{n\in \N}\subseteq A$ that converges to $x$.
\end{proposition}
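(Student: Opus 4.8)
The plan is to exploit first countability to produce a \emph{nested} countable neighborhood basis at $x$, and then to select one point of $A$ from each basic neighborhood. Since $X$ is first countable, there is a countable family $\{V_n\}_{n\in\N}\subseteq \NN_x$ forming a neighborhood basis at $x$. The first step I would carry out is to replace this by a decreasing one: setting $U_n = \tbigcap_{k=1}^{n} V_k$ for each $n\in\N$, each $U_n$ is still a neighborhood of $x$ (being a finite intersection of neighborhoods), the family $\{U_n\}_{n\in\N}$ is again a neighborhood basis at $x$, and now $U_1 \supseteq U_2 \supseteq \cdots$.

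Next I would use the hypothesis $x\in\overline{A}$. This means that every neighborhood of $x$ meets $A$, so in particular $U_n \cap A \ne \varnothing$ for every $n\in\N$. Choosing a point $x_n \in U_n \cap A$ for each $n$, I obtain a sequence $\{x_n\}_{n\in\N}\subseteq A$.

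Finally I would verify that $\{x_n\}_{n\in\N}$ converges to $x$. Given any neighborhood $W\in\NN_x$, the basis property yields an index $N\in\N$ with $U_N \subseteq W$. Because the $U_n$ are nested, $U_n \subseteq U_N \subseteq W$ for every $n\geq N$, and hence $x_n \in U_n \subseteq W$ for all $n\geq N$. Thus the sequence is eventually contained in every neighborhood of $x$, which is precisely convergence to $x$.

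The argument is entirely routine; the only point requiring any care is the passage to a decreasing basis in the first step. Indeed, without nestedness the chosen points $x_n$ need not eventually enter an arbitrary neighborhood $W$, so the monotonicity of the $U_n$ is exactly what makes the final convergence step immediate.
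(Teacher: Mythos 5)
Your proof is correct and is precisely the standard argument the paper has in mind: the paper states this proposition without proof (remarking only that ``one can easily prove'' it), and your argument---passing to a nested countable basis $U_n=\tbigcap_{k=1}^{n}V_k$, selecting $x_n\in U_n\cap A$, and using the nesting to get convergence---is the canonical way to fill that gap. The one step you single out as needing care, the reduction to a decreasing basis, is indeed the only non-mechanical point, and you handle it correctly.
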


Later, we will show that there are topological spaces which are not first countable but where the converse of Proposition~\ref{sequenceclosure} is still true. Hence, we introduce the following:

\begin{definition}\label{defFU}
A topological space $X$ is said to be a \emph{Fr\'echet--Urysohn space} if for all $A\subseteq X$ and $x\in \overline{A}$ there is a sequence $\{x_n\}_{n\in \N}\subseteq A$ that converges to $x$.
\end{definition}

\begin{proposition}\label{FUhered}
The property of being a Fr\'echet--Urysohn space is hereditary.
\end{proposition}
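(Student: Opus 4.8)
The plan is to take an arbitrary subspace $Y \subseteq X$ of a Fr\'echet--Urysohn space $X$ and verify the defining property of Definition~\ref{defFU} directly for $Y$. So I would let $A \subseteq Y$ and suppose $y \in \overline{A}^{Y}$, the closure of $A$ computed in the subspace $Y$; the goal is to produce a sequence in $A$ converging to $y$ within $Y$.

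First I would translate the hypothesis into a statement about the ambient space. By the description of the subspace topology (Examples~\ref{exampinitial}(2)) together with the notation recalled in Section~2, one has $\overline{A}^{Y} = \overline{A} \cap Y$, where $\overline{A}$ denotes the closure of $A$ in $X$. Hence, viewing $A$ as a subset of $X$, the point $y$ lies in $\overline{A}$.

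Next I would invoke the hypothesis that $X$ is Fr\'echet--Urysohn. Since $A \subseteq Y \subseteq X$ and $y \in \overline{A}$, Definition~\ref{defFU} applied in $X$ supplies a sequence $\{x_n\}_{n\in\N} \subseteq A$ converging to $y$ in $X$.

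Finally I would upgrade this to convergence in $Y$. The key observation is that a sequence contained in $Y$ which converges in $X$ to a point of $Y$ also converges to that point in the subspace topology: every neighborhood of $y$ in $Y$ has the form $V \cap Y$ with $V$ a neighborhood of $y$ in $X$, and since $x_n \to y$ in $X$ the terms eventually lie in $V$, hence in $V \cap Y$ because they already belong to $Y$. Thus $\{x_n\}_{n\in\N} \subseteq A$ converges to $y$ in $Y$, which shows that $Y$ is Fr\'echet--Urysohn. There is no genuine obstacle in this argument; the only point requiring any care is the interplay between closure and sequential convergence in the subspace versus the ambient topology, and both facts follow immediately from the definition of the subspace topology.
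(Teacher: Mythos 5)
Your proposal is correct and follows essentially the same route as the paper's proof: translate $\overline{A}^{Y}=\overline{A}\cap Y$ to place $y$ in the ambient closure, apply the Fr\'echet--Urysohn property of $X$ to obtain a sequence in $A$ converging in $X$, and then observe that a sequence lying in $Y$ and converging in $X$ to a point of $Y$ also converges in the subspace topology. No gaps.
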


\begin{proof}
Let $X$ be a Fr\'echet--Urysohn space and take $A\subseteq X$. Our goal is to prove that $A$ is also Fr\'echet--Urysohn. Let $B\subseteq A$ and $x\in \overline{B}^A$. Since $\overline{B}^A= A\cap \overline{B}^X$, we have $x\in \overline{B}$, and thus there is a sequence $\{x_n\}_{n\in\N}\subseteq B$ converging to $x$ (in $X$). Let us check that the sequence $\{x_n\}_{n\in\N}$ also converges to $x$ in $A$. Indeed, let $N$ be a neighborhood of $x$ in $A$. Then there is a neighborhood $M$ of $x$ in $X$ such that $N=M\cap A$. Since $\{x_n\}_{n\in\N}$ converges to $x$ in $X$, there is an $n_0 \in \N$ such that $x_n \in M$ whenever $n\geq n_0$. Since $x_n \in B\subseteq A$ for all $n\in \N$, one has that $x\in N$ whenever $n\geq n_0$, and thus $\{x_n\}_{n\in\N}$ converges to $x$ in $A$. 
\end{proof}

\subsection{Weight of a topological space} 

\begin{definition}\label{defweig}
For $X$ a topological space, the \emph{weight} of $X$ is defined to be the minimum cardinality of a basis of $X$.  
\end{definition}

In what follows, we shall frequently denote the weight of $X$ by $\omega(X)$.  We will later need the following lemmas only in the case $\kappa\geq \aleph_0$.

\begin{lemma}{\rm (Cf.\ \cite[page\ 17,\ Theorem\ 1.1.14]{Eng})}
Let $X$ be a topological space and $\omega(X)\leq \kappa$. Then for every nonempty family $\{U_i\}_{i\in I}$ of open sets there exists $I_0\subseteq I$ such that $|I_0| \leq\kappa$ and $\tbigcup_{i\in I_0} U_i = \tbigcup_{i\in I} U_i$. 
\end{lemma}

\begin{proof} Let $\{U_i\}_{i\in I}$ be a nonempty family of open sets.
Since $\omega(X) \leq \kappa$, there exists a basis $\beta$ of $X$ with $|\beta|\leq \kappa$. Define
$$\beta_0 = \{ B\in \beta \mid B\subseteq U_i \textrm{ for some } i\in I\}.$$
For each $B\in \beta_0$ choose $i(B)\in I$ such that $B\subseteq U_{i(B)}$. This allows us to define a map $f\colon \beta_0 \longrightarrow I$ such that $f(B)=i(B)$. Set $I_0=f(\beta_0)$. Let us show that $I_0$ satisfies the required property. On the one hand, note that $\beta_0\subseteq \beta$, and thus 
$$| I_0 | = |f(\beta_0)|\leq |\beta_0|\leq |\beta|\leq \kappa.$$ 
The inclusion $\tbigcup_{i\in I_0} U_i \subseteq \tbigcup_{i\in I} U_i$ is obvious, so let us check the reverse one. Let $x\in \tbigcup_{i\in I} U_i$. Then there is an $i\in I$ such that $x\in U_i$. Since $\beta$ is a basis and $U_i$ is open, there exists $B\in\beta$ such that $x\in B\subseteq U_i$. Clearly,  $B\in \beta_0$ and thus $f(B)=i(B)\in I_0$. Hence, $B\subseteq U_{i(B)} \subseteq \tbigcup_{i\in I_0} U_i$, as desired. 
\end{proof}

\begin{lemma}{\rm(Cf.\ \cite[page\ 17,\ Theorem\ 1.1.15]{Eng})} \label{smallerbasis} Let $X$ be a topological space and $\omega(X) \leq \kappa$. Then for every basis $\beta$ of $X$ there is a basis $\beta_0$ such that $|\beta_0|\leq \kappa$ and $\beta_0 \subseteq  \beta$. 
\end{lemma}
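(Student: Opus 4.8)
The plan is to use a fixed small basis to index a suitable subfamily of $\beta$. Since $\omega(X)\leq \kappa$, first I would fix some basis $\mathcal{C}=\{C_i\}_{i\in I}$ of $X$ with $|I|\leq \kappa$ (which exists by Definition~\ref{defweig}). The key idea is that, although $\beta$ itself may be huge, we only need enough of its members to ``interpolate'' between pairs of elements of the small basis $\mathcal{C}$.

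Concretely, I would call a pair $(i,j)\in I\times I$ \emph{admissible} if there is some $B\in\beta$ with $C_i\subseteq B\subseteq C_j$, and for each admissible pair I would choose one such member $B_{i,j}\in\beta$. Setting $\beta_0=\{B_{i,j}\mid (i,j)\text{ admissible}\}$, we have $\beta_0\subseteq\beta$ by construction, and $|\beta_0|\leq |I\times I|\leq \kappa$ since $\kappa$ is infinite; this is exactly where the standing hypothesis $\kappa\geq\aleph_0$ enters, via the cardinal identity $\kappa\cdot\kappa=\kappa$.

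It remains to check that $\beta_0$ is a basis, and this is the heart of the argument: given an open set $U$ and a point $x\in U$, I would produce a member of $\beta_0$ sandwiched between $x$ and $U$ by applying the basis property of $\mathcal{C}$ twice and that of $\beta$ once. Precisely, since $\mathcal{C}$ is a basis there is $C_j$ with $x\in C_j\subseteq U$; since $\beta$ is a basis there is $B\in\beta$ with $x\in B\subseteq C_j$; and since $\mathcal{C}$ is a basis there is $C_i$ with $x\in C_i\subseteq B$. This exhibits $C_i\subseteq B\subseteq C_j$, so $(i,j)$ is admissible and $B_{i,j}$ is defined, whence $x\in C_i\subseteq B_{i,j}\subseteq C_j\subseteq U$, giving the desired member of $\beta_0$.

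The only genuinely delicate point is this double application of the small basis around a single element of $\beta$: one cannot simply select, for each $C_j\in\mathcal{C}$, a member of $\beta$ contained in $C_j$, because such a member need not admit an element of $\mathcal{C}$ refining it from below. The interleaving $C_i\subseteq B\subseteq C_j$ is precisely what guarantees that the chosen $B_{i,j}$ can later be squeezed between an arbitrary point and an arbitrary open neighbourhood. Everything else is routine cardinal arithmetic and bookkeeping.
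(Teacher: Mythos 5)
Your argument is correct for infinite $\kappa$, but it takes a genuinely different route from the paper's. You use the classical interpolation-by-pairs device (this is in fact essentially Engelking's own proof of Theorem~1.1.15): index candidates by pairs $(i,j)$ from a fixed small basis $\mathcal{C}=\{C_i\}_{i\in I}$ and choose one $B_{i,j}\in\beta$ with $C_i\subseteq B_{i,j}\subseteq C_j$ whenever such a set exists; the double sandwich is exactly what lets the chosen set be squeezed between an arbitrary point and an arbitrary open set, and your remark on why a single application of the small basis would not suffice is on point. The paper instead leans on the preceding lemma (that any family of open sets admits a subfamily of cardinality at most $\kappa$ with the same union): for each member $B_i$ of the small basis it extracts at most $\kappa$ members of $\beta$ contained in $B_i$ whose union is still $B_i$, and takes the union of these selections over $i$. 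Your proof is self-contained and avoids that auxiliary lemma; the paper's proof reuses machinery it has already built, and the two are comparable in length. The one divergence in coverage is the finite case: your cardinality bound $|I\times I|\leq\kappa$ genuinely requires $\kappa\geq\aleph_0$ (for finite $\kappa$ one only gets $\kappa^2$), whereas the lemma as printed carries no such hypothesis, so the paper supplies a separate minimality argument showing that a minimal finite basis is contained in every basis. Since the paper itself remarks that these lemmas are only needed when $\kappa\geq\aleph_0$, this omission is harmless for the applications, but strictly speaking your proof establishes a slightly weaker statement than the one stated.
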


\begin{proof}
Let $\beta=\{U_j\}_{j\in J}$ be a basis of $X$. Since $\omega(X)\leq \kappa$, there exists a basis $\beta_1=\{B_i\}_{i\in I}$ such that $| I |\leq \kappa$.\\[2mm]
(1) Suppose first that $\kappa\geq \aleph_0$.  For every $i\in I$, set $J(i) = \{ j\in J\mid U_j\subseteq B_i\}$. It is clear that $\tbigcup_{j\in J(i)}U_j = B_i$ because $\beta$ is a basis. The previous lemma yields $J_0 (i) \subseteq J(i)$ such that $|J_0 (i) | \leq \kappa$ and
$$B_i =\tbigcup_{j\in J(i)}U_j= \tbigcup_{j\in J_0 (i)}U_j .$$
Define $\beta_0=\{U_j \mid j\in J_0(i),\, i\in I\}$. Since $|I|\leq\kappa$ and $|J_0(i)|\leq\kappa$, one has $|\beta_0|\leq \kappa$ (because $\kappa$ is infinite). Now we check that $\beta_0$ is a basis. Let $U$ be open and $x\in U$. Since $\beta_1$ is a basis, there is $i\in I$ such that $x\in B_i \subseteq U$. Since $B_i =  \tbigcup_{j\in J_0 (i)}U_j$, there is some $j\in J_0 (i)$ such that $x\in U_j \subseteq B_i$. Thus $x\in U_j \subseteq U$ and $\beta_0$ is a basis.  \\[2mm]
(2) Now we deal with the case $\kappa<\aleph_0$. In this case we will show that $\beta_1 \subseteq \beta$. Let $B_i\in\beta_1$ and $J(i) = \{ j\in J\mid U_j\subseteq B_i\}$. Since $B_i$ is open and $\beta$ is a basis it is clear that $\tbigcup_{j\in J(i)}U_j = B_i$. Similarly, since each $U_j$ is open and $\beta_1$ is a basis, one has $U_j = \tbigcup_{k(j) \in I(j)} B_{k(j)}$, where $I(j) = \{k(j)\in I \mid B_{k(j)} \subseteq  U_j\}$. Thus,
$$ B_i = \tbigcup_{j\in J(i)} \tbigcup_{k(j)\in I(j)} B_{k(j)}.$$
We distinguish two cases: first, assume that $B_i\ne B_{k(j)}$ for every $k(j)\in I(j)$ and $j\in J(i)$. Then one can remove the element $B_i$ from the basis $\beta_1$, obtaining a new basis with strictly smaller cardinality, a contradiction. Therefore, there is a $j_0\in J(i)$ and $k_0(j_0)\in I(j_0)$ such that $B_{k_0(j_0)}=B_i$, from which follows that
$$B_{i}= B_{k_0(j_0)} \subseteq \tbigcup_{k(j_0)\in I(j_0)} B_{k(j_0)} = U_{j_0} \subseteq B_i,$$
that is, $B_i= U_{j_0} \in \beta$. 
\end{proof}

\begin{lemma}\label{finitecount}
The family consisting of all finite subsets of a countable set is countable.
\end{lemma}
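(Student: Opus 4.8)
The plan is to reduce to the case $S=\N$ and then produce an explicit injection from the family of finite subsets into $\N$, so that countability becomes immediate from the definition. First I would dispose of the trivial situation: if $S$ is finite it has only finitely many subsets and there is nothing to prove. Hence I may assume $S$ is countably infinite and, fixing a bijection $S\cong\N$, identify $S$ with $\N$ throughout. Let $\mathcal{F}$ denote the collection of all finite subsets of $\N$.

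The key step is to define a map $\Phi\colon\mathcal{F}\longrightarrow\N$ by $\Phi(F)=\sum_{k\in F}2^{k}$, with the convention $\Phi(\varnothing)=0$. Since $F$ is finite the sum is a well-defined natural number, so $\Phi$ is genuinely $\N$-valued. I would then argue that $\Phi$ is injective by appealing to the uniqueness of the binary expansion of a natural number: the value $\Phi(F)$ determines, for each $k$, whether $2^{k}$ occurs in its binary representation, and hence recovers $F=\{k\mid 2^{k}\text{ appears}\}$. Thus distinct finite sets have distinct images (in fact $\Phi$ is a bijection onto $\N$). As $\Phi$ embeds $\mathcal{F}$ into the countable set $\N$, we conclude that $\mathcal{F}$ is countable.

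An alternative route, which I would mention in case one prefers to avoid binary encodings, is to write $\mathcal{F}=\bigcup_{n\in\N}\mathcal{F}_{n}$, where $\mathcal{F}_{n}$ is the set of subsets of $\N$ of cardinality exactly $n$. For each $n$ the assignment $(a_{1},\dots,a_{n})\mapsto\{a_{1},\dots,a_{n}\}$ is a surjection from the finite power $\N^{n}$ onto $\mathcal{F}_{n}$; since $\N^{n}$ is countable, so is each $\mathcal{F}_{n}$, and a countable union of countable sets is countable. The mild obstacle in this second approach --- and the reason I would favour the first --- is precisely that last invocation, which tacitly uses a choice of enumeration for each $\mathcal{F}_{n}$; the explicit map $\Phi$ sidesteps any such appeal and settles the lemma with a single, transparent formula.
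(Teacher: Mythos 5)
Your proof is correct and essentially the same as the paper's: both fix an enumeration of the countable set and inject the family of finite subsets into a countable target by encoding the characteristic sequence of each finite subset (the paper writes it as a terminating decimal $0.z_1z_2\dots\in\Q$, you as the binary sum $\sum_{k\in F}2^k\in\N$). The two encodings are interchangeable, and your remark that the explicit injection avoids the choice issues of the union-over-cardinalities route applies equally to the paper's argument.
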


\begin{proof}
 Let  $X$ be a  countable set and  $\mathcal{J}=\{ J\subseteq X\mid J \textrm{ is finite}\}$. 
 Let $X=\{x_n \mid n\in \N\}$ be an enumeration of $X$.
 Define
    \[\begin{array}{rcclll}
 \varphi \colon&\mathcal{J}&\longrightarrow&\Q&&\cr
 &J&\longmapsto&\varphi \left(J\right)  = 0.z_1z_2\dots  \textrm{ where } z_i=\begin{cases} 1, & \textrm{if } x_i\in J; \\ 0, & \textrm{if }x_i\not\in J.\end{cases}
 \end{array}\]
 It is clear that $\varphi$ is one-to-one, and thus $\mathcal{J}$ is countable. 
\end{proof}

The following lemma establishes the relation between the weight of a space and the weight of its countable cartesian power (with respect to the Tychonoff  topology).

\begin{lemma}\label{prodwrithk}
Let $\kappa \geq \aleph_0$ be some cardinality and $(X,\tau_X)$ a topological space of weight $\omega (X) = \kappa$.  Then $\omega (X^\N) =\kappa$.
\end{lemma}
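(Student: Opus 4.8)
The plan is to establish the two inequalities $\omega(X^{\N}) \leq \kappa$ and $\omega(X^{\N}) \geq \kappa$ separately; together they force equality.

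For the upper bound I would fix a basis $\beta$ of $X$ with $|\beta| = \kappa$ and produce an explicit basis of $X^{\N}$ from it. Writing $\varphi_n$ for the $n$-th coordinate projection, I claim the family $\gamma$ of all finite intersections $\bigcap_{n \in F} \varphi_n^{-1}(U_n)$, where $F \subseteq \N$ is finite and each $U_n \in \beta$, is a basis of the Tychonoff topology: given a basic product open set $\prod_n V_n$ (with $V_n$ open and $V_n = X$ off a finite set) and a point $x$ in it, one picks for each of the finitely many nontrivial coordinates some $U_n \in \beta$ with $x_n \in U_n \subseteq V_n$, and the corresponding member of $\gamma$ sits between $x$ and $\prod_n V_n$. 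It then remains to bound $|\gamma|$. By Lemma~\ref{finitecount} there are only $\aleph_0$ finite subsets $F$ of $\N$, and for each fixed $F$ there are at most $\kappa^{|F|} = \kappa$ choices of the tuple $(U_n)_{n \in F}$ (here I use that $\kappa$ is infinite). Hence $|\gamma| \leq \aleph_0 \cdot \kappa = \kappa$, so $\omega(X^{\N}) \leq \kappa$.

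For the lower bound I would realise $X$ as a subspace of $X^{\N}$. Since $\kappa \geq \aleph_0$ forces $X \neq \varnothing$, I can fix $x_0 \in X$ and set $\iota \colon X \longrightarrow X^{\N}$, $\iota(x) = (x, x_0, x_0, \dots)$. This $\iota$ is continuous by the universal property of the initial topology (Proposition~\ref{univpropinitial}), since $\varphi_1 \circ \iota = \mathrm{id}_X$ and $\varphi_n \circ \iota$ is constant for $n \geq 2$; it is clearly injective; and it is open onto its image because $\varphi_1^{-1}(U) \cap \iota(X) = \iota(U)$ for every open $U \subseteq X$. Thus $\iota$ is an embedding and $X \cong \iota(X)$. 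Finally I would invoke the monotonicity of weight under subspaces --- if $A \subseteq Y$ and $\mathcal{B}$ is a basis of $Y$, then $\{B \cap A \mid B \in \mathcal{B}\}$ is a basis of $A$, so $\omega(A) \leq \omega(Y)$ --- applied to $\iota(X) \subseteq X^{\N}$, giving $\kappa = \omega(X) = \omega(\iota(X)) \leq \omega(X^{\N})$.

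Combining the two bounds yields $\omega(X^{\N}) = \kappa$. I expect the only genuinely delicate points to be the verification that $\gamma$ is a basis (and not merely a subbasis) together with the careful cardinality bookkeeping; the embedding and the subspace-monotonicity of weight are routine, though the latter is perhaps worth recording explicitly, as it does not seem to have appeared earlier in the text.
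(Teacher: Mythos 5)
Your proof is correct and follows essentially the same route as the paper: both bound $\omega(X^{\N})$ from above by counting the canonical basis of finite intersections $\tbigcap_{n\in F}\varphi_n^{-1}(U_n)$ with $U_n$ drawn from a basis of size $\kappa$ (using Lemma~\ref{finitecount} for the countably many finite index sets), and from below by embedding $X$ into $X^{\N}$. You merely make explicit two steps the paper leaves implicit, namely the verification that this family is genuinely a basis and the construction of the embedding together with the monotonicity of weight under subspaces.
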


\begin{proof}
Let $\beta$ be a basis of $X$ with $|\beta|=\kappa$.  For each $n\in\N$, denote by $\varphi_n \colon X^\N \longrightarrow X$ the $n$th coordinate projection. A basis of the product $X^\N$ is given by
\begin{align*} \tilde\beta= \bigl\{ \tbigcap_{n\in J} \varphi^{-1}_n (B_n) \mid J\subseteq \N \textrm{ finite, } B_n\in\beta\bigr\} = \tbigcup_{J\in \mathcal{J}} K_J , \end{align*}
where $\mathcal{J}=\{ J\subseteq \N \mid J \textrm{ is finite} \}$ and
 $$K_J =\bigl\{ \tbigcap_{n\in J} \varphi^{-1}_n (B_n) \mid B_n\in\beta\bigr\} = \bigl\{ \tbigprod_{n\in \N} B_n \mid B_n\in \beta, B_j=X \quad \forall j\not\in J\bigr\}. $$
 It is clear that each $K_J$ is in bijection with $\beta ^J$, via the mapping 
   \[\begin{array}{rcclll}
 f \colon&K_J&\longrightarrow&\beta^J&&\cr
 &\tbigprod_{n\in \N} B_n &\longmapsto&f\left( \tbigprod_{n\in \N} B_n  \right)  = \tbigprod_{j\in J} B_j.
 \end{array}\]
 Then, $|K_J| = |\beta^J|=\kappa ^{|J|} =\kappa$ (note that $\kappa$ is infinite and $|J|$ is finite). 
  By the previous lemma, $\mathcal{J}$ is countable and hence we have proved that $\tilde\beta$ is a countable union of sets of cardinality $\kappa\geq \aleph_0$, thus $|\tilde\beta| =\kappa$ and $\omega (X^\N) \leq \kappa$.   
  
  Finally assume that there is another basis $\beta '$ of $X^\N$ with $|\beta ' | = \kappa' < \kappa$. Now, $X$ is embedded in the product $X^\N$ (i.e. it is homeomorphic to a subspace of $X^\N$), from which follows that $X$ has a basis of cardinality less than $\kappa$, a contradiction. We therefore have $\omega(X^\N)= \kappa$. 
\end{proof}

\subsection{Discrete and pairwise disjoint families} Recall that a family $\{A_i\}_{i\in I}$ of subsets of a given $X$ is said to be \emph{pairwise disjoint} if $A_i \cap A_j =\varnothing$ whenever $i\ne j$.

\begin{definition}
Let $X$ be a topological space. A family $\{A_i\}_{i\in I}$ of subsets of $X$ is said to be \emph{discrete} if for all $x\in X$ there exists $N\in\mathcal{N}_x$ such that 
$$| \{ i\in I \mid A_i \cap N \neq \varnothing \} | \leq 1.$$
\end{definition}

Discreteness is a stronger condition than pairwise disjointness. More precisely, both concepts are related as follows:

\begin{lemma}\label{propertiesdisjointdiscrete} Let $X$ be a topological space.\begin{enumerate}[\normalfont (i)]
\item  Every discrete family of $X$ is pairwise disjoint.
\item  A finite family of closed subsets of $X$ is discrete if and only if it is pairwise disjoint. \end{enumerate}
\end{lemma}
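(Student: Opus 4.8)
The plan is to prove the two statements in Lemma~\ref{propertiesdisjointdiscrete} separately, with (i) being the genuinely substantive part and the nontrivial direction of (ii) following from it together with a finiteness argument using closedness.

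\textbf{Proof of (i).} Let $\{A_i\}_{i\in I}$ be a discrete family in $X$, and suppose toward a contradiction that it is not pairwise disjoint, so that there exist indices $i\neq j$ and a point $x\in A_i\cap A_j$. By discreteness applied to this $x$, there is a neighborhood $N\in\NN_x$ meeting at most one of the sets $A_k$. But $x\in N$ and $x\in A_i$, so $A_i\cap N\neq\varnothing$; likewise $x\in A_j$ gives $A_j\cap N\neq\varnothing$. Since $i\neq j$, the set $\{k\in I\mid A_k\cap N\neq\varnothing\}$ contains both $i$ and $j$ and hence has cardinality at least $2$, contradicting $|\{k\in I\mid A_k\cap N\neq\varnothing\}|\leq 1$. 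Thus the family is pairwise disjoint.

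\textbf{Proof of (ii).} One implication is immediate from (i): a discrete family is always pairwise disjoint, with no use of finiteness or closedness. For the converse, let $\{A_1,\dots,A_n\}$ be a finite pairwise disjoint family of \emph{closed} subsets of $X$ and fix $x\in X$; I must produce a neighborhood of $x$ meeting at most one of the $A_k$. The idea is that $x$ can belong to at most one of the (pairwise disjoint) sets $A_k$, and closedness lets me ``avoid'' all the others locally. Concretely, let $K=\{k\mid x\notin A_k\}$; for each such $k$, since $A_k$ is closed and $x\notin A_k$, the complement $X\setminus A_k$ is an open neighborhood of $x$. Set $N=\tbigcap_{k\in K}(X\setminus A_k)$, which is a neighborhood of $x$ because it is a \emph{finite} intersection of neighborhoods of $x$ (here finiteness of the family is used). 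By construction $N\cap A_k=\varnothing$ for every $k\in K$, so the only indices whose set can meet $N$ lie outside $K$, i.e.\ those $k$ with $x\in A_k$. Since the family is pairwise disjoint, $x$ lies in at most one $A_k$, so there is at most one such index, giving $|\{k\mid A_k\cap N\neq\varnothing\}|\leq 1$. As $x$ was arbitrary, the family is discrete.

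The only real subtlety is in (ii): both finiteness and closedness are essential and must each be invoked at the right place — closedness to make each $X\setminus A_k$ open, and finiteness to guarantee that the intersection $N$ remains a neighborhood. The main obstacle, if any, is simply being careful to handle the index $k$ (if it exists) for which $x\in A_k$ separately rather than trying to exclude it, since one genuinely cannot separate $x$ from a set containing it.
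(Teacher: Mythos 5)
Your proposal is correct and follows essentially the same argument as the paper: part (i) by the same direct appeal to the discreteness condition at a point of $A_i\cap A_j$, and part (ii) by using closedness to excise the sets not containing $x$ and finiteness to keep the resulting neighborhood open. The only difference is presentational — the paper splits (ii) into the two cases $x\notin\tbigcup_k F_k$ and $x\in F_{n_0}$, whereas your single intersection $N=\tbigcap_{k\in K}(X\smallsetminus A_k)$ handles both at once.
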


\begin{proof}
(i) Let $\{A_i\}_{i\in I}$ be a discrete family and $x\in A_i \cap A_j$. By discreteness there is a neighborhood $N$ of $x$ such that $A_k\cap N\neq \varnothing$ for at most one $k\in I$. But since $x\in N$ then, it must be $i=j$.\\[2mm]
(ii) By part (i), we only need to show the ``if'' part.
Let $\{F_n\}_{n=1}^k$ be a finite family of closed pairwise disjoint subsets and take $x\in X$. We distinguish two cases. First, if $x\not\in \tbigcup_{n=1}^k F_n$, by finiteness  $U=X\smallsetminus \tbigcup_{n=1}^k F_n$ is an open neighborhood of $x$ and clearly it does not intersect any of the $F_1,\dots ,F_k$. 

Assume now that $x\in F_{n_0}$ for an $n_0\in\{1,\dots k\}$. Let $U=X\smallsetminus \tbigcup_{n\neq n_0} F_n$. Since $F_1,\dots, F_k$ are pairwise disjoint, we have $x\in U$. By finiteness, $U$ is open, and thus $U$ is an open neighborhood of $x$. Finally, by construction, $U$ only intersects $F_{n_0}$. Hence, the discreteness condition is verified.
\end{proof}

An infinite union of closed sets is not necessarily closed. However, in the case of discrete families of closed sets, we have the following result:

\begin{proposition}\label{unionclosed}
For every discrete family $\{A_i\}_{i\in I}$ in a topological space, we have the equality $\tbigcup_{i\in I} \overline{A_i} = \overline{\tbigcup_{i\in I} A_i}$. In particular, the union of a discrete family of closed sets is closed.
\end{proposition}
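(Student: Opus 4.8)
The plan is to establish the set equality by proving the two inclusions separately, and then to read off the ``in particular'' statement immediately. Write $A=\tbigcup_{i\in I} A_i$. The inclusion $\tbigcup_{i\in I} \overline{A_i} \subseteq \overline{A}$ is the easy half and requires no discreteness at all: for each $i\in I$ we have $A_i \subseteq A$, so monotonicity of the closure operator gives $\overline{A_i} \subseteq \overline{A}$, and taking the union over $i\in I$ yields the claim.

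For the reverse inclusion $\overline{A}\subseteq \tbigcup_{i\in I}\overline{A_i}$ I would argue by contraposition, and this is precisely where discreteness enters. Suppose $x\notin \overline{A_i}$ for every $i\in I$; I want to show $x\notin\overline{A}$, i.e. to produce a neighborhood of $x$ disjoint from $A$. By discreteness there is a neighborhood $N\in\NN_x$ meeting at most one member of the family, say $N\cap A_i=\varnothing$ for all $i\neq i_0$ (if $N$ meets none of the $A_i$, then $N\cap A=\varnothing$ and we are already done). Since by assumption $x\notin\overline{A_{i_0}}$, there is a further neighborhood $M\in\NN_x$ with $M\cap A_{i_0}=\varnothing$. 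Then $N\cap M\in\NN_x$ meets no $A_i$ whatsoever: it avoids every $A_i$ with $i\neq i_0$ because $N$ does, and it avoids $A_{i_0}$ because $M$ does. Hence $(N\cap M)\cap A=\varnothing$, so $x\notin\overline{A}$, which is the desired contrapositive. Combining the two inclusions gives the equality $\tbigcup_{i\in I}\overline{A_i}=\overline{A}$.

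Finally, for the ``in particular'' assertion, assume each $A_i$ is closed, so $\overline{A_i}=A_i$. Then the equality just proved reads $A=\tbigcup_{i\in I} A_i = \tbigcup_{i\in I}\overline{A_i}=\overline{A}$, which exhibits $A$ as equal to its own closure, i.e. $A$ is closed.

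I expect the reverse inclusion to be the only real content, and its one delicate point is the bookkeeping in the step ``$N\cap M$ meets no $A_i$'': one must use the neighborhood $N$ from discreteness to kill all indices $i\neq i_0$ simultaneously, and the separate neighborhood $M$ only to remove the single surviving index $i_0$. The finite-intersection trick is available because we are intersecting just the two neighborhoods $N$ and $M$, both of which lie in $\NN_x$.
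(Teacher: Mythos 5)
Your proof is correct and follows essentially the same route as the paper's: both hinge on discreteness supplying a neighborhood of $x$ that meets at most one member $A_{i_0}$, thereby reducing the hard inclusion to the single index $i_0$. The only cosmetic difference is that you argue contrapositively and finish by intersecting two neighborhoods, whereas the paper argues directly and invokes $\overline{B\cup C}=\overline{B}\cup\overline{C}$ for the two sets $\tbigcup_{i\ne i_0}A_i$ and $A_{i_0}$.
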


\begin{proof}
Let $x\in \overline{\tbigcup_{i\in I}A_i}$. Then there is $N\in \mathcal{N}_x$ such that 
$$| \{i\in I \mid A_i \cap N \neq\varnothing\}|\leq1.$$
Since $x\in \overline{\tbigcup_{i\in I} A_i}$ it follows that $N\cap\bigl(\overline{\tbigcup_{i\in I} A_i}\bigr)\ne\varnothing$ and so $| \{i\in I \mid A_i \cap N \neq\varnothing\}|=1$. Let $i_0\in I$ be the index such that $A_{i_0} \cap N \neq \varnothing$ and  $A_i \cap N=\varnothing$ for each $i \neq i_0.$ 
Note that therefore $\left(\tbigcup_{i\neq i_0} A_i \right)\cap N = \varnothing$ and so $x\not\in \overline{\tbigcup_{i\neq i_0}A_i}$. Moreover, since
$$x\in \overline{\tbigcup_{i\in I}A_i} =\overline{\tbigcup_{i \neq i_0} A_i \cup A_{i_0}}=\overline{\tbigcup_{i\neq i_0}A_i}\cup \overline{A_{i_0}},$$  we obtain that $x\in \overline{A_{i_0}} \subseteq \tbigcup_{i\in I} \overline{A_i}$.
The reverse inclusion always holds: for every $i\in I$ one has $A_i \subseteq \tbigcup_{i\in I} A_i$, from which follows that $\overline{A_i} \subseteq \overline{\tbigcup_{i\in I} A_i}$, and thus $\tbigcup_{i\in I} \overline{A_i} \subseteq \overline{\tbigcup_{i\in I} A_i}$.
\end{proof}

\begin{lemma}\label{closdiscrete}
If $\{A_i\}_{i\in I}$ is a discrete family in a topological space, then $\bigl\{\overline{A_i}\bigr\}_{i\in I}$ is also discrete.
\end{lemma}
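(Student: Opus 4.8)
The plan is to verify the discreteness of $\{\overline{A_i}\}_{i\in I}$ directly from the definition: fixing an arbitrary point $x\in X$, I would exhibit a neighborhood of $x$ meeting at most one of the closures $\overline{A_i}$. The starting point is the discreteness of the original family, which provides some $N\in\mathcal{N}_x$ with $|\{i\in I \mid A_i\cap N\neq\varnothing\}|\leq 1$. The one technical adjustment needed at the outset is to replace $N$ by an open neighborhood: taking $U=\operatorname{int} N$, we still have $x\in U$, and since $U\subseteq N$ the inclusion $\{i\in I\mid A_i\cap U\neq\varnothing\}\subseteq\{i\in I\mid A_i\cap N\neq\varnothing\}$ guarantees that $U$ also meets at most one $A_i$.

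The crux of the argument is the elementary observation that an \emph{open} set disjoint from $A_i$ is automatically disjoint from $\overline{A_i}$. Indeed, if $U$ is open and $y\in U\cap\overline{A_i}$, then $U$ is an open neighborhood of $y$, and since $y$ lies in the closure of $A_i$ every neighborhood of $y$ meets $A_i$; hence $U\cap A_i\neq\varnothing$. Contrapositively, $U\cap A_i=\varnothing$ forces $U\cap\overline{A_i}=\varnothing$. Applying this to the open neighborhood $U$ constructed above, for every index $i$ with $A_i\cap U=\varnothing$ we obtain $\overline{A_i}\cap U=\varnothing$ as well.

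Consequently $\{i\in I\mid \overline{A_i}\cap U\neq\varnothing\}\subseteq\{i\in I\mid A_i\cap U\neq\varnothing\}$, and the latter set has at most one element; therefore $|\{i\in I\mid \overline{A_i}\cap U\neq\varnothing\}|\leq 1$, which is exactly the discreteness condition for $\{\overline{A_i}\}_{i\in I}$ at $x$. Since $x$ was arbitrary, the family of closures is discrete. I do not expect any genuine obstacle here; the only point requiring care is the passage to an open neighborhood, after which the open-set/closure disjointness fact does all the work.
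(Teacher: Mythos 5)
Your proof is correct and uses the same key idea as the paper: pass to an open neighborhood $U\subseteq N$ and observe that an open set meets $A_i$ if and only if it meets $\overline{A_i}$. The paper phrases this as a proof by contradiction while you argue directly, but the substance is identical.
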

\begin{proof}
By way of contradiction assume that there exists $x\in X$ such that for every $N\in\mathcal{N}_x$ there are $i\ne j$ in $I$ satisfying $\overline{A_i} \cap N\neq \varnothing$ and $\overline{A_j}\cap N\neq \varnothing$. Now, for each  $N\in \mathcal{N}_x$ take an open subset $U$ with $x\in U \subseteq N$.  We also have $U\in \mathcal{N}_x$, and hence $\overline{A_i} \cap U\neq \varnothing$ and $\overline{A_j}\cap U\neq \varnothing$ for some $i\ne j$ in $I$. Let $y\in \overline{A_i}\cap U$. Since $U$ is open, one has $U\in\mathcal{N}_y$, and since $y\in\overline{A_i}$ (by definition of closure) we get $A_i \cap U\ne \varnothing$ and so $A_i \cap N\ne\varnothing$. Similarly, we have $A_j \cap N\ne \varnothing$,  a contradiction with the discreteness of $\{A_i\}_{i\in I}$. 
\end{proof}

\begin{definition}
A family $\{A_i\}_{i\in I}$ of subsets of a topological space $X$ is said to be \emph{locally finite} if for every $x\in X$ there is $N\in\NN_x$ such that $\{ i\in I \mid  A_i \cap N \ne \varnothing \}$ is finite.
\end{definition}

\begin{lemma}\label{charactDisc}
Let $X$ be a topological space and $\{A_i\}_{i\in I}$ a family of subsets of $X$. Then, $\{A_i\}_{i\in I}$ is a discrete family if and only if it is locally finite and $\overline{A_i}\cap \overline{A_j} =\varnothing$ whenever $i\ne j$ in $I$.
\end{lemma}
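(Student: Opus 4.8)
The plan is to prove the two implications separately, leaning on the two already-established facts about discrete families for the forward direction and reserving the genuine work for the converse.

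For the forward direction ($\Rightarrow$), suppose $\{A_i\}_{i\in I}$ is discrete. Local finiteness is immediate, since the defining neighborhood meets at most one member of the family, and ``at most one'' is in particular finite. To get $\overline{A_i}\cap\overline{A_j}=\varnothing$ for $i\neq j$, I would invoke Lemma~\ref{closdiscrete} to conclude that $\{\overline{A_i}\}_{i\in I}$ is again discrete, and then Lemma~\ref{propertiesdisjointdiscrete}(i) to conclude that this family of closures is pairwise disjoint. That is exactly the desired condition.

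For the converse ($\Leftarrow$), assume the family is locally finite with pairwise disjoint closures, and fix $x\in X$. First I would use local finiteness to pick $N\in\mathcal{N}_x$ so that $F=\{i\in I\mid A_i\cap N\neq\varnothing\}$ is finite. The idea is then to shrink $N$ to a neighborhood meeting at most one $A_i$. Since the closures $\{\overline{A_i}\}_{i\in F}$ are finitely many and pairwise disjoint, the point $x$ lies in at most one of them. If $x\in\overline{A_{i_0}}$ for a (necessarily unique) $i_0\in F$, I would set $M = N\cap\bigl(X\smallsetminus\bigcup_{i\in F,\, i\neq i_0}\overline{A_i}\bigr)$; if $x$ lies in none of them, I would set $M=N\cap\bigl(X\smallsetminus\bigcup_{i\in F}\overline{A_i}\bigr)$. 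In each case the subtracted set is a finite union of closed sets, hence closed, and avoids $x$, so $M$ is a neighborhood of $x$. A short check then shows that $M$ meets at most $A_{i_0}$ (resp. no member at all): indices outside $F$ are excluded because $A_i\cap N=\varnothing$, while the remaining indices in $F$ are excluded because $A_i\subseteq\overline{A_i}$ has been removed from $M$.

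The main obstacle is really just the bookkeeping in the converse: one must remember to intersect the local-finiteness neighborhood $N$ with the open set obtained by deleting the closed union of the ``unwanted'' closures, and then verify that this single neighborhood $M$ handles simultaneously the (possibly infinitely many) indices outside $F$, killed by $N$, and the finitely many unwanted indices inside $F$, killed by the deletion. Once the finite subfamily has been isolated by local finiteness and its closures are known to be pairwise disjoint, the situation reduces to the finite, closed, pairwise-disjoint case already treated in Lemma~\ref{propertiesdisjointdiscrete}(ii), so no new topological input is required.
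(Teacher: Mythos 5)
Your proposal is correct and follows essentially the same route as the paper's proof: the forward direction via Lemma~\ref{closdiscrete} and Lemma~\ref{propertiesdisjointdiscrete}(i), and the converse by intersecting the local-finiteness neighborhood with the complement of the finite union of the unwanted closures, split into the same two cases according to whether $x$ lies in one of those closures. No gaps.
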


\begin{proof}
In the ``only if'' part, local finiteness is clear and the second condition follows from Lemmas \ref{closdiscrete} and \ref{propertiesdisjointdiscrete}. Let us now show the ``if'' part. Assume that $\{A_i\}_{i\in I}$ is locally finite and that $\overline{A_i}\cap \overline{A_j} =\varnothing$ whenever $i\ne j$ in $I$.  Let $x\in X$. Then there is a neighborhood $N\in\NN_x$ such that $\{i\in I\mid A_i\cap N\}=\{i_1,\dots,i_n\}$ is finite. If $x\not\in \overline{A_{i_1}}\cup\cdots \cup \overline{A_{i_n}}$, then $M=N\cap (X\smallsetminus  \overline{A_{i_1}}\cup\cdots \cup \overline{A_{i_n}})$ is a neighborhood of $x$ which does not meet any member of the family $\{A_i\}_{i\in I}$. Assume otherwise $x\in \overline{A_{i_k}}$ for some $k\in\{ 1,\dots,n\}$. By hypothesis, $\overline{A_{i_k}} \subseteq X\smallsetminus \overline{A_{i_j}}$ for all $j\in \{1,\dots,n\}\smallsetminus \{k\}$. Then 
$$M= N \cap \tbigcap_{\substack{j=1\\ j\ne k}}^{n} (X\smallsetminus \overline{A_{i_j}})$$ 
is an neighborhood of $x$ which intersects at most one member of $\{A_i\}_{i\in I}$ (namely $A_{i_{k}}$). 
\end{proof}

Let $(X,\tau_X)$ and $(Y,\tau_Y)$ be topological spaces. Suppose that $\{X_i\}_{i\in I}$ is a cover of $X$ and  take a family $\{f_i \colon X_i \longrightarrow Y\}_{i\in I}$ of continuous mappings. Recall that the maps $\{f_i\}_{i	\in I}$ are said to be \emph{compatible} if ${f_{i}}_{|X_i \cap X_j}={f_{j}}_{|X_i \cap X_j}$ for all $i,j\in I$.  In that case, a mapping $f\colon X\longrightarrow Y$ arises, given by $f(x)=f_i(x)$ where $x\in X_i$. This function is said to be the \emph{combination} of the mappings $\{f_i\}_{i\in I}$.

\begin{remark}
If the family $\{X_i\}_{i\in I}$ is pairwise disjoint, the maps $\{f_i\}_{i\in I}$ are always compatible. In particular, because of Lemma~\ref{propertiesdisjointdiscrete}, if $\{X_i\}_{i\in I}$ is discrete  the maps $\{f_i\}_{i\in I}$ are always compatible.
\end{remark}

The following result is an extension of the Pasting Lemma. It guarantees the continuity of a combined map with respect to a (possibly infinite) discrete family of sets.

\begin{proposition}\label{propcombinedmap}
Let $(X,\tau_X)$ and $(Y,\tau_Y)$ be topological spaces. Suppose that $\{F_i\}_{i\in I}$ is a closed discrete  cover of $X$ and  let $\{f_i \colon F_i \longrightarrow Y\}_{i\in I}$ be a family of continuous  mappings. Then the combined map is continuous. 
\end{proposition}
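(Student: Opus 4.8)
The plan is to prove continuity of the combined map $f\colon X\longrightarrow Y$ via the closed-set criterion: a map is continuous if and only if the preimage of every closed set is closed. First I would observe that $f$ is well defined. Indeed, since $\{F_i\}_{i\in I}$ is discrete, it is in particular pairwise disjoint by Lemma~\ref{propertiesdisjointdiscrete}(i), so the maps $\{f_i\}_{i\in I}$ are automatically compatible (cf.\ the Remark preceding the statement) and $f(x)=f_i(x)$ for the unique $i$ with $x\in F_i$ is unambiguous.

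The core of the argument is as follows. Let $C\subseteq Y$ be an arbitrary closed set. Because $\{F_i\}_{i\in I}$ covers $X$ and $f$ agrees with $f_i$ on $F_i$, one has the set-theoretic identity $f^{-1}(C)=\tbigcup_{i\in I} f_i^{-1}(C)$. Now each $f_i^{-1}(C)$ is closed in $F_i$ by continuity of $f_i$, and since $F_i$ is itself closed in $X$, each $f_i^{-1}(C)$ is closed in $X$. Moreover, the family $\{f_i^{-1}(C)\}_{i\in I}$ is discrete: this is immediate from the definition of discreteness, since $f_i^{-1}(C)\subseteq F_i$, so any neighborhood $N$ meeting at most one $F_i$ meets at most one $f_i^{-1}(C)$. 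Thus $\{f_i^{-1}(C)\}_{i\in I}$ is a \emph{discrete family of closed sets}, and Proposition~\ref{unionclosed} applies to give that its union $f^{-1}(C)=\tbigcup_{i\in I}f_i^{-1}(C)$ is closed. Since $C$ was arbitrary, $f$ is continuous.

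The point I expect to be the crux, rather than a genuine obstacle, is the decision to work with closed sets instead of open ones. The naive attempt with open sets fails: although $f_i^{-1}(U)$ is open \emph{in $F_i$} for each open $U\subseteq Y$, the sets $F_i$ need not be open in $X$, so there is no reason for these preimages to be open in $X$, and an infinite union would not help even if they were. The whole strength of the hypothesis lies in the interplay between closedness of the $F_i$ and discreteness of the family, which is exactly what Proposition~\ref{unionclosed} is designed to exploit; channeling the proof through the closed-set criterion is therefore the decisive step, after which the remaining verifications are routine.
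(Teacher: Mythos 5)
Your proposal is correct and follows essentially the same route as the paper: both prove continuity via the closed-set criterion, write $f^{-1}(C)=\tbigcup_{i\in I}f_i^{-1}(C)$ with each term closed in $X$, and invoke Proposition~\ref{unionclosed} on the resulting discrete family of closed sets. Your explicit check that $\{f_i^{-1}(C)\}_{i\in I}$ inherits discreteness from $\{F_i\}_{i\in I}$ is a small detail the paper leaves implicit, but otherwise the arguments coincide.
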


\begin{proof}
Let $f$ be the combined mapping. We shall prove that $f$ is continuous by showing that inverse images of closed sets are closed. Indeed, let $F\subseteq X$ be closed. Note that
$$f^{-1} (F) =\{ x\in X\mid f(x) \in F\} = \tbigcup_{i\in I} \{x\in F_i \mid f_i(x)\in F\}=\tbigcup_{i\in I} f_i^{-1}(F).$$
By continuity of $f_i$ it follows that $f_i^{-1}(F)$ is closed in $F_i$  for every $i\in I$. Further, for each $i\in I$, since $F_i$ is closed in $X$,  $f_i^{-1}(F)$  is also closed  in $X$. By Proposition~\ref{unionclosed},  $f^{-1}(F)$ is closed in $X$. 
\end{proof}

We will also be interested in certain families consisting of a union of countably many discrete families:

\begin{definition}\label{sigmadiscret}
A family of subsets of a topological space is called $\sigma$-\emph{discrete} if it can be represented as a countable union  of discrete families. 
\end{definition}

\subsection{The Diagonal Theorem}

\begin{definition} Let $(X,\tau_X)$ be a topological space, $\{ (Y_i,\tau_i) \}_{i\in I}$ a family of 
topological spaces and $\{ f_i \colon (X,\tau_X) \longrightarrow (Y_i,\tau_i) \}_{i\in I}$ a family of continuous maps. The map
 \[\begin{array}{rcclll}
 f \colon&X&\longrightarrow&\tbigprod_{i\in I} Y_i &&\cr
 &x&\longmapsto&f(x) = \left\{ f_i(x) \right\}_{i\in I}
 \end{array}\]
 is said to be the \emph{diagonal} of the mappings $\{f_i\}_{i\in I}$, and it is usually denoted by $\Delta_{i\in I} f_i$. 
\end{definition}

\begin{lemma}
 Let $(X,\tau_X)$ be a topological space, $\{ (Y_i,\tau_i) \}_{i\in I}$ a family of 
topological spaces and $\{ f_i \colon (X,\tau_X) \longrightarrow (Y_i,\tau_i) \}_{i\in I}$ a family of continuous maps.  Then the diagonal map $\Delta_{i\in I} f_i$ is continuous.
\end{lemma}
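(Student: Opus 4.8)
The plan is to deduce continuity directly from the universal property of the initial topology (Proposition~\ref{univpropinitial}), thereby avoiding any explicit manipulation of preimages of subbasic sets. The crucial input is the observation recorded in Examples~\ref{exampinitial}(1): the Tychonoff topology on $\tbigprod_{i\in I} Y_i$ is precisely the initial topology with respect to the family of coordinate projections $\{\varphi_i \colon \tbigprod_{j\in I} Y_j \longrightarrow Y_i\}_{i\in I}$. Once this is in place, Proposition~\ref{univpropinitial} applies verbatim with $Z=X$, with the product as the initial-topology space, and with the diagonal map $f=\Delta_{i\in I} f_i$ playing the role of the map $g$ whose continuity is to be established.

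Concretely, I would first write $f\colon X\longrightarrow \tbigprod_{i\in I} Y_i$, $f(x)=\{f_i(x)\}_{i\in I}$, and then compute the composite with each projection. The single computation that drives the whole argument is that for every $i\in I$ and every $x\in X$,
\[
(\varphi_i \circ f)(x) = \varphi_i\bigl(\{f_j(x)\}_{j\in I}\bigr) = f_i(x),
\]
so that $\varphi_i\circ f = f_i$ as maps $X\longrightarrow Y_i$. Since each $f_i$ is continuous by hypothesis, every composite $\varphi_i\circ f$ is continuous.

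Finally I would invoke the ``only if'' direction (in fact the equivalence) of Proposition~\ref{univpropinitial}: because $\varphi_i\circ f$ is continuous for every $i\in I$, the map $f$ into the initial-topology space $\tbigprod_{i\in I} Y_i$ is itself continuous. This yields exactly the continuity of $\Delta_{i\in I} f_i$ that is claimed.

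I do not anticipate any genuine obstacle here; the statement is essentially a reformulation of the universal property, and the only content is the identification $\varphi_i\circ f = f_i$ together with the fact that the product topology is an initial topology. If one preferred to avoid citing the universal property, the alternative would be to check directly that $f^{-1}\bigl(\varphi_i^{-1}(U)\bigr) = f_i^{-1}(U)$ is open for every $U\in\tau_i$, and that such sets $\varphi_i^{-1}(U)$ form a subbasis of the product; but this merely re-proves Proposition~\ref{univpropinitial} by hand, so the clean approach is to quote it.
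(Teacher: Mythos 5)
Your proof is correct and follows exactly the paper's own argument: both invoke Examples~\ref{exampinitial}(1) and Proposition~\ref{univpropinitial}, identify $\varphi_i\circ f=f_i$, and conclude continuity from the continuity of the $f_i$. No differences worth noting.
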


\begin{proof}
Let $\varphi_i\colon \tbigprod_{j\in I} Y_j \longrightarrow Y_i$ denote the $i$th coordinate projection for each $i\in I$. Because of Examples~\ref{exampinitial} and Proposition~\ref{univpropinitial}, $f$ is continuous if and only if $\varphi_i \circ f$ is continuous for every $i\in I$. Now, one has $\varphi_i \circ f = f_i$, which is continuous for all $i\in I$, and the proof is complete.
\end{proof}

\begin{definition}
Let $(X,\tau_X)$ be a topological space, $\{ (Y_i,\tau_i) \}_{i\in I}$ a family of 
topological spaces and $\{ f_i \colon (X,\tau_X) \longrightarrow (Y_i,\tau_i) \}_{i\in I}$ a family of continuous maps.  
\begin{enumerate}[\normalfont (i)]
\item $\{f_i\}_{i\in I}$ is said to \emph{separate points} if for every pair of distinct points $x,y\in X$ there exists an $i\in I$ such that $f_i(x)\ne f_i(y)$.
\item $\{f_i\}_{i\in I}$ is said to \emph{separate points and closed sets} if for every $x\in X$ and for every closed subset $F\subseteq X$ such that $x\not\in F$ there is an $i\in I$ such that $f_i(x)\not\in \overline{f_i(F)}$. 
\end{enumerate}
\end{definition}

\begin{remark}\label{remT1}
Recall that in a $T_1$ space singletons are closed. Thus,  if $X$ is $T_1$, condition (ii) in the previous definition automatically implies condition (i). 
\end{remark}

The following theorem asserts that under certain circumstances the diagonal map is one-to-one or, further,  an embedding.

\begin{theorem}[The diagonal theorem]\label{diagonal} Let $(X,\tau_X)$ be a topological space, $\{ (Y_i,\tau_i) \}_{i\in I}$ a family of 
topological spaces and $\{ f_i \colon (X,\tau_X) \longrightarrow (Y_i,\tau_i) \}_{i\in I}$ a family of continuous maps.  Then,
\begin{enumerate}[\normalfont (i)]
\item If $\{f_i\}_{i\in I}$ separates points, then $\Delta_{i\in I} f_i$ is one-to-one.
\item If $\{f_i\}_{i\in I}$ separates points and also separates points and closed sets, then $\Delta_{i\in I} f_i$ is an embedding, i.e.  it is a homeomorphism onto its image.
\end{enumerate}
\end{theorem}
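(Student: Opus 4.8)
The plan is to set $f=\Delta_{i\in I}f_i\colon X\longrightarrow \tbigprod_{i\in I}Y_i$ and to recall that $f$ is already known to be continuous by the preceding lemma; it then remains to establish injectivity in (i) and the open-onto-image property in (ii).

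For (i), I would argue directly. Given distinct $x,y\in X$, the hypothesis that $\{f_i\}_{i\in I}$ separates points yields some $i\in I$ with $f_i(x)\ne f_i(y)$. Since $f_i(x)$ and $f_i(y)$ are precisely the $i$th coordinates of $f(x)$ and $f(y)$, the two tuples differ in that coordinate, so $f(x)\ne f(y)$ and $f$ is one-to-one.

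For (ii), I already have that $f$ is a continuous bijection onto $f(X)$ (continuity from the lemma, injectivity from (i)), so the only thing left is to show that $f\colon X\longrightarrow f(X)$ is open, i.e. that $f(U)$ is open in the subspace $f(X)$ for every open $U\subseteq X$. Fix such a $U$ and a point $x\in U$; then $F=X\smallsetminus U$ is closed and $x\notin F$. Invoking the hypothesis that $\{f_i\}_{i\in I}$ separates points and closed sets, I obtain an index $i_0\in I$ with $f_{i_0}(x)\notin \overline{f_{i_0}(F)}$. Setting $V=Y_{i_0}\smallsetminus \overline{f_{i_0}(F)}$, which is open in $Y_{i_0}$ and contains $f_{i_0}(x)$, I consider the subbasic open set $W=\varphi_{i_0}^{-1}(V)$ in the product, where $\varphi_{i_0}$ is the $i_0$th coordinate projection. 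By construction $f(x)\in W$.

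The key verification---and the only step requiring any care---is the inclusion $W\cap f(X)\subseteq f(U)$. Indeed, any point of $W\cap f(X)$ has the form $f(y)$ with $\varphi_{i_0}(f(y))=f_{i_0}(y)\in V$, so $f_{i_0}(y)\notin \overline{f_{i_0}(F)}\supseteq f_{i_0}(F)$; hence $f_{i_0}(y)\notin f_{i_0}(F)$, which forces $y\notin F$, i.e. $y\in U$, and therefore $f(y)\in f(U)$. This shows that every point of $f(U)$ has a product-open neighborhood whose trace on $f(X)$ lies in $f(U)$, so $f(U)$ is open in $f(X)$. Thus $f$ is a continuous open bijection onto its image and therefore a homeomorphism onto $f(X)$, i.e. an embedding. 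I expect the crux of the whole argument to be exactly this last inclusion, where the separation of points and closed sets is precisely what allows one to pass from control of a single coordinate back to membership in the open set $U$.
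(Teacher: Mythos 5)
Your proof is correct, but it takes the dual route to the one in the paper. The paper proves that $f=\Delta_{i\in I}f_i$ is a \emph{closed} map onto its image: for $F\subseteq X$ closed it establishes the single identity $f(F)=\overline{f(F)}\cap f(X)$, using the fact that $f_i(x)=\varphi_i(f(x))\in\varphi_i\bigl(\overline{f(F)}\bigr)\subseteq\overline{f_i(F)}$ for every $i$, and then invoking the separation hypothesis in contrapositive form (``if $f_i(x)\in\overline{f_i(F)}$ for all $i$, then $x\in F$''). You instead prove that $f$ is an \emph{open} map onto its image, working pointwise: for each $x$ in an open $U$ you apply the hypothesis directly to $F=X\smallsetminus U$ to extract a single index $i_0$ and a subbasic product neighborhood $\varphi_{i_0}^{-1}\bigl(Y_{i_0}\smallsetminus\overline{f_{i_0}(F)}\bigr)$ whose trace on $f(X)$ sits inside $f(U)$. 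Both arguments use the separation of points and closed sets in the same essential way; the paper's version is slightly more economical (one set equation, no choice of neighborhoods), while yours is more constructive in that it exhibits explicit basic neighborhoods witnessing openness, which makes the role of the product's subbase visible. Your key inclusion $W\cap f(X)\subseteq f(U)$ is verified correctly, so there is no gap.
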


\begin{proof}
Throughout the proof we shall write $f=\Delta_{i\in I} f_i$.\\[2mm]
(i) If $x\ne y$ in $X$, there is an $i\in I$ with $f_i(x)\ne f_i(y)$, and then $f(x)\ne f(y)$.\\[2mm]
(ii)  By part (i) $f$ is one-to-one, and we already know that $f$ is continuous. Thus it is enough to show that $f\colon (X,\tau_X)\longrightarrow ( f(X), \tau_{f(X)})$ is a closed map, where $(f(X), \tau_{f(X)})$ denotes the subspace topology inherited from the cartesian product. Let $F$ be closed in $X$. Our goal is to show that $f(F)$ is closed in $f(X)$. We will show that
$$f(F) = \overline{f(F)} \cap f(X)$$
where $\overline{f(F)}$ denotes the closure of $f(F)$ in the whole cartesian product $\tbigprod_{i\in I} Y_i$. Clearly, it is enough to show that the right hand side is contained in the left hand side.  
Let $f(x)\in  \overline{f(F)} \cap f(X)$. Denote by $\varphi_i\colon \tbigprod_{j\in I} Y_j\longrightarrow Y_i$ the $i$th coordinate projection. Since $f(x) \in  \overline{f(F)}$,  one has 
$$f_i(x) =\varphi_i (f(x))\in \varphi_i \left(  \overline{f(F)} \right) \subseteq \overline{ \varphi_i \left(  f(F) \right) }=\overline{f_i(F)}$$
for every $i\in I$. Thus, the hypothesis tells us that necessarily $x\in F$, which implies $f(x)\in f(F)$, as desired.
\end{proof}

 \section{The Hedgehog}
 \label{ch2}
 Let $\kappa $ be some cardinal and $I$ be a set with $|I|=\kappa$. Let $\sim$ be an equivalence relation on the product $X=[0,1]\times I$ defined by $(t,i)\sim (s,j)$ if and only if $t=0=s$ or $(t,i)=(s,j)$. The \emph{hedgehog with $\kappa$ spines} $J(\kappa)$ is the set of equivalence classes $X\slash\sim$ of $[0,1]\times I$ under $\sim$.

 Let $p$ denote the quotient map $p\colon X\longrightarrow J(\kappa)$.
 In what follows we shall identify equivalence classes $p(t,i)$ with their representatives $(t,i)$, and we let $\zerohedgehog$ denote the equivalence class $p(0,i)$.

  \begin{figure}[htbp]
\begin{center}
\begin{tikzpicture}
\draw[line width=2pt] (0,0) -- (5,0);
\foreach \x in {1,...,9}
\draw[line width=2pt,rotate=(60/sqrt(\x)-20)] (0,0) -- (5,0);
\filldraw[black,rotate=(40)] (3,0) circle (2pt);
\draw[rotate=(40)] (3,0)  node[anchor=south east] {$(t,i)$};
\filldraw (0,0) circle (2pt);
\draw (0,0)  node[anchor=south east] {$\zerohedgehog$};
\end{tikzpicture}
\caption{The hedgehog}
\label{irudi}
\end{center}
\end{figure}
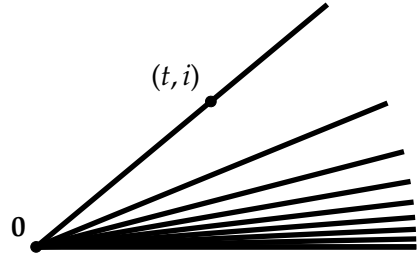

 \subsection{Projections}

Now we introduce  a new family of useful mappings. For each $i \in I$, let $\pi_i \colon J(\kappa)\longrightarrow [0,1]$ the $i$th projection given by
\[\pi_i (t,j)  =\begin{cases}t,&\text{ if  }j=i;\\[0.1cm] 0,&\text{ if  }j\ne i,\cr\end{cases},\qquad (t,j)\in J(\kappa)  \]

We also have the projection $\pi_\kappa \colon J(\kappa)\longrightarrow [0,1]$ given by
\[\pi_\kappa (t,j)  =t,\qquad (t,j)\in J(\kappa)  \]
  It is clear that the equalities
$$\pi_\kappa = \tbigsum_{i\in I} \pi_i = \tbigvee_{i\in I} \pi_i$$
hold (note that for each $(t,j)\in J(\kappa)$ the sum $\tbigsum_{i\in I} \pi_i(t,j)$ has only one nonzero term).

 \subsection{Partial order on $J(\kappa)$}

The hedgehog $J(\kappa)$ can be seen as a partially ordered set. More precisely,  we define a binary relation on $J(\kappa)$ as follows:
 \[
 (t,i)\leq (s,j)\quad\text{ if }(t,i)=\zerohedgehog\quad\text{ or }\quad i=j\text { and } t\leq s.
 \]
 
 It turns out that $\leq$ is a partial order on $J(\kappa)$. We set
$$\newuparrow (t,i)=\{ (s,j)\in J(\kappa) \mid(t,i) \leq (s,j) \}\quad \textrm{and} \quad  \newdownarrow (t,i)=\{ (s,j)\in J(\kappa) \mid(s,j) \leq (t,i) \}.$$ Figure~\ref{partial} shows how the sets $\newdownarrow (t,i)$ and $\newuparrow (t,i)$ look like for $(t,i)\in J(\kappa)$.
   
  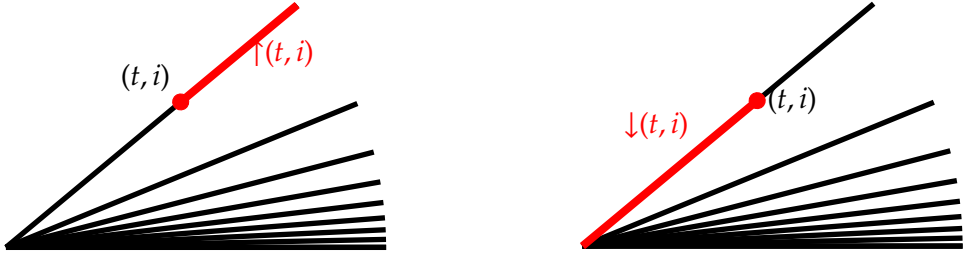
\begin{figure}[htbp]
\begin{center}
\begin{tikzpicture}
\draw[line width=2pt] (0,0) -- (5,0);
\foreach \x in {2,...,9}
\draw[line width=2pt,rotate=(60/sqrt(\x)-20)] (0,0) -- (5,0);
\draw[line width=2pt,rotate=(40)] (0,0) -- (5,0);
\draw[line width=3pt,red,rotate=(40)] (3,0) -- (5,0);
\filldraw[red,rotate=(40)] (3,0) circle (3pt);
\draw[rotate=(40)] (3,0)  node[anchor=south east] {$(t,i)$};
\draw[red,rotate=(40)] (4,0)  node[anchor=west] {$\newuparrow (t,i)$};
\end{tikzpicture}
\hfill
\begin{tikzpicture}
\draw[line width=2pt] (0,0) -- (5,0);
\foreach \x in {2,...,9}
\draw[line width=2pt,rotate=(60/sqrt(\x)-20)] (0,0) -- (5,0);
\draw[line width=2pt,rotate=(40)] (0,0) -- (5,0);
\draw[line width=3pt,red,rotate=(40)] (3,0) -- (0,0);
\filldraw[red,rotate=(40)] (3,0) circle (3pt);
\draw[rotate=(40)] (3,0)  node[anchor=west] {$(t,i)$};
\draw[red,rotate=(40)] (2,0)  node[anchor=south east] {$\newdownarrow (t,i)$};
\end{tikzpicture}
\caption{The partial order}
\label{partial}
\end{center}
\end{figure}

We begin by stating some properties of the poset $(J(\kappa),\leq)$. Recall that a nonempty subset $D$ of a partially ordered set is said to be \emph{directed} if every pair of elements has an upper bound.
  
 \begin{fact}\label{directedfact} A subset $D\subset J(\kappa)$ is directed in $(J(\kappa),\le)$ if and only if $D$ is nonempty and there is an $i_0\in I$ such that $D\subset p([0,1]\times \{i_0\})$, i.e. iff it is directed in one spine.
 \end{fact}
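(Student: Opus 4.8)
The plan is to prove both implications directly from the definition of the order $\le$, exploiting the fact that two points $(t,i)$ and $(s,j)$ with $t,s>0$ are comparable only when $i=j$. The key preliminary observation is that $\zerohedgehog=p(0,i)$ lies in \emph{every} spine $p([0,1]\times\{i_0\})$, since $(0,i)\sim(0,j)$ for all $i,j$. Consequently, the condition ``$D\subset p([0,1]\times\{i_0\})$ for some $i_0$'' is equivalent to the statement that all points of $D$ other than $\zerohedgehog$ lie on a single spine; this reformulation is what makes the argument clean.

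For the ``if'' direction I would assume $D$ is nonempty with $D\subset p([0,1]\times\{i_0\})$ and note that the restriction of $\le$ to the spine $p([0,1]\times\{i_0\})$ is order-isomorphic to $([0,1],\le)$, hence a chain. Therefore any two elements $(t,i_0),(s,i_0)\in D$ are comparable, and $(\max\{t,s\},i_0)$ — which equals one of the two given elements and thus already belongs to $D$ — serves as an upper bound. This shows $D$ is directed.

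For the ``only if'' direction, which is the substantive part, I would argue by contradiction. Suppose $D$ is directed but not contained in a single spine; by the reformulation above there must then exist $(t,i),(s,j)\in D$ with $t,s>0$ and $i\ne j$. By directedness these two elements admit a common upper bound $(u,k)$. Unwinding the definition of $\le$: the relation $(t,i)\le(u,k)$ rules out $(t,i)=\zerohedgehog$ (because $t>0$) and hence forces $k=i$, while $(s,j)\le(u,k)$ similarly forces $k=j$; thus $i=j$, contradicting $i\ne j$.

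The main obstacle is purely a matter of careful bookkeeping: correctly translating ``$D$ is not contained in one spine'' into the existence of two genuinely incomparable points $(t,i),(s,j)$ with $t,s>0$ and $i\ne j$, and ensuring that $\zerohedgehog$ — which sits in all spines simultaneously — never obstructs this reduction. Once that translation is in place the contradiction falls out immediately from the definition of the order, so I expect the whole proof to be short.
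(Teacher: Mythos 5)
Your proof is correct and follows essentially the same route as the paper's: the ``only if'' direction derives a contradiction by noting that a common upper bound $(u,k)$ of two points $(t,i),(s,j)$ with $t,s>0$ forces $k=i$ and $k=j$, and the ``if'' direction uses that a single spine is a chain so the larger of two elements is already an upper bound in $D$. Your explicit remark that $\zerohedgehog$ lies in every spine is a nice clarification, but the argument is the same.
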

 
 \begin{proof}
 First we show the ``only if'' part. Assume that $D$ is directed. Assume by contradiction that there are $(t,i)$ and $(s,j)$ in $D$ such that $t,s>0$ and $i\neq j$. Let $(r,k)$ be an upper bound of $(t,i)$ and $(s,j)$. Since $(t,i),(s,j)\neq \zerohedgehog$, necessarily we have $i=j$, a contradiction. For the converse, assume that $D$ is nonempty and $D\subset p([0,1]\times \{i_0\})$ for an $i_0\in I$. Let $(t,i_0),(s,i_0)\in D$. Then $(t\vee s, i_0)$ in an upper bound in $D$. 
 \end{proof}

\begin{definition}{\rm(Cf.\ \cite[page~9, Definition~0-2.1\,(iv)]{GierzDom})}
A partially ordered set is said to be a \emph{complete semilattice} if every nonempty subset has an infimum and every directed subset has a supremum.
\end{definition}

 \begin{fact} The partially ordered set $(J(\kappa),\le)$ is a complete semilattice.
 \end{fact}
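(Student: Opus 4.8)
The plan is to exploit that $(J(\kappa),\le)$ is nothing but a family of copies of the chain $[0,1]$ (the spines) glued along their common least element $\zerohedgehog$; since $[0,1]$ is a complete lattice, all the relevant infima and suprema can be read off one spine at a time. First I would record the basic structural observation that $\zerohedgehog$ is the global minimum of the poset, since $(0,i)\le(s,j)$ holds for every $(s,j)\in J(\kappa)$ directly from the definition of $\le$.

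For the existence of infima, I would let $S\subseteq J(\kappa)$ be nonempty and distinguish two cases. If $S$ is contained in a single spine, say $S\subseteq p([0,1]\times\{i_0\})$, then writing $S=\{(t,i_0)\mid t\in T\}$ for some nonempty $T\subseteq[0,1]$, I would claim $\tbigwedge S=(\inf T,i_0)$, where $\inf T$ is computed in the complete lattice $[0,1]$. This is a lower bound because the order restricted to the spine is just the usual order on $[0,1]$, and it is the greatest such because any nonzero lower bound must lie on the same spine $i_0$ and correspond to a real number $\le t$ for all $t\in T$, hence $\le\inf T$. In the remaining case $S$ contains two points $(t,i),(s,j)$ with $t,s>0$ and $i\ne j$; then I would observe that a common lower bound $(r,k)$ satisfying both $(r,k)\le(t,i)$ and $(r,k)\le(s,j)$ is forced by the definition of $\le$ to be $\zerohedgehog$ (otherwise $k=i=j$, contradicting $i\ne j$), so the unique lower bound is $\zerohedgehog$ and $\tbigwedge S=\zerohedgehog$.

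For directed suprema I would invoke Fact~\ref{directedfact}: a directed $D$ is nonempty and lies in a single spine, so $D=\{(t,i_0)\mid t\in T\}$ with $\varnothing\ne T\subseteq[0,1]$. I would then claim $\tbigvee D=(\sup T,i_0)$, again using completeness of $[0,1]$. It dominates every element of $D$, and any upper bound $(s,j)$ must, provided $D$ contains a point of positive first coordinate (the case $D=\{\zerohedgehog\}$ being trivial), satisfy $j=i_0$ and $s\ge t$ for all $t\in T$, hence $s\ge\sup T$, giving $(\sup T,i_0)\le(s,j)$.

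The only delicate point is the bookkeeping around $\zerohedgehog$, which equals $(0,i)$ simultaneously for every $i\in I$: I would need to check that the ``single spine'' descriptions remain well posed when $0\in T$ (so that $\zerohedgehog$ itself appears in $S$ or $D$), and that the degenerate directed set $D=\{\zerohedgehog\}$ is covered. None of this is substantial; the genuine content of the argument is simply the completeness of the unit interval transported onto each spine.
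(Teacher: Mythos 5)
Your proposal is correct and follows essentially the same route as the paper: both arguments use Fact~\ref{directedfact} to confine a directed set to a single spine and take the supremum there in the complete lattice $[0,1]$, and both split the infimum computation into the single-spine case (infimum in $[0,1]$) and the two-spine case (where $\zerohedgehog$ is forced). The only difference is presentational — your explicit remarks about $\zerohedgehog$ being the bottom element and the degenerate case $D=\{\zerohedgehog\}$ are bookkeeping the paper leaves implicit.
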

 
 \begin{proof}
 First we show that every directed subset has a sup. Let $D$ be a directed subset. By Fact~\ref{directedfact} it follows that there is an $i_0\in I$ such that $D\subset p([0,1]\times \{i_0\})$. Set $d_0= \tbigvee \{ d \mid (d,i_0)\in D\}$ (the sup is taken in the complete lattice $[0,1]$). Then $(d_0, i_0)$ is the sup of $D$ in $(J(\kappa),\leq)$.  
 
 We now prove that every nonempty subset has an inf. Let $S\subset J(\kappa)$ a nonempty subset. We distinguish two cases. If there is $i_0\in I$ such that $S\subseteq p([0,1]\times \{i_0\})$, let $s_0= \tbigwedge\{ s \mid (s,i_0)\in S\}$. Then $(s_0,i_0)$ is the inf of $S$. Indeed, it is clear that $(s_0,i_0)$ is a lower bound of $S$. Further, assume that $(s_1,i_1)$ is another lower bound of $S$. If $i_1=i_0$, one has $s_1 \in \{ s \mid (s,i_0)\in S\}$ and thus $s_1\leq s_0$ and $(s_1,i_1)\leq (s_0,i_0)$. Suppose now that $i_1\neq i_0$.  Take $(s,i_0)\in S$ ($S$ is nonempty). Since $(s_1,i_1)$ is a lower bound, we have $(s_1,i_1)\leq (s,i_0)$. Thus, $(s_1,i_1)=\zerohedgehog \leq (s_0,i_0)$.  Finally assume that there are two elements $(s,i),(s',i')$ in $S$ such that $i\neq i'$. Clearly, $\zerohedgehog$ is an inf of $S$. 
 \end{proof}
 \subsection{The hedgehog as a subset of the cube $[0,1]^{I}$}\label{axescube}
 
We now give an alternative description of the poset $(J(\kappa),\leq)$. More precisely, we show that $J(\kappa)$ is order-isomorphic to the axes of the cube $[0,1]^I$
 \[L(\kappa)=\underset{i\in I}\tbigcup\, \left\{\varphi\in [0,1]^{I}\mid \varphi(j)=0\quad \forall j\ne i\right\}
 \]
endowed with the componentwise order inherited from $[0,1]^{I}$, i.e.
\[\varphi\le \psi\text{ in }[0,1]^{I} \iff \varphi(i)\le \psi(i)\text{ for each } i\in I.\] 
 \begin{fact}\label{orderisom} The hedgehog $(J(\kappa),\le)$ is order-isomorphic to $(L(\kappa),\le)$.
  \end{fact}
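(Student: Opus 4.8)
The plan is to write down the obvious candidate isomorphism and verify by hand that it is an order isomorphism. Concretely, I would use the diagonal of the projections introduced earlier: regarding it purely as a map of sets, define
\[
\Phi=\Delta_{i\in I}\pi_i\colon J(\kappa)\longrightarrow[0,1]^{I},
\]
so that $\Phi(t,i)$ is the function whose $i$th coordinate equals $t$ and whose remaining coordinates all vanish, while $\Phi(\zerohedgehog)$ is the zero function. The first task is to check that $\Phi$ is well-defined on equivalence classes, which is immediate since each $\pi_i$ is constant on the class $\zerohedgehog$, and that its image is exactly $L(\kappa)$: every $\Phi(t,i)$ is supported on at most the single coordinate $i$ and hence lies in $L(\kappa)$, and conversely any $\varphi\in L(\kappa)$ supported on coordinate $i$ with value $t$ equals $\Phi(t,i)$, while the zero function equals $\Phi(\zerohedgehog)$.

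Next I would establish that $\Phi\colon J(\kappa)\longrightarrow L(\kappa)$ is a bijection. Surjectivity is exactly the description of $L(\kappa)$ just given. For injectivity, suppose $\Phi(t,i)=\Phi(s,j)$: if the common image is the zero function then $t=s=0$ and both points equal $\zerohedgehog$; otherwise the common image is nonzero with a single coordinate in its support, which forces $i=j$ and then $t=s$, so $(t,i)=(s,j)$ in $J(\kappa)$.

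It then remains to prove that both $\Phi$ and $\Phi^{-1}$ are monotone, i.e.\ that $(t,i)\le(s,j)$ in $J(\kappa)$ if and only if $\Phi(t,i)\le\Phi(s,j)$ componentwise in $[0,1]^I$. The forward direction splits according to the definition of $\le$ on $J(\kappa)$: if $(t,i)=\zerohedgehog$ then $\Phi(t,i)$ is the zero function, which lies below everything in $[0,1]^I$; if instead $i=j$ and $t\le s$, then the two images agree off the $i$th coordinate and satisfy $t\le s$ there. For the reverse direction I would argue by cases on $t$: if $t=0$ then $(t,i)=\zerohedgehog$ is below everything; and if $t>0$, then $\Phi(s,j)(i)\ge\Phi(t,i)(i)=t>0$ forces the support of $\Phi(s,j)$ to contain the coordinate $i$, whence $i=j$ and $t\le s$.

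I expect the only genuinely delicate point to be the bookkeeping around the basepoint $\zerohedgehog$. Because $\zerohedgehog$ collapses all the points $(0,i)$, one must consistently treat the zero function as the image of a single point and, in the reverse monotonicity argument, dispose of the case $t=0$ separately \emph{before} trying to extract coordinate information from a strictly positive value. Everything else is routine once $\Phi$ is identified with the diagonal of the $\pi_i$ and $L(\kappa)$ is recognized as its image.
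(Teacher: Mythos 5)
Your proposal is correct and follows essentially the same route as the paper: the same candidate map $\Phi(t,i)(j)=t$ if $j=i$ and $0$ otherwise, with the same checks of well-definedness, bijectivity, and order-preservation in both directions. The only cosmetic difference is that the paper writes down an explicit inverse $\Psi$ and verifies $\Psi\circ\Phi=\mathrm{id}$ and that $\Psi$ is monotone, whereas you argue injectivity/surjectivity directly and phrase inverse-monotonicity as the reverse implication of a biconditional; your handling of the basepoint case $t=0$ matches the paper's case analysis.
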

 
 \begin{proof}
  Define an order-isomorphism $\Phi\colon J(\kappa)\longrightarrow L(\kappa)$ by
\[\begin{array}{clll}
 &\Phi(t,i)\colon&I\longmapsto &[0,1]\cr
 &&j\longmapsto &\Phi(t,i)(j)=\begin{cases}t,&\text{ if  }j=i;\\[0.1cm] 0,&\text{ if  }j\ne i;\cr\end{cases} 
 \end{array}\]
for each $(t,i)\in J(\kappa)$. Note that $\Phi$ is well defined, that is, it does not depend on the representatives chosen. Let us begin by proving that $\Phi$ is order-preserving. Let $(t,i) \leq (s,j)$ in $J(\kappa)$. Assume first that $i=j$ and $t\leq s$.  Let $k\in I$. If $k=i=j$, then $\Phi(t,i)(k) = t \leq s = \Phi(s,j)(k)$. If $k\neq i,j$, we have $\Phi(t,i)(k)=0 = \Phi(s,j)(k)$. Thus $\Phi(t,i)\leq \Phi(s,j)$.  Assume now that $(t,i)=\zerohedgehog$, i.e. that $t=0$. Then it is clear that $\Phi(t,i)(k)=0 \leq \Phi(s,j)(k)$ for all $k\in I$, that is, $\Phi(t,i)\leq \Phi(s,j)$. Define a new map $\Psi$ as follows:
   \[\begin{array}{rcclll}
 \Psi \colon&L(\kappa)&\longrightarrow&J(\kappa)&&\\
 &\varphi&\longmapsto&\Psi(\varphi)=\begin{cases}(\varphi(i),i),&\text{ if  }\varphi(i)\ne 0;\\[0.1cm] \zerohedgehog,&\text{ if  }\varphi(i)=0\quad \forall i\in I.\cr\end{cases} \end{array}\]
 We will show that $\Phi$ and $\Psi$ are mutually inverse, i.e. that $\Phi \circ \Psi =\text{\rm id}_{J(\kappa)}$ and  $\Phi\circ \Psi =\text{\rm id}_{L(\kappa)}$. We will only prove the former equality, because the latter may be proved similarly.  Let $(t,i)\in J(\kappa)$.  If $t = 0$, we have $\Phi(t,i)(j)=0$ for all $j\in I$. Hence, $(\Psi \circ \Phi)(t,i)=\Psi( \Phi(t,i) )=\zerohedgehog=(t,i).$ Now assume that $t\ne 0$. It follows that $\Phi(t,i)(i)=t\ne 0$, and so we obtain $(\Psi \circ \Phi)(t,i)=\Psi( \Phi(t,i) )=(\Phi(t,i)(i),i)=(t,i)$, as desired. 
 
 Finally we show that $\Psi= \Phi^{-1}$ is also order-preserving. Let $\varphi \leq \psi$ in $L(\kappa)$. Suppose first that $\varphi (i) = 0$ for all $i\in I$. Then, $\Psi(\varphi)= \zerohedgehog \leq \Psi(\psi)$. Assume otherwise that $\varphi(i)\neq 0$ (note that we also have $\psi(i)\neq 0$ because $\varphi(i)\leq \psi(i)$). Then $\Psi(\varphi)= (\varphi(i),i) \leq (\psi(i),i)=\Psi(\psi)$.
 \end{proof}

\section{The Quotient Hedgehog}
\label{ch3}
We consider the set $X=[0,1]\times I$ endowed with the product topology of the usual topology on $[0,1]$ and the discrete topology on $I$.
 In view of the description of the hedgehog with $\kappa$ spines as a quotient defined on the product  $X=[0,1]\times I$ (with $|I|=\kappa$), it is natural to consider the quotient topology on it. The \emph{quotient hedgehog with $\kappa$ spines} is the quotient space with respect to this topology.

One may ask whether it is true that the quotient mapping $p\colon X \longrightarrow J(\kappa)$  is closed or open. Firstly, it turns out that $p$ is not open. Indeed,  choose an $i_0\in I$. Then $U=[0,1]\times \{i_0\}$ is open in $X$ but $p\left(U \right)$ is not open in $J(\kappa)$ because $p^{-1}(p(U)) =\left( [0,1]\times \{i_0\} \right) \cup \left( \{0\}\times I\right) $ is not open in $X$. Thus $p$ fails to be open. On the contrary, the quotient mapping $p$ is always closed.

\begin{fact}\label{projclosed}
The quotient map $p\colon X \longrightarrow J(\kappa)$ is closed.
\end{fact}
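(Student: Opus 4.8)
The plan is to invoke the defining property of the quotient topology: a subset $V\subseteq J(\kappa)$ is closed if and only if its preimage $p^{-1}(V)$ is closed in $X$. Hence, to show that $p$ is a closed map, I would take an arbitrary closed set $F\subseteq X$ and reduce the task to proving that the saturation $p^{-1}(p(F))$ is closed in $X$. The entire argument then amounts to computing this saturation explicitly.

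First I would unwind the equivalence relation. For $t>0$ the class of $(t,i)$ is the singleton $\{(t,i)\}$, while the class of any point $(0,i)$ is the whole base $\{0\}\times I=p^{-1}(\zerohedgehog)$. Consequently the saturation of $F$ depends only on whether $F$ meets the base: if $F\cap(\{0\}\times I)=\varnothing$, then every point of $F$ has a singleton class and $p^{-1}(p(F))=F$; otherwise $p(F)$ contains $\zerohedgehog$ and $p^{-1}(p(F))=F\cup(\{0\}\times I)$.

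To finish, I would note that $\{0\}\times I$ is closed in $X$, being the preimage of the closed set $\{0\}\subseteq[0,1]$ under the continuous projection onto the first coordinate. In the first case $p^{-1}(p(F))=F$ is closed by hypothesis, and in the second case it is the union of the two closed sets $F$ and $\{0\}\times I$, hence again closed. In either case $p^{-1}(p(F))$ is closed, so $p(F)$ is closed in $J(\kappa)$ and $p$ is a closed map.

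There is no genuine difficulty in this argument; the only step demanding care is the correct description of the saturation $p^{-1}(p(F))$, which rests on recognizing the sharp dichotomy between the singleton classes along the open parts of the spines and the single large class $\{0\}\times I$ collapsed to $\zerohedgehog$.
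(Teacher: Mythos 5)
Your argument is correct and follows exactly the same route as the paper: compute the saturation $p^{-1}(p(F))$ via the dichotomy between singleton classes and the collapsed base $\{0\}\times I$, and observe that in either case the result is closed. Nothing further is needed.
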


\begin{proof} Let $F$ be closed in $X$. We want to see that $p(F)$ is closed in $J(\kappa)$, i.e. that $p^{-1}(p(F))$ is closed in $X$. We have that $p^{-1}(p(F))$ is the union of the equivalence classes intersecting $F$, and thus, 
$$p^{-1}(p(F)) = \begin{cases}
F & \textrm{if } (0,i)\not\in F \text{ for all } i\in I;\\
F\cup \left(\{0\}\times I\right) & \textrm{otherwise.}\end{cases}$$
In both cases, it is clear that $p^{-1}(p(F))$ is closed (in the last case because it is the union of two closed subsets). 
\end{proof}

Once that we have defined the quotient topology on hedgehog, we can give a subbasis. 

\begin{fact}\label{subbasequot}
The family 
 \[
 {\mathcal{S}}(\kappa) =\bigl\{\underset{i\in I}\tbigcup\, p\left(\left[0,t_{i}\right)\times\{i\}\right)\mid \{t_{i}\}_{i\in I}\in (0,1]^{I}\bigr\}\cup\bigl\{p\left((s,1]\times \{i\}\right)\mid s<1,i\in I\bigr\}
 \] 
is a subbase of the quotient hedgehog $J(\kappa)$. 
\end{fact}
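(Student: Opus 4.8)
The plan is to verify the two defining properties of a subbase directly against the quotient topology: first, that every member of $\mathcal{S}(\kappa)$ is open in $J(\kappa)$ (so that the topology generated by $\mathcal{S}(\kappa)$ is coarser than the quotient topology), and second, that every quotient-open set is a union of finite intersections of members of $\mathcal{S}(\kappa)$ (the reverse inclusion). Since a set $V\subseteq J(\kappa)$ is open exactly when $p^{-1}(V)$ is open in $X=[0,1]\times I$, both tasks reduce to computing preimages under $p$ and to using that a basic open set of $X$ around a point $(t,i)$ has the form $O\times\{i\}$ with $O$ open in $[0,1]$ (recall $I$ carries the discrete topology).

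For the first property I would compute the two relevant preimages. For a family $\{t_i\}_{i\in I}\in(0,1]^I$ one checks that
\[
p^{-1}\Bigl(\tbigcup_{i\in I} p([0,t_i)\times\{i\})\Bigr)=\tbigcup_{i\in I} [0,t_i)\times\{i\},
\]
the point being that a positive-height point $(t,j)$ lies in the preimage iff $t<t_j$, while $(0,j)$ always does since $0\in[0,t_i)$ for every $i$; the right-hand side is open in $X$. Similarly, since every $u\in(s,1]$ has $u>0$ and is therefore identified with nothing but itself,
\[
p^{-1}\bigl(p((s,1]\times\{i\})\bigr)=(s,1]\times\{i\},
\]
again open in $X$. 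Hence every member of both families is open in $J(\kappa)$.

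For the second property I would fix a quotient-open $V$ and a point $x\in V$ and produce a finite intersection $W$ of members of $\mathcal{S}(\kappa)$ with $x\in W\subseteq V$, distinguishing three cases. If $x=\zerohedgehog$, then $(0,i)\in p^{-1}(V)$ for every $i$, so openness of $p^{-1}(V)$ yields, for each $i$, some $t_i>0$ with $[0,t_i)\times\{i\}\subseteq p^{-1}(V)$; the single set $\tbigcup_{i\in I} p([0,t_i)\times\{i\})$ of the first family then contains $\zerohedgehog$ and, by the preimage computation above together with surjectivity of $p$, is contained in $V$. If $x=(t,i)$ with $t=1$, a single set $p((s,1]\times\{i\})$ of the second family (with $s<1$ chosen so that $(s,1]\times\{i\}\subseteq p^{-1}(V)$) suffices. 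In the remaining case $0<t<1$, where both families are genuinely needed, I would pick $0\le s<t<r\le 1$ with $(s,r)\times\{i\}\subseteq p^{-1}(V)$ and intersect the second-family set $p((s,1]\times\{i\})$ with a first-family set whose $i$th parameter is $r$ (the other parameters being arbitrary in $(0,1]$); the key computation
\[
\Bigl(\tbigcup_{j\in I} p([0,t_j)\times\{j\})\Bigr)\cap p((s,1]\times\{i\})=p((s,r)\times\{i\}),\qquad t_i=r,
\]
produces exactly the bounded arc $p((s,r)\times\{i\})$ around $x$ inside $V$.

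The main obstacle is precisely this last case: one must recognise that the second family alone delivers only rays reaching the tip of a spine (and that two such rays on distinct spines meet in $\varnothing$), so a bounded sub-arc in the interior of a spine can be obtained only as the intersection of a second-family set, supplying the lower cut $s$, with a first-family set, supplying the upper cut $r$ and simultaneously discarding the other spines. Once this intersection formula is isolated, the inclusions $x\in W\subseteq V$ in each case are routine, using $p^{-1}(W)\subseteq p^{-1}(V)$ and surjectivity of $p$; combining the two properties then gives that $\mathcal{S}(\kappa)$ is a subbase of the quotient hedgehog.
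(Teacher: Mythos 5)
Your proposal is correct and follows essentially the same route as the paper: the same preimage computations establish openness of the subbasic sets, and the same three-case analysis (the point $\zerohedgehog$, the tip $t=1$, and the interior $0<t<1$) with the same key intersection identity $\bigl(\tbigcup_{j\in I} p([0,t_j)\times\{j\})\bigr)\cap p((s,1]\times\{i\})=p((s,r)\times\{i\})$ handles the basis property. The only cosmetic difference is that the paper takes all parameters of the first-family set equal to the upper cut $r$, whereas you allow the parameters on the other spines to be arbitrary; both choices work identically.
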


\begin{proof}
We have to show that $\mathcal{S}(\kappa)$ is a family of open sets whose finite intersections form a basis of the quotient topology on $J(\kappa)$. Note that a basis of the product $X=[0,1]\times I$ is given by
\begin{align*} \beta &= \bigl\{ (a,b)\times \{i\} \mid 0\leq a < b \leq 1, i\in I\bigr\}\\& \cup \bigl\{ [0,b)\times\{i\} \mid 0<b\leq 1,i\in I\bigr\} \cup \bigl\{ (a,1]\times\{i\} \mid 0\leq a< 1,i\in I\bigr\}.\end{align*}

Let $S=\underset{i\in I}\tbigcup\, p\left(\left[0,t_{i}\right)\times\{i\}\right) \in \mathcal{S}(\kappa)$ with $\{t_i\}_{i\in I} \in (0,1]^I$.  We have 
$$p^{-1} (S)= \underset{i\in I}\tbigcup\,\left[0,t_{i}\right)\times\{i\},$$ which is open in $X$. Thus $S$ is open in $J(\kappa)$. Take now $S=p\left((s,1]\times \{i\}\right) \in \mathcal{S}(\kappa)$. It turns out that $p^{-1}(S)=(s,1]\times \{i\}$ which is open in $X$, i.e. $S$ is open in $J(\kappa)$.  Hence $\mathcal{S}(\kappa)$ is a family of open sets.

Let $U\subseteq J(\kappa)$ be open in the quotient hedgehog, that is, $p^{-1} (U)$ is open in $X$. Let $(t,i)\in U$. We distinguish two cases. First, assume that $(t,i)\neq \zerohedgehog$. If $t<1$, Since $(t,i)\in p^{-1}(U) \subseteq X $ and $\beta$ is a basis of $X$, we have $(t,i)\in (a,b)\times \{i\} \subseteq p^{-1}(U)$ for some $0\leq a < b \leq 1$. It follows that  $(t,i) \in p\left( (a,b)\times \{i\} \right)  \subseteq U$.
Note that
\[p\left( (a,b)\times \{i\} \right) = \tbigcup_{j\in I} p\left( [0,b)\times \{j\} \right) \cap p\left((a,1]\times \{i\}\right)\]  
is a finite intersection of elements in $\mathcal{S}(\kappa)$. If $t=1$, since $\beta$ is a basis, one has $(t,i)\in (a,1]\times\{i\} \subseteq p^{-1}(U)$ for some $0\leq a <1$, and so $(t,i)\in p((a,1]\times\{i\}) \subseteq U$, where $p((a,1]\times\{i\})\in\mathcal{S}(\kappa)$, as desired.  Assume now that $(t,i)=\zerohedgehog$. Then one has $(0,i)\in p^{-1}(U)\subseteq X$ for every $i\in I$. Since $\beta$ is a basis, for each $i\in I$ there is $b_i$ such that $0< b_i \leq 1$ and $(0,i)\in [0,b_i) \times \{i\} \subseteq p^{-1}(U)$. Thus $\zerohedgehog \in p([0,b_i)\times \{i\})\subseteq  U$ for all $i\in I$, which implies
$$\zerohedgehog \in \tbigcup_{i\in I} p([0,b_i)\times \{i\})\subseteq  U.$$
Note that $\tbigcup_{i\in I} p([0,b_i)\times \{i\}) \in \mathcal{S}(\kappa)$. Thus, $\mathcal{S}(\kappa)$ is a subbase of $J(\kappa)$. 
\end{proof}

 \begin{figure}[htbp]
\begin{center}
\begin{tikzpicture}
\draw[line width=2pt] (0,0) -- (5,0);
\foreach \x in {2,...,9}
\draw[line width=2pt,rotate=(60/sqrt(\x)-20)] (0,0) -- (5,0);
\draw[line width=3pt,red, rotate=(60/sqrt(2)-20)] (0,0) -- (2,0);
\draw[line width=3pt,red, rotate=(60/sqrt(3)-20)] (0,0) -- (4,0);
\draw[line width=3pt,red, rotate=(60/sqrt(4)-20)] (0,0) -- (3.25,0);
\draw[line width=3pt,red, rotate=(60/sqrt(5)-20)] (0,0) -- (4.25,0);
\draw[line width=3pt,red, rotate=(60/sqrt(6)-20)] (0,0) -- (3.5,0);
\draw[line width=3pt,red, rotate=(60/sqrt(7)-20)] (0,0) -- (4,0);
\draw[line width=3pt,red, rotate=(60/sqrt(8)-20)] (0,0) -- (3,0);
\draw[line width=3pt,red, rotate=(60/sqrt(9)-20)] (0,0) -- (4.5,0);
\draw[line width=2pt,rotate=(40)] (0,0) -- (5,0);
\draw[line width=3pt,red,rotate=(40)] (0,0) -- (4.2,0);
\filldraw[red,rotate=(40)] (4.2,0) circle (3pt);
\filldraw[red,rotate=(60/sqrt(2)-20)] (2,0) circle (3pt);
\filldraw[red,rotate=(60/sqrt(3)-20)] (4,0) circle (3pt);
\filldraw[red,rotate=(60/sqrt(4)-20)] (3.25,0) circle (3pt);
\filldraw[red,rotate=(60/sqrt(5)-20)] (4.25,0) circle (3pt);
\filldraw[red,rotate=(60/sqrt(6)-20)] (3.5,0) circle (3pt);
\filldraw[red,rotate=(60/sqrt(7)-20)] (4,0) circle (3pt);
\filldraw[red,rotate=(60/sqrt(8)-20)] (3,0) circle (3pt);
\filldraw[red,rotate=(60/sqrt(9)-20)] (4.5,0) circle (3pt);
\filldraw[white,rotate=(40)] (4.2,0) circle (2pt);
\filldraw[white,rotate=(60/sqrt(2)-20)] (2,0) circle (2pt);
\filldraw[white,rotate=(60/sqrt(3)-20)] (4,0) circle (2pt);
\filldraw[white,rotate=(60/sqrt(4)-20)] (3.25,0) circle (2pt);
\filldraw[white,rotate=(60/sqrt(5)-20)] (4.25,0) circle (2pt);
\filldraw[white,rotate=(60/sqrt(6)-20)] (3.5,0) circle (2pt);
\filldraw[white,rotate=(60/sqrt(7)-20)] (4,0) circle (2pt);
\filldraw[white,rotate=(60/sqrt(8)-20)] (3,0) circle (2pt);
\filldraw[white,rotate=(60/sqrt(9)-20)] (4.5,0) circle (2pt);
\end{tikzpicture}
\hfill
\begin{tikzpicture}
\draw[line width=2pt] (0,0) -- (5,0);
\foreach \x in {2,...,9}
\draw[line width=2pt,rotate=(60/sqrt(\x)-20)] (0,0) -- (5,0);
\draw[line width=2pt,rotate=(40)] (0,0) -- (5,0);
\draw[line width=3pt,red,rotate=(40)] (3,0) -- (5,0);
\filldraw[red,rotate=(40)] (3,0) circle (3pt);
\filldraw[white,rotate=(40)] (3,0) circle (2pt);
\draw[rotate=(40)] (3,0)  node[anchor=south east] {$(t,i)$};
\end{tikzpicture}
\caption{Subbase of $J(\kappa)$}
\label{picsubq}
\end{center}
\end{figure}
A base of neighborhoods of $\zerohedgehog$ (see Figure \ref{basisnbqu}) is precisely given by
$$\mathcal{B}_{\zerohedgehog}(\kappa) =\bigl\{ \underset{i\in I}\tbigcup\,  p\left( \left[0,t_i \right)\times \{i\} \right) \mid \{t_i\}_{i\in I} \in (0,1]^I \bigr\}.$$

 \begin{figure}[htbp]
\begin{center}
\begin{tikzpicture}
\draw[line width=2pt] (0,0) -- (5,0);
\foreach \x in {2,...,9}
\draw[line width=2pt,rotate=(60/sqrt(\x)-20)] (0,0) -- (5,0);
\draw[line width=3pt,red, rotate=(60/sqrt(2)-20)] (0,0) -- (2,0);
\draw[line width=3pt,red, rotate=(60/sqrt(3)-20)] (0,0) -- (4,0);
\draw[line width=3pt,red, rotate=(60/sqrt(4)-20)] (0,0) -- (3.25,0);
\draw[line width=3pt,red, rotate=(60/sqrt(5)-20)] (0,0) -- (4.25,0);
\draw[line width=3pt,red, rotate=(60/sqrt(6)-20)] (0,0) -- (3.5,0);
\draw[line width=3pt,red, rotate=(60/sqrt(7)-20)] (0,0) -- (4,0);
\draw[line width=3pt,red, rotate=(60/sqrt(8)-20)] (0,0) -- (3,0);
\draw[line width=3pt,red, rotate=(60/sqrt(9)-20)] (0,0) -- (4.5,0);
\draw[line width=2pt,rotate=(40)] (0,0) -- (5,0);
\draw[line width=3pt,red,rotate=(40)] (0,0) -- (4.2,0);
\filldraw[red,rotate=(40)] (4.2,0) circle (3pt);
\filldraw[red,rotate=(60/sqrt(2)-20)] (2,0) circle (3pt);
\filldraw[red,rotate=(60/sqrt(3)-20)] (4,0) circle (3pt);
\filldraw[red,rotate=(60/sqrt(4)-20)] (3.25,0) circle (3pt);
\filldraw[red,rotate=(60/sqrt(5)-20)] (4.25,0) circle (3pt);
\filldraw[red,rotate=(60/sqrt(6)-20)] (3.5,0) circle (3pt);
\filldraw[red,rotate=(60/sqrt(7)-20)] (4,0) circle (3pt);
\filldraw[red,rotate=(60/sqrt(8)-20)] (3,0) circle (3pt);
\filldraw[red,rotate=(60/sqrt(9)-20)] (4.5,0) circle (3pt);
\filldraw[white,rotate=(40)] (4.2,0) circle (2pt);
\filldraw[white,rotate=(60/sqrt(2)-20)] (2,0) circle (2pt);
\filldraw[white,rotate=(60/sqrt(3)-20)] (4,0) circle (2pt);
\filldraw[white,rotate=(60/sqrt(4)-20)] (3.25,0) circle (2pt);
\filldraw[white,rotate=(60/sqrt(5)-20)] (4.25,0) circle (2pt);
\filldraw[white,rotate=(60/sqrt(6)-20)] (3.5,0) circle (2pt);
\filldraw[white,rotate=(60/sqrt(7)-20)] (4,0) circle (2pt);
\filldraw[white,rotate=(60/sqrt(8)-20)] (3,0) circle (2pt);
\filldraw[white,rotate=(60/sqrt(9)-20)] (4.5,0) circle (2pt);
\end{tikzpicture}
\caption{Open neighborhood of $\zerohedgehog$}
\label{basisnbqu}
\end{center}
\end{figure}

 This topological space (when $|I|=\aleph_{0}$) is one of the easiest examples of a quotient of a first countable space which is not first countable.

\begin{proposition}\label{NotFirst}
The quotient hedgehog $J(\kappa)$ is first countable if and only if $\kappa < \aleph_{0}$. 
\end{proposition}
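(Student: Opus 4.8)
The plan is to use the fact that first countability is a pointwise condition: $J(\kappa)$ is first countable if and only if every point admits a countable neighborhood base. First I would dispose of the non-central points. For $(t,i)$ with $t>0$, every neighborhood contains a basic set of the form $p\left((a,b)\times\{i\}\right)$ (or $p\left((a,1]\times\{i\}\right)$ when $t=1$), and letting $a,b$ range over rationals approaching $t$ produces a countable neighborhood base; thus such points are always first countable, irrespective of $\kappa$. Consequently the whole question reduces to whether the centre $\zerohedgehog$ has a countable neighborhood base, and here I would work exclusively with the explicit base $\mathcal{B}_{\zerohedgehog}(\kappa)$ displayed above, whose members are indexed by functions $\{t_i\}_{i\in I}\in(0,1]^I$.

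For the ``if'' direction, assume $\kappa<\aleph_0$, so that $I$ is finite. I would exhibit the countable family $V_k=\tbigcup_{i\in I} p\left([0,1/k)\times\{i\}\right)$ for $k\in\N$ as a neighborhood base at $\zerohedgehog$: given any basic neighborhood determined by $\{t_i\}_{i\in I}$, finiteness of $I$ guarantees $t:=\min_{i\in I} t_i>0$, so choosing $k$ with $1/k<t$ yields $V_k\subseteq\tbigcup_{i\in I} p\left([0,t_i)\times\{i\}\right)$. Together with first countability at the remaining points, this shows $J(\kappa)$ is first countable when $\kappa$ is finite.

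For the ``only if'' direction I would prove the contrapositive: if $\kappa\geq\aleph_0$ then $\zerohedgehog$ has no countable neighborhood base, by a diagonal argument. Suppose, for contradiction, that $\{U_n\}_{n\in\N}$ is a countable neighborhood base at $\zerohedgehog$. Shrinking each $U_n$ to a basic neighborhood contained in it (which again yields a neighborhood base), I may assume $U_n=\tbigcup_{i\in I} p\left([0,t_i^{(n)})\times\{i\}\right)$ for suitable $\{t_i^{(n)}\}_{i\in I}\in(0,1]^I$. Since $I$ is infinite I can fix a sequence $\{i_n\}_{n\in\N}$ of pairwise distinct indices, and define a competing neighborhood $V=\tbigcup_{i\in I} p\left([0,s_i)\times\{i\}\right)$ by setting $s_{i_n}=t_{i_n}^{(n)}/2$ and $s_i=1$ for the remaining $i$. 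The point of the construction is that along the spine $i_n$ the set $U_n$ reaches up to $t_{i_n}^{(n)}$ whereas $V$ only reaches $t_{i_n}^{(n)}/2$, so the point $\left(3t_{i_n}^{(n)}/4,\,i_n\right)$ lies in $U_n$ but not in $V$. Hence $U_n\not\subseteq V$ for every $n$, contradicting that $\{U_n\}$ is a neighborhood base.

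The main obstacle, and the heart of the proof, is this diagonalization: one must check both that $V$ is a genuine neighborhood of $\zerohedgehog$ (immediate, as $V\in\mathcal{B}_{\zerohedgehog}(\kappa)$ since all $s_i\in(0,1]$) and that the chosen witness point genuinely escapes $V$ --- for which it is essential that $\left(3t_{i_n}^{(n)}/4,\,i_n\right)$, being a non-central point of the single spine $i_n$, can only belong to $V$ through the $i_n$-th piece, whose reach $s_{i_n}=t_{i_n}^{(n)}/2$ it exceeds. The infinitude of $I$ is exactly what makes room to sabotage all countably many candidate base elements simultaneously.
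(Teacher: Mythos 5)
Your proposal is correct and uses essentially the same argument as the paper: a diagonalization at the centre, halving $t_{i_n}^{(n)}$ along countably many spines to produce a neighborhood $V$ of $\zerohedgehog$ that contains no member of the candidate countable base. The only cosmetic difference is that the paper first treats $I=\N$ and then handles general $\kappa\geq\aleph_0$ by embedding $J(\aleph_0)$ into $J(\kappa)$ and invoking heredity of first countability, whereas you run the diagonal argument directly on a countable family of distinct indices inside an arbitrary infinite $I$ (and you also make the easy verification at non-central points explicit).
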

\begin{proof}
$\Rightarrow$) We will prove that $J(\kappa)$ is not be first countable whenever $\kappa \geq \aleph_0$. First we show the case where $I=\N$.  By way of contradiction, assume that $\zerohedgehog$ has a countable basis of neighborhoods, say $\{N_n\}_{n\in\N}$. If we define
$$B(t)=\tbigcup_{k\in \N} p\left( \left[0,t_k \right)\times \{k\} \right)$$ with $t=\{t_k\}_{k\in \N} \in (0,1]^\N$, we know that $\mathcal{B}_{\zerohedgehog}=\{ B(t) \mid t \in (0,1]^\N\}$ is a basis of neighborhoods of $\zerohedgehog$.

For each $n\in \N$, there is $t^n =\{t_{k}^{n}\}_{k\in\N}\in (0,1]^\N$ such that $B(t^n) \subseteq N_n$, because $\mathcal{B}_{\zerohedgehog}$ is a  basis of neighborhoods of $\zerohedgehog$ and $N_n \in \NN_{\zerohedgehog}$.  We now construct a new sequence $t=\{ t_n \}_{n\in \N}\in (0,1]^\N$ as follows: for each $n\in \N$ set $t_n= t_{n}^{n} / 2$.

Since $B(t) \in \NN_{\zerohedgehog}$ and $\{N_n\}_{n\in \N}$ is a basis of neighborhoods of $\zerohedgehog$, there exists $n_0\in\N$ such that $B(t^{n_0})\subseteq N_{n_0} \subseteq B(t)$. Note that in the $n_0$th spine we have 
$$p([0,t_{n_0}^{n_0}) \times \{n_0\})\subseteq p([0,t_{n_0}^{n_0}/2)\times \{n_0\}),$$
which is impossible. Thus such countable basis cannot exist.

For the general case, assume that $\kappa \geq \aleph_0$. Clearly, $J(\aleph_0)$ is embedded in $J( \kappa )$. Now, if $J( \kappa)$ were first countable,  so would be $J(\aleph_0) $ (because being first countable is a hereditary property), which contradicts what already has been proved.\\[2mm]
$\Leftarrow$) Assume now that $\kappa =k< \aleph_0$, i.e. that $I=\{i_1,\dots i_k\}$ is finite. Then, for every
 $(t,i)$ in $J(k)$  the family
$$\mathcal B_{(t,i)}= 
\begin{cases} \bigl\{ \tbigcup_{i=1}^{k} p([0,\frac{1}{n} )\times \{i\} ) \mid  n \in \N \bigr\} & \textrm{if } t=0,
\\[1mm]
  \bigl\{ p((t-\frac{1}{n}, t+\frac{1}{n})\times\{i\})  \mid n\in \N, \frac{1}{n} \leq t \le 1-\frac{1}{n} \bigr\} & \textrm{otherwise,}\end{cases}$$
is a countable basis of neighborhoods of $(t,i)$. 
\end{proof}

In what follows, we outline the most important topological properties of the quotient hedgehog.

\begin{properties}
(1) The quotient hedgehog is Hausdorff. Indeed, let $(t,i)\neq (s,j)$ in $J(\kappa)$.  Assume first that  $(t,i),(s,j)\neq \zerohedgehog$. If $i\neq j$, the sets $U=p\left((t/2,1]\times\{i\}\right)$ and $V=p\left((s/2,1]\times \{j\}\right)$ are open and disjoint  and satisfy $(t,i)\in U$ and $(s,j)\in V$.  If $i=j$, set 
$$r=\min \bigl\{ |t-s|/2, t, 1-t, s, 1-s\bigr\}.$$
Then, if $t,s<1$, each of the open and disjoint sets $U=p\left((t-r,t+r)\times\{i\}\right)$ and $V=p\left((s-r,s+r)\times\{i\}\right)$  contains one of the points. The case where $t=1$ or $s=1$ may be shown similarly. Finally, assume that $(t,i)=\zerohedgehog$.  We have that $U=p([0,s/2)\times I)$ and $V=p\left((s/2,1]\times\{j\}\right)$ are the desired open subsets.\\[3mm]
(2) We have proved that $J(\kappa)$ is not first countable whenever $\kappa \geq \aleph_0$. Since  second countability implies first countability and metrizability implies first countability, we deduce that $J(\kappa)$ is neither second countable nor met\-rizable whenever $\kappa \geq \aleph_0$.\\[3mm]
(3) The quotient hedgehog is a normal space. Note that $X=[0,1]\times I$ is metrizable because so is each of the two factors ($[0,1]$ and $I$ are endowed with the usual and discrete topologies, respectively). In particular, $X$ is normal. By virtue of Fact \ref{projclosed}, one has that $J(\kappa)$ is a continuous image of a normal space under a closed map; and, therefore, it is also normal.\\[3mm]
(4) Since $J(\kappa)$ is normal and Hausdorff (in particular $T_1$) it follows that the quotient hedgehog is also regular.\\[3mm]
(5) Combining the previous paragraphs with Fact~\ref{projclosed},  we have that the quotient hedgehog with infinitely many spines is an example of a \emph{La\v{s}nev space} (that is, the image of a met\-rizable space under a closed map) which is not met\-rizable.\\[3mm]
 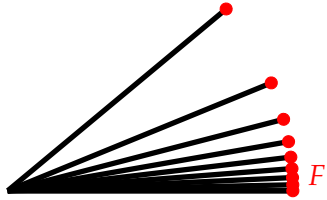
\begin{figure}[htbp]
\begin{center}
\begin{tikzpicture}[scale=0.75]
\draw[line width=2pt] (0,0) -- (5,0);
\foreach \x in {2,...,9}
\draw[line width=2pt,rotate=(60/sqrt(\x)-20)] (0,0) -- (5,0);
\draw[line width=2pt,rotate=(40)] (0,0) -- (5,0);
\filldraw[red,rotate=(40)] (5,0) circle (3pt);
\filldraw[red,rotate=(60/sqrt(2)-20)] (5,0) circle (3pt);
\filldraw[red,rotate=(60/sqrt(3)-20)] (5,0) circle (3pt);
\filldraw[red,rotate=(60/sqrt(4)-20)] (5,0) circle (3pt);
\filldraw[red,rotate=(60/sqrt(5)-20)] (5,0) circle (3pt);
\filldraw[red,rotate=(60/sqrt(6)-20)] (5,0) circle (3pt);
\filldraw[red,rotate=(60/sqrt(7)-20)] (5,0) circle (3pt);
\filldraw[red,rotate=(60/sqrt(8)-20)] (5,0) circle (3pt);
\filldraw[red,rotate=(60/sqrt(9)-20)] (5,0) circle (3pt);
\draw[red,rotate=(60/sqrt(7)-20)] (5.75,0)  node[anchor=east] {$F$};
\end{tikzpicture}
\caption{The discrete subspace $F$}
\label{irudidisk}
\end{center}
\vskip-3mm
\end{figure}
(6) We now show that $J(\kappa)$ is not compact whenever $\kappa \geq \aleph_0$.
 Assume otherwise that $J(\kappa)$ is compact. Let $U=p([0,1)\times I)$, which is a (subbasic) open of $J(\kappa)$. Thus $F=J(\kappa)\smallsetminus U$ is closed, and being a closed set in a compact, we conclude that $F$ is compact.  Note that $F$ inherits the discrete topology from $J(\kappa)$ (see Figure~\ref{irudidisk}). Since $\kappa \geq \aleph_0$, we then have a discrete infinite compact space, a contradiction (recall that a discrete space is compact iff it is finite).\\[3mm]
(7)  The quotient hedgehog is arcwise connected (in particular connected). Indeed, we have
$$J(\kappa) = \{ \zerohedgehog \} \cup  \tbigcup_{i\in I} p\left( [0,1]\times\{i\} \right)$$
where $\zerohedgehog$ is arcwise connected (it is a singleton) and each subset $p\left([0,1]\times \{i\}\right)$ is also arcwise connected (because they are all homeomorphic to the closed unit interval). Since $\zerohedgehog$ is in the intersection of all of them, it follows that $J(\kappa)$ is arcwise connected.\\[3mm]
(8) Note that $J(1) \cong J(2)$. Nevertheless, $J(\kappa) \not\cong J(\lambda)$ for all cardinalities $\kappa \ne \lambda$ with $\lambda$ or $\kappa$ greater than $2$. Indeed, without loss of generality assume that $\kappa < \lambda$ and note that $J(\lambda) \smallsetminus \left\{\zerohedgehog\right\}$ has $\lambda$ connected components. However, $J(\kappa)$ with a point removed has $\kappa$ or $2$ connected components, depending on whether we remove the point $\zerohedgehog$ or some other point. Thus, $J(\kappa)$ and $J(\lambda)$ cannot be homeomorphic.\\[3mm]
(9) If $\kappa \geq \aleph_0$, we shall prove that $J(\kappa)$ fails to be locally compact at $\zerohedgehog$. Note that this shows that a continuous image of a locally compact space is not necessarily locally compact. By way of contradiction, suppose that $N$ is a compact neighborhood of $J(\kappa)$. Then there is a $\{t_i\}_{i\in I}\in (0,1]^I$ such that 
$$\tbigcup_{i\in I} p\left( [0,t_i /2] \times \{i\} \right) \subseteq \tbigcup_{i\in I} p\left( [0,t_i) \times \{i\} \right)\subseteq N.$$ 
Note that $\tbigcup_{i\in I} p\left( [0,t_i /2] \times \{i\} \right)$ is compact because it is a closed subset in a compact.  Since all the intervals $[0,t_i/2]$ are homeomorphic to $[0,1]$, one has that $\tbigcup_{i\in I} p\left( [0,t_i /2] \times \{i\} \right)$ is homeomorphic to  $J(\kappa)$, which is not compact, a contradiction.\\[3mm]
(10) The quotient hedgehog $J(\kappa)$ is separable if and only if $\kappa \leq \aleph_0$.  Assume first that $\kappa\leq \aleph_0$. Then  $X=[0,1]\times I$ has the countable dense subset $X=\left( [0,1]\cap \Q \right) \times I$.  Since the projection map is continuous $p$ and $X$ is separable, from Lemma~\ref{lemmasep} we conclude that $f(X)=J(\kappa)$ is separable.

For the converse, suppose that $\kappa >\aleph_0$. If $J(\kappa)$ were separable, we would have a countable dense subset $D$. Then for each $i\in I$  choose the neighborhood $N_i=p\left( (1/2, 1]\times \{i\} \right)$ of the point $(1,i)\in J(\kappa)$. Then $N_i$ would intersect $D$ in some point, say $d_i$. It follows that $\{d_i\}_{i\in I}$ is an uncountable collection in $D$, against our assumption.\\[3mm]
(11) The quotient hedgehog is a Fr\'echet--Urysohn space. Indeed, let $A\subseteq J(\kappa)$ and $(t,i) \in \overline{A}$. First we deal with the case $t\ne0$. One has 
$$(t,i) \in \overline{A\cap p \left( (0,1)\times\{i\} \right)},$$ for otherwise we would have $U \cap A\cap p \left( (0,1)\times\{i\} \right) =\varnothing$ for some basic open neighborhood $U=p\left( ( t-\varepsilon,t+\varepsilon)\times\{i\} \right)$, and thus $U \cap A=\varnothing$, which contradicts that $(t,i)\in \overline{A}$. Since $p \left( (0,1)\times\{i\} \right)$ is homeomorphic to $((0,1),\tau_u)$ (which is Fr\'echet--Urysohn), it is Fr\'echet--Urysohn. Now, since  
$$(t,i) \in \overline{A\cap p \left( (0,1)\times\{i\} \right)},$$ one deduces that $(t,i)$ is contained in the closure of $A\cap p \left( (0,1)\times\{i\} \right)$ in $p \left( (0,1)\times\{i\} \right)$.  Thus there is a sequence of points $\{x_n\}_{n\in\N}$ in $A\cap p \left( (0,1)\times\{i\} \right)$ converging to $(t,i)$ in $p \left( (0,1)\times\{i\} \right)$ (and therefore in $J(\kappa)$). 

Now suppose that $(t,i)=\zerohedgehog$. First, we shall show that there is an $i_0\in I$ such that 
$$\zerohedgehog \in \overline{A\cap p\left( [0,1]\times\{i_0\} \right)}.$$
 Indeed, by way of contradiction assume that for every $i\in I$ one has $U_i\cap A\cap  p\left( [0,1]\times\{i\} \right)=\varnothing$ for some basic open $U_i =  \underset{j\in I}\tbigcup\,  p( [0,t^{i}_{j} )\times \{j\} )$, where $\{t^{i}_{j}\}_{j\in I}\in (0,1]^I$. Then for every $i\in I$, we have $A\cap p\left([0,t_{i}^{i}) \times\{i\}\right) = \varnothing$. Let
$$U= \tbigcup_{i\in I} p\left([0,t_{i}^{i}) \times\{i\}\right).$$ 
We have that $U$ is an open neighborhood of $\zerohedgehog$ which does not intersect $A$, a contradiction.   
Thus there is an $i_0\in I$ with 
$$\zerohedgehog \in \overline{A\cap p\left( [0,1]\times\{i_0\} \right)}.$$ Now, $p\left( [0,1]\times\{i_0\}\right)$ is homeomorphic to $([0,1],\tau_u)$, and consequently it is Fr\'echet--Urysohn. Since $\zerohedgehog$ is contained in the closure of $A\cap p\left( [0,1]\times\{i_0\}\right)$ in $p\left( [0,1]\times\{i_0\}\right)$, there is a sequence of points $\{y_n\}_{n\in\N}$ in $A\cap  p\left( [0,1]\times\{i_0\}\right)$ converging to $\zerohedgehog$ in $p\left( [0,1]\times\{i_0\}\right)$, and thus in $J(\kappa)$. 
\end{properties}

\subsection{The quotient hedgehog and the Fr\'echet-Urysohn fan} \label{secfre}
In this subsection we present another well-known topological space and we will show that it is closely related to the quotient hedgehog. Such space is constructed as follows (cf.\ \cite{Enc}). Let
$$S= \bigl\{ \textstyle\frac{1}{n} \mid n\in \N \bigr\} \cup \{0\}$$ 
and consider the product $Y= S\times \N$, where $S$ is provided with the usual topology and $\N$ with the discrete topology. Consider an equivalence relation on $Y$ which identifies all the non-isolated points (i.e. $(t,n) \sim (s,m)$ if and only if $t=0=s$ or $(t,n)=(s,m)$). The quotient set with the quotient topology is said to be the \emph{Fr\'echet--Urysohn fan} and we will denote it by $V(S_0)$ (see Figure~\ref{fan}). In view of the description of the Fr\'echet--Urysohn fan, it is clear that $V(S_0)$ is a subspace of the quotient hedgehog $J(\aleph_0)$.

  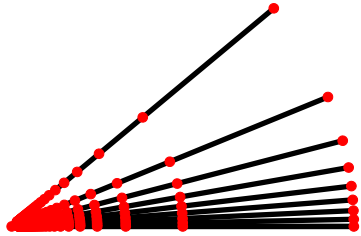
\begin{figure}[htbp]
\begin{center}
\begin{tikzpicture}[scale=0.9]
\draw[line width=2pt] (0,0) -- (5,0);
\foreach \x in {2,...,9}
\draw[line width=2pt,rotate=(60/sqrt(\x)-20)] (0,0) -- (5,0);
\draw[line width=2pt,rotate=(40)] (0,0) -- (5,0);
\foreach \x in {2,...,9}
\foreach \n in {1,...,50}
\filldraw[red,rotate=(60/sqrt(\x)-20)] (5/\n,0) circle (2pt);
\foreach \n in {1,...,50}
\filldraw[red,rotate=(40)] (5/\n,0) circle (2pt);
\filldraw[red,rotate=(40)] (0,0) circle (2pt);
\end{tikzpicture}
\caption{The Fr\'echet--Urysohn fan and the hedgehog}
\label{fan}
\end{center}
\end{figure}

Recall that first countable implies Fr\'echet--Urysohn (see Definition~\ref{defFU}), but the converse is not true in general.  The Fr\'echet--Urysohn fan is the typical example of a (countable, Hausdorff) Fr\'echet--Urysohn space which is not first countable. Indeed, we can easily deduce that $V(S_0)$ is  Fr\'echet--Urysohn, just by noting that $J(\aleph_0)$ is  Fr\'echet--Urysohn and using Proposition~\ref{FUhered}, on the other hand, the argument to conclude that is not first countable is similar to the one for the quotient hedgehog (cf. Proposition~\ref{NotFirst}).
 \subsection{The quotient hedgehog viewed as a subspace of $\bigl([0,1]^{I},\tau_{Box}\bigr)$}
 As we have already proved, there is an order isomorphism between the hedgehog and the following subset of the cube $[0,1]^{I}$:
 \[J(\kappa)\simeq L(\kappa)=\underset{i\in I}\tbigcup\,\left\{\varphi\in [0,1]^{I}\mid \varphi(j)=0\quad \forall j\ne i\right\}.
 \]
Consequently, when $[0,1]^I$ is endowed with an appropriate topology, the quotient hedgehog is homeomorphic to the subspace $L(\kappa)$ of $[0,1]^I$. 
 The adequate topology is the box topology, as we prove in the next lemma.
 
\begin{lemma}\label{homeomBox} The quotient hedgehog $J(\kappa)$ is homeomorphic to $(L(\kappa),\tau_{Box})$.   
\end{lemma}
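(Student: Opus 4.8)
The plan is to promote the order-isomorphism $\Phi\colon J(\kappa)\longrightarrow L(\kappa)$ of Fact~\ref{orderisom} to a homeomorphism, now that $L(\kappa)$ carries the box topology $\tau_{Box}$. Since $\Phi$ is already a bijection, it suffices to prove that $\Phi$ is both open and continuous. I would organise the argument around the subbase $\mathcal{S}(\kappa)$ of the quotient hedgehog (Fact~\ref{subbasequot}) on one side, and the canonical basis $\{\tbigprod_{i\in I}U_i \mid U_i \text{ open in } [0,1]\}$ of the box topology on the other.

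For openness I would compute the image under $\Phi$ of each member of $\mathcal{S}(\kappa)$, recalling that $\Phi(t,i)$ is the function supported on the $i$th coordinate with value $t$. A direct check gives
$$\Phi\Bigl(\tbigcup_{i\in I} p([0,t_i)\times\{i\})\Bigr) = \Bigl(\tbigprod_{i\in I}[0,t_i)\Bigr)\cap L(\kappa), \qquad \Phi(p((s,1]\times\{i\})) = \{\varphi \mid \varphi(i)>s\}\cap L(\kappa),$$
so both families of subbasic sets are carried to restrictions of box-basic open sets, hence to open subsets of $(L(\kappa),\tau_{Box})$. Because $\Phi$ is a bijection it commutes with arbitrary unions and finite intersections, so it sends every open set of $J(\kappa)$ (a union of finite intersections of members of $\mathcal{S}(\kappa)$) to an open set of $L(\kappa)$; thus $\Phi$ is open.

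For continuity it is enough to show that $\Phi^{-1}(W\cap L(\kappa))$ is open for each basic box set $W=\tbigprod_{i\in I}U_i$. Unwinding the definition, $(t,j)\in\Phi^{-1}(W)$ exactly when $t\in U_j$ and $0\in U_i$ for every $i\neq j$. I would verify that this preimage is a neighbourhood of each of its points in two cases. If the point is some $(t,j)$ with $t>0$, then $t\in U_j$ and the condition $0\in U_i$ for $i\neq j$ is already recorded, so a small interval neighbourhood within the single spine $j$, of the form $p((a,b)\times\{j\})$ with $(a,b)\subseteq U_j$ and $a>0$, stays inside the preimage. If the point is $\zerohedgehog$, then $0\in U_i$ for all $i$, so for each $i$ one may pick $\delta_i>0$ with $[0,\delta_i)\subseteq U_i$, and the neighbourhood $\tbigcup_{i\in I}p([0,\delta_i)\times\{i\})$ of $\zerohedgehog$ then lies inside the preimage. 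Hence $\Phi^{-1}(W\cap L(\kappa))$ is open, $\Phi$ is continuous, and being a continuous open bijection it is a homeomorphism.

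The step I expect to be the main obstacle is precisely this continuity argument. The temptation is to reduce it to coordinate projections via the universal property of Proposition~\ref{univpropinitial}, but that fails here: the box topology is \emph{not} the initial topology with respect to the projections $\varphi\mapsto\varphi(i)$, since finite intersections of coordinate tubes generate only the much coarser Tychonoff topology. Consequently no coordinate subbase is available and one genuinely has to test the full box-basic sets $\tbigprod_{i\in I}U_i$. The delicate point within that test is the behaviour at $\zerohedgehog$, where the requirement $0\in U_i$ for \emph{all} $i$ is exactly what is needed to fit a single neighbourhood of the centre from $\mathcal{B}_{\zerohedgehog}(\kappa)$ inside the preimage; keeping track of the side condition $0\in U_i$ for $i\neq j$ at the spine points is what makes the two cases dovetail.
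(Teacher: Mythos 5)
Your proposal is correct and follows essentially the same route as the paper: it uses the bijection $\Phi$ of Fact~\ref{orderisom}, proves openness by computing images of the subbasic sets in $\mathcal{S}(\kappa)$ with exactly the formulas the paper obtains, and proves continuity by testing box-basic sets $\tbigprod_{i\in I}U_i$. The only cosmetic difference is in the continuity half, where the paper writes down a closed-form case analysis for $\Phi^{-1}\bigl(L(\kappa)\cap\tbigprod_{i\in I}U_i\bigr)$ according to the positions of $0$ among the $U_i$, while you verify openness of the same preimage pointwise via the membership criterion ``$t\in U_j$ and $0\in U_i$ for all $i\ne j$''; your observation that Proposition~\ref{univpropinitial} is unavailable because the box topology is not initial for the coordinate projections is accurate and is implicitly why the paper also works with full box-basic sets.
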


\begin{proof} The desired homeomorphism is the map $\Phi$ defined in Fact~\ref{orderisom}. We already know that $\Phi$ is bijective, so we are left with the task of showing that  $\Phi$ is continuous and open. For the continuity, let $U= L(\kappa) \cap \tbigprod_{i\in I} U_i$ be an open set in $(L(\kappa),\tau_{Box})$ (where $U_i$ is open in $[0,1]$ for all $i\in I$). We have 
\begin{align*} \Phi^{-1}(U) &= \bigl\{ (t,j) \in J(\kappa) \mid  \Phi(t,j) \in L(\kappa)	\cap  \tbigprod_{i\in I} U_i\bigr\} \\
&= \bigl\{ (t,j) \in J(\kappa) \mid  \Phi(t,j)(i) \in U_i\quad\forall i\in I \bigr\} \\
&= \bigl\{ (t,j) \in J(\kappa) \mid  \Phi(t,j)(i) \in U_i\quad\forall i\ne j \quad \textrm{ and }\quad \Phi(t,j)(j) \in U_j\bigr\} \\
&=\bigl\{ (t,j) \in J(\kappa) \mid  \Phi(t,j)(i) \in U_i\quad\forall i\neq j\bigr\} \cap  \bigl\{ (t,j) \in J(\kappa) \mid  \Phi(t,j)(j) \in U_j\bigr\} \\
&=\bigl\{ (t,j) \in J(\kappa) \mid  0 \in U_i\quad\forall i\neq j\bigr\} \cap  \bigl\{ (t,j) \in J(\kappa) \mid t \in U_j\bigr\} \\
&=\bigl\{ (t,j) \in J(\kappa) \mid  0 \in U_i\quad\forall i\neq j\bigr\} \cap \tbigcup_{j\in I}  p(U_j\times \{j\}). \end{align*}
Let $V=\bigl\{ (t,j) \in J(\kappa) \mid  0 \in U_i\quad\forall i\neq j\bigr\}$. Then
$$V= \begin{cases} 
\varnothing & \textrm{if } 0\not\in U_{i},U_{j} \textrm{ for an } i\neq j;\\
 p([0,1]\times\{j\}) & \textrm{if } 0\not\in U_j \textrm{ for a }j\in I\textrm{ and }  0\in U_i\textrm{ for all } i\neq j;\\
J(\kappa) & \textrm{if } 0\in U_i \textrm{ for all } i\in I.
\end{cases}$$
Therefore, we get
$$\Phi^{-1}(U)= \begin{cases} 
\varnothing & \textrm{if } 0\not\in U_{i},U_{j} \textrm{ for an } i\neq j;\\
p(U_j\times \{j\}) & \textrm{if } 0\not\in U_j \textrm{ for a }j\in I\textrm{ and }  0\in U_i\textrm{ for all } i\neq j;\\
\tbigcup_{j\in I}  p(U_j\times \{j\}) & \textrm{if } 0\in U_j \textrm{ for all } j\in I;
\end{cases}$$
which is open in $J(\kappa)$.

Now we show that $\Phi$ is open, by showing that the images of subbasic open sets of $J(\kappa)$ are open. Recall that
 \[
 {\mathcal{S}}(\kappa) =\bigl\{\underset{i\in I}\tbigcup\, p\left(\left[0,t_{i}\right)\times\{i\}\right)\mid \{t_{i}\}_{i\in I}\in (0,1]^{I}\bigr\}\cup\bigl\{p\left((s,1]\times \{i\}\right)\mid s<1,i\in I\bigr\}
 \] 
is a subbase of $J(\kappa)$. Let $S\in \mathcal{S}(\kappa)$. First assume that $S=\underset{i\in I}\tbigcup\, p\left(\left[0,t_{i}\right)\times\{i\}\right)$ for some $\{t_i\}_{i\in I} \in (0,1]^{I}$. 
Then, 
\begin{align*}
 \Phi(S)&=\Phi\bigl( \tbigcup_{i\in I} p([0,t_i)\times \{i\} ) \bigr) \\& 
=  \tbigcup_{i\in I} \Phi\left(p([0,t_i)\times \{i\} ) \right)= \tbigcup_{i\in I} \bigl\{ \Phi(t,j)\in L(\kappa) \mid (t,j)\in p([0,t_i)\times\{i\}) \bigr\}
 \\& = \tbigcup_{i\in I} \bigl\{ \Phi(t,i)\in L(\kappa)  \mid (t,i)\in p([0,t_i)\times\{i\}) \bigr\} 
 \\& = \tbigcup_{i\in I} \bigl\{ \Phi(t,i)\in L(\kappa)  \mid t\in  [0,t_i)  \bigr\}
 \\& = \tbigcup_{i\in I} \bigl\{ \varphi \in L(\kappa) \mid \varphi(i)=t,\, \varphi(j)=0 \quad \forall j\ne i ,\, t\in [0,t_i) \bigr\}
  \\&= \tbigcup_{i\in I} \bigl\{ \varphi \in L(\kappa) \mid \varphi(i)\in [0,t_i),\, \varphi(j)=0 \quad \forall j\ne i \bigr\}= L(\kappa) \cap \tbigprod_{i\in I} [0,t_i),
 \end{align*}
 which is open in $L(\kappa)$. 
 Assume on the other hand that $S=p\left((s,1]\times \{i\}\right)$ with $s<1$. We have
 \begin{align*} \Phi(S)&=  \bigl\{ \Phi(t,i) \in L(\kappa) \mid  (t,i) \in p\left((s,1]\times \{i\}\right)\bigr\}= 
 \\& =\bigl\{ \varphi \in L(\kappa) \mid \varphi(i)=t,\,\varphi(j)=0 \quad \forall j\ne i,\, t\in (s,1] \bigr\}
 \\& = \bigl\{ \varphi \in L(\kappa) \mid \varphi(i)\in (s,1],\,\varphi(j)=0 \quad \forall j\ne i\bigr\} = L(\kappa) \cap U,
\end{align*}
  where $U= \bigl\{ \varphi \in [0,1]^I \mid \varphi(i) \in (s,1] \bigr\}$ because of the definition of $L(\kappa)$. Set $V_i=(s,1]$ and $V_j=[0,1]$ for all $j\neq i$. Then, $U=\tbigprod_{j\in I} V_j$ is open in $[0,1]^I$ and so $\Phi(S)$ is open in $L(\kappa)$. 
\end{proof}
 
 \section{The Compact Hedgehog}
\label{ch4}
This section is devoted to the study of the second topology on $J(\kappa)$. For this purpose,  recall first that we denote by  $L(\kappa)$ the subset consisting of the axes of the cube $[0,1]^I$ (cf. Subsection~\ref{axescube}). 

\begin{lemma}
The subspace $L(\kappa)$ is closed in $\bigl( [0,1]^I, \tau_{Tych} \bigr)$.
\end{lemma}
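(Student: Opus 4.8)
The plan is to show that the complement $[0,1]^I \smallsetminus L(\kappa)$ is open in the Tychonoff topology, which is equivalent to $L(\kappa)$ being closed. First I would unwind the definition of $L(\kappa)$ into a pointwise membership criterion: a point $\varphi\in[0,1]^I$ lies in $L(\kappa)$ precisely when it is supported on at most one coordinate, i.e. when the set $\{i\in I\mid \varphi(i)\ne 0\}$ has at most one element. Dually, $\varphi\notin L(\kappa)$ if and only if there exist two distinct indices $i\ne j$ in $I$ with $\varphi(i)>0$ and $\varphi(j)>0$.

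Next, given an arbitrary $\varphi$ in the complement, I would exhibit a basic open neighborhood of $\varphi$ that depends on only the two coordinates $i$ and $j$ and is entirely contained in the complement. Setting $a=\varphi(i)/2>0$ and $b=\varphi(j)/2>0$, consider
$$U=\bigl\{\psi\in[0,1]^I\mid \psi(i)\in (a,1]\ \textrm{ and }\ \psi(j)\in (b,1]\bigr\}.$$
This is the intersection of the preimages of the open subsets $(a,1]$ and $(b,1]$ of $[0,1]$ under the $i$th and $j$th coordinate projections, so it restricts only finitely many (indeed, exactly two) coordinates and is therefore a basic open set of $\bigl([0,1]^I,\tau_{Tych}\bigr)$. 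By construction $\varphi\in U$.

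Finally I would check the containment $U\subseteq [0,1]^I\smallsetminus L(\kappa)$: any $\psi\in U$ satisfies $\psi(i)>a>0$ and $\psi(j)>b>0$ with $i\ne j$, so $\psi$ has two distinct nonzero coordinates and hence cannot belong to $L(\kappa)$. This produces, for every point of the complement, an open neighborhood contained in the complement, so the complement is open and $L(\kappa)$ is closed.

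I do not expect a genuine obstacle here; the single point to keep straight is that the neighborhood must constrain only \emph{finitely many} coordinates in order to be open in the product (rather than the box) topology. This is exactly why pinning down just the two coordinates $i$ and $j$—the ones witnessing that $\varphi\notin L(\kappa)$—is both available and sufficient.
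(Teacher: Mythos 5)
Your proposal is correct and follows essentially the same route as the paper: both show the complement is open by taking, for each $\varphi$ with two nonzero coordinates $i\ne j$, a basic Tychonoff-open neighborhood restricting only those two coordinates to nonzero values. The paper simply uses $(0,1]$ in both coordinates rather than your $(\varphi(i)/2,1]$ and $(\varphi(j)/2,1]$; either choice works.
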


\begin{proof}
We shall prove that the complementary
 $$[0,1]^I \smallsetminus L(\kappa)= \{\varphi\in[0,1]^I\mid \varphi(i), \varphi(j)\ne0 \textrm{ for some } i\ne j \textrm{ in } I \}$$
 is open in $\bigl( [0,1]^I, \tau_{Tych} \bigr)$. Indeed, let $\varphi \in [0,1]^I \smallsetminus L(\kappa)$, where $\varphi(i), \varphi(j)\ne0$ for some $i\ne j$ in $I$.  For each $k\ne i,j$ in $I$, set $U_k=[0,1]$ and let $U_i=U_j=(0,1]$. Then $U=\tbigprod_{k\in I}U_k$ is an open set in the product topology containing $\varphi$ and contained in $[0,1]^I\smallsetminus L(\kappa)$. Thus, $[0,1]^I \smallsetminus L(\kappa)$ is open in $\left( [0,1]^I, \tau_{Tych} \right)$.
\end{proof}

\begin{corollary}\label{Lcompact}
The subspace $L(\kappa)$ is compact with the topology inherited from $\bigl( [0,1]^I, \tau_{Tych} \bigr)$.
\end{corollary}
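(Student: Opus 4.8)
The plan is to deduce compactness of $L(\kappa)$ from the lemma just proved together with two classical facts: Tychonoff's theorem and the fact that a closed subspace of a compact space is compact. The overall strategy is that $L(\kappa)$ sits as a closed subset inside the compact cube $\bigl([0,1]^I, \tau_{Tych}\bigr)$, and closedness inside a compact space is exactly what guarantees compactness of the subspace.

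First I would recall that the unit interval $[0,1]$ is compact in its usual topology (by the Heine--Borel theorem). Since the Tychonoff topology is precisely the product topology, Tychonoff's theorem then yields that the full cube $\bigl([0,1]^I, \tau_{Tych}\bigr)$ is compact, being an arbitrary product of the compact factors $[0,1]$. This is the step that supplies the compact ambient space in which $L(\kappa)$ lives.

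Next I would invoke the preceding lemma, which establishes that $L(\kappa)$ is closed in $\bigl([0,1]^I, \tau_{Tych}\bigr)$. Combining this with the compactness of the ambient cube, the standard result that \emph{every closed subspace of a compact space is compact} applies directly: a closed subset $C$ of a compact space $X$ is compact, since any open cover of $C$ can be completed to an open cover of $X$ by adjoining the open set $X \smallsetminus C$, and a finite subcover of the latter restricts to a finite subcover of $C$. Applying this with $X = \bigl([0,1]^I, \tau_{Tych}\bigr)$ and $C = L(\kappa)$ concludes that $L(\kappa)$ is compact with the inherited subspace topology.

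There is no genuine obstacle here; the corollary is an immediate consequence of the lemma and the cited classical theorems, and the only point requiring mild care is making explicit that the Tychonoff topology $\tau_{Tych}$ is the product topology, so that Tychonoff's theorem is indeed applicable to the (possibly uncountable) index set $I$ with $|I| = \kappa$.
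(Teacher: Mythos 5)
Your proposal is correct and follows exactly the same route as the paper: Tychonoff's theorem gives compactness of the cube $\bigl([0,1]^I,\tau_{Tych}\bigr)$, and the preceding lemma's closedness of $L(\kappa)$ plus the fact that closed subspaces of compact spaces are compact finishes the argument. The extra details you supply (Heine--Borel, the finite-subcover argument) are just elaborations of the same proof.
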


\begin{proof}
By the Tychonoff's theorem, $\left( [0,1]^I, \tau_{Tych} \right)$ is compact, and since $L(\kappa)$ is closed in $\left( [0,1]^I, \tau_{Tych} \right)$, it is also compact.
\end{proof}

Let $( L(\kappa), \tau_{Tych} )$ denote the topology inherited from the product topology of $[0,1]^I$.  Recall that $L(\kappa)$ is in bijection with $J(\kappa)$ via the the mapping $\Phi$ defined in Fact~\ref{orderisom}. Then,  by virtue of Lemma~\ref{bijecthomeom}, one can consider the topology on $J(\kappa)$ which makes the bijection $\Phi$ a homeomorphism. The hedgehog $J(\kappa)$ endowed with such topology will be said to be the \emph{compact hedgehog with $\kappa$ spines}, and we will denote it by
$$\Lambda J(\kappa).$$
Of course, the compact hedgehog is a compact space because of the previous corollary. Now we describe the compact hedgehog by means of a subbase.

\begin{fact}\label{subbasecomp}
The following family is a subbasis of the compact hedgehog $\Lambda J(\kappa)$:
\begin{align*} \hat{\mathcal{S}}(\kappa)&= \bigl\{ J(\kappa) \smallsetminus p([r,1]\times\{i\}) \mid r\in (0,1],i\in I \bigr\}\\&\qquad \qquad \qquad \cup \bigl\{ p\left( (s,1]\times\{i\} \right) \mid s<1 ,i\in I\bigr\}.\end{align*}
\end{fact}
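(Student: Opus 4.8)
The plan is to exploit the very definition of $\Lambda J(\kappa)$: this topology is, by construction, the one transported from $(L(\kappa),\tau_{Tych})$ through the bijection $\Phi$ of Fact~\ref{orderisom}, so Lemma~\ref{bijecthomeom} reduces the problem to exhibiting a subbase of $(L(\kappa),\tau_{Tych})$ and then computing its image under $\Phi^{-1}$. Concretely, writing $\mathrm{pr}_i\colon[0,1]^I\to[0,1]$ for the $i$th coordinate projection, the subspace topology on $L(\kappa)$ is the initial topology with respect to the restrictions $\mathrm{pr}_i|_{L(\kappa)}$ (by Examples~\ref{exampinitial} and transitivity of initial topologies); combining these with the standard subbase $\{[0,b)\mid 0<b\le 1\}\cup\{(a,1]\mid 0\le a<1\}$ of $[0,1]$ shows that
\[
\bigl\{ L(\kappa) \cap \mathrm{pr}_i^{-1}([0,b)) \mid i\in I,\ 0<b\le 1 \bigr\}\cup\bigl\{ L(\kappa) \cap \mathrm{pr}_i^{-1}((a,1]) \mid i\in I,\ 0\le a<1 \bigr\}
\]
is a subbase of $(L(\kappa),\tau_{Tych})$. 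By Lemma~\ref{bijecthomeom}, applying $\Phi^{-1}$ to every member of this family yields a subbase of $\Lambda J(\kappa)$, so it only remains to identify these preimages with the sets listed in $\hat{\mathcal{S}}(\kappa)$.

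For the computation I would use that $\Phi(t,j)$ takes the value $t$ at the coordinate $j$ and $0$ at every other coordinate. Thus for a set of the first type, $(t,j)$ lies in $\Phi^{-1}\bigl(L(\kappa)\cap \mathrm{pr}_i^{-1}([0,b))\bigr)$ exactly when $\Phi(t,j)(i)<b$; if $j=i$ this says $t<b$, while if $j\ne i$ the value is $0<b$ and the condition holds automatically. Hence the preimage is the whole hedgehog minus the tail $\{(t,i)\mid t\ge b\}$, that is, $J(\kappa)\smallsetminus p([b,1]\times\{i\})$, which is the first family of $\hat{\mathcal{S}}(\kappa)$ with $r=b$. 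For a set of the second type, $(t,j)$ lies in the preimage precisely when $\Phi(t,j)(i)>a$; now the case $j\ne i$ (and in particular the value at $\zerohedgehog$) forces $0>a$, which is impossible for $a\ge 0$, so only $j=i$ with $t>a$ survives, giving $p((a,1]\times\{i\})$, the second family with $s=a$.

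The genuinely instructive point — and the only place where care is needed — is the asymmetry between the two types of subbasic set. A lower restriction $\mathrm{pr}_i<b$ is satisfied by every point on every spine other than the $i$th (since those points have $i$th coordinate $0$), so its preimage is ``large'' and must be written as the complement of a closed tail; this is exactly what forces the first half of $\hat{\mathcal{S}}(\kappa)$ to take the form $J(\kappa)\smallsetminus p([r,1]\times\{i\})$ rather than an interval inside a single spine. By contrast an upper restriction $\mathrm{pr}_i>a$ excludes $\zerohedgehog$ together with all other spines, collapsing the preimage onto the single spine $i$. Once this bookkeeping is done, both the membership $\hat{\mathcal{S}}(\kappa)\subseteq\tau_{\Lambda J(\kappa)}$ and the fact that it is a subbase follow immediately from Lemma~\ref{bijecthomeom}, with no further estimates required.
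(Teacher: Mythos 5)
Your proposal is correct and follows essentially the same route as the paper: both transport the standard subbase of $(L(\kappa),\tau_{Tych})$ (sets determined by one coordinate restriction $[0,b)$ or $(a,1]$) through $\Phi^{-1}$ via Lemma~\ref{bijecthomeom}, and both arrive at the same case analysis distinguishing the two types of preimage. The only cosmetic difference is that the paper cites the computation of $\Phi^{-1}$ already carried out in the proof of Lemma~\ref{homeomBox}, whereas you redo it directly; your observation about the asymmetry between the two types of subbasic set is a nice explanatory addition but not a different argument.
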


 \begin{figure}[htbp]
\begin{center} 
\begin{tikzpicture}
\draw[line width=2pt] (0,0) -- (5,0);
\foreach \x in {2,...,9}
\draw[line width=2pt,rotate=(60/sqrt(\x)-20)] (0,0) -- (5,0);
\draw[line width=2pt,rotate=(40)] (0,0) -- (5,0);
\draw[line width=3pt,red,rotate=(40)] (0,0) -- (5,0);
\draw[rotate=(40)] (0.2,0)  node[anchor=south east] {$\zerohedgehog$};
\draw[line width=3pt,red, rotate=(60/sqrt(2)-20)] (0,0) -- (2,0);
\draw[line width=3pt,red, rotate=(60/sqrt(3)-20)] (0,0) -- (5,0);
\draw[line width=3pt,red, rotate=(60/sqrt(4)-20)] (0,0) -- (5,0);
\draw[line width=3pt,red, rotate=(60/sqrt(5)-20)] (0,0) -- (5,0);
\draw[line width=3pt,red, rotate=(60/sqrt(6)-20)] (0,0) -- (5,0);
\draw[line width=3pt,red, rotate=(60/sqrt(7)-20)] (0,0) -- (5,0);
\draw[line width=3pt,red, rotate=(60/sqrt(8)-20)] (0,0) -- (5,0);
\draw[line width=3pt,red, rotate=(60/sqrt(9)-20)] (0,0) -- (5,0);
\filldraw[red,rotate=(60/sqrt(2)-20)] (2,0) circle (3pt);
\filldraw[white,rotate=(60/sqrt(2)-20)] (2,0) circle (2pt);
\end{tikzpicture}
\hfill
\begin{tikzpicture}
\draw[line width=2pt] (0,0) -- (5,0);
\foreach \x in {2,...,9}
\draw[line width=2pt,rotate=(60/sqrt(\x)-20)] (0,0) -- (5,0);
\draw[line width=2pt,rotate=(40)] (0,0) -- (5,0);
\draw[line width=3pt,red,rotate=(40)] (3,0) -- (5,0);
\filldraw[red,rotate=(40)] (3,0) circle (3pt);
\filldraw[white,rotate=(40)] (3,0) circle (2pt);
\draw[rotate=(40)] (3,0)  node[anchor=south east] {$(t,i)$};
\end{tikzpicture}
\caption{Subbase of $\Lambda J(\kappa)$}
\label{subbasecompact}
\end{center}
\end{figure}
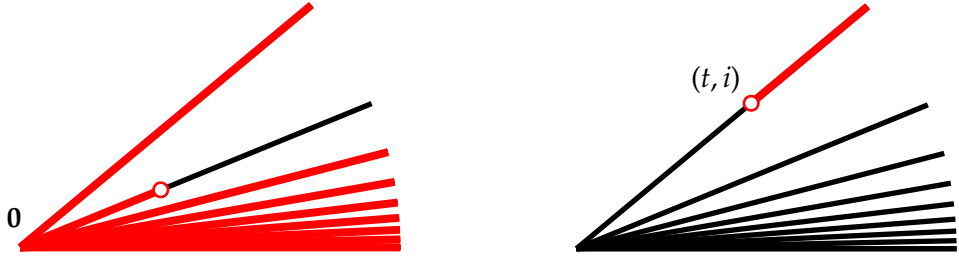

\begin{proof}
A subbase of $\left( L(\kappa), \tau_{Tych} \right)$ is given by 
\begin{align*} \mathcal{S}&=\tbigcup_{i\in I} \bigl\{ L(\kappa) \cap \tbigprod_{j\in I} U_j \mid U_i=[0,b) \textrm{ for some } b\in (0,1] , U_j=[0,1]\quad \forall j\ne i\bigr\}\\&
\cup \tbigcup_{i\in I} \bigl\{ L(\kappa) \cap \tbigprod_{j\in I} U_j \mid U_i=(a,1] \textrm{ for some } a\in [0,1) , U_j=[0,1]\quad \forall j\ne i\bigr\}.
\end{align*}
By Lemma~\ref{bijecthomeom}, a subbasis of $\Lambda J(\kappa)$ is given by 
$$\hat{\mathcal{S}}(\kappa)=\{ \Phi^{-1}(S)\mid S\in \mathcal{S}\}.$$
Let $S\in \mathcal{S}$. First assume that $S=L(\kappa) \cap \tbigprod_{j\in I} U_j$ where $U_i=[0,b)$ and $U_j=[0,1]$ whenever $j\ne i$. In the proof of Lemma~\ref{homeomBox} we have already computed
$$\Phi^{-1}(S)= \tbigcup_{j\in I} p(U_j\times\{j\})=J(\kappa) \smallsetminus p([b,1]\times\{i\}).$$
Similarly, if $S=L(\kappa) \cap \tbigprod_{j\in I} U_j$  with  $U_i=(a,1]$ and $U_j=[0,1]$ for all $ j\ne i$, by the proof of Lemma~\ref{homeomBox} one has
$$\Phi^{-1}(S)=p(U_i\times \{i\} ) = p\left( (a,1]\times\{i\} \right).$$
Hence, the assertion follows.
\end{proof}

In view of the previous fact, we can easily describe a basis of open neighborhoods of $\zerohedgehog$. 

\begin{fact}\label{Basis0comp} The family 
$$\hat{\mathcal{B}_{\zerohedgehog}}(\kappa)=\bigl\{ J(\kappa) \smallsetminus \tbigcup_{j\in J} p([t_j,1]\times\{j\}) \mid J\subseteq I \textrm{ finite, } 0<t_j\leq 1 \bigr\}$$
is a basis of neighborhoods of $\zerohedgehog$ in the compact hedgehog. 
\end{fact}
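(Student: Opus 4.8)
The plan is to read the result off directly from the subbasis $\hat{\mathcal{S}}(\kappa)$ obtained in Fact~\ref{subbasecomp}, checking the two defining properties of a neighborhood basis of $\zerohedgehog$: that every member of $\hat{\mathcal{B}_{\zerohedgehog}}(\kappa)$ is an open neighborhood of $\zerohedgehog$, and that every neighborhood of $\zerohedgehog$ contains one of them.

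For the first property, I would start from the set-theoretic identity
$$J(\kappa)\smallsetminus \tbigcup_{j\in J} p([t_j,1]\times\{j\}) = \tbigcap_{j\in J}\bigl( J(\kappa)\smallsetminus p([t_j,1]\times\{j\})\bigr),$$
valid for any finite $J\subseteq I$. Each factor on the right lies in $\hat{\mathcal{S}}(\kappa)$ (it is a subbasic set of the first type), so a generic member of $\hat{\mathcal{B}_{\zerohedgehog}}(\kappa)$ is a \emph{finite} intersection of subbasic sets and hence open. Moreover $\zerohedgehog=p(0,j)$ belongs to it, because $0\notin[t_j,1]$ whenever $t_j>0$, so $\zerohedgehog\notin p([t_j,1]\times\{j\})$ for every $j\in J$.

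For the converse, let $N\in\NN_{\zerohedgehog}$. By definition of the topology generated by a subbasis, $N$ contains a basic open set $B=\tbigcap_{k=1}^n S_k$ with $\zerohedgehog\in B\subseteq N$ and $S_k\in\hat{\mathcal{S}}(\kappa)$; in particular $\zerohedgehog\in S_k$ for each $k$. The key observation is that no $S_k$ can be of the second type $p((s,1]\times\{i\})$, since such a set omits $\zerohedgehog$ (as $0\notin(s,1]$). Hence each $S_k$ has the form $J(\kappa)\smallsetminus p([r_k,1]\times\{i_k\})$, and
$$B=J(\kappa)\smallsetminus \tbigcup_{k=1}^n p([r_k,1]\times\{i_k\}).$$
It then remains only to normalise repeated indices: grouping the finitely many $i_1,\dots,i_n$ by distinct value and setting $t_j=\min\{r_k\mid i_k=j\}$ for each such value $j$, the inclusion $[r_k,1]\subseteq[t_j,1]$ gives $\tbigcup_{k\colon i_k=j}p([r_k,1]\times\{j\})=p([t_j,1]\times\{j\})$. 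Thus $B=J(\kappa)\smallsetminus \tbigcup_{j\in J}p([t_j,1]\times\{j\})$ for a finite $J\subseteq I$, i.e. $B$ is itself a member of $\hat{\mathcal{B}_{\zerohedgehog}}(\kappa)$ contained in $N$.

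I do not expect a genuine obstacle here: once Fact~\ref{subbasecomp} is available the argument is bookkeeping. The only step worth stating explicitly is the exclusion of the second type of subbasic set, as it is precisely what forces a basic neighborhood of $\zerohedgehog$ to take the complemented-union form appearing in the statement.
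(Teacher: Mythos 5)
Your proof is correct and is precisely the routine verification the paper has in mind: the paper states this fact without proof, deriving it ``in view of'' the subbasis $\hat{\mathcal{S}}(\kappa)$ of Fact~\ref{subbasecomp}, and your argument (De Morgan to see each member is a finite intersection of subbasic sets of the first type, plus the observation that subbasic sets of the second type omit $\zerohedgehog$, followed by merging repeated indices via $t_j=\min\{r_k\mid i_k=j\}$) is exactly that check. No gaps.
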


\begin{proposition}\label{initialforcompact}
The topology of the compact hedgehog $\Lambda J(\kappa)$ is the initial topology for the family of mappings $\bigl\{ \pi_i \colon J(\kappa) \longrightarrow \left( [0,1] ,\tau_u \right) \bigr\}_{i\in I}$.
\end{proposition}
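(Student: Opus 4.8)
The plan is to prove the equality of the two topologies by exhibiting a common subbase. Since $\Lambda J(\kappa)$ is defined so that $\Phi$ is a homeomorphism onto $(L(\kappa),\tau_{Tych})$ and $\pi_i=\varphi_i\circ\Phi$, where $\varphi_i\colon[0,1]^I\to[0,1]$ denotes the $i$th coordinate projection, each $\pi_i$ is continuous on $\Lambda J(\kappa)$, so the initial topology for $\{\pi_i\}_{i\in I}$ is well defined. Rather than comparing the two topologies by hand, I would show directly that the natural subbase of the initial topology coincides with the subbase $\hat{\mathcal{S}}(\kappa)$ of Fact~\ref{subbasecomp}; the two topologies then agree, being both the smallest topology containing that family.

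First I would reduce the subbase of the initial topology. By definition it is generated by $\{\pi_i^{-1}(U)\mid U\in\tau_u,\ i\in I\}$. Since the half-open intervals $\{[0,b)\mid b\in(0,1]\}\cup\{(a,1]\mid a\in[0,1)\}$ form a subbase of the usual topology on $[0,1]$, and since $\pi_i^{-1}$ commutes with arbitrary unions and finite intersections, the smaller family
\[ \bigl\{\pi_i^{-1}\left([0,b)\right)\mid b\in(0,1],\, i\in I\bigr\}\cup\bigl\{\pi_i^{-1}\left((a,1]\right)\mid a\in[0,1),\, i\in I\bigr\} \]
generates the same topology, and is therefore itself a subbase of the initial topology.

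Next I would compute these preimages by a short case analysis on the definition $\pi_i(t,j)=t$ if $j=i$ and $\pi_i(t,j)=0$ otherwise (noting $\pi_i(\zerohedgehog)=0$). For $b\in(0,1]$ every point off the $i$th spine satisfies $\pi_i=0<b$, so $\pi_i^{-1}([0,b))=J(\kappa)\smallsetminus p([b,1]\times\{i\})$; for $a\in[0,1)$ no point off the $i$th spine, nor $\zerohedgehog$, satisfies $\pi_i>a$, so $\pi_i^{-1}((a,1])=p((a,1]\times\{i\})$. Comparing with Fact~\ref{subbasecomp}, the displayed family is \emph{exactly} $\hat{\mathcal{S}}(\kappa)$, which is a subbase of $\Lambda J(\kappa)$. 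Hence the compact topology and the initial topology are generated by one and the same subbase and therefore coincide.

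I do not expect a genuine obstacle here; the two points requiring care are the standard (but worth stating) fact that replacing the subbase of each codomain $[0,1]$ by the half-open intervals still yields a subbase of the initial topology upstairs, and the correct handling of the spine index $j\ne i$ and of the point $\zerohedgehog$ in the two preimage computations, where one must check that the $b>0$ and $a\ge 0$ boundary cases fall on the expected side.
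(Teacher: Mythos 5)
Your proposal is correct and follows essentially the same route as the paper: both arguments reduce to the two preimage computations $[\pi_i<b]=J(\kappa)\smallsetminus p([b,1]\times\{i\})$ and $[\pi_i>a]=p((a,1]\times\{i\})$ and then match these against the subbase $\hat{\mathcal{S}}(\kappa)$ of Fact~\ref{subbasecomp}. The paper merely packages this as two separate inclusions (continuity of the $\pi_i$, then minimality), whereas you phrase it as the identification of a common subbase; the mathematical content is the same.
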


\begin{proof}
First we need to show that the projections $\pi_i \colon \Lambda J(\kappa) \longrightarrow \left( [0,1]  ,\tau_u \right)$  are continuous.  A subbasis of $\left( [0,1]  ,\tau_u \right)$ is given by
$$\mathcal{S} = \bigl\{ [0,b) \mid 0<b\leq 1 \bigr\} \cup \bigl\{ (a,1] \mid 0\leq a <1 \bigr\},$$
and it is enough to show that inverse images of subbasic opens are open.
One has
\begin{align*} [\pi_{i}<b]&= \bigl\{  (t,j)\in J(\kappa) \mid \pi_i (t,j)\in [0,b) \bigr\} \\
&=  \bigl\{  (t,i)\in J(\kappa) \mid \pi_i (t,i) \in [0,b) \bigr\} \\& \qquad \qquad \qquad \qquad \qquad  \cup  \bigl\{  (t,j)\in J(\kappa) \mid j\ne i, \pi_i (t,j) \in [0,b) \bigr\}
\\&=\bigl\{ (t,j)\in J(\kappa) \mid j\ne i\bigr\} \cup \bigl\{ (t,i)\in J(\kappa) \mid t\in [0,b) \bigr\}
\\& = \tbigcup_{j\ne i} p\left([0,1]\times\{j\} \right) \cup  p\left( [0,b) \times \{i\} \right)= J(\kappa) \smallsetminus p\left( [b,1]\times\{i\} \right) \end{align*}
and
\begin{align*} [\pi_{i}>a]&= \bigl\{  (t,j)\in J(\kappa) \mid \pi_i (t,j)\in (a,1] \bigr\} =  \bigl\{  (t,i)\in J(\kappa) \mid \pi_i (t,i) \in (a,1] \bigr\} \\
& =   \bigl\{  (t,i)\in J(\kappa) \mid t \in (a,1] \bigr\}  = p\left( (a,1] \times \{i\} \right) \end{align*}
for each $0<b\leq 1$ and $0\leq a<1$, which are open in $\Lambda J(\kappa)$. Thus one has that $\pi_i \colon\Lambda J(\kappa) \longrightarrow \left( [0,1] ,\tau_u \right)$ is continuous for every $i\in I$.

Let us show that $\Lambda J(\kappa)$ is indeed the coarsest topology making all the maps $\pi_i$ continuous. Assume that $(J(\kappa),\tau)$ is another topology with the property that  $\pi_i \colon (J(\kappa) ,\tau) \longrightarrow \left( [0,1] ,\tau_u \right)$ is continuous for every $i\in I$.  Let $S$ be a subbasic open in $\Lambda J(\kappa)$. Our goal is to show that $S$ is open in $(J(\kappa),\tau)$ too. If $S=J(\kappa)\smallsetminus  p\left( [b,1] \times \{i\} \right)$ for some $0<b\leq1$, one has $S=[\pi_{i}<b]$ which is open in $(J(\kappa),\tau)$  because $\pi_i \colon (J(\kappa) ,\tau) \longrightarrow \left( [0,1] ,\tau_u \right)$ is continuous. Similarly, if $S=p\left( (a,1] \times \{i\} \right)$ for some $0\leq a <1$, continuity of $\pi_i \colon (J(\kappa) ,\tau) \longrightarrow \left( [0,1] ,\tau_u \right)$ implies that $S=[\pi_i>a]$ is open in $(J(\kappa),\tau)$. Hence $\Lambda J(\kappa)$ is coarser than $(J(\kappa),\tau)$.  
\end{proof}

The two topologies that we have introduced so far are related as follows:
\begin{fact}\label{compQuotComp}
The topology of the compact hedgehog is coarser than the topology of the quotient hedgehog. Furthermore, both topologies coincide if and only if $\kappa <\aleph_0$.
\end{fact}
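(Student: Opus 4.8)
The plan is to prove the coarseness claim via the universal property of initial topologies, and then to settle the coincidence claim in both directions using the bases of neighborhoods of $\zerohedgehog$ that have already been computed. Throughout I write $J(\kappa)_{q}$ for the quotient hedgehog and $\Lambda J(\kappa)$ for the compact one.

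For the first assertion, by Proposition~\ref{initialforcompact} the compact topology is the initial topology for the family $\{\pi_i\}_{i\in I}$. Hence, applying the universal property (Proposition~\ref{univpropinitial}) to the identity map $\mathrm{id}\colon J(\kappa)_{q}\longrightarrow\Lambda J(\kappa)$, it suffices to check that each projection $\pi_i$ is continuous when $J(\kappa)$ carries the \emph{quotient} topology. Using the computations in the proof of Proposition~\ref{initialforcompact}, one has $[\pi_i>a]=p((a,1]\times\{i\})$, which already belongs to the quotient subbasis $\mathcal{S}(\kappa)$, and $[\pi_i<b]=J(\kappa)\smallsetminus p([b,1]\times\{i\})$; the latter is open in the quotient topology because its preimage under $p$ equals $X\smallsetminus([b,1]\times\{i\})$, the complement of a closed subset of $X$ (here $b>0$, so no point of the form $(0,j)$ is involved). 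Thus each $\pi_i$ is continuous on $J(\kappa)_{q}$, the identity map is continuous, and the compact topology is coarser than the quotient one.

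For the coincidence when $\kappa<\aleph_0$, I would invoke Lemma~\ref{homeomBox}, which identifies $J(\kappa)_{q}$ with $(L(\kappa),\tau_{Box})$, together with the fact that $\Lambda J(\kappa)$ is, by definition, $(L(\kappa),\tau_{Tych})$, both identifications being effected by the \emph{same} map $\Phi$. Comparing the two hedgehog topologies therefore reduces to comparing $\tau_{Box}$ and $\tau_{Tych}$ on $L(\kappa)$. Since a finite index set $I$ makes the box and Tychonoff topologies on the whole cube $[0,1]^I$ coincide, the two subspace topologies on $L(\kappa)$ agree, and hence so do the two hedgehog topologies.

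For the converse I would argue the contrapositive: assuming $\kappa\geq\aleph_0$, I exhibit a quotient-open set that is not compact-open. Consider $U=\tbigcup_{i\in I}p\bigl([0,\tfrac12)\times\{i\}\bigr)$, which is a basic neighborhood of $\zerohedgehog$ in $J(\kappa)_{q}$ (take all $t_i=\tfrac12$ in $\mathcal{B}_{\zerohedgehog}(\kappa)$). The structural difference to exploit is that every basic compact-neighborhood of $\zerohedgehog$ taken from $\hat{\mathcal{B}_{\zerohedgehog}}(\kappa)$ restricts only \emph{finitely} many spines, leaving all remaining spines entirely inside it. This finiteness is the crux of the whole statement: if $U$ were open in $\Lambda J(\kappa)$, there would be some $B=J(\kappa)\smallsetminus\tbigcup_{j\in J}p([t_j,1]\times\{j\})$ with $J$ finite and $\zerohedgehog\in B\subseteq U$; choosing an index $i_0\in I\smallsetminus J$ (possible since $I$ is infinite), the entire spine $p([0,1]\times\{i_0\})$, and in particular the point $(1,i_0)$, lies in $B$ but not in $U$, a contradiction. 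Hence the two topologies differ whenever $\kappa\geq\aleph_0$, which together with the previous paragraph completes the equivalence.
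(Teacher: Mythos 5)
Your proof is correct, but it takes a different route from the paper's, which is a three-line argument: the paper identifies the quotient hedgehog with $(L(\kappa),\tau_{Box})$ (Lemma~\ref{homeomBox}) and the compact hedgehog with $(L(\kappa),\tau_{Tych})$ via the \emph{same} bijection $\Phi$, and then simply invokes the general fact that the box topology is finer than the Tychonoff topology and that the two coincide exactly when the number of factors is finite. You use that identification only for the finite case; for the coarseness claim you instead run the universal property of the initial topology (Propositions~\ref{initialforcompact} and~\ref{univpropinitial}) on the identity map, checking directly that each $\pi_i$ is quotient-continuous, and for the ``only if'' direction you produce the explicit quotient-open set $U=\tbigcup_{i\in I}p([0,\tfrac12)\times\{i\})$ and show via Fact~\ref{Basis0comp} that no basic compact-neighborhood of $\zerohedgehog$ fits inside it. The paper's argument is shorter, but its ``only if'' direction is slightly glossed: that box and Tychonoff differ on the full cube $[0,1]^I$ for infinite $I$ does not by itself guarantee that the two induced subspace topologies on $L(\kappa)$ differ, so one still needs a witness living in $L(\kappa)$. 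Your explicit counterexample supplies exactly that witness and is self-contained; it is also the same mechanism the paper itself uses later to separate the metric and compact topologies in Fact~\ref{compareComMet}. The cost is a somewhat longer write-up and a dependence on Proposition~\ref{initialforcompact} and Fact~\ref{Basis0comp} rather than on the single order-isomorphism $\Phi$ of Fact~\ref{orderisom}.
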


\begin{proof}
Recall that the map $\Phi$ defined in Fact~\ref{orderisom} is a homemorphism \linebreak $\Phi\colon J(\kappa) \longrightarrow ( L(\kappa), \tau_{Box})$ and $\Phi\colon \Lambda J(\kappa) \longrightarrow ( L(\kappa),\tau_{Tych})$. It is a general fact that the box topology is finer than the Tychonoff topology and that they coincide if and only if the number of factors is finite, and hence the assertion follows.
\end{proof}

In what follows we give some of the most important topological properties of the compact hedgehog.

\begin{properties}
(1) As we have already mentioned, $\Lambda J(\kappa)$ is compact.\\[3mm]
(2) Recall that $\Lambda J(\kappa)$ can be seen as a subspace of $([0,1]^I,\tau_{Tych})$, which is Hausdorff, being a product of Hausdorff spaces. Thus $\Lambda J(\kappa)$ is Hausdorff.\\[3mm]
(3) By  combining (1) and (2) and by using Proposition~\ref{comhaus} one gets that $\Lambda J(\kappa)$ is also a normal topological space.  Besides, since $\Lambda J(\kappa)$ is  $T_1$, the compact hedgehog is also regular.\\[3mm]
(4) The compact hedgehog is met\-rizable whenever $\kappa \leq \aleph_0$. Indeed, by virtue of Theorem~\ref{countablemetrizable}, one has that  $[0,1]^I$ is met\-rizable whenever $\kappa\leq \aleph_0$, and since metrizability is hereditary,  $\Lambda J(\kappa)$ is met\-rizable whenever $\kappa \leq \aleph_0$. \\[3mm]
(5) If $\kappa\leq \aleph_0$, since metrizability implies first countability, we deduce that $\Lambda J(\kappa)$ is first countable. Furthermore, whenever $\kappa > \aleph_0$ the compact hedgehog is not first countable. By contradiction, suppose that $\{N_n\}_{n\in\N}$ is a countable basis of neighborhoods of $\zerohedgehog$. For each $n\in \N$, since $N_n\in \NN_x$,  Fact~\ref{Basis0comp} yields  a finite $J_n\subseteq I$ such that 
$$B_n=  J(\kappa) \smallsetminus \tbigcup_{j\in J_n} p([t_{j}^{n},1]\times\{j\}) \subseteq N_n. $$
The set $\tbigcup_{n\in\N} J_n$ is a countable union of finite sets, so it is countable. In particular, there is an element $i_0\in I$ which is not contained in $\tbigcup_{n\in I} J_n$. Set $V=J(\kappa)\smallsetminus p([1/2,1]\times\{i_0\}$, which is an open neighborhood of $\zerohedgehog$. Since $\{N_n\}_{n\in\N}$ is a basis of neighborhoods, there is an $n_0\in \N$ such that 
$B_{n_0}\subseteq N_{n_0} \subseteq V$, which is a contradiction, since $(1,i_0)\in B_{n_0}$ but $(1,i_0)\not\in V$.\\[3mm]
(6) Because of property (5), we deduce that $\Lambda J(\kappa)$ is neither second countable nor met\-rizable whenever $\kappa>\aleph_0$.\\[3mm]
(7) Whenever $\kappa \leq \aleph_0$, one has that $\Lambda J (\kappa)$ is second countable (equivalently separable, since it is met\-rizable, see Lemma~\ref{2ndcountiffsep}).  More precisely, the family 
\begin{align*}\beta (\kappa) &=\bigl\{ J(\kappa) \smallsetminus \tbigcup_{j\in J} p([1/n,1]\times\{j\}) \mid J\subseteq I \textrm{ finite, } n\in\N \bigr\}\\& \cup \bigl\{ p\left( \left(1-1/n,1\right] \times \{i\}  \right)  \mid n\in \N,i\in I\bigr\}   \\&\cup  \bigl\{  p\left( \left(a,b\right) \times \{i\} \right)  \mid a,b\in \Q,0\leq a <b \leq 1,i\in I\bigr\}\end{align*}
is a countable basis (note that the family of finite subsets of a countable set is countable because of Lemma~\ref{finitecount}).\\[3mm]
(8) The same proof as the one for the quotient hedgehog shows that $\Lambda J(\kappa)$ fails to be separable whenever $\kappa >\aleph_0$.\\[3mm]
(9) Despite not being first countable (when $\kappa>\aleph_0$), the compact hedgehog is always a Fr\'echet--Urysohn space. Let $A\subseteq J(\kappa)$ and $(t,i)\in \overline{A}$. Our goal is to construct a sequence of points in $A$ converging to $(t,i)$.  If $t\ne0$, the proof is identical to the one for the quotient hedgehog; so we will only deal with the assertion for $(t,i)=\zerohedgehog$. We distinguish two cases:
\begin{enumerate}[$\bullet$]
\item Assume first that $A$ has points in infinitely many spines of $J(\kappa)$. Select any sequence $\{x_n\}_{n\in \N}\subseteq A$ such that $x_n$ and $x_m$ belong to different spines whenever $n\ne m$.   Let $i_n\in I$ denote the index such that $x_n \in p\left( (0,1]\times\{i_n\}  \right)$.  Then the sequence $\{x_n\}_{n\in\N}$ converges to $\zerohedgehog$. Indeed, let $B=  J(\kappa) \smallsetminus \tbigcup_{j\in J} p([t_j,1]\times\{j\}) $ be a basic neighborhood, where $J\subseteq I$ is finite. Since $J$ is finite, let
$n_B = \tbigvee \{ n\in \N \mid i_n \in J\}+1$.  Then $x_n\in B$ whenever $n\geq n_B$, and hence $\{x_n\}_{n\in\N}$ converges to $\zerohedgehog$. 
\item Suppose now that $A\subseteq p\left( [0,1]\times J\right)$ for some finite $J\subseteq I$. Note that  
$$\zerohedgehog \in \overline{A} = \overline{A\cap p\left( [0,1]\times J\right)},$$ from which follows  that $\zerohedgehog$ is contained in the closure of $A\cap p\left( [0,1]\times J\right)$ in $p\left( [0,1]\times J\right)$.
Besides,  since $J$ is finite, the subspace $p\left( [0,1]\times J\right)$ coincides with the quotient hedgehog (see Fact~\ref{compQuotComp}), which is Fr\'echet--Urysohn. Thus  there is a sequence of points of $A\cap p\left( [0,1]\times J\right)$ converging to $\zerohedgehog$ in $p\left( [0,1]\times J\right)$ (and in $\Lambda J(\kappa)$).\end{enumerate}
\vskip 3pt
(10) If $\kappa \leq \aleph_0$, the space $\Lambda J (\kappa)$ is compact and metrizable, and thus it is automatically totally bounded and complete (see \cite[Theorem~45.1]{Mun}).\\[3mm]
(11) The space $\Lambda J(\kappa)$ is arcwise connected (one can mimic the proof for the quotient hedgehog). Also, note that $\Lambda J(1) \cong \Lambda J(2) \cong [0,1]$. However, $\Lambda J(\kappa) \not\cong \Lambda J(\lambda)$ for all cardinalities $\kappa \ne \lambda$ with $\lambda$ or $\kappa$ greater than 2.
\end{properties}

 \section{The Metric Hedgehog}
 \label{ch5}
In this section we introduce the last topology on the hedgehog $J(\kappa)$. The next fact shows that the hedgehog can also be seen as a metric space.

 \begin{figure}[htbp]
\begin{center}
\begin{tikzpicture}
\draw[line width=2pt] (0,0) -- (5,0);
\foreach \x in {2,...,9}
\draw[line width=2pt,rotate=(60/sqrt(\x)-20)] (0,0) -- (5,0);
\draw[line width=2pt,rotate=(40)] (0,0) -- (5,0);
\draw[line width=3pt,red,rotate=(40)] (2,0) -- (4,0);
\filldraw[red,rotate=(40)] (2,0) circle (3pt);
\filldraw[red,rotate=(40)] (4,0) circle (3pt);
\end{tikzpicture}
\hfill
\begin{tikzpicture}
\draw[line width=2pt] (0,0) -- (5,0);
\foreach \x in {2,...,9}
\draw[line width=2pt,rotate=(60/sqrt(\x)-20)] (0,0) -- (5,0);
\draw[line width=2pt,rotate=(40)] (0,0) -- (5,0);
\draw[line width=3pt,red,rotate=(40)] (2,0) -- (0,0);
\filldraw[red,rotate=(40)] (2,0) circle (3pt);
\draw[line width=3pt,red,rotate=(60/sqrt(3)-20)] (3,0) -- (0,0);
\filldraw[red,rotate=(60/sqrt(3)-20)] (3,0) circle (3pt);
\end{tikzpicture}
\caption{The distance between two pairs of points in $J(\kappa)$}
\label{metriconhedgehog}
\end{center}
\end{figure}
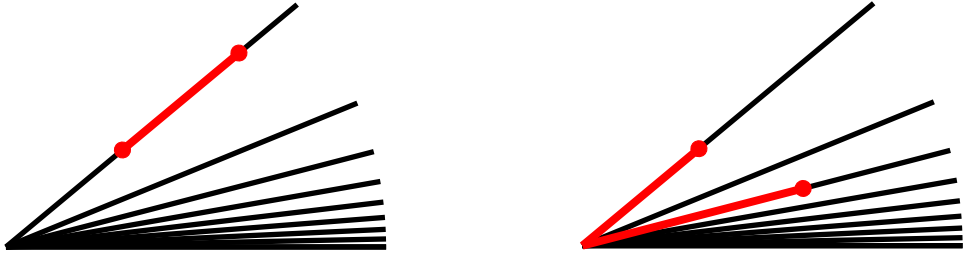

\begin{fact} The map $d\colon J(\kappa)\times J(\kappa)\longrightarrow[0,+\infty)$ given by
 \[d\left((t,i),(s,j)\right) =\begin{cases}|t-s|,&\text{ if  }j=i,\\[0.1cm] t+s,&\text{ if  }j\ne i,\end{cases}\qquad (t,i),(s,j)\in J(\kappa),\]
is a metric on $J(\kappa)$. 
\end{fact}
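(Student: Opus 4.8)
The plan is to verify the three metric axioms, after first checking that $d$ is well-defined on equivalence classes. The only nontrivial identification in $J(\kappa)$ collapses all the pairs $(0,i)$ to $\zerohedgehog$, so well-definedness reduces to observing that $d\left((0,i),(s,j)\right)=s$ irrespective of the relation between $i$ and $j$: if $j=i$ then $|0-s|=s$, and if $j\ne i$ then $0+s=s$. Hence the value does not depend on the representative chosen for $\zerohedgehog$ in the first slot, and by symmetry the same holds in the second slot.

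Next I would dispatch the three easy requirements. Non-negativity is immediate, since $t,s\in[0,1]$ give $|t-s|\ge 0$ and $t+s\ge 0$. For the identity of indiscernibles: if $i=j$ then $d=|t-s|=0$ forces $t=s$, whereas if $i\ne j$ then $d=t+s=0$ forces $t=s=0$, so both points equal $\zerohedgehog$; the converse is clear. Symmetry follows at once from $|t-s|=|s-t|$ and $t+s=s+t$.

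The substantive part is the triangle inequality $d(x,z)\le d(x,y)+d(y,z)$ for $x=(t,i)$, $y=(s,j)$, $z=(r,k)$, and I would organize the argument according to whether the two outer points share a spine. If $i=k$, then $d(x,z)=|t-r|$: when in addition $j=i$ the desired bound is exactly the triangle inequality $|t-r|\le |t-s|+|s-r|$ on $[0,1]$, while when $j\ne i$ the right-hand side equals $t+r+2s$, which dominates $|t-r|$ because $|t-r|\le t+r$ for non-negative $t,r$. If instead $i\ne k$, then $d(x,z)=t+r$, and since the middle index $j$ can coincide with at most one of $i,k$ I would split into three subcases: if $j\notin\{i,k\}$ the right-hand side is $t+r+2s\ge t+r$; if $j=i$ it is $|t-s|+s+r\ge t+r$, using $t\le |t-s|+s$; and if $j=k$ it is $t+s+|s-r|\ge t+r$, using $r\le s+|s-r|$.

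In every branch the inequality reduces to the ordinary triangle inequality on the real line together with the elementary bound $|a-b|\le a+b$ for $a,b\ge 0$, so no step presents a genuine obstacle. The only point requiring care is that the case division be exhaustive, which is guaranteed by the dichotomy $i=k$ versus $i\ne k$, refined by the position of the middle index $j$ relative to $\{i,k\}$.
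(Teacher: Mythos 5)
Your proof is correct and complete: the well-definedness check on the class $\zerohedgehog$, the easy axioms, and the exhaustive case analysis for the triangle inequality (split on $i=k$ versus $i\ne k$, refined by the position of the middle index $j$) all go through, with each branch reducing to $|a-b|\le a+b$ for $a,b\ge 0$ or the ordinary triangle inequality on $[0,1]$. The paper states this Fact without any proof, treating it as routine, so your direct verification is exactly the argument being taken for granted there; in particular there is no divergence of approach to report.
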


The hedgehog $J(\kappa)$ equipped with the metric $d$ will be called the \emph{metric hedgehog with $\kappa$ spines}. In what follows, we shall use the symbol 
\[MJ(\kappa)\]
 to denote the metric hedgehog. The metric hedgehog has important applications in Topology. For example, later on we will show that every met\-rizable space can be embedded in a countable  cartesian product of hedgehogs (see Theorem~\ref{Kowalsky}). Another interesting application is given in Theorem~\ref{colnorTh}.\\[2mm]
In view of the metric, we can easily describe the balls of $MJ(\kappa)$. Here and subsequently, we shall denote by $B\left( (t,i) , r\right)$ the open ball of center $(t,i)$ and radius $r$ and  by $\overline{B}\left( (t,i) , r\right)$ the closed ball of center $(t,i)$ and radius $r$. 

\begin{fact}  The open balls of $MJ(\kappa)$  centered at $\zerohedgehog$ are given by
$$B( \zerohedgehog , r )= p\left( [0,r)\times I\right) \quad \textrm{ for all } 0<r\leq 1.$$
At any other point $(t,i)\ne \zerohedgehog$ with $t<1$, we have
$$B\left( (t,i) , r\right)= p\left( (t-r,t+r) \times \{i\} \right) \quad \textrm{ for all } 0<r\leq \min\{t,1-t\}.$$
\end{fact}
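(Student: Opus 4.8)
The plan is to unwind the definition of the metric $d$ and, for each candidate point $(s,j)\in J(\kappa)$, decide directly when its distance to the center falls below $r$. Since both assertions pit a single point against all of $J(\kappa)$, a case analysis according to whether $(s,j)$ lies on the same spine as the center or on a different one will suffice. In each case the defining formula for $d$ collapses to an elementary inequality on the real parameters $t,s$, and the identity of the ball then falls out by inspection.

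First I would treat the ball centered at $\zerohedgehog$. The key point is that $\zerohedgehog=p(0,i)$ is independent of the spine, so for an arbitrary $(s,j)$ I may compute $d(\zerohedgehog,(s,j))$ using the representative $(0,j)$ on the \emph{same} spine as $(s,j)$; the definition then yields $d(\zerohedgehog,(s,j))=|0-s|=s$. Hence $(s,j)\in B(\zerohedgehog,r)$ exactly when $s<r$, and this set is precisely $p([0,r)\times I)$. No hypothesis beyond $0<r\le 1$ is needed here, the upper bound serving only to keep $r$ within the range of the metric.

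Next, for the ball centered at $(t,i)$ with $t>0$ and $0<r\le\min\{t,1-t\}$, I would split along the two branches of $d$. If $j=i$, then $d((t,i),(s,i))=|t-s|$, so membership amounts to $s\in(t-r,t+r)$; here $r\le t$ forces $t-r\ge 0$ and $r\le 1-t$ forces $t+r\le 1$, so the whole open interval consists of legitimate parameters in $[0,1]$ and the corresponding points genuinely lie on the $i$th spine. If $j\ne i$, then $d((t,i),(s,j))=t+s\ge t\ge r$, so the strict inequality $d<r$ can never hold; the same estimate with $s=0$ (equivalently, the same-spine computation $d((t,i),\zerohedgehog)=t\ge r$) shows that $\zerohedgehog$ is itself excluded. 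Assembling the cases gives $B((t,i),r)=p((t-r,t+r)\times\{i\})$.

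I expect the computations to be routine, and the only genuinely load-bearing feature is the constraint $r\le\min\{t,1-t\}$. Dropping $r\le t$ would let $\zerohedgehog$ and points on other spines enter the ball, so it would cease to be a single open subinterval of one spine, while dropping $r\le 1-t$ would push the interval past the endpoint $1$. The main obstacle, such as it is, is therefore purely bookkeeping: using the identification at $\zerohedgehog$ consistently when choosing representatives, and checking the endpoint behaviour so that the open interval $(t-r,t+r)$ stays strictly inside $[0,1]$ and never reaches the cone point $\zerohedgehog$.
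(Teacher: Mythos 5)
Your proof is correct; the paper states this as a Fact without proof, and your case analysis (same spine via $|t-s|<r$, different spine via $t+s\ge t\ge r$, with the representative $(0,j)$ used for $\zerohedgehog$) is exactly the routine verification being left to the reader. The one load-bearing observation you rightly emphasize — that $r\le\min\{t,1-t\}$ keeps $(t-r,t+r)$ inside $(0,1]$ and away from the cone point — is all that needs checking.
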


  \begin{figure}[htbp]
\begin{center} 
\begin{tikzpicture}
\draw[line width=2pt] (0,0) -- (5,0);
\foreach \x in {2,...,9}
\draw[line width=2pt,rotate=(60/sqrt(\x)-20)] (0,0) -- (5,0);
\draw[line width=2pt,rotate=(40)] (0,0) -- (5,0);
\draw[line width=3pt,red,rotate=(40)] (2,0) -- (0,0);
\filldraw[red,rotate=(40)] (2,0) circle (3pt);
\filldraw[white,rotate=(40)] (2,0) circle (2pt);
\draw[rotate=(40)] (0.2,0)  node[anchor=south east] {$\zerohedgehog$};
\draw[line width=3pt,red, rotate=(60/sqrt(2)-20)] (0,0) -- (2,0);
\draw[line width=3pt,red, rotate=(60/sqrt(3)-20)] (0,0) -- (2,0);
\draw[line width=3pt,red, rotate=(60/sqrt(4)-20)] (0,0) -- (2,0);
\draw[line width=3pt,red, rotate=(60/sqrt(5)-20)] (0,0) -- (2,0);
\draw[line width=3pt,red, rotate=(60/sqrt(6)-20)] (0,0) -- (2,0);
\draw[line width=3pt,red, rotate=(60/sqrt(7)-20)] (0,0) -- (2,0);
\draw[line width=3pt,red, rotate=(60/sqrt(8)-20)] (0,0) -- (2,0);
\draw[line width=3pt,red, rotate=(60/sqrt(9)-20)] (0,0) -- (2,0);
\filldraw[red,rotate=(60/sqrt(2)-20)] (2,0) circle (3pt);
\filldraw[red,rotate=(60/sqrt(3)-20)] (2,0) circle (3pt);
\filldraw[red,rotate=(60/sqrt(4)-20)] (2,0) circle (3pt);
\filldraw[red,rotate=(60/sqrt(5)-20)] (2,0) circle (3pt);
\filldraw[red,rotate=(60/sqrt(6)-20)] (2,0) circle (3pt);
\filldraw[red,rotate=(60/sqrt(7)-20)] (2,0) circle (3pt);
\filldraw[red,rotate=(60/sqrt(8)-20)] (2,0) circle (3pt);
\filldraw[red,rotate=(60/sqrt(9)-20)] (2,0) circle (3pt);
\filldraw[white,rotate=(60/sqrt(2)-20)] (2,0) circle (2pt);
\filldraw[white,rotate=(60/sqrt(3)-20)] (2,0) circle (2pt);
\filldraw[white,rotate=(60/sqrt(4)-20)] (2,0) circle (2pt);
\filldraw[white,rotate=(60/sqrt(5)-20)] (2,0) circle (2pt);
\filldraw[white,rotate=(60/sqrt(6)-20)] (2,0) circle (2pt);
\filldraw[white,rotate=(60/sqrt(7)-20)] (2,0) circle (2pt);
\filldraw[white,rotate=(60/sqrt(8)-20)] (2,0) circle (2pt);
\filldraw[white,rotate=(60/sqrt(9)-20)] (2,0) circle (2pt);
\end{tikzpicture}
\hfill
\begin{tikzpicture}
\draw[line width=2pt] (0,0) -- (5,0);
\foreach \x in {2,...,9}
\draw[line width=2pt,rotate=(60/sqrt(\x)-20)] (0,0) -- (5,0);
\draw[line width=2pt,rotate=(40)] (0,0) -- (5,0);
\draw[line width=3pt,red,rotate=(40)] (2,0) -- (4,0);
\filldraw[red,rotate=(40)] (3,0) circle (3pt);
\filldraw[red,rotate=(40)] (4,0) circle (3pt);
\filldraw[red,rotate=(40)] (2,0) circle (3pt);
\filldraw[white,rotate=(40)] (4,0) circle (2pt);
\filldraw[white,rotate=(40)] (2,0) circle (2pt);
\draw[rotate=(40)] (3,0)  node[anchor=south east] {$(t,i)$};
\end{tikzpicture}
\caption{Open balls centered at $\zerohedgehog$ and at $(t,i)\ne 0$}
\label{ballsmetric}
\end{center}
\end{figure}
On the one hand, all open balls centered at a certain point form a basis of neighborhoods of that point in $MJ(\kappa)$. On the other hand, we can also give a subbasis of the metric hedgehog.
\begin{fact}\label{SubBaseMetric} The following family is a subbasis of the metric hedgehog $MJ(\kappa)$:
$$\tilde{\mathcal{S}}(\kappa) = \bigl\{ p\left([0,r)\times I \right) \mid r\in (0,1] \bigr\} \cup  \bigl\{ p \left((s,1] \times \{i\}\right)  \mid  s< 1,i\in I\bigr\}.$$
\end{fact}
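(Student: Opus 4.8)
The plan is to prove that $\tilde{\mathcal{S}}(\kappa)$ is a subbasis by establishing the two defining properties: that every member of $\tilde{\mathcal{S}}(\kappa)$ is open in $MJ(\kappa)$, and that the finite intersections of members of $\tilde{\mathcal{S}}(\kappa)$ form a basis of the metric topology. Since the open balls of a metric space form a basis, for the second property it suffices to show that for every point $x\in J(\kappa)$ and every open ball $B(x,r)$ there is a finite intersection $S_1\cap\cdots\cap S_n$ of members of $\tilde{\mathcal{S}}(\kappa)$ with $x\in S_1\cap\cdots\cap S_n\subseteq B(x,r)$.

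First I would verify openness. The sets $p([0,r)\times I)$ are precisely the balls $B(\zerohedgehog,r)$ described in the previous fact, hence open. For a set of the form $p((s,1]\times\{i\})$, I would take an arbitrary point $(t,i)$ in it (so $s<t\leq 1$) and exhibit a small ball around it contained in the set: if $t<1$, the ball of radius $\rho=\min\{t-s,1-t\}$ equals $p((t-\rho,t+\rho)\times\{i\})\subseteq p((s,1]\times\{i\})$, and if $t=1$ a direct computation gives that the ball of radius $1-s$ equals $p((s,1]\times\{i\})$ itself. In both cases a ball lies inside, so the set is open.

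For the basis property I would distinguish three cases according to the point $x$. If $x=\zerohedgehog$, then already $B(\zerohedgehog,r)=p([0,r)\times I)\in\tilde{\mathcal{S}}(\kappa)$, so a single subbasic set works. If $x=(t,i)$ with $0<t<1$, I shrink $r$ so that $r\leq\min\{t,1-t\}$; then the previous fact gives $B((t,i),r)=p((t-r,t+r)\times\{i\})$, and the key observation is the identity
$$p((t-r,t+r)\times\{i\}) = p((t-r,1]\times\{i\})\cap p([0,t+r)\times I),$$
which exhibits the ball as the intersection of two members of $\tilde{\mathcal{S}}(\kappa)$. Here $t-r\in[0,1)$ because $r\leq t$, and $t+r\in(0,1]$ because $r\leq 1-t$, so both factors genuinely lie in the prescribed ranges. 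Finally, if $x=(t,i)$ with $t=1$, then $B((1,i),r)=p((1-r,1]\times\{i\})$ is itself a single member of $\tilde{\mathcal{S}}(\kappa)$ for $0<r\leq 1$.

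The routine calculations are the verifications of the set-theoretic identity above, which amounts to intersecting the tip-set $p((s,1]\times\{i\})$ on a single spine with the central set $p([0,r)\times I)$ spanning all spines. The main delicacy — and essentially the only place requiring care — is the bookkeeping at the endpoints: ensuring that the parameters $t-r$ and $t+r$ stay within $[0,1)$ and $(0,1]$ respectively so that the two sets used in the intersection genuinely belong to $\tilde{\mathcal{S}}(\kappa)$, and separately treating the tip $t=1$, where a single subbasic set already suffices.
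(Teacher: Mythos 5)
Your proof is correct and is the natural argument; the paper actually states Fact~\ref{SubBaseMetric} without any proof, so your write-up simply supplies the routine verification the author left implicit, using exactly the ingredients one would expect (the explicit ball descriptions from the preceding fact, the identity $p((t-r,t+r)\times\{i\}) = p((t-r,1]\times\{i\})\cap p([0,t+r)\times I)$, and the separate treatment of $\zerohedgehog$ and the tips $t=1$). The only reading convention worth making explicit is that the parameter $s$ in $p((s,1]\times\{i\})$ ranges over $[0,1)$ (as in the analogous quotient subbase), since your openness argument uses $t-s\leq t$, i.e.\ $s\geq 0$ — and indeed for $s<0$ the set would contain $\zerohedgehog$ and fail to be open when $\kappa\geq 2$.
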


 \begin{figure}[htbp]
\begin{center}
\begin{tikzpicture}
\draw[line width=2pt] (0,0) -- (5,0);
\foreach \x in {2,...,9}
\draw[line width=2pt,rotate=(60/sqrt(\x)-20)] (0,0) -- (5,0);
\draw[line width=2pt,rotate=(40)] (0,0) -- (5,0);
\draw[line width=3pt,red,rotate=(40)] (2,0) -- (0,0);
\filldraw[red,rotate=(40)] (2,0) circle (3pt);
\filldraw[white,rotate=(40)] (2,0) circle (2pt);
\draw[rotate=(40)] (0.2,0)  node[anchor=south east] {$\zerohedgehog$};
\draw[line width=3pt,red, rotate=(60/sqrt(2)-20)] (0,0) -- (2,0);
\draw[line width=3pt,red, rotate=(60/sqrt(3)-20)] (0,0) -- (2,0);
\draw[line width=3pt,red, rotate=(60/sqrt(4)-20)] (0,0) -- (2,0);
\draw[line width=3pt,red, rotate=(60/sqrt(5)-20)] (0,0) -- (2,0);
\draw[line width=3pt,red, rotate=(60/sqrt(6)-20)] (0,0) -- (2,0);
\draw[line width=3pt,red, rotate=(60/sqrt(7)-20)] (0,0) -- (2,0);
\draw[line width=3pt,red, rotate=(60/sqrt(8)-20)] (0,0) -- (2,0);
\draw[line width=3pt,red, rotate=(60/sqrt(9)-20)] (0,0) -- (2,0);
\filldraw[red,rotate=(60/sqrt(2)-20)] (2,0) circle (3pt);
\filldraw[red,rotate=(60/sqrt(3)-20)] (2,0) circle (3pt);
\filldraw[red,rotate=(60/sqrt(4)-20)] (2,0) circle (3pt);
\filldraw[red,rotate=(60/sqrt(5)-20)] (2,0) circle (3pt);
\filldraw[red,rotate=(60/sqrt(6)-20)] (2,0) circle (3pt);
\filldraw[red,rotate=(60/sqrt(7)-20)] (2,0) circle (3pt);
\filldraw[red,rotate=(60/sqrt(8)-20)] (2,0) circle (3pt);
\filldraw[red,rotate=(60/sqrt(9)-20)] (2,0) circle (3pt);
\filldraw[white,rotate=(60/sqrt(2)-20)] (2,0) circle (2pt);
\filldraw[white,rotate=(60/sqrt(3)-20)] (2,0) circle (2pt);
\filldraw[white,rotate=(60/sqrt(4)-20)] (2,0) circle (2pt);
\filldraw[white,rotate=(60/sqrt(5)-20)] (2,0) circle (2pt);
\filldraw[white,rotate=(60/sqrt(6)-20)] (2,0) circle (2pt);
\filldraw[white,rotate=(60/sqrt(7)-20)] (2,0) circle (2pt);
\filldraw[white,rotate=(60/sqrt(8)-20)] (2,0) circle (2pt);
\filldraw[white,rotate=(60/sqrt(9)-20)] (2,0) circle (2pt);
\end{tikzpicture}
\hfill
\begin{tikzpicture}
\draw[line width=2pt] (0,0) -- (5,0);
\foreach \x in {2,...,9}
\draw[line width=2pt,rotate=(60/sqrt(\x)-20)] (0,0) -- (5,0);
\draw[line width=2pt,rotate=(40)] (0,0) -- (5,0);
\draw[line width=3pt,red,rotate=(40)] (3,0) -- (5,0);
\filldraw[red,rotate=(40)] (3,0) circle (3pt);
\filldraw[white,rotate=(40)] (3,0) circle (2pt);
\draw[rotate=(40)] (3,0)  node[anchor=south east] {$(t,i)$};
\end{tikzpicture} 
\caption{Subbase of $MJ(\kappa)$}
\label{subbmet}
\end{center}
\end{figure}
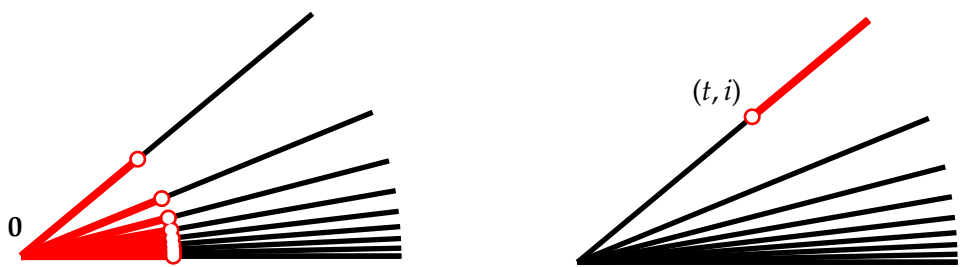

\medskip

Let $\mathcal{S}(\kappa)$ denote the subbase of the quotient hedgehog $J(\kappa)$ defined as in Fact~\ref{subbasequot}.  Then, one clearly has $\tilde{\mathcal{S}}(\kappa) \subseteq \mathcal{S}(\kappa)$, which yields a comparison relation between both topologies.

\begin{fact}\label{comparmetricquot}
The topology of the quotient hedgehog $J(\kappa)$ is finer than the topology of the metric hedgehog $MJ(\kappa)$. 
\end{fact}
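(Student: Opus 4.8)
The plan is to reduce the statement to a comparison of subbases, exactly as hinted in the remark preceding the fact. By Fact~\ref{SubBaseMetric} the metric hedgehog $MJ(\kappa)$ is generated by the subbasis $\tilde{\mathcal{S}}(\kappa)$, and by Fact~\ref{subbasequot} the quotient hedgehog $J(\kappa)$ is generated by the subbasis $\mathcal{S}(\kappa)$. First I would verify the inclusion $\tilde{\mathcal{S}}(\kappa)\subseteq\mathcal{S}(\kappa)$. The members of the form $p((s,1]\times\{i\})$ with $s<1$ occur verbatim in both families, so nothing needs to be done for them. For the remaining members $p([0,r)\times I)$ of $\tilde{\mathcal{S}}(\kappa)$, I would rewrite
$$p([0,r)\times I) = \tbigcup_{i\in I} p([0,r)\times\{i\}),$$
which is precisely the element of $\mathcal{S}(\kappa)$ associated with the constant family $t_i=r$ (an admissible choice since $r\in(0,1]$). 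This gives the desired inclusion.

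With the inclusion of subbases in hand, I would invoke the elementary principle that if one subbasis is contained in another, then the topology it generates is coarser. Concretely, each element of $\tilde{\mathcal{S}}(\kappa)$ already lies in the quotient topology, and since the topology of $MJ(\kappa)$ is by definition the coarsest topology containing $\tilde{\mathcal{S}}(\kappa)$, it must be contained in the quotient topology. Therefore the quotient topology is finer than the metric topology, as claimed.

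I do not expect any serious obstacle here: the entire argument rests on the bookkeeping identity $p([0,r)\times I) = \tbigcup_{i\in I} p([0,r)\times\{i\})$ together with the recognition that the constant family $\{t_i\}_{i\in I}$ with $t_i\equiv r$ is a legitimate choice in $(0,1]^I$. The only conceptual point worth flagging is that the balls $B(\zerohedgehog,r)=p([0,r)\times I)$ of the metric hedgehog are always among the neighbourhoods of $\zerohedgehog$ available in the quotient topology, whereas the reverse containment of subbases fails once $\kappa\geq\aleph_0$ --- which is consistent with the two topologies being genuinely distinct in that range.
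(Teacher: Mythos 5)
Your proposal is correct and is essentially the paper's own argument: the paper proves this Fact via the remark immediately preceding it, namely that $\tilde{\mathcal{S}}(\kappa)\subseteq\mathcal{S}(\kappa)$, which you simply spell out in more detail (the identity $p([0,r)\times I)=\tbigcup_{i\in I}p([0,r)\times\{i\})$ with the constant family $t_i\equiv r$). No gaps.
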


Furthermore, in the finite case, the quotient topology and the metric topology coincide.  

\begin{fact}\label{metricquotientsame}
The metric topology on the hedgehog coincides with the quotient topology on the hedgehog if and only if $\kappa<\aleph_0$.
\end{fact}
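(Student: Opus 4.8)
The plan is to lean on the comparison already established in Fact~\ref{comparmetricquot}, which states that the quotient topology is finer than the metric topology; consequently the two coincide precisely when, in addition, the quotient topology is \emph{coarser} than the metric one. I would therefore prove both implications by reducing everything to subbasic open sets: to show that the quotient topology is coarser than the metric one it is enough to check that every member of the subbase $\mathcal{S}(\kappa)$ of the quotient hedgehog (Fact~\ref{subbasequot}) is open in $MJ(\kappa)$, since metric-openness is preserved under the finite intersections and arbitrary unions that generate the quotient topology from $\mathcal{S}(\kappa)$.

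For the implication $\kappa<\aleph_0 \Rightarrow$ equality, the sets of the form $p\left((s,1]\times\{i\}\right)$ already lie in the metric subbase $\tilde{\mathcal{S}}(\kappa)$ (Fact~\ref{SubBaseMetric}), so they require nothing. For a set $U=\tbigcup_{i\in I} p\left([0,t_i)\times\{i\}\right)$ I would show that each of its points is a metric-interior point. At $\zerohedgehog$ the decisive move is to set $r=\min_{i\in I} t_i$; this number is \emph{strictly positive exactly because $I$ is finite}, and then the metric ball $B(\zerohedgehog,r)=p\left([0,r)\times I\right)$ is contained in $U$. At a point $(t,i)$ with $t>0$ one has $t<t_i\leq 1$, so in particular $t<1$, and a sufficiently small ball $p\left((t-\rho,t+\rho)\times\{i\}\right)$ stays inside $p\left([0,t_i)\times\{i\}\right)\subseteq U$. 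Hence $U$ is metric-open, the quotient topology is coarser than the metric one, and the two agree.

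For the converse I would argue by contraposition, showing the topologies cannot coincide once $\kappa\geq\aleph_0$. The cleanest route is first countability: the metric hedgehog, being metrizable, is first countable, whereas by Proposition~\ref{NotFirst} the quotient hedgehog fails to be first countable as soon as $\kappa\geq\aleph_0$. Since a single space cannot both satisfy and violate first countability, the metric and quotient topologies must differ. Concretely, the witness is precisely a set $\tbigcup_{i\in I} p\left([0,t_i)\times\{i\}\right)$ with $\inf_{i\in I} t_i=0$: it is a quotient-neighborhood of $\zerohedgehog$, yet every metric ball $p\left([0,r)\times I\right)$ around $\zerohedgehog$ escapes it along the spines with $t_i<r$.

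The only genuinely delicate point is the behavior at $\zerohedgehog$: finiteness of $I$ is exactly what guarantees the positive uniform radius $\min_{i\in I} t_i$, and this is where the dichotomy lives. The estimates on the remaining spines are routine, and the reverse implication is immediate once one invokes the first-countability characterization of Proposition~\ref{NotFirst}.
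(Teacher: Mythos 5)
Your proof is correct. The $\Leftarrow$ direction is essentially the paper's argument: both reduce to checking that the subbasic quotient-open sets $\tbigcup_{i\in I} p\left([0,t_i)\times\{i\}\right)$ are metric-open, and both hinge on exactly the point you flag, namely that finiteness of $I$ makes $\min_{i\in I} t_i$ strictly positive, so that the ball $p\left([0,r)\times I\right)$ with $r=\min_{i\in I}t_i$ fits inside the set at $\zerohedgehog$ (the paper writes the radii as $1/n_j$ and takes $n_0=\max\{n_1,\dots,n_k\}$, which is the same move). In the $\Rightarrow$ direction you diverge: the paper directly exhibits the witness $\tbigcup_{i\in I}p\left([0,t_i)\times\{i\}\right)$ with $t_{i_n}=1/n$ along a countable set of spines, open in the quotient topology but not in the metric one, whereas your primary argument invokes first countability --- $MJ(\kappa)$ is metrizable hence first countable, while Proposition~\ref{NotFirst} shows the quotient hedgehog is not first countable once $\kappa\geq\aleph_0$, so the two topologies on the same set cannot coincide. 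That shortcut is legitimate and clean given that Proposition~\ref{NotFirst} is already established, at the cost of making this Fact depend on that earlier (and harder) result; the paper's witness is self-contained and is in fact the same set you offer as your \emph{concrete} backup. Either route closes the proof.
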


\begin{proof}
$\Rightarrow)$ Suppose $\kappa \geq \aleph_0$ and select a sequence $\{ i_n \}_{n\in\N}$ of elements of $I$. For each $n\in \N$, set $t_{i_n} = 1/n$; and for $i\not \in \{ i_n \}_{n\in\N}$, set $t_i=1$. Then $\tbigcup_{i\in I} p\left( [0,t_i)\times\{i\} \right)$ is an open neighborhood of $\zerohedgehog$ in the quotient hedgehog which is not open in the metric hedgehog. Thus the quotient hedgehog is strictly finer than the metric hedgehog.\\[2mm]
$\Leftarrow)$ Assume now that $I= \{i_1,\dots i_k\}$ is finite. Because of the previous fact, we only need to show that the metric hedgehog is finer than the quotient hedgehog. Let $S\in \mathcal{S}(\kappa)$ be a subbasic open in the quotient hedgehog, and let us check that $S$ is also open in the metric hedgehog. If $S= p\left( (s,1]\times \{i\} \right)$ for some $s<1$ and $i\in I$, then $S$ is also open in the metric hedgehog. Assume now that $S=\tbigcup_{j=1}^{k}  p([0,\frac{1}{n_j} )\times \{i_j\} )$, with $n_j \in \N$   for each $j=1,\dots,k.$ Let $(t,i_j)\in S$. If $(t,i_j)\ne \zerohedgehog$, we have that $(0,1/n_j)\times\{i_j\}$ is an open neighborhood (in the metric hedgehog) in-between. Otherwise, if $(t,i_j) = \zerohedgehog$, let $n_0 = \max \{ n_1,\dots n_k\}$. Then $p\left([0,1/n_0)\times I\right)$ is an open neighborhood in the metric hedgehog in-between.
\end{proof}

The following proposition is the analogue of Proposition~\ref{initialforcompact} for the metric hedgehog. It will be fundamental in the subsequent subections, since it allows us to check the continuity of hedgehog-valued functions by means of the universal property of the initial topology (cf. Proposition~\ref{univpropinitial}).

\begin{proposition}\label{inittop}
The topology of the metric hedgehog $MJ(\kappa)$ is the initial topology for the family of mappings $\bigl\{ \pi_i \colon J(\kappa) \longrightarrow \left( [0,1] ,\tau_u \right) \bigr\}_{i\in I}$ together with the mapping $\pi_\kappa \colon J(\kappa) \longrightarrow \left( [0,1]  ,\tau_u \right)$.
\end{proposition}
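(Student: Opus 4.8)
The plan is to prove the two inclusions separately. Writing $\tau_{MJ}$ for the metric topology on $J(\kappa)$ and $\tau_{\mathrm{init}}$ for the initial topology generated by the family $\{\pi_i\}_{i\in I}\cup\{\pi_\kappa\}$, I would first check that every one of these maps is continuous from $MJ(\kappa)$ to $([0,1],\tau_u)$; since $\tau_{\mathrm{init}}$ is by definition the coarsest topology with this property, this yields $\tau_{\mathrm{init}}\subseteq\tau_{MJ}$. Conversely, I would show that each member of the subbasis $\tilde{\mathcal{S}}(\kappa)$ from Fact~\ref{SubBaseMetric} is the preimage of an open subset of $[0,1]$ under one of the maps, hence lies in $\tau_{\mathrm{init}}$; this gives $\tau_{MJ}\subseteq\tau_{\mathrm{init}}$ and completes the proof.

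For the continuity, the cleanest route I would take is to observe that each $\pi_i$ and $\pi_\kappa$ is $1$-Lipschitz with respect to $d$, so that continuity is immediate. Indeed, for $\pi_\kappa$ one has $|\pi_\kappa(t,j)-\pi_\kappa(s,k)|=|t-s|$; if $j=k$ this equals $d\left((t,j),(s,k)\right)$, while if $j\ne k$ it is at most $t+s=d\left((t,j),(s,k)\right)$. For a fixed $i$, a short case analysis (both points on spine $i$; both off spine $i$; exactly one of them on spine $i$) shows likewise that $|\pi_i(t,j)-\pi_i(s,k)|\le d\left((t,j),(s,k)\right)$, the only nontrivial case being $(t,i)$ against $(s,k)$ with $k\ne i$, where the left side is $t$ and the right side is $t+s\ge t$. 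Thus all the maps are $1$-Lipschitz and hence continuous, giving $\tau_{\mathrm{init}}\subseteq\tau_{MJ}$.

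For the reverse inclusion, I would compute the relevant preimages explicitly. Since $\pi_\kappa(t,j)=t$, one has $[\pi_\kappa<r]=\pi_\kappa^{-1}\left([0,r)\right)=p\left([0,r)\times I\right)$ for $r\in(0,1]$. On the other hand, for $0\le s<1$, since $\pi_i(t,j)=t$ when $j=i$ and $\pi_i(t,j)=0$ when $j\ne i$, the condition $\pi_i(t,j)>s\ge 0$ forces $j=i$ and $t>s$, so $[\pi_i>s]=\pi_i^{-1}\left((s,1]\right)=p\left((s,1]\times\{i\}\right)$. These are exactly the two families constituting $\tilde{\mathcal{S}}(\kappa)$. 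As $[0,r)$ and $(s,1]$ are open in $([0,1],\tau_u)$ and $\tau_{\mathrm{init}}$ makes every $\pi_i$ and $\pi_\kappa$ continuous, each element of $\tilde{\mathcal{S}}(\kappa)$ belongs to $\tau_{\mathrm{init}}$. Since $\tilde{\mathcal{S}}(\kappa)$ is a subbasis of $\tau_{MJ}$, I conclude $\tau_{MJ}\subseteq\tau_{\mathrm{init}}$, and therefore $\tau_{MJ}=\tau_{\mathrm{init}}$.

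There is no serious obstacle here; the proof is essentially bookkeeping once the subbasis of Fact~\ref{SubBaseMetric} is in hand. The only point worth flagging is the structural difference with the compact case (Proposition~\ref{initialforcompact}): there the neighborhoods of $\zerohedgehog$ were captured by the sets $[\pi_i<b]=J(\kappa)\smallsetminus p\left([b,1]\times\{i\}\right)$, whereas in the metric topology the basic neighborhoods of $\zerohedgehog$ are the sets $p\left([0,r)\times I\right)$, which cannot be obtained from the $\pi_i$ alone and genuinely require the global projection $\pi_\kappa$; this is precisely why $\pi_\kappa$ must be adjoined to the family.
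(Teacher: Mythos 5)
Your proposal is correct, and its overall architecture (prove the two inclusions between the metric topology and the initial topology separately) matches the paper's. The one genuine difference is how you establish continuity of the projections: the paper computes the preimages $[\pi_i<b]$, $[\pi_i>a]$, $[\pi_\kappa<b]$, $[\pi_\kappa>a]$ of subbasic open sets of $[0,1]$ and identifies each with a set known to be open in $MJ(\kappa)$, whereas you observe that $\pi_\kappa$ and each $\pi_i$ are $1$-Lipschitz for the hedgehog metric, which settles continuity in one line and exploits the fact that $MJ(\kappa)$ is a metric space (something the analogous argument for the compact hedgehog could not do). The trade-off is that the paper's preimage computations do double duty: the same identities $[\pi_\kappa<r]=p\left([0,r)\times I\right)$ and $[\pi_i>s]=p\left((s,1]\times\{i\}\right)$ are then reused verbatim in the coarsest-topology step, while you end up computing those preimages anyway for your reverse inclusion, so the Lipschitz estimate is an extra (if pleasant) piece of work rather than a shortcut. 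Your closing remark about why $\pi_\kappa$ must be adjoined — the basic neighborhoods $p\left([0,r)\times I\right)$ of $\zerohedgehog$ are not generated by the $\pi_i$ alone — is exactly the right structural point and is consistent with Fact~\ref{compareComMet}.
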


\begin{proof}
First we need to show that the projections $\pi_\kappa \colon MJ(\kappa) \longrightarrow \left( [0,1]  ,\tau_u \right) $ and $\pi_i \colon MJ(\kappa) \longrightarrow \left( [0,1]  ,\tau_u \right)$  are continuous.  A subbasis of $\left( [0,1]  ,\tau_u \right)$ is given by $\mathcal{S} = \bigl\{ [0,b) \mid 0<b\leq 1 \bigr\} \cup \bigl\{ (a,1] \mid 0\leq a <1 \bigr\},$ and it is enough to show that inverse images of subbasic opens are open. Since (see the proof of Proposition~\ref{initialforcompact}) 
\begin{align*} [\pi_{i}<b]= J(\kappa) \smallsetminus p\left( [b,1]\times\{i\} \right) \quad \textrm{and} \quad [ \pi_{i}>a] = p\left( (a,1] \times \{i\} \right) \end{align*}
are open in $MJ(\kappa)$ for every $0<b\leq 1$ and $0\leq a <1$, one has that $\pi_i\colon MJ(\kappa) \longrightarrow ([0,1],\tau_u)$ is continuous for every $i\in I$.  Similarly, for every $0<b\leq 1$ and $0\leq a <1$ we have
\begin{align*} [\pi_{\kappa}<b]&= \bigl\{  (t,j)\in J(\kappa) \mid \pi_\kappa (t,j)\in [0,b) \bigr\} =  \bigl\{  (t,j)\in J(\kappa) \mid t \in [0,b) \bigr\} \\
&  = p\left( [0,b) \times I \right), \end{align*}
and
\begin{align*} [\pi_{\kappa}>a]&= \bigl\{  (t,j)\in J(\kappa) \mid \pi_\kappa (t,j)\in (a,1] \bigr\} =  \bigl\{  (t,j)\in J(\kappa) \mid t \in (a,1] \bigr\} \\
& =   p\left( (a,1] \times  I \right), \end{align*}
which are open in $MJ(\kappa)$.  Thus $\pi_\kappa$ is also continuous.  

Now we have to prove that $MJ(\kappa)$ is indeed the coarsest topology making all the maps in the family continuous. Let $(J(\kappa),\tau)$ be another topology such that $\pi_\kappa$ and $\pi_{i}$ are continuous for every $i\in I$.  Let $S$ be a subbasic open in $MJ(\kappa)$. Our goal is to show that $S$ is open in $(J(\kappa),\tau)$ too.

First let $S=p\left( [0,r)\times I \right)$ with $0<r\leq 1$.  By the previously done calculation, one has $[\pi_\kappa<r] = S$, and since $\pi_\kappa\colon (J(\kappa),\tau) \longrightarrow \left([0,1],\tau_u\right)$ is continuous, we conclude that  $S$ is open in $(J(\kappa),\tau)$. Now take $S=p\left( (s,1]\times \{i\} \right)$ with $0\leq s <1$ and $i\in I$.  Since $\pi_i\colon (J(\kappa),\tau) \longrightarrow \left([0,1],\tau_u\right)$ is continuous, we deduce that $[\pi_i>s] = S$ is open in $(J(\kappa),\tau)$.
\end{proof}

Now we are in position to give the following relation between the three hedgehog spaces.

\begin{corollary}
The topology of the quotient hedgehog is finer than the topology of the metric hedgehog, which is finer than the topology of the compact hedgehog.
\end{corollary}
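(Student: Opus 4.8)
The plan is to split the statement into its two comparisons and dispatch each using results already established. The first comparison, namely that the quotient topology is finer than the metric topology, is precisely the content of Fact~\ref{comparmetricquot}, so nothing new is required there; I would simply invoke it.

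For the second comparison, that the metric topology is finer than the compact topology, I would exploit the two initial-topology characterizations just proved. By Proposition~\ref{initialforcompact}, the compact topology $\Lambda J(\kappa)$ is the initial topology with respect to the family $\{\pi_i\}_{i\in I}$; that is, by definition it is the \emph{coarsest} topology on $J(\kappa)$ making every projection $\pi_i$ continuous. On the other hand, Proposition~\ref{inittop} establishes that the metric topology $MJ(\kappa)$ is initial for the larger family $\{\pi_i\}_{i\in I}\cup\{\pi_\kappa\}$, and in particular each $\pi_i\colon MJ(\kappa)\longrightarrow([0,1],\tau_u)$ is continuous. Since $MJ(\kappa)$ is thus a topology on $J(\kappa)$ making every $\pi_i$ continuous, and the compact topology is the coarsest such topology, it follows at once that the compact topology is coarser than the metric topology.

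Equivalently, one may argue at the level of subbases: the subbasis generating the compact topology is $\{\pi_i^{-1}(U)\mid U\in\tau_u,\, i\in I\}$, whereas the one generating the metric topology is this same family enlarged by the sets $\pi_\kappa^{-1}(U)$. As enlarging a subbasis can only refine the generated topology, the metric topology is finer than the compact one. There is essentially no obstacle here: the whole argument is a formal consequence of Propositions~\ref{initialforcompact} and~\ref{inittop} together with Fact~\ref{comparmetricquot}, the only point requiring a moment's care being the elementary observation that passing to the initial topology for a larger family of maps yields a finer (or equal) topology.
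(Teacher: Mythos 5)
Your proposal is correct and follows exactly the paper's route: the first comparison is Fact~\ref{comparmetricquot}, and the second is deduced from Propositions~\ref{initialforcompact} and~\ref{inittop}, using the (elementary but worth stating) fact that the initial topology for a larger family of maps is finer. You merely spell out more explicitly the step the paper leaves implicit.
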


\begin{proof}
The former assertion is proved in Fact~\ref{comparmetricquot} whereas the latter assertion is a consequence of the previous proposition and Proposition~\ref{initialforcompact}.
\end{proof}

\begin{fact}\label{compareComMet}
The metric topology of the hedgehog $J(\kappa)$ coincides with the compact topology of the hedgehog if and only if $\kappa<\aleph_0$.
\end{fact}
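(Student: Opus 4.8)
The plan is to read off both topologies from their descriptions as initial topologies. By Proposition~\ref{initialforcompact} the compact topology $\Lambda J(\kappa)$ is the initial topology for the family $\{\pi_i\}_{i\in I}$, while by Proposition~\ref{inittop} the metric topology $MJ(\kappa)$ is the initial topology for that same family \emph{together with} the single extra map $\pi_\kappa$. As was already established, the metric topology is finer than the compact one, so the entire question reduces to deciding whether $\pi_\kappa$ is continuous with respect to $\Lambda J(\kappa)$: if it is, then $\Lambda J(\kappa)$ makes every map of the metric family continuous and, being coarser, must agree with $MJ(\kappa)$; if it is not, the metric topology is strictly finer.

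For the ``if'' direction, assume $\kappa<\aleph_0$, so that $I$ is finite. I would invoke the identity $\pi_\kappa=\tbigsum_{i\in I}\pi_i$ recorded earlier: since each $\pi_i$ is continuous on $\Lambda J(\kappa)$ and a \emph{finite} sum of continuous real-valued maps is continuous, $\pi_\kappa$ is continuous on $\Lambda J(\kappa)$. Hence $\Lambda J(\kappa)$ makes every member of $\{\pi_i\}_{i\in I}\cup\{\pi_\kappa\}$ continuous, and because $MJ(\kappa)$ is by Proposition~\ref{inittop} the coarsest such topology, we get that $\Lambda J(\kappa)$ is finer than $MJ(\kappa)$. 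Combined with the reverse comparison this gives $MJ(\kappa)=\Lambda J(\kappa)$.

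For the ``only if'' direction I would argue the contrapositive: assuming $\kappa\geq\aleph_0$, I exhibit a set open in $MJ(\kappa)$ but not in $\Lambda J(\kappa)$. The natural witness is the metric ball $U=B(\zerohedgehog,\tfrac12)=p([0,\tfrac12)\times I)$, which is open in $MJ(\kappa)$. I claim $U$ is not a neighborhood of $\zerohedgehog$ in the compact topology. Indeed, by Fact~\ref{Basis0comp} every compact-open neighborhood of $\zerohedgehog$ contains a basic one of the form $J(\kappa)\smallsetminus\tbigcup_{j\in J}p([t_j,1]\times\{j\})$ with $J\subseteq I$ finite, and such a set contains the \emph{entire} spine $p([0,1]\times\{j_0\})$ for any index $j_0\in I\smallsetminus J$. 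Since $\kappa\geq\aleph_0$, such a $j_0$ exists, so the point $(\tfrac12,j_0)$ lies in that neighborhood but not in $U$. Thus no compact-open set around $\zerohedgehog$ fits inside $U$, whence $U\notin\Lambda J(\kappa)$ while $U\in MJ(\kappa)$, and the two topologies differ.

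The main obstacle is less a computation than getting the comparison of initial topologies pointed in the right direction; once one recognizes that $\pi_\kappa$ is the only map distinguishing the two families, the finite case follows at once from continuity of finite sums, and the infinite case rests on the single structural contrast that compact neighborhoods of $\zerohedgehog$ must engulf cofinitely many entire spines, whereas every metric ball about $\zerohedgehog$ truncates each spine short of its tip.
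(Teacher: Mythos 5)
Your proof is correct. The ``only if'' direction is essentially the paper's argument: the paper uses the witness $U=p([0,1)\times I)$ and the point $(1,i)$ for $i$ outside the finite set $J$ coming from Fact~\ref{Basis0comp}, while you use $p([0,\tfrac12)\times I)$ and $(\tfrac12,j_0)$ --- the same structural contrast, that compact-basic neighborhoods of $\zerohedgehog$ contain all but finitely many whole spines. Where you genuinely diverge is in the organization and in the finite case. The paper never isolates $\pi_\kappa$ as ``the one map distinguishing the two initial families''; instead, for $\kappa<\aleph_0$ it checks directly that each subbasic open of $MJ(\kappa)$ lies in $\Lambda J(\kappa)$, via the identity $p([0,r)\times I)=\tbigcap_{i\in I}\bigl(J(\kappa)\smallsetminus p([r,1]\times\{i\})\bigr)$, a finite intersection of compact-subbasic opens. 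Your version routes this through the universal property of the initial topology plus continuity of the finite sum $\pi_\kappa=\tbigsum_{i\in I}\pi_i$; the two computations are equivalent (your sum identity is exactly the paper's finite intersection read through preimages, since $[\pi_\kappa<r]=\tbigcap_{i\in I}[\pi_i<r]$), but your framing makes the whole equivalence a one-line dichotomy --- the topologies agree iff $\Lambda J(\kappa)$ already makes $\pi_\kappa$ continuous --- which is a cleaner conceptual decomposition than the paper's two independent subbase arguments.
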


\begin{proof}
$\Rightarrow )$ Assume that $\kappa \geq \aleph_0$, and let us show that $MJ(\kappa)$ is strictly finer than $\Lambda J(\kappa)$.  We have that $U=p\left([0,1) \times I\right)$ is an open subset of $MJ(\kappa)$ which is not open in $\Lambda J(\kappa)$. Indeed, if $U$ were open in $\Lambda J(\kappa)$, Fact~\ref{Basis0comp} yields a finite $J\subseteq I$ such that 
$$B=J(\kappa) \smallsetminus \tbigcup_{j\in J} [t_j,1] \subseteq  U.$$
Since $ \kappa \geq \aleph_0$, select $i\in I$ such that $i\not\in J$, from which follows that $(1,i)\in B$ but $(1,i)\not\in U$, a contradiction.\\[2mm]
$\Leftarrow )$ Assume that $\kappa<\aleph_0$. By the previous corollary, it is enough to show that $MJ(\kappa)$ is coarser tan $\Lambda J(\kappa)$. For that purpose, take the subbasis 
$\tilde{\mathcal{S}}(\kappa)$ of $MJ(\kappa)$ (cf. Fact~\ref{SubBaseMetric}) and the subbasis $\hat{\mathcal{S}}(\kappa)$ of $\Lambda J(\kappa)$  (cf. Fact~\ref{subbasecomp}). It is enough to show that every subbasic open $S$ in $\tilde{\mathcal{S}}(\kappa)$ is also open in $\Lambda J(\kappa)$. If $S= p\left( (r,1]\times\{i\} \right)$, one has  that $S\in \hat{\mathcal{S}}(\kappa)$, so let us assume that $S=p\left( [0,r) \times I \right)$. Then,
$$S=\tbigcap_{i\in I} J(\kappa) \smallsetminus p\left( [r,1] \times \{i\} \right),$$
which is a finite intersection of (subbasic) open subsets of $\Lambda J(\kappa)$, and hence it is open in the compact hedgehog.
\end{proof}
Thus, Facts~\ref{metricquotientsame} and~\ref{compareComMet} show that in the finite case the three hedgehog are just the same space:
\begin{corollary}
Assume that $\kappa < \aleph_0$. Then
$$J(\kappa) = MJ(\kappa) = \Lambda J (\kappa).$$
\end{corollary}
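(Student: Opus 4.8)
The plan is to simply invoke the two comparison facts that have just been established, both of which turn on the very same threshold $\kappa < \aleph_0$. First I would appeal to Fact~\ref{metricquotientsame}, whose ``if'' direction asserts that when $\kappa < \aleph_0$ the metric topology and the quotient topology on $J(\kappa)$ coincide; this gives the equality $J(\kappa) = MJ(\kappa)$ as topological spaces. Next I would appeal to Fact~\ref{compareComMet}, whose ``if'' direction asserts that under the same hypothesis $\kappa < \aleph_0$ the metric topology agrees with the compact topology, yielding $MJ(\kappa) = \Lambda J(\kappa)$.

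Concatenating these two equalities produces the chain $J(\kappa) = MJ(\kappa) = \Lambda J(\kappa)$, which is precisely the claim. Since all three symbols denote the same underlying set $J(\kappa)$ (they differ only in the topology carried), the statement is genuinely an equality of spaces, not merely a homeomorphism, so no auxiliary map needs to be exhibited.

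There is essentially no obstacle here: the corollary is a bookkeeping consequence of the two preceding facts, and all the substantive work—constructing explicit subbasic opens and checking the coincidence of topologies in the finite case—was already carried out in the proofs of Facts~\ref{metricquotientsame} and~\ref{compareComMet}. The only point worth flagging is to make sure both facts are being applied in their ``$\Leftarrow$'' (sufficiency) direction under the common hypothesis $\kappa < \aleph_0$, so that the two equalities hold simultaneously and can be chained together.
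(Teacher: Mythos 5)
Your proposal is correct and matches the paper exactly: the corollary is stated as an immediate consequence of Facts~\ref{metricquotientsame} and~\ref{compareComMet}, applied in their sufficiency directions under the common hypothesis $\kappa<\aleph_0$, and chained together just as you describe. No further justification is needed.
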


\begin{properties}
(1) The hedgehog $MJ(\kappa)$  is a metric space, and therefore it is first countable, Hausdorff (in particular $T_1$) and normal  (thus regular).\\[3mm]
(2) Note that $J(\aleph_0)$ can be provided with two met\-rizable topologies, namely $MJ(\aleph_0)$ and $\Lambda J(\aleph_0)$. Further, the metrics are non-equivalent, since both topologies are distinct when $\kappa=\aleph_0$ (see Fact~\ref{compareComMet}). \\[3mm]
(3) $MJ(	\kappa)$ is second countable if and only if $\kappa \leq \aleph_0$. Assume first that $\kappa \leq \aleph_0$. A countable basis is given by
\begin{align*}\beta(\kappa) &= \textstyle{\bigl\{ p\left(\left[0,1/n \right)\times I \right) \mid n\in\N \bigr\}\cup \bigl\{ p\left( \left(1-1/n,1\right] \times \{i\} \right)  \mid n\in \N,i\in I\bigr\}} \\
& \cup  \bigl\{  p\left( \left(a,b\right) \times \{i\} \right)  \mid a,b\in \Q,0\leq a <b \leq 1,i\in I\bigr\}.
\end{align*}
Suppose now that $\kappa > \aleph_0$ and by contradiction take a countable basis $\tilde\beta (\kappa)$. For each $i\in I$, consider the open set $U_i = p\left((0,1]\times \{i\}\right)$ in $MJ(\kappa)$. Then there is a $B_i \in \tilde\beta(\kappa)$ such that $B_i \subseteq U_i$. It follows that $\{B_i\}_{i\in I}$ is an uncountable family in $\tilde\beta(\kappa)$, a contradiction.\\[3mm]
(4) Exactly the same argument used for the quotient hedgehog shows that $MJ(\kappa)$ is neither compact nor locally compact whenever $\kappa\geq \aleph_0$.\\[3mm]
(5) Note that the proof of the arcwise connectedness of the quotient hedgehog also applies to the metric hedgehog. Thus, $MJ(\kappa)$ is arcwise connected. Besides, note that $MJ(1) \cong MJ(2) \cong [0,1]$. However, $MJ(\kappa) \not\cong MJ(\lambda)$ for all cardinalities $\kappa \ne \lambda$ with $\lambda$ or $\kappa$ greater than 2.\\[3mm]
(6) Taking into account (3) and Lemma~\ref{2ndcountiffsep}, the metric hedgehog is separable if and only if $\kappa \leq \aleph_0$.\\[3mm]
(7) The metric hedgehog $MJ(\kappa)$ is totally bounded if and only if $\kappa < \aleph_0$. Assume first that $\kappa < \aleph_0$ and let $\epsilon > 0$. Then,
$$\bigl\{B\left( \zerohedgehog, \varepsilon \right)\bigr\} \cup \bigl\{ B( (n\varepsilon/2, i), \varepsilon) \mid i\in I, 2\leq n \leq 2/\varepsilon  \bigr\}$$
is the desired finite cover of open balls of radius $\varepsilon$. Suppose now by way of contradiction  that $\kappa \geq \aleph_0$ and that $MJ(\kappa)$ is totally bounded.  For $\varepsilon = 1/2$, there is a finite cover $\mathcal{B}$ of open balls of radius $\varepsilon$.
It follows that for every $i\in I$ there is a ball $B_i \in \mathcal{B}$ such that $(1,i)\in B_i$. Note that $B_i \subseteq p\left( (0,1] \times \{i\} \right)$, and hence $\{B_i\}_{i\in I}\subseteq \mathcal{B}$ is an infinite family, a contradiction.\\[3mm]
(8) {\rm(Cf.\ \cite[page~277, Problem~4.3.B.\,(c)]{Eng})} The metric hedgehog is complete. Let $\{x_n\}_{n\in \N}$ be a Cauchy sequence in $MJ(\kappa)$. We want to prove that $\{x_n\}_{n\in \N}$ is convergent, which, by virtue of Lemma~\ref{ChConv}, is equivalent to show that $\{x_n\}_{n\in \N}$ has a convergent subsequence.  By contradiction, suppose that $\{x_n\}_{n\in \N}$ has not convergent subsequences. In particular, there are not subsequences of $\{x_n\}_{n\in \N}$ converging to $\zerohedgehog$. Set $S= \{x_n \mid n\in\N\}$. Then there is an $r>0$ such $B\left( \zerohedgehog, r \right) \cap S$ is finite, for otherwise $\zerohedgehog$ is a limit point of $S$, from which we conclude (by Lemma~\ref{lemmaconvergentlimit}) that there is convergent subsequence of $\{x_n\}_{n\in \N}$ to $\zerohedgehog$, a contradiction. Now, by reducing $r$ if necessary, we can assume that $B\left( \zerohedgehog, r \right) \cap ( S \smallsetminus \{\zerohedgehog\} ) = \varnothing$.  We distinguish two cases:
\begin{enumerate}[$\bullet$]
\item Assume first that $S$ has points in infinitely many spines of $MJ(\kappa)$.  Thus we can build a subsequence $\{x_{n_k}\}_{k\in\N}$ of $\{x_n\}_{n\in \N}$ such that $x_{n_k} \in p ((0,1]\times\{i_k\})$ and $x_{n_\ell}\in p((0,1]\times\{i_\ell\})$ with $i_k\ne i_\ell$ whenever $k\ne \ell$. Observe that 
$d\left(x_{n_k},x_{n_\ell}\right)\geq 2r$ whenever  $k\ne\ell$, a contradiction with the fact that $\{x_n\}_{n\in\N}$ is Cauchy. 
\item Suppose now that $S$ has points only in finitely many spines of $MJ(\kappa)$. Then there is a subsequence $\{x_{n_k}\}_{k\in\N}$ of $\{x_n\}_{n\in\N}$ such that $S_1 = \{x_{n_k} \mid k\in\N\} \subseteq p\left( [0,1]\times\{i_0\} \right)$ for some $i_0\in I$. Now, $[0,1]$ is clearly isometric to $p\left( [0,1]\times\{i_0\} \right)$, the former being complete (because it is a closed subset of the real line). Thus, $p\left( [0,1]\times\{i_0\} \right)$ is complete and $\{x_{n_k} \}_{k\in \N}$ (and therefore $\{x_n\}_{n\in\N}$) has a convergent subsequence, a contradiction.  
\end{enumerate}
\end{properties}

Later, we shall need the following property concerning the metric hedgehog (see \cite{Prz}).

\begin{proposition}\label{Rembeds}
The real line $\mathbb{R}$ embeds as a closed subspace in $\bigl( MJ(\aleph_0) \bigr)^2$.
\end{proposition}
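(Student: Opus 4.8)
The plan is to exhibit an explicit map $f\colon\mathbb{R}\longrightarrow\bigl(MJ(\aleph_0)\bigr)^2$, $f(x)=\bigl(g(x),h(x)\bigr)$, and to verify that it is a continuous, injective, \emph{closed} map; since a continuous injective closed map is a homeomorphism onto a closed subspace, this gives the statement at once. Identify the spine index set $I$ with $\mathbb{Z}$. The idea is a \textbf{zigzag}: as $x$ runs through $\mathbb{R}$ the two coordinates alternately climb a spine and come back down, so that at every integer exactly one coordinate sits at a spine tip and the other at $\zerohedgehog$. Concretely, for $x\in[2k,2k+1]$ write $u=x-2k$ and set $g(x)=(1-u,k)$, $h(x)=(u,k)$; for $x\in[2k+1,2k+2]$ write $u=x-(2k+1)$ and set $g(x)=(u,k+1)$, $h(x)=(1-u,k)$. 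The pieces agree at the integers, so $f$ is well defined. For continuity I would invoke Proposition~\ref{inittop}: since $MJ(\aleph_0)$ carries the initial topology for the family $\{\pi_i\}_{i\in I}\cup\{\pi_\kappa\}$, and the diagonal of continuous maps is continuous, it suffices to check that $\pi_\kappa\circ g,\ \pi_\kappa\circ h$ and all $\pi_i\circ g,\ \pi_i\circ h$ are continuous real functions; but these are all piecewise-linear functions whose pieces match at the integers, hence continuous.

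The crucial bookkeeping identity is that $\pi_\kappa\bigl(g(x)\bigr)+\pi_\kappa\bigl(h(x)\bigr)=1$ for every $x$, so the two radial heights always sum to $1$. Write $G(x)=\pi_\kappa(g(x))$, a triangle wave equal to $0$ exactly at the odd integers and to $1$ exactly at the even integers. For injectivity I would deduce $x=y$ from $f(x)=f(y)$ by cases on the common value $G(x)=G(y)$: if it equals $1$ then $x,y$ are even integers and $g$ reads off their distinct spine tips, forcing $x=y$; if it equals $0$ the same argument applied to $h$ (whose tips are at the odd integers) gives $x=y$; and if it lies in $(0,1)$ then $g(x),g(y)$ are nonzero on a common spine $k$, so $x,y\in(2k-1,2k+1)$, where $G$ is strictly monotone on each half, leaving only the possibility that $x,y$ are symmetric about $2k$. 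In that symmetric case, however, $h(x)$ and $h(y)$ sit on the two \emph{distinct} spines $k-1$ and $k$ with positive heights, so $h(x)\ne h(y)$, a contradiction. Hence $f$ is one-to-one.

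The main work, and the step I expect to be the chief obstacle, is showing that $f$ is a \textbf{closed map}; this is exactly where the second factor is indispensable, since a single hedgehog cannot absorb the two-sided neighbourhoods of the integers. As $\bigl(MJ(\aleph_0)\bigr)^2$ is metrizable by Theorem~\ref{countablemetrizable}, it suffices to show that whenever $C\subseteq\mathbb{R}$ is closed, $c_n\in C$, and $f(c_n)\to q=\bigl((t,i),(s,j)\bigr)$, one has $q\in f(C)$. Continuity of the two copies of $\pi_\kappa$ gives $G(c_n)\to t$ and $\pi_\kappa(h(c_n))\to s$, and letting $n\to\infty$ in the identity $G+\pi_\kappa\circ h\equiv 1$ yields $t+s=1$; in particular $t>0$ or $s>0$. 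Suppose $t>0$ (the case $s>0$ is symmetric). The open ball $B\bigl((t,i),t/2\bigr)$ lies entirely inside the single spine $i$ of the first hedgehog, so $g(c_n)$ eventually lies on spine $i$ with positive height, and by construction this confines $c_n$ to the bounded interval $(2i-1,2i+1)$.

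Thus $\{c_n\}$ is eventually contained in the compact set $[2i-1,2i+1]$, so a subsequence converges to some $c\in[2i-1,2i+1]$, and $c\in C$ because $C$ is closed; continuity forces $f(c)=\lim_n f(c_n)=q$, whence $q\in f(C)$. This proves $f(C)$ is closed, so $f$ is a closed map. Being also continuous and injective, $f$ is a homeomorphism onto its image and $f(\mathbb{R})$ is closed in $\bigl(MJ(\aleph_0)\bigr)^2$, as required.
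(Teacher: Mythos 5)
Your map is exactly the paper's embedding (the paper writes $x=2k_x+t_x=(2\ell_x+1)+s_x$ and sets $f(x)=(1-|t_x|,k_x)$, $g(x)=(1-|s_x|,\ell_x)$, which on each interval $[2k,2k+1]$, $[2k+1,2k+2]$ reduces to your zigzag formulas), and your continuity and injectivity arguments are essentially the paper's, phrased via tent functions rather than explicit preimages of subbasic opens. Where you genuinely diverge is in the hard step. The paper proves that $h(\R)$ is closed and that $h$ is open onto its image by a lengthy direct case analysis: it first characterizes exactly which pairs $(\alpha,\beta)$ lie in $h(\R)$, then produces an explicit basic neighbourhood of each point of the complement (splitting according to whether the radial coordinates are positive, using the distance to the line $y=1-x$ in the plane), and separately verifies openness of the restriction on three types of intervals. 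You instead prove directly that $f$ is a closed map, using three ingredients: metrizability of $\bigl(MJ(\aleph_0)\bigr)^2$ (so closedness can be tested on sequences), the identity $\pi_\kappa\circ g+\pi_\kappa\circ h\equiv 1$ (so a limit point has at least one positive radial coordinate, which traps the preimages in a single spine and hence in a compact interval of $\R$), and local compactness of $\R$ to extract a convergent subsequence. This is shorter and more conceptual, it makes transparent why the second factor is needed, and it yields both closedness of the image and the homeomorphism onto the image in one stroke (a continuous closed bijection onto its image is a homeomorphism). The paper's approach, by contrast, is entirely elementary and exhibits the separating neighbourhoods explicitly, at the cost of many cases. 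Your argument is correct as written; the only points worth spelling out in a final write-up are that $\pi_\kappa$ is continuous on $MJ(\aleph_0)$ (Proposition~\ref{inittop}), so that $f(c_n)\to q$ really does give convergence of the radial heights, and that when $t=0$ the first limit coordinate is $\zerohedgehog$ and one must switch to the symmetric case $s=1>0$, which you do indicate.
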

\begin{proof}
Throughout the proof we shall denote $Z=\bigl( MJ(\aleph_0) \bigr)^2$.  We will show that $\mathbb{R}$ embeds as a closed subspace in  $Z$ for the index set $I=\Z$. 
We can uniquely represent every $x\in\R$ as $x= 2k_x + t_x = (2\ell_x+1) + s_x$ where $k_x,\ell_x\in \Z$ and $-1< t_x,s_x \leq 1$. Thus we define functions $f\colon \mathbb{R} \longrightarrow MJ(\aleph_0)$ and $g\colon  \mathbb{R} \longrightarrow MJ(\aleph_0)$ given by 
\[f(x) = (1-|t_x|,k_x)\quad\text{and}\quad g(x) = (1-|s_x|, \ell_x).\]
Now we show that $f$ is continuous. Because of Propositions~\ref{univpropinitial} and~\ref{inittop}, it is enough to show that $\pi_{\aleph_0} \circ f$ is continuous and $\pi_k \circ f$ is also continuous for every $k\in \Z$. For every $0<s\leq 1$ we have
\begin{align*}
[\pi_{\aleph_0} \circ f <s] &  =\{ 2n+t\in\R\mid  n\in \Z,\ -1<t\leq 1 ,\ \pi_{\aleph_0} (1-|t|, n) <s\} \\
&= \{ 2n+t \in \R\mid n\in \Z,\ -1<t\leq 1,\ 1-|t| <s \} \\
&= \tbigcup_{n\in \Z} \{ 2n+t \in \R\mid-1<t<-1+s\text{ or } 1-s<t\leq 1 \} \\
&= \tbigcup_{n\in \Z} (2n-1,2n-1+s)\cup (2n+1-s,2n+1] \\
&=  \tbigcup_{n\in \Z} (2n-1,2n-1+s) \cup \tbigcup_{n\in \Z} (2n-1-s,2n-1] \\
& = \tbigcup_{n\in\Z}(2n-1-s,2n-1+s),
\end{align*}
and, for each $k\in\Z$,
\begin{align*}
[ \pi_k \circ f < s] &= \{ 2n+t \in \R \mid  n\in \Z,\ -1<t\leq 1,\ \pi_k( 1-|t|,n) <s \}\\
&=  \{ 2n+t \in \R \mid  n\in \Z,\ -1<t \leq 1 ,\ n\ne k\}\\
&\qquad\qquad\qquad\qquad\qquad \cup  \{ 2k+t \in \R \mid -1<t \leq 1,\  1-|t| <s \}  \\
&= \tbigcup_{n\ne k} (2n-1,2n+1] \cup (2k-1,2k-1+s)\cup (2k+1-s,2k+1] \\
& = \R \smallsetminus [2k-1+s,2k+1-s]
\end{align*}
which are open in the real line. Let now $0\leq s <1$. Similarly, we have that
\begin{align*}
[\pi_{\aleph_0} \circ f >s] &  =\{ 2n+t\in\R\mid  n\in \Z,\ -1<t\leq 1 ,\ \pi_{\aleph_0} (1-|t|, n) >s\} \\
&= \{ 2n+t \in \R\mid n\in \Z,\ -1<t\leq 1,\ 1-|t| >s \} \\
& = \tbigcup_{n\in\Z}(2n-1+s, 2n+1-s),
\end{align*}
and
\begin{align*}
[ \pi_k \circ f > s] &= \{ 2n+t \in \R \mid  n\in \Z,\ -1<t\leq 1,\ \pi_k( 1-|t|,n) >s \}\\
&
=   \{ 2k+t \in \R \mid -1<t \leq 1,\  1-|t| >s \}\\
&  = (2k-1+s, 2k+1-s)
\end{align*}
and again both of them are open in $\R$. Hence $f$ is continuous. One can check the continuity of $g$ similarly.
We will show that the diagonal $h= f \Delta g$ is the desired closed embedding. It suffices to prove that $h(\R)$ is closed in $Z$ \setcounter{ffact}{2}(Fact~\Roman{ffact}), that $h$ restricted onto its image is open \setcounter{ffact}{3}(Fact~\Roman{ffact}) and that $h$ is one-to-one.

\setcounter{ffact}{1}
\begin{ffact}
 Let $\alpha,\beta\in J(\aleph_0)$ and denote $\alpha=(t,n)$.\begin{enumerate}[\normalfont (i)]
\item If $t>0$, one has $(\alpha, \beta) \in h(\R)$ if and only if $\beta = (1-t,n)$ or $\beta= (1-t,n-1)$.
\item One has $(\zerohedgehog, \beta) \in h(\R)$ if and only if there exists $m\in \Z$ such that $\beta= (1,m)$.\end{enumerate}
\end{ffact}

\begin{proof}
(i) Let us first prove the ``only if'' part. Assume that there is an $x= 2m +s \in \R$ such that $f(x)=\alpha$ and $g(x)=\beta$, where $-1<s\leq 1$ and $m\in\Z$. Note that
$$(1-|s|, m) = f(x) = \alpha = (t,n),$$
and since $t>0$, we deduce that $m=n$ and $s=t-1$ or $s=1-t$.  Thus, if $x= 2n+t-1=2(n-1)+1+t$, one has $\beta=g(x)= (1-t,n-1)$, as desired. Assume otherwise that $x=2n+1-t$. If $t=1$, we have $\beta=g(x)=\zerohedgehog=(0,n)$; so let us now suppose that $0<t<1$. 	In the latter case we have $\beta=g(x)=(1-t,n)$, the desired conclusion.

Let us now show the converse. If $\beta=(1-t,n)$, take $x=2n+1-t$. One easily checks that $f(x)=\alpha$ and $g(x)=\beta$, that is, $(\alpha,\beta)\in h(\R)$. If $\beta=(1-t,n-1)$, set $x=2n+t-1$. Then $f(x)=\alpha$ and $g(x)=\beta$, as we wanted to show.\\[3mm]
(ii) For the sufficiency, let $x=2m+s\in \R$ such that $f(x)=\zerohedgehog$ and $g(x)=\beta$, where $-1<s\leq 1$ and $m\in\Z$. Since $(1-|s|, m)=f(x)=\zerohedgehog$, one has $s=1$ or $s=-1$. In the former case, we deduce that $\beta= (1,m)$, whereas in the latter case one has $\beta=(1,m-1)$. 

Now we show the necessity. If $\beta= (1,m)$ for some $m\in \Z$. Let $x=2m+1$. Then $f(x)=\zerohedgehog$ and $g(x)=\beta$, which concludes the proof.\qedhere
\end{proof}

\setcounter{ffact}{2}
\begin{ffact}
$h(\R)$ is closed in $Z$.
\end{ffact} 

\begin{proof}
We shall show that the complementary of $h(\R)$ is open. Let $(\alpha_0,\beta_0)\in Z \smallsetminus h(\R)$, and write $\alpha_0= (t_0,n_0)$ and $\beta_0=(s_0,m_0)$. We distinguish several cases:	\\[3mm]
(i) First assume that $t_0,s_0>0$. By the previous fact, one has $\beta_0\ne (1-t_0,n_0)$ and $\beta_0\ne (1-t_0,n_0-1)$. If $m_0\ne n_0,n_0-1$, define 
$$U=p( (0,1]\times \{n_0\} ) \times p ( (0,1] \times \{m_0\})$$
which is a neighborhood of $(\alpha_0, \beta_0)$ with the property that for each $(\alpha,\beta)\in U$ (write $\alpha=(t,n_0)$), one has $\beta\ne (1-t,n_0),(1-t,n_0-1)$, and hence  the previous fact yields $U\subseteq Z \smallsetminus h(\R)$.

If $m_0=n_0$, since $(s_0,n_0)= \beta_0\ne (1-t_0,n_0)$, one has that $s_0 \ne 1-t_0$, from which follows that the point $(t_0,s_0)\in \R^2$ is not contained in the line $y=1-x$ of $\R^2$. Let then $d>0$ be the euclidean distance between $(t_0,s_0)$ and the line $y=1-x$. We set $r=d/\sqrt2$ and
$$U= \begin{cases} p( (t_0-r,t_0+r) \times\{n_0\} ) \times p( (s_0-r,s_0+r)\times\{n_0\} )  & \textrm{if } t_0,s_0<1;\\
p( (1-r,1] \times\{n_0\} ) \times p( (s_0-r,s_0+r)\times\{n_0\} )  & \textrm{if } t_0=1,s_0<1;\\
p( (t_0-r,t_0+r) \times\{n_0\} ) \times p( (1-r,1]\times\{n_0\} )  & \textrm{if } t_0<1,s_0=1;\\
p( (1-r,1] \times\{n_0\} ) \times p( (s_0-r,1]\times\{n_0\} ) & \textrm{if } t_0=s_0=1.
\end{cases}$$
The neighborhood $U$ of $(\alpha_0,\beta_0)$ satisfies that given $(\alpha, \beta)$ in $U$ (assume that $\alpha=(t,n_0)$ and $\beta=(s,n_0)$), one has $\beta\ne (1-t,n_0),(1-t,n_0-1)$. Indeed, the case $\beta \ne (1-t,n_0-1)$ is clear, so let us verify that $\beta\ne (1-t,n_0)$. By way of contradiction, suppose that $1-t=s$. Then $(t,s)\in \R^2$ is contained in the line $y=1-x$, which yields
$$d \leq d_u \left( (t,s), (t_0,s_0) \right) \leq \sqrt2 \max\{ |t-t_0|,|s-s_0|\} < \sqrt 2 r = d,$$
a contradiction. Thus $U\subseteq Z  \smallsetminus h(\R)$. The case $m_0=n_0-1$ is completely analogous.\\[3mm]
(ii) Suppose now that $t_0>0$ and $s_0=0$. By the previous fact we have  $\beta_0\ne (1-t_0,n_0)$ and $\beta_0\ne (1-t_0,n_0-1)$, from which follows that $1-t_0\ne 0$. The point $(t_0,0)\in \R^2$ is not contained in the  line $y=1-x$ of the plane, so let $d>0$ be the distance from such line to $(t_0,0)$. Define $r=d/\sqrt2$ and 
$$U=
\begin{cases}
p\left( (t_0-r,t_0+r)\times\{n_0\} \right) \times p \left( [0,r)\times I \right) & \textrm{if } t_0<1;\\
p( (1-r,1] \times\{n_0\} )  \times p \left( [0,r)\times I \right) & \textrm{if } t_0=1;
\end{cases}$$
which is a neighborhood of $(\alpha_0,\beta_0)$. Let $(\alpha,\beta)\in U$, and denote $\alpha=(t,n_0)$ and $\beta=(s,m)$. If $\beta=(1-t,n_0)$ or $\beta=(1-t,n_0-1)$, we would have $1-t=s$, that is, $(t,s)\in \R^2$  is contained in the line $y=1-x$, from which we conclude that
$$d \leq d_u \left( (t,s), (t_0,s_0) \right) \leq \sqrt2 \max\{ |t-t_0|,s \} < \sqrt 2 r = d,$$
a contradiction. Thus $U\subseteq Z \smallsetminus h(\R)$. \\[3mm]
(iii) The case $t_0=0$ and $s_0>0$ is similar to the previous one. To finish the proof, note that it is impossible that the equality $t_0=s_0=0$ holds, because of \setcounter{ffact}{2}Fact~\Roman{ffact}.\qedhere
\end{proof}

\setcounter{ffact}{3}
\begin{ffact}
The restriction $h\colon \R\longrightarrow h(\R)$ is open. 
\end{ffact}

\begin{proof}
We shall show that the images of basic open sets under $h$ are open in $h(\R)$. For that purpose, we divide the proof into several simpler cases: \\[3mm]
(i) First, take $(a,b)\subseteq \R$ such that $(a,b)\cap \Z= \varnothing$. Then, one can easily check that $h((a,b))$ is open in $h(\R)$. \\[3mm]
(ii) Now suppose that $(a,b)=(2n-\varepsilon, 2n+\varepsilon)$ for some $n\in\Z$ and $0<\varepsilon <1$. We want to see that $U=h((a,b))$ is open in $h(\R)$. We have that
$$U= h(\R) \cap f((a,b)) \times g((a,b)),$$
where $f((a,b))= p\left( (1-\varepsilon,1]\times\{n\}\right)$ and $g((a,b))=\left( [0,\varepsilon ) \times \{n-1\}\right) \cup p\left( [0,\varepsilon) \times \{n\} \right)$.
Let us check that $U$ can also be written as follows:
$$U= h(\R) \cap f((a,b)) \times p\left( [0,\varepsilon) \times I\right),$$
(which is obviously open in $h(\R)$). It is enough to show the containment
$$  h(\R) \cap f((a,b)) \times p\left( [0,\varepsilon) \times I\right) \subseteq h(\R) \cap f((a,b)) \times g((a,b)),$$
so let $\left( (t,n), \beta )\right) \in h(\R) \cap   f((a,b)) \times p\left( [0,\varepsilon) \times I\right)$. By \setcounter{ffact}{1}Fact~\Roman{ffact}, one has $\beta= (1-t,n)$ or $\beta=(1-t,n-1)$, and hence $(\alpha,\beta)\in h(\R) \cap f((a,b)) \times g((a,b))$. \\[3mm]
(iii) The odd case (i.e. when $(a,b)=(2n+1-\varepsilon,2n+1+\varepsilon)$) can be proved similarly.\\[3mm]
(iv)  Finally, let us show the general case. Indeed, every interval $(a,b)\subseteq \R$ can be written as the union of open intervals of the form described in (i), (ii) and (iii). Since the direct image of a union and the union of direct images coincide, $h((a,b))$ is open in $h(\R)$. \qedhere
\end{proof}
Finally, we check that $h$ is one-to-one. Let $x,y\in\R$ such that $h(x)=h(y)$, i.e. $f(x)=f(y)$ and $g(x)=g(y)$. Write $x=2n+t$ and $y=2m+s$ with $n,m\in\Z$ and $-1< t,s \leq 1$. We only deal with the case $t,s > 0$ since the other cases can be shown similarly. We have $f(x)=f(y)$, i.e.  $(1-|t|,n)=(1-|s|,m)$ and so there are two possible cases. First, assume that $1-|t|=1-|s|=0$. Since $t,s>0$, one has $t=s=1$. Now, since $g(x)=g(y)$ we obtain that $(1,n)=(1,m)$, and thus $n=m$ and $x=y$, as desired. Assume otherwise that $n=m$. Then $1-|t|=1-|s|$, and therefore $t=s$ because $t,s>0$. Thus, $x=y$, as we wanted to show. Therefore, $h$ is the desired closed embedding.
\end{proof}

\subsection{Kowalsky's Hedgehog Theorem}\label{secKo}
One important application of the metric hedgehog is presented in this subsection. More specifically, we aim to give a proof of the Kowalsky's Hedgehog Theorem. Roughly speaking, this result asserts that every metric space  is embeddable in the product of countably many copies of the metric hedgehog.  We will mainly follow the proof given in \cite[pp.~282-283,\ Theorem~4.4.9]{Eng}. \\[2mm]
The first step is to compute the weight of a countable cartesian power of the metric hedgehog. 
\begin{example}\label{weighthedgehog}
The metric hedgehog has weight $\omega \left( MJ(\kappa) \right) = \aleph_0$ if  $\kappa <\aleph_0$ and  $\omega \left( MJ(\kappa) \right) = \kappa$ if $\kappa \geq \aleph_0$. Indeed, assume first that $\kappa \geq \aleph_0$. A basis of cardinality $\kappa$ is given by
\begin{align*} \beta (\kappa) = & \bigl\{ p( [0,1/n) \times I ) \mid n\in \N \bigr\}\\& \cup \bigl\{ p\left((a,b)\times\{i\} \right) \mid a,b\in \Q ,\ 0\leq a < b \leq 1,\ i\in I \bigr\}\\& \cup \bigl\{ p\left( (1/n,1] \times \{i\}  \right) \mid n\in \N,\ i\in I\bigr\}.\end{align*}
Further, it is impossible to give a basis of a strictly smaller cardinality. Assume that $\tilde\beta$ is another basis of $MJ(\kappa)$. For each $i \in I$, $U_i= p\left((0,1]\times \{i\}\right)$ is open, so there is an element $B_i$ of $\tilde\beta$ such that $B_i\subseteq U_i$. Note that all the $B_i$'s are pairwise disjoint and in particular distinct. Thus $|\tilde\beta|\geq \kappa$ and $\omega\left(MJ(\kappa)\right)=\kappa$. 

Now we deal with the case $\kappa < \aleph_0$. Note that the basis $\beta (\kappa)$ defined above is a countable basis of $MJ(\kappa)$. Thus $MJ(\kappa) \leq \aleph_0$. Let $\tilde\beta$ be another basis of $MJ(\kappa)$ and assume by contradiction that $|\tilde\beta| < \aleph_0$. Then, we would have a finite basis for the subspace $p([0,1]\times \{i\})$, which is homeomorphic to the closed unit interval, a contradiction. 
\end{example}

Example~\ref{weighthedgehog} and Lemma~\ref{prodwrithk} yield

\begin{corollary}\label{corwei}
Let $\kappa\geq \aleph_0$. Then the weight of $\bigl(MJ(\kappa)\bigr)^{\aleph_{0}}$ is $\kappa$. 
\end{corollary}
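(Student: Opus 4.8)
The plan is to combine the two results cited immediately before the statement, as the phrasing ``Example~\ref{weighthedgehog} and Lemma~\ref{prodwrithk} yield'' already suggests. First I would invoke Example~\ref{weighthedgehog}: since we are assuming $\kappa \geq \aleph_0$, that example gives the weight of a single metric hedgehog as $\omega\bigl(MJ(\kappa)\bigr) = \kappa$. This settles the weight of one factor.

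Next I would set $X = MJ(\kappa)$ and apply Lemma~\ref{prodwrithk}. The hypotheses of that lemma are a cardinal $\kappa \geq \aleph_0$ and a topological space whose weight is exactly $\kappa$; both are met, the first by the standing assumption and the second by the previous step. The lemma then delivers $\omega(X^{\N}) = \kappa$. Since the index set $\N$ has cardinality $\aleph_0$, we have $X^{\N} = X^{\aleph_0}$, so that $\omega\bigl((MJ(\kappa))^{\aleph_0}\bigr) = \kappa$, which is precisely the assertion.

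Since both ingredients already carry out the substantive work — the computation of the hedgehog's weight in Example~\ref{weighthedgehog} and the preservation of weight under countable powers in Lemma~\ref{prodwrithk} — there is essentially no genuine obstacle remaining. The only points requiring care are verifying that the hypotheses of Lemma~\ref{prodwrithk} are genuinely satisfied (in particular that the weight equals $\kappa$ rather than merely being bounded by it) and matching the notation $X^{\N}$ of the lemma with the notation $X^{\aleph_0}$ appearing in the statement.
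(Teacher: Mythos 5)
Your proposal is correct and is exactly the argument the paper intends: the corollary is stated as an immediate consequence of Example~\ref{weighthedgehog} (which gives $\omega(MJ(\kappa))=\kappa$ for $\kappa\geq\aleph_0$) and Lemma~\ref{prodwrithk} (preservation of infinite weight under countable powers), with no further proof given. Your attention to checking that the weight is exactly $\kappa$, as the lemma requires, is appropriate and matches the paper's setup.
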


The following results are devoted to show that every metric space has a $\sigma$-discrete basis (see Definition~\ref{sigmadiscret}). 

\begin{definition}
We say that a cover $\{B_j\}_{j\in J}$ is a \emph{refinement} of a cover $\{A_i\}_{i\in I}$ of the same set $X$, if for every $j\in J$ there is an $i \in I$ with $B_j\subseteq A_i$. 
\end{definition}

\begin{theorem}[Stone]{\rm (Cf.\ \cite[page 280, Theorem 4.4.1]{Eng})}\label{stone}
Every open cover of a met\-rizable space has a $\sigma$-discrete open refinement.
\end{theorem}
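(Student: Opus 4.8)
The plan is to use metrizability to fix a metric $d$ inducing the topology of $X$ and then to carry out A.~H.~Stone's classical construction. Let $\{U_s\}_{s\in S}$ be the given open cover. First I would well-order the index set $S$ (invoking the well-ordering theorem), so that it makes sense to speak of the least index with a prescribed property; write $B(x,r)$ for the open $d$-ball. For each $n\in\N$ I would then define a family $\{V_{s,n}\}_{s\in S}$ by recursion on $n$, letting $V_{s,n}$ be the union of all balls $B(x,2^{-n})$ such that
\begin{enumerate}[\normalfont (a)]
\item $s$ is the least index in $S$ with $x\in U_s$;
\item $x\notin V_{t,m}$ for every $m<n$ and every $t\in S$;
\item $B(x,3\cdot 2^{-n})\subseteq U_s$.
\end{enumerate}
Since the families $\{V_{t,m}\}_{t\in S}$ for $m<n$ are already available when $V_{s,n}$ is formed, the recursion is legitimate, and the claim will be that $\{V_{s,n}\mid s\in S,\ n\in\N\}$ is the desired $\sigma$-discrete open refinement.

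Next come the easy verifications. Each $V_{s,n}$ is a union of open balls, hence open. For the refinement property, every generating ball satisfies $B(x,2^{-n})\subseteq B(x,3\cdot 2^{-n})\subseteq U_s$ by (c), so $V_{s,n}\subseteq U_s$. For the covering property, I would fix $x\in X$, let $s$ be the least index with $x\in U_s$ (which exists since $\{U_s\}$ covers $X$ and $S$ is well-ordered), and choose $n$ with $B(x,3\cdot 2^{-n})\subseteq U_s$ by openness. If $x\in V_{t,m}$ for some $m<n$, then $x$ is already covered; otherwise conditions (a)--(c) hold for $x$ at level $n$ with this index $s$, so $x\in B(x,2^{-n})\subseteq V_{s,n}$. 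Either way $x$ lies in a member of the family.

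The heart of the argument, and the step I expect to be the main obstacle, is to show that for each fixed $n$ the family $\{V_{s,n}\}_{s\in S}$ is discrete. I would establish the quantitative separation estimate $d(x,y)>2^{-n}$ whenever $x\in V_{s,n}$, $y\in V_{t,n}$ and $s\ne t$. Indeed, pick generating centres $a,b$ with $x\in B(a,2^{-n})\subseteq V_{s,n}$ and $y\in B(b,2^{-n})\subseteq V_{t,n}$, and assume $s<t$. Condition (c) for $a$ gives $B(a,3\cdot 2^{-n})\subseteq U_s$, while condition (a) for $b$ forces $b\notin U_s$ (as $t$ is the least index with $b\in U_t$ and $s<t$); hence $b\notin B(a,3\cdot 2^{-n})$, i.e.\ $d(a,b)\ge 3\cdot 2^{-n}$. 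The triangle inequality then yields
\[
d(x,y)\ge d(a,b)-d(a,x)-d(b,y)>3\cdot 2^{-n}-2^{-n}-2^{-n}=2^{-n}.
\]
Consequently the ball $B(z,2^{-(n+1)})$ about any $z\in X$ can meet at most one of the sets $V_{s,n}$, which is precisely the discreteness condition. Since $\{V_{s,n}\}_{s,n}$ is then a countable union (over $n\in\N$) of discrete families, it is $\sigma$-discrete in the sense of Definition~\ref{sigmadiscret}, completing the proof. The subtlety to be careful about throughout is the bookkeeping in the recursion—condition (b) is what prevents a point from being "used'' at two different levels and is essential both for the covering step and for keeping the construction well founded.
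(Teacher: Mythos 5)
Your proposal is correct and follows exactly the same construction as the paper: the same recursively defined sets $V_{s,n}$ built from balls $B(x,2^{-n})$ subject to the three conditions (least index, not previously used, $B(x,3\cdot 2^{-n})\subseteq U_s$), and the same quantitative separation estimate $d(x,y)>2^{-n}$ to establish discreteness of each level. There is nothing substantive to distinguish the two arguments.
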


\begin{proof}
Let $\{U_i\}_{i\in I}$ be an open cover of a met\-rizable space $X$ with metric $d$. The Well-ordering Theorem guarantees the existence of a well-ordering $<$ on $I$.  For every $n\in \N$, we inductively build a family $\mathcal{V}_n = \{ V_{i,n}\}_{i\in I}$ as follows. For each $i\in I$, set $V_{i,n} = \tbigcup B(c,1/2^n)$, where the union is taken over all the $c\in X$ satisfying the following conditions:\\[2mm]
(1) $i$ is the least element of $I$ such that $c\in U_i$,\\[1mm]
(2) $c\not\in V_{i',j}$ for all $j<n$ and $i' \in I$,\\[1mm]
(3) $B(c,3/2^n)\subseteq U_i$.\\[2mm]
The desired $\sigma$-discrete open refinement will turn out to be $\mathcal{V}=\tbigcup_{n\in N}\mathcal{V}_n$. Firstly, the openess of all element in $\mathcal{V}$ is clear (each $V_{i,n}$ is a union of open balls). Further, let us check that $\mathcal{V}$ is a cover of $X$. Indeed, let $x\in X$ and observe that the set 
$$I_x= \{i\in I \mid x\in U_i\}$$
is nonempty because $\{U_i\}_{i \in I}$ is a cover of $X$.  Then, by the well-ordering of $I$, there is a least element in $I_x$, say $i\in I$.  Since $x\in U_i$, which is open, there is an $n\in \N$ such that $B(x,3/2^n)\subseteq U_i$. Thus $x$ satisfies properties (1) and (3).  We distinguish two cases: if property (2) is also satisfied, one has $x \in V_{i,n}$. If (2) does not hold, there is a $j<n$ and a $i' \in I$ such that $x\in V_{i',j}$. Thus, $\mathcal{V}$ is a cover of $X$. Condition (3) yields that $\mathcal{V}$ is a refinement of $\{U_i\}_{i\in I}$, because
$$B(c,1/2^n) \subseteq B(c,3/2^n) \subseteq U_i \implies V_{i,n} = \tbigcup B(c,1/2^n) \subseteq U_i.$$
The only point remaining is to show that $\mathcal{V}$ is $\sigma$- discrete, i.e. that $\mathcal{V}_n$ is discrete for all $n\in \N$. We will first show that the following property is satisfied for each $n\in \N$:
\begin{equation}\label{propertypn} \tag{P$_n$} x_1 \in V_{i_1,n},\, x_2\in V_{i_2,n} \textrm{ and } i_1\ne i_2 \implies d (x_1,x_2)>\frac{1}{2^n}.\end{equation}

Let  $x_1 \in V_{i_1,n}$,  $x_2\in V_{i_2,n}$ and without loss of generality assume $i_1<i_2$. By the definition of $V_{i_1,n}$ and $V_{i_2,n}$, there exist $c_1,c_2\in X$ satisfying properties (1)--(3) such that $x_1\in B(c_1, 1/2^n)$ and $x_2\in B(c_2, 1/2^n)$. Property (3) yields $B(c_1, 3/2^n) \subseteq U_{i_1}$, and from (1) we have $c_2 \not\in U_{i_1}$ (because $i_1<i_2$). Thus, one has $c_2\not\in B(c_1, 3/2^n)$, i.e. $d(c_1,c_2)\geq 3/2^n$. From the triangle inequality we obtain
$$d(x_1,x_2)\geq d(c_1,c_2) - d(c_1,x_1) -d(c_2,x_2) > \frac{3}{2^n} - \frac{1}{2^n} -\frac{1}{2^n} = \frac{1}{2^n},$$
which proves  (\ref{propertypn}). This property shows that $\mathcal{V}_n$ is discrete. Indeed, let $x\in X$. Then $B(x,1/2^{n+1})\in\mathcal{N}_x$ meets $\mathcal{V}_n$ at most once, for otherwise we would have $i_1\neq i_2$ such that $V_{i_1,n}\cap B(x,1/2^{n+1})\ne \varnothing $ and $V_{i_2,n}\cap B(x,1/2^{n+1})\ne \varnothing$. Hence, there would be $x_1\in V_{i_1,n}$ and $x_2 \in V_{i_2,n}$ such that
 $$d(x_1,x_2) \leq d(x_1,x)+d(x,x_2)< \frac{1}{2^{n+1}}+\frac{1}{2^{n+1}}=\frac{1}{2^n},$$
 a contradiction with (\ref{propertypn}). 
\end{proof}

\begin{theorem}{\rm (Cf.\ \cite[page 281, Theorem 4.4.3]{Eng})}\label{discretebasisexists}
Every met\-rizable space has a $\sigma$-discrete basis.
\end{theorem}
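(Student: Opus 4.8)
The plan is to apply Stone's theorem (Theorem~\ref{stone}) to a countable family of open covers indexed by the scale $1/n$, and then assemble the resulting $\sigma$-discrete open refinements into a single basis. Fix a metric $d$ inducing the topology on the metrizable space $X$. For each $n\in\N$, consider the open cover $\mathcal{U}_n=\{ B(x,1/n) \mid x\in X\}$ of $X$ by all open balls of radius $1/n$. By Theorem~\ref{stone}, each $\mathcal{U}_n$ admits a $\sigma$-discrete open refinement $\mathcal{B}_n$. I would then set $\mathcal{B}=\tbigcup_{n\in\N}\mathcal{B}_n$ and claim that $\mathcal{B}$ is the desired $\sigma$-discrete basis.

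First I would check that $\mathcal{B}$ is $\sigma$-discrete. By construction each $\mathcal{B}_n$ is $\sigma$-discrete, i.e. a countable union of discrete families (Definition~\ref{sigmadiscret}). Hence $\mathcal{B}$ is a countable union (over $n\in\N$) of countable unions of discrete families, which is again a countable union of discrete families; thus $\mathcal{B}$ is $\sigma$-discrete. The openness of each member of $\mathcal{B}$ is immediate since every $\mathcal{B}_n$ consists of open sets.

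The crux of the argument is verifying that $\mathcal{B}$ is a basis, and this is where I expect the main (though still elementary) work to lie, since it is exactly here that the refinement relation must be exploited to control the diameters of the basic sets. Let $U\subseteq X$ be open and take $x\in U$. Choose $r>0$ with $B(x,r)\subseteq U$, and pick $n\in\N$ with $2/n<r$. Since $\mathcal{B}_n$ is a cover of $X$, there is some $B\in\mathcal{B}_n\subseteq\mathcal{B}$ with $x\in B$. Because $\mathcal{B}_n$ refines $\mathcal{U}_n$, there exists $y\in X$ such that $B\subseteq B(y,1/n)$; in particular $x\in B(y,1/n)$, so $d(x,y)<1/n$. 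Then for every $z\in B$ one has $z\in B(y,1/n)$, whence
$$d(x,z)\le d(x,y)+d(y,z)<\tfrac{1}{n}+\tfrac{1}{n}=\tfrac{2}{n}<r.$$
This shows $B\subseteq B(x,r)\subseteq U$, so $x\in B\subseteq U$ with $B\in\mathcal{B}$. Since $x$ and $U$ were arbitrary, $\mathcal{B}$ is a basis of $X$, completing the proof. The only subtle points are the choice of $n$ with $2/n<r$ (to absorb the two applications of $1/n$ in the triangle inequality) and the use of the refinement property to bound the diameter of each $B$; everything else is routine.
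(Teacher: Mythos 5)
Your proposal is correct and follows essentially the same route as the paper: apply Stone's theorem to the covers by $1/n$-balls, take the union of the resulting $\sigma$-discrete open refinements, and use the refinement property together with the triangle inequality to show the union is a basis. The only cosmetic difference is that you bound $d(x,z)$ directly for $z\in B$, while the paper phrases the same estimate as the inclusion $B(y_{2n_0},1/(2n_0))\subseteq B(x,1/n_0)$.
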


\begin{proof}
Let $X$ be a met\-rizable space. For each $n\in \N$, consider the open cover of $X$ given by the $1/n$-balls, namely $\{B(x,1/n)\}_{x\in X}$. By the previous theorem, there is an open $\sigma$-discrete refinement $\beta_n$ of $\{B(x,1/n)\}_{x\in X}$ for each $n\in \N$. It is clear that the union $\beta=\tbigcup_{n\in \N}\beta_n$ is also an open $\sigma$-discrete cover. Let us check that it is indeed a base of $X$. Let $U$ be open in $X$ and take $x \in U$. For each $n\in\N$, $\beta_n$ is an open cover of $X$, so there is an open $B_n\in\beta_n$ such that $x\in B_n$. Further, for every $n\in \N$ one has that $\beta_n$ refines $\{B(y,1/n)\}_{y\in X}$, and thus there is $y_n\in X$ such that $x\in B_n \subseteq B(y_n,1/n)$. 

Since $x\in U$, which is open, there is $n_0\in \N$ with $x\in B(x,1/n_0)\subseteq U$. Note that then 
$$B\left(y_{2n_0}, \textstyle\frac{1}{2n_0}\right) \subseteq B\left(x,\textstyle\frac{1}{n_0}\right).$$
Indeed, let $z\in X$ such that $d(z,y_{2n_0})< 1/(2n_0)$. Therefore, $$d(z,x)\leq d(z,y_{2n_0}) + d(y_{2n_0}, x) <\frac{1}{2n_0} + \frac{1}{2n_0}=\frac{1}{n_0}.$$
Thus, one has $x\in B_{2n_0} \subseteq U$, and $\beta$ is a basis of $X$. 
\end{proof}

We are finally ready to prove the main theorem in this subsection. This result was first published in \cite{KO} by Hans-Joachim Kowalsky in 1961.

\begin{theorem}[Kowalsky's Hedgehog Theorem] \label{Kowalsky} Let $\kappa \geq \aleph_0$. Then the cartesian product  $\bigl(MJ(\kappa)\bigr)^{\aleph_{0}}$ of $\aleph_0$ copies of the metric hedgehog of $\kappa$ spines is universal among all met\-rizable spaces of weight $\kappa$. 
\end{theorem}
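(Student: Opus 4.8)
The plan is to verify the two defining conditions of universality separately. Condition (i)—that $Z:=\bigl(MJ(\kappa)\bigr)^{\aleph_0}$ is itself metrizable of weight $\kappa$—is immediate from earlier results: $MJ(\kappa)$ is a metric space, so $Z$ is metrizable by Theorem~\ref{countablemetrizable}, and $\omega(Z)=\kappa$ by Corollary~\ref{corwei}. The substance of the theorem lies in condition (ii), the embeddability of an arbitrary metrizable space $X$ of weight $\kappa$ into $Z$.

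For (ii) I would first replace the metric on $X$ by an equivalent one bounded by $1$ (Lemma~\ref{lemmabd1}). By Theorem~\ref{discretebasisexists}, $X$ has a $\sigma$-discrete basis, and by Lemma~\ref{smallerbasis} I may trim it to a basis of cardinality $\leq\kappa$ (a subfamily of a $\sigma$-discrete family is again $\sigma$-discrete, since a subfamily of a discrete family is discrete). Write it as $\mathcal{B}=\tbigcup_{n\in\N}\mathcal{B}_n$ with each $\mathcal{B}_n$ discrete; since $|\mathcal{B}_n|\leq\kappa=|I|$, enumerate $\mathcal{B}_n=\{B_{n,i}\}_{i\in I}$, allowing empty sets. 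For each $n$ I would then define $f_n\colon X\longrightarrow MJ(\kappa)$ by setting $f_n(x)=\bigl(d(x,X\smallsetminus B_{n,i}),i\bigr)$ when $x\in B_{n,i}$ and $f_n(x)=\zerohedgehog$ otherwise; this is well-defined because a discrete family is pairwise disjoint (Lemma~\ref{propertiesdisjointdiscrete}).

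To show each $f_n$ is continuous I would invoke the initial-topology description of $MJ(\kappa)$ in Proposition~\ref{inittop}, together with the universal property in Proposition~\ref{univpropinitial}: it suffices to check that $\pi_i\circ f_n$ and $\pi_\kappa\circ f_n$ are continuous. Writing $\phi_i:=d(\cdot,X\smallsetminus B_{n,i})$, which is continuous by Lemma~\ref{distcontclosed} and strictly positive precisely on the open set $B_{n,i}$, one computes $\pi_i\circ f_n=\phi_i$, hence continuous. The delicate term is $\pi_\kappa\circ f_n=\sup_i\phi_i$, and this is where discreteness of $\mathcal{B}_n$ is essential: each $x$ has a neighborhood $N$ meeting at most one member $B_{n,i_0}$, and on $N$ the supremum $\sup_i\phi_i$ coincides with $\phi_{i_0}$ (or with the constant $0$ if $N$ meets no member), hence is continuous at $x$. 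This step—promoting the pointwise supremum of the distance functions to a continuous function via local finiteness—is the main obstacle I expect; without discreteness the height map $\pi_\kappa\circ f_n$ would generally fail to be continuous, and it is precisely the extra generator $\pi_\kappa$ (present in the metric but not the compact topology, cf.\ Propositions~\ref{initialforcompact} and~\ref{inittop}) that forces us to control it.

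Finally I would form the diagonal $f=\Delta_{n\in\N}f_n\colon X\longrightarrow Z$, which is continuous, and conclude it is an embedding via the Diagonal Theorem~\ref{diagonal}(ii). For that it suffices to show that $\{f_n\}$ separates points from closed sets, since separation of points then follows automatically because metrizable spaces are $T_1$ (Remark~\ref{remT1}). Given $x\notin F$ with $F$ closed, the basis property yields some $B_{n,i}$ with $x\in B_{n,i}\subseteq X\smallsetminus F$. Then $\pi_i(f_n(x))=\phi_i(x)>0$, whereas $F\cap B_{n,i}=\varnothing$ forces $f_n(F)\subseteq\pi_i^{-1}(\{0\})$; since $\pi_i$ is continuous this preimage is closed, so $\overline{f_n(F)}\subseteq\pi_i^{-1}(\{0\})$ cannot contain $f_n(x)$. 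Thus $f_n(x)\notin\overline{f_n(F)}$, which completes the verification and hence the embedding.
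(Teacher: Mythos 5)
Your proof is correct, and its skeleton coincides with the paper's: a $\sigma$-discrete basis trimmed to cardinality $\kappa$, the distance functions $d(\cdot,X\smallsetminus B_{n,i})$ sending each basis element into its own spine, one hedgehog-valued map per discrete level, and the Diagonal Theorem with separation of points handled by Remark~\ref{remT1}. Where you genuinely diverge is in the continuity of each level map $f_n$. The paper first builds $G_n$ on the closed set $\tbigcup_{i}\overline{U_i}$ as a combined map over the closed discrete cover $\{\overline{U_i}\}_i$ (Proposition~\ref{propcombinedmap}, which rests on Proposition~\ref{unionclosed}), and then pastes $G_n$ with the constant map $\zerohedgehog$ on $X\smallsetminus\tbigcup_i U_i$ via the Pasting Lemma, checking compatibility on the overlap. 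You instead invoke the initial-topology description of $MJ(\kappa)$ (Propositions~\ref{inittop} and~\ref{univpropinitial}) and verify continuity coordinatewise: $\pi_i\circ f_n=\phi_i$ is continuous outright, and the height $\pi_\kappa\circ f_n=\sup_i\phi_i$ is handled by the local-finiteness consequence of discreteness, since near each point the supremum agrees with a single $\phi_{i_0}$ (or with $0$). Your route makes explicit why the extra generator $\pi_\kappa$ of the metric topology is the real obstruction and why discreteness (rather than mere disjointness) is needed, at the cost of redoing by hand what Proposition~\ref{propcombinedmap} packages; the paper's route reuses its general machinery on discrete closed covers but hides the role of $\pi_\kappa$ inside that machinery. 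A further cosmetic difference: the paper indexes the $n$th discrete family by a block $I_n$ of a partition of $I$, so each $F_n$ only touches the spines in $I_n$, whereas you reuse the whole index set $I$ at every level; both conventions are compatible with the diagonal argument.
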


\begin{proof}
Let $\kappa \geq \aleph_0$, and denote $Z=\bigl(MJ(\kappa)\bigr)^{\aleph_{0}}$. By Corollary~\ref{corwei}, the weight of $Z$ is $\kappa$, and by Theorem~\ref{countablemetrizable} one has that $Z$ is met\-rizable. Thus $Z$ belongs to the class of all met\-rizable spaces of weight $\kappa$. 

Let $X$ be a met\-rizable space of weight $\kappa$. Theorem~\ref{discretebasisexists} yields a \break $\sigma$-discrete basis $\beta$, that is, $\beta = \dot\tbigcup_{n\in\N} \beta_n$ with each $\beta_n=\{U_i\}_{i\in I_n}$ discrete. By Lemma~\ref{smallerbasis} we can assume that  the cardinality of $I= \dot\tbigcup_{n\in \N} I_n$ is $\kappa$; for otherwise we could take a basis  $\beta'\subseteq \beta$ such that $|\beta'|=\kappa$, whose existence is asserted by the lemma, and clearly $\beta'$ would also be $\sigma$-discrete. Hence we can assume that the index set $I$ coincides with the index set used to define the hedgehog $MJ(\kappa)$.

Let $n\in \N$.  By virtue of Lemma~\ref{lemmabd1} we can assume that the metric in $X$ is bounded by $1$. For each  $i\in I_n$, let $f_i \colon X\longrightarrow [0,1]$ denote the continuous mapping $f_{X \smallsetminus U_i}$ defined in Lemma~\ref{distcontclosed}.  By the same lemma, since $X\smallsetminus U_i$ is closed  one has $X\smallsetminus U_i = f_i^{-1}\left(\{0\}\right)$. Hence $U_i =[f_i>0]$. Let  $\psi_i\colon [0,1]\longrightarrow p\left( [0,1]\times \{i\} \right)$ be the homeomorphism given by $\psi_i (t)=(t,i)$ (note that the expression makes sense because the index sets coincide). Then we define 
 \[\begin{array}{rcclll}
 G_n \colon&\tbigcup_{i\in I_n} \overline{U_i}&\longrightarrow&MJ(\kappa)&&\cr
 &x&\longmapsto&G_n(x)  = (\psi_i \circ f_i)(x) \quad \textrm{if } x\in \overline{U_i}.
 \end{array}\]

Note that $G_n$ is the combined mapping with respect to the closed discrete (see Lemma~\ref{closdiscrete}) cover $\{\overline{U_i}\}_{i\in I_n}$ of $\tbigcup_{i\in I_n} \overline{U_i}$ .  Since each of the functions in the family is continuous, Proposition~\ref{propcombinedmap} yields the continuity of $G_n$. Define
 \[\begin{array}{rcclll}
 F_n \colon&X&\longrightarrow&MJ(\kappa)&&\cr
 &x&\longmapsto&F_n(x)  =\begin{cases} G_n(x) & \textrm{if } x\in \tbigcup_{i\in I_n} \overline{U_i}; \\ \zerohedgehog & \textrm{if } x\in X\smallsetminus \tbigcup_{i\in I_n} U_i.\end{cases}
 \end{array}\]
 We have that $\tbigcup_{i\in I_n} \overline{U_i}$ is closed (by Proposition~\ref{unionclosed}). Clearly, $X\smallsetminus \tbigcup_{i\in I_n} U_i$ is also closed.  Let us check that $F_n$ is a well-defined combined map (i.e. it comes from compatible mappings). Indeed, if $x\in  \tbigcup_{i\in I_n} \overline{U_i} \cap \left(X\smallsetminus \tbigcup_{i\in I_n} U_i \right)$, then $x\in \overline{U_i}$ and $x\in X\smallsetminus U_i$ for some $i\in I_n$.  In other words, we have $G_n(x) = (\psi_i \circ f_i)(x)$ and $f_i(x)=0$. Thus $G_n(x)=\psi_i(0) =\zerohedgehog$ and $F_n$ is well defined. Hence, the Pasting Lemma gives that $F_n$ is continuous. 
 
 In the last step of the proof we use the Diagonal Theorem. By Remark~\ref{remT1} it is enough to show that  the family $\{F_n\}_{n\in\N}$ separates points and closed sets.  Let $x\in X$ and  $F\subseteq X$ a closed subset such that $x\not\in F$. We have $x\in X\smallsetminus F$, which is open, and since $\beta$ is a basis of $X$, there exist $n\in \N$ and $i\in I_n$ such that $x\in U_i \subseteq X\smallsetminus F$, from which we obtain 
 \begin{align*} F_n(x) \in F_n(U_i) &=G_n (U_i) = \psi_i \left(f_i (U_i) \right) \\&= \psi_i \left( f_i \left( f_i^{-1} \left((0,1]\right) \right) \right) \subseteq   \psi_i ( (0,1] ) = p \left((0,1]\times \{i\}\right).\end{align*}
Further, we have  $F_n(F) \tbigcap  p \left((0,1]\times \{i\}\right)=\varnothing$. Indeed, if $F_n(y) \in F_n(F)$ (with $y\in F$) and $F_n(y) \in  p \left((0,1]\times \{i\}\right)$, one necessarily has $y\in U_i$, and thus $y\in F \cap U_i$, a contradiction (note that $U_i\subseteq X\smallsetminus F$).

Hence, it follows that $F_n(x)  \in p \left((0,1]\times \{i\}\right) \subseteq MJ(\kappa)\smallsetminus F_n (F)$. Note that $p \left((0,1]\times \{i\}\right)$ is open in the metric hedgehog, and thus 
$$F_n(x)\in \textrm{int} \left( MJ(\kappa) \smallsetminus F_n(F)\right)= MJ(\kappa) \smallsetminus \overline{F_n(F)},$$ i.e. $F_n(x)\not\in \overline{F_n(F)}$, as desired. By the Diagonal Theorem (see Theorem~\ref{diagonal}) the diagonal mapping is an embedding $\Delta_{n\in \N} F_n \colon X \longrightarrow Z$.
\end{proof}

 Kowalsky's hedgehog theorem yields many interesting corollaries, and in what follows we give some of them.

The following result is the solution of \cite[page~286,\ Problem~4.4.C.\,(a)]{Eng}.

\begin{theorem}\label{app1}
Let $X$ be a met\-rizable topological space of weight $\kappa \leq \mathfrak{c}$. Then there exists a continuous bijection $F \colon X\longrightarrow Y$ onto a separable met\-rizable space $Y$.
\end{theorem}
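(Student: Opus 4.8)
The plan is to use Kowalsky's Hedgehog Theorem to reduce the statement to the construction of a single continuous bijection of $MJ(\kappa)$ onto a separable metrizable space; the hypothesis $\kappa\le\mathfrak{c}$ will be used precisely at that point. First I would dispose of the case $\kappa<\aleph_0$: then $\omega(X)\le\aleph_0$, so $X$ is second countable, hence separable by Lemma~\ref{2ndcountiffsep}, and already metrizable, so one may take $Y=X$ and $F=\mathrm{id}_X$. Thus I assume from now on that $\kappa\ge\aleph_0$.

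The heart of the argument is the following construction. Since $\kappa=|I|\le\mathfrak{c}$ equals the cardinality of the unit circle $S^1\subseteq\R^2$, I fix an injection $i\mapsto u_i$ of $I$ into $S^1$, i.e.\ I attach to each spine a distinct unit vector of the plane. I then define $\iota\colon MJ(\kappa)\to\R^2$ by $\iota(t,i)=t\,u_i$ (so that $\iota(\zerohedgehog)=0$). This map is injective: if $\iota(t,i)=\iota(s,j)\ne 0$, then taking norms gives $t=s>0$ and hence $u_i=u_j$, so $i=j$; and the only preimage of $0$ is $\zerohedgehog$. Moreover $\iota$ is $1$-Lipschitz, because for $i=j$ one has $\|t u_i-s u_j\|=|t-s|$, while for $i\ne j$ the triangle inequality yields $\|t u_i-s u_j\|\le t+s$; in either case $\|\iota(t,i)-\iota(s,j)\|\le d\bigl((t,i),(s,j)\bigr)$. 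In particular $\iota$ is continuous, so it is a continuous bijection of $MJ(\kappa)$ onto the subspace $V:=\iota(J(\kappa))\subseteq\R^2$, which is separable and metrizable by Corollary~\ref{SepHered}.

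It remains to assemble these pieces. Taking the countable power of $\iota$ gives a continuous bijection $\iota^{\aleph_0}\colon Z:=\bigl(MJ(\kappa)\bigr)^{\aleph_0}\longrightarrow V^{\aleph_0}$, and $V^{\aleph_0}$ is separable by Lemma~\ref{SepCountProd} and metrizable by Theorem~\ref{countablemetrizable}. By Kowalsky's Hedgehog Theorem (Theorem~\ref{Kowalsky}) there is an embedding $e\colon X\hookrightarrow Z$. Then $F:=\iota^{\aleph_0}\circ e\colon X\to Y$, where $Y:=\iota^{\aleph_0}(e(X))$, is continuous and injective, hence a continuous bijection of $X$ onto its image $Y$; and $Y$, being a subspace of the separable metrizable space $V^{\aleph_0}$, is itself separable (Corollary~\ref{SepHered}) and metrizable. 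This $F$ is the desired map.

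The only genuinely delicate step is the planar construction of $\iota$; everything else is routine bookkeeping with products, embeddings and subspaces. The essential point — and the only place where $\kappa\le\mathfrak{c}$ is unavoidable — is that $\mathfrak{c}$ distinct directions are available in $\R^2$, which lets the $\kappa$ spines be realized as distinct rays so that the radial map simultaneously separates points and contracts distances, whence continuity. The hypothesis is also necessary in general, since a separable metric space has at most $\mathfrak{c}$ points, so no continuous bijection can exist when $X$ has more than $\mathfrak{c}$ points (as happens, for instance, for $X=MJ(\kappa)$ with $\kappa>\mathfrak{c}$).
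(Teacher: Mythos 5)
Your proposal is correct and follows essentially the same route as the paper: realize the spines of $MJ(\kappa)$ as distinct rays in the plane to get a continuous injection into $\R^2$, take the countable power, and compose with the Kowalsky embedding. The only differences are cosmetic — you use an abstract injection of $I$ into the unit circle with a clean $1$-Lipschitz estimate where the paper writes the explicit map $(t,s)\mapsto \frac{t}{1+s^2}(1,s)$ on $MJ(\mathfrak{c})$ and checks continuity by hand, and you sensibly dispose of the finite-weight case before invoking Theorem~\ref{Kowalsky}, which requires $\kappa\geq\aleph_0$.
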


\begin{proof}
First we will show that there is a continuous one-to-one mapping from $MJ(\mathfrak{c})$ into the plane with the usual topology. Assume that the index set $I$ is $(0,+\infty)$. Consider the  following natural mapping that sends each spine into the corresponding segment in the plane:
 \[\begin{array}{rcclll}
 f\colon&MJ(\mathfrak{c})&\longrightarrow&(\R^2,\tau_u)&&\cr
 &(t,s) &\longmapsto& f(t,s)=\displaystyle\frac{t}{1+s^2} (1,s).
 \end{array}\]
 Clearly $f$ is well-defined and one-to-one. Let us check that $f$ is continuous. Take $(t_0,s_0)\in MJ(\mathfrak{c})$ and $\varepsilon > 0$. First, assume that $t_0 >0$. Let $\delta=\varepsilon \wedge t_0 > 0$. Now, for every $(t,s)\in MJ(\mathfrak{c})$ such that $d\left( (t,s), (t_0,s_0) \right) < \delta$, one necessarily has $s=s_0$  because $\delta < t_0$. An easy calculation shows that
 $$d\left( f(t,s), f(t_0,s_0) \right) = d \left( f(t,s_0), f(t_0,s_0) \right) = | t-t_0| = d\left( (t,s), (t_0,s_0) \right) < \varepsilon,$$
 which proves continuity at $(t_0,s_0)$. Suppose now that $t_0=0$ and let $\delta=\varepsilon$. Then, for every $(t,s)\in MJ(\mathfrak{c})$ such that $d( (t,s), \zerohedgehog)= t <\delta$, one has
 $$d\left( f(t,s), f(\zerohedgehog) \right) = d\left( f(t,s), (0,0) \right) = t <\varepsilon,$$
and thus $f$ is continuous. 

Let $f^*$ denote the product map $f^* \colon \left( MJ(\mathfrak{c})\right)^{\aleph_0} \longrightarrow \left( \R^2 \right)^{\aleph_0}$ assigning to the point $x=\{x_n\}_{n\in\N}$ the point $\{ f(x_n) \}_{n\in \N}$. Since $f^*$ is a product map, it is continuous, and $f^*$ is one-to-one because so is $f$.

To prove the assertion of the theorem, let $X$ be a met\-rizable space of weight $\kappa \leq \mathfrak{c}$. By virtue of Kowalsky's hedgehog theorem, in particular there is a continuous one-to-one mapping $h\colon X \longrightarrow \left( MJ(\kappa) \right)^{\aleph_0}$, and since  $\left( MJ(\kappa) \right)^{\aleph_0}$ embeds in $ \left( MJ(\mathfrak{c}) \right)^{\aleph_0}$, in particular there is a continuous one-to-one mapping $g\colon \left( MJ(\kappa) \right)^{\aleph_0} \longrightarrow\left( MJ(\mathfrak{c}) \right)^{\aleph_0}$. Finally, set  $F=  f^{*} \circ g \circ h \colon X \longrightarrow F(X)$.  We have that $H$ is a bijective continuous map. Further,  $F(X)$ is separable because of Lemma~\ref{SepCountProd} and Corollary~\ref{SepHered}.
\end{proof}

The following result is an extension of the  Kowalsky's hedgehog theorem which  provides a condition for a metric space to be embeddable as a \emph{closed} subspace of $\bigl(MJ(\kappa)\bigr)^{\aleph_{0}}$. The proof is a solution for \cite[page 286, Problem~4.4.B.]{Eng}.

\begin{theorem}\label{app2}
Let $\kappa \geq \aleph_0$. Then every completely met\-rizable space of weight $\kappa$ embeds as a closed subspace of $\bigl(MJ(\kappa)\bigr)^{\aleph_{0}}$.
\end{theorem}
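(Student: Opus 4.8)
The plan is to combine Kowalsky's Hedgehog Theorem with the $G_\delta$-characterization of completely met\-rizable subspaces, and then to absorb the resulting auxiliary factor $\R^{\aleph_0}$ back into $\bigl(MJ(\kappa)\bigr)^{\aleph_0}$ by means of Proposition~\ref{Rembeds}.

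First I would set $Z=\bigl(MJ(\kappa)\bigr)^{\aleph_0}$ and let $X$ be completely met\-rizable of weight $\kappa$. By Kowalsky's Hedgehog Theorem (Theorem~\ref{Kowalsky}) there is an embedding of $X$ into $Z$, and I identify $X$ with its image. Since $Z$ is met\-rizable (Theorem~\ref{countablemetrizable}) and $X$ is completely met\-rizable, Theorem~\ref{complmetrizGdelta} shows that $X$ is a $G_\delta$-subset of $Z$. Lemma~\ref{closedofcartesian} then gives that $X$ is homeomorphic to a \emph{closed} subspace of $Z\times \R^{\aleph_0}$. Thus the whole problem is reduced to exhibiting $Z\times\R^{\aleph_0}$ as a closed subspace of $Z$ itself.

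To this end I would first note that $MJ(\aleph_0)$ is isometric to a closed subspace of $MJ(\kappa)$: choosing a countable subset $I_0\subseteq I$, the set $p([0,1]\times I_0)$ inherits exactly the hedgehog metric, and it is closed, since a point $(t,i)$ with $i\notin I_0$ and $t>0$ has a small ball contained in its own spine (hence disjoint from every chosen spine), so it lies outside the closure, while $\zerohedgehog$ already belongs to the set. Composing the closed embedding $\R\hookrightarrow \bigl(MJ(\aleph_0)\bigr)^2$ of Proposition~\ref{Rembeds} with the closed embedding $\bigl(MJ(\aleph_0)\bigr)^2\hookrightarrow \bigl(MJ(\kappa)\bigr)^2$ (a product of two copies of the closed embedding just described), and using that a composition of closed embeddings is again a closed embedding, I obtain that $\R$ embeds as a closed subspace of $\bigl(MJ(\kappa)\bigr)^2$. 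Taking $\aleph_0$ copies, and using that a product of closed embeddings is a closed embedding (a product of closed sets being closed in the product topology), it follows that $\R^{\aleph_0}$ embeds as a closed subspace of $\bigl(\bigl(MJ(\kappa)\bigr)^2\bigr)^{\aleph_0}=\bigl(MJ(\kappa)\bigr)^{\aleph_0}=Z$. Consequently $Z\times\R^{\aleph_0}$ embeds as a closed subspace of $Z\times Z=\bigl(MJ(\kappa)\bigr)^{\aleph_0+\aleph_0}=\bigl(MJ(\kappa)\bigr)^{\aleph_0}=Z$, where the identifications rest on the cardinal equalities $2\cdot\aleph_0=\aleph_0$ and $\aleph_0+\aleph_0=\aleph_0$. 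Chaining the two closed embeddings $X\hookrightarrow Z\times\R^{\aleph_0}\hookrightarrow Z$ produces the desired closed embedding of $X$ into $\bigl(MJ(\kappa)\bigr)^{\aleph_0}$.

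The main obstacle is the last step, the careful bookkeeping around $\R^{\aleph_0}$: one must verify that each map in the chain is genuinely a \emph{closed} embedding rather than a mere embedding, and that closedness is preserved under composition and under countable products, together with the (harmless but essential) cardinal arithmetic that lets one refold $Z\times Z$ and $\bigl(\bigl(MJ(\kappa)\bigr)^2\bigr)^{\aleph_0}$ back into a single copy of $Z$.
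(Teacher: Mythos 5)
Your proposal is correct and follows essentially the same route as the paper: Kowalsky's theorem to embed $X$ into $Z$, Theorem~\ref{complmetrizGdelta} to see the image is $G_\delta$, Lemma~\ref{closedofcartesian} to obtain a closed embedding into $Z\times\R^{\aleph_0}$, and Proposition~\ref{Rembeds} (together with the closed embedding $MJ(\aleph_0)\hookrightarrow MJ(\kappa)$) to fold $\R^{\aleph_0}$ back into $Z$. The extra bookkeeping you supply about closedness under composition and countable products is exactly the detail the paper leaves implicit.
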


\begin{proof}
Let $X$ be a completely met\-rizable space of weight $\kappa$ with $\kappa \geq \aleph_0$, and denote $Z=\bigl(MJ(\kappa)\bigr)^{\aleph_{0}}$. By  Kowalsky's hedgehog theorem, $X$ embeds in $Z$. Let $Y \subseteq Z$ be the set such that $X$ and $Y$ homeomorphic. Since complete metrizability is a topological property (see Remark~\ref{compmetrtopprop}), one has that $Y$ is completely met\-rizable. Thus, by virtue of Theorem~\ref{complmetrizGdelta}, $Y$ is a $G_\delta$ set. Now, by Lemma~\ref{closedofcartesian}, it follows that $Y$ (and therefore $X$) is homeomorphic to a closed subset of $Z\times \R^{\aleph_0}$. Proposition~\ref{Rembeds} tells us that $\R$ embeds in $\bigl( MJ(\aleph_0) \bigr)^2$ as a closed subset. Further, it is clear that $MJ(\aleph_0)$ embeds as a closed subset of $MJ(\kappa)$ because $\kappa \geq \aleph_0$. Thus, $X$ embeds in $Z \times \bigl(\bigl(MJ(\kappa)\bigr)^2\bigr)^{\aleph_{0}} \cong Z$ as a closed subset. 
\end{proof}

\subsection{The metric hedgehog and collectionwise normality} \label{secNor}

Using induction, it is easy to check that in a normal space any finite family of pairwise disjoint closed sets can be separated by a family open pairwise disjoint open sets:

\begin{proposition} \label{propfiniteclosed}
Let $\{F_n\}_{n=1}^k$ be a finite family of pairwise disjoint closed sets in a normal space $X$.  Then, there is a family $\{U_n\}_{n=1}^k$ of open pairwise disjoint sets such that $F_n \subseteq U_n$ for all $n=1,\dots,k$.
\end{proposition}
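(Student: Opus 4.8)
The plan is to argue by induction on the number $k$ of closed sets, using as the only genuine tool the definition of normality, i.e.\ that two disjoint closed sets can be enclosed in disjoint open sets. The base case is immediate: for $k=1$ one takes $U_1=X$, and the case $k=2$ is exactly normality applied to the disjoint closed pair $F_1,F_2$.

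For the inductive step I would assume the statement for any family of $k-1$ pairwise disjoint closed sets. Given $F_1,\dots,F_k$ pairwise disjoint and closed, the first move is to peel off $F_k$: since a finite union of closed sets is closed, $F_1\cup\cdots\cup F_{k-1}$ is closed and disjoint from $F_k$, so normality provides disjoint open sets $A\supseteq F_k$ and $B\supseteq F_1\cup\cdots\cup F_{k-1}$. Next I apply the induction hypothesis to the $k-1$ pairwise disjoint closed sets $F_1,\dots,F_{k-1}$, obtaining pairwise disjoint open sets $V_1,\dots,V_{k-1}$ with $F_i\subseteq V_i$. The definition that makes everything fit together is $U_k=A$ and $U_i=V_i\cap B$ for $i<k$.

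It then remains to check three easy things. Each $U_i$ still contains $F_i$: for $i<k$ this holds because $F_i\subseteq V_i$ and $F_i\subseteq F_1\cup\cdots\cup F_{k-1}\subseteq B$. The sets $U_1,\dots,U_{k-1}$ stay pairwise disjoint because $U_i\cap U_j\subseteq V_i\cap V_j=\varnothing$ for $i\ne j$. Finally $U_k$ meets none of the others, since $U_k\subseteq A$, $U_i\subseteq B$ for $i<k$, and $A\cap B=\varnothing$. All the $U_i$ are visibly open.

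The only conceptual point — and it is a mild one — is recognizing that one should separate $F_k$ from the \emph{union} of the remaining sets in a single application of normality, rather than from each $F_i$ separately; intersecting the inductively produced sets $V_i$ with the one open set $B$ is precisely the device that upgrades the induction hypothesis to a global pairwise-disjoint separation. No step requires anything beyond the closedness of finite unions and a single use of normality per level of the induction, which is why the result is indeed ``easy to check.''
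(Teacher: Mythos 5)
Your proof is correct and follows exactly the inductive strategy the paper indicates (it states the result with the remark that it is ``easy to check'' by induction and omits the details). Separating $F_k$ from the closed union $F_1\cup\cdots\cup F_{k-1}$ by normality and then intersecting the inductively obtained open sets with $B$ is a clean and complete way to carry out that induction, and it parallels the device the paper does write out for the countable discrete case in Theorem~\ref{aleph0true}.
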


Regarding Proposition~\ref{propfiniteclosed}, a natural question in a normal space is whether we can consider larger families of closed disjoint sets or not. Even in the countably infinite case, the answer is negative without additional assumptions:

\begin{example}
Consider the real line with the usual topology, which is a normal space, and take the countable pairwise disjoint family $\{ \{x\} \mid x\in \mathbb{Q} \}$ of rational singletons (they are closed).  It is clear that we cannot find a pairwise disjoint family of open sets each one containing a rational number.  Thus, Proposition~\ref{propfiniteclosed} fails when we replace \emph{finite} with \emph{countable}. \end{example}

Nevertheless, if we replace pairwise disjointness with a stronger condition, we will get a positive result. As one may expect, the required condition is discreteness. 

\begin{theorem}\label{aleph0true}
Let $X$ be a normal space and $\{F_n\}_{n\in \mathbb{N}}$ a discrete family of closed subsets of $X$. Then, there is a family $\{U_n\}_{n\in\mathbb{N}}$ of pairwise disjoint open subsets such that $F_n \subseteq U_n$ for each $n\in \mathbb{N}$.
\end{theorem}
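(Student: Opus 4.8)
The plan is to separate each $F_n$ from the union of all the other members of the family by means of normality, and then to shrink the resulting open sets so that they become pairwise disjoint. The first observation I would make is that, for each fixed $n$, the set $G_n=\tbigcup_{m\ne n} F_m$ is closed: the family $\{F_m\}_{m\ne n}$ is a subfamily of a discrete family, hence itself discrete (the same neighbourhood witnessing discreteness at a point still meets at most one member), and therefore Proposition~\ref{unionclosed} guarantees that its union is closed. Since a discrete family is in particular pairwise disjoint (Lemma~\ref{propertiesdisjointdiscrete}), we have $F_n\cap G_n=\varnothing$, so $F_n$ and $G_n$ are disjoint closed sets.

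Next I would invoke normality in its standard closure form: for the disjoint closed sets $F_n$ and $G_n$ there exists an open set $U_n$ with $F_n\subseteq U_n\subseteq \overline{U_n}\subseteq X\smallsetminus G_n$; equivalently, $\overline{U_n}\cap F_m=\varnothing$ whenever $m\ne n$. This already produces, for each $n$, an open neighbourhood of $F_n$ missing every other $F_m$, but the family $\{U_n\}_{n\in\N}$ need not be pairwise disjoint, and repairing this is the heart of the argument.

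The disjointification is the crucial step. I would set
$$U_n' = U_n \smallsetminus \tbigcup_{m<n} \overline{U_m}.$$
Each $U_n'$ is open, being the difference of $U_n$ and a finite (hence closed) union of closed sets. It still contains $F_n$, because for every $m<n$ we have $\overline{U_m}\cap F_n=\varnothing$, so no point of $F_n$ is removed in passing from $U_n$ to $U_n'$. Finally, the $U_n'$ are pairwise disjoint: if $n<k$, then $U_k'\subseteq X\smallsetminus \overline{U_n}$ by construction, while $U_n'\subseteq U_n\subseteq \overline{U_n}$, whence $U_n'\cap U_k'=\varnothing$. Thus $\{U_n'\}_{n\in\N}$ is the required family. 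The only genuinely delicate point is this finite-subtraction trick, which exploits that only finitely many earlier closures appear; everything else is a routine application of normality combined with the closedness of unions of subfamilies of a discrete closed family.
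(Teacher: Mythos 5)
Your proof is correct and follows essentially the same strategy as the paper's: separate each $F_n$ from the closed union $\bigcup_{m\ne n}F_m$ via normality (justified by Proposition~\ref{unionclosed} and Lemma~\ref{propertiesdisjointdiscrete}), then disjointify using only the finitely many predecessors. The paper intersects $U_n$ with the open sets $V_1,\dots,V_{n-1}$ produced by the two-open-sets form of normality, whereas you subtract the closures $\overline{U_1},\dots,\overline{U_{n-1}}$ obtained from the closure form — an equivalent finite modification.
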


\begin{proof}
Since the family $\{F_n\}_{n\in \N}$ of closed sets is discrete, Proposition~\ref{unionclosed} gives that $\tbigcup_{n\in\N} F_n$ is closed. 

Let us construct the desired family of open sets. For each $n\in\N$, the sets 
$F_{n}$ and  $\tbigcup_{m\neq n} F_m = \left(\tbigcup_{m\in \N} F_m \right) \smallsetminus F_n$ are closed and disjoint. Since $X$ is normal, there exist two open and disjoint sets $U_n$ and $V_n$ satisfying $F_n \subseteq U_n$ and $\tbigcup_{m\neq n} F_m \subseteq V_n$. Finally, define 
$$W_1=U_1 \quad \textrm{ and } \quad W_n=U_n\cap V_1\cap \dots \cap V_{n-1}, \quad \textrm{for each } n > 1.$$
We shall show that $\{W_n\}_{n\in \N}$ is the desired family. Clearly $W_n$ is open for each $n\in\N$. Also $W_n\cap W_m =\varnothing$ whenever $n\neq m$. Indeed, let $n\neq m$ in $\N$ and without loss of generality assume that $n<m$. Then $W_m = U_m \cap V_1\cap \dots\cap V_n \cap V_{n+1}\cap\dots \cap V_{m-1}\subseteq V_n$ and hence $W_n\cap W_m \subseteq U_n\cap U_m =\varnothing$.

We still have to check that $F_n \subseteq W_n$ for all $n\in \N$. Let $n\in \N$. The case $n=1$  is clear, so assume that $n>1$. Note that for all $j=1,\dots,n-1$ we have $F_n\subseteq \tbigcup_{m\neq j} F_m \subseteq V_j$ and also $F_n\subseteq U_n$, and thus we get $F_n \subseteq U_n \cap V_1 \cap \dots \cap V_{n-1}$, that is, $F_{n}\subseteq W_{n}$, which completes the proof.
\end{proof}

However, the analogue of Theorem~\ref{aleph0true} for larger families of closed sets is false in general. An example of such a space is the Bing's Space and its construction can be found in \cite{Eng}. In view of this fact, we introduce the following terminology:

\begin{definition}
Let $X$ be a topological space and $\kappa\ge 2$ some cardinal. We say that $X$ is \emph{$\kappa$-collectionwise normal} if for every discrete family $\{F_i\}_{i\in I}$ of closed subsets with $| I | =\kappa$ there exists a family $\{U_i\}_{i\in I}$ of pairwise disjoint open subsets such that $F_i \subseteq U_i$ for every $i\in I$. 
Further, we say that $X$ is \emph{collectionwise normal} if $X$ is $\kappa$-collectionwise normal for each cardinality $\kappa$.\end{definition}

\begin{remarks}\label{impliesnormal}
(1) Note that $2$-collectionwise normality coincides with normality (cf. Lemma \ref{propertiesdisjointdiscrete}). Further,  take $\kappa \leq \aleph_0$. Then, Proposition~\ref{aleph0true} is telling us that a space is normal if and only if it is $\kappa$-collectionwise normal.\\[3mm]
(2) It is clear that if $\kappa \leq \lambda$ are two cardinalities, then $\lambda$-collectionwise normality forces $\kappa$-collectionwise normality. In particular, taking (1) into account, $\kappa$-collectionwise normality implies normality for every $\kappa$. 
\end{remarks}

\begin{lemma}{\rm (Cf. \cite[page 305, Theorem 5.1.17]{Eng})}\label{alsodisc}
A topological space $X$ is $\kappa$-collectionwise normal if and only if  for every discrete family $\{F_i\}_{i\in I}$ of closed subsets in $X$ with $| I | =\kappa$ there exists a discrete family $\{U_i\}_{i\in I}$ of open subsets in $X$ such that $F_i \subseteq U_i$ for every $i\in I$. 
\end{lemma}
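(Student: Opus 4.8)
The plan is to treat the two implications separately, the reverse one being immediate. For the ``if'' direction, suppose every discrete family of closed sets indexed by a set of cardinality $\kappa$ can be enlarged to a discrete family of open sets. Since any discrete family is pairwise disjoint (Lemma~\ref{propertiesdisjointdiscrete}(i)), such an open family is in particular pairwise disjoint, which is precisely the conclusion demanded by $\kappa$-collectionwise normality. Hence nothing further is needed in this direction.

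The substance lies in the ``only if'' direction. I would start from a discrete family $\{F_i\}_{i\in I}$ of closed sets with $|I|=\kappa$. The first step is to record, via Proposition~\ref{unionclosed}, that the union $F=\tbigcup_{i\in I}F_i$ is closed. Applying $\kappa$-collectionwise normality to $\{F_i\}_{i\in I}$ yields a pairwise disjoint open family $\{V_i\}_{i\in I}$ with $F_i\subseteq V_i$ for each $i$; set $V=\tbigcup_{i\in I}V_i$, so that $F\subseteq V$ with $V$ open. Since $\kappa$-collectionwise normality implies normality (Remark~\ref{impliesnormal}(2)), I can interpose an open set $U$ between the closed set $F$ and its open neighborhood $V$ so that $F\subseteq U\subseteq \overline U\subseteq V$; this is the standard shrinking characterization of normality, obtained by separating the disjoint closed sets $F$ and $X\smallsetminus V$. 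The candidate family is then $U_i=U\cap V_i$, which is open and satisfies $F_i\subseteq U_i$, because $F_i\subseteq F\subseteq U$ and $F_i\subseteq V_i$.

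The heart of the argument, and the only real obstacle, is verifying that $\{U_i\}_{i\in I}$ is \emph{discrete} rather than merely pairwise disjoint; this is exactly what the interposed closure buys us. Given $x\in X$, I would argue by cases. If $x\notin\overline U$, then $X\smallsetminus\overline U$ is a neighborhood of $x$ meeting no $U_i$ at all, since $U_i\subseteq U\subseteq\overline U$. If instead $x\in\overline U$, then $x\in V$, so $x\in V_{i_0}$ for some $i_0\in I$; as the $V_i$ are pairwise disjoint and each $U_j\subseteq V_j$, the open neighborhood $V_{i_0}$ of $x$ can meet at most the single member $U_{i_0}$. In either case $x$ admits a neighborhood intersecting at most one $U_i$, so $\{U_i\}_{i\in I}$ is a discrete open family containing the $F_i$, which completes the proof.
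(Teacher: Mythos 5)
Your proof is correct and follows essentially the same route as the paper: obtain the pairwise disjoint open family from $\kappa$-collectionwise normality, use Proposition~\ref{unionclosed} to see that $\tbigcup_{i\in I}F_i$ is closed, invoke normality to interpose an open set $U$ with $\overline{U}$ inside the union of the open family, and intersect. Your two-case discreteness check ($x\notin\overline{U}$ versus $x\in\overline{U}$) is exactly the verification the paper leaves to the reader, so if anything your write-up is slightly more complete.
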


\begin{proof}
The ``if'' part is clear by taking into account Proposition \ref{propertiesdisjointdiscrete}. For the converse, suppose that $X$ is $\kappa$-collectionwise normal and let $\{F_i\}_{i\in I}$ be a discrete family of closed sets in $X$ with $| I | =\kappa$. By $\kappa$-collectionwise normality, there is a pairwise disjoint family $\{U_i\}_{i\in I}$ of open sets in $X$ satisfying $F_i\subseteq U_i$ for all $i\in I$. By Lemma \ref{unionclosed} one has that $\tbigcup_{i\in I} F_i$ is closed. Since $X$ is normal, there are disjoint open sets $U$ and $V$ such that $\tbigcup_{i\in I} F_i \subseteq U$ and $X\smallsetminus \tbigcup_{i\in I} U_i \subseteq V$. One easily checks that the family $\{V_i\}_{i\in I}$  where $V_i = U\cap U_i$ is the desired discrete family.
\end{proof}

\begin{proposition}
Every met\-rizable space is collectionwise normal.
\end{proposition}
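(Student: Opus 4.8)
The plan is to prove directly that a metrizable space $X$ is $\kappa$-collectionwise normal for \emph{every} cardinal $\kappa$, so that no transfinite bookkeeping is needed: the construction I have in mind treats all the members of the family simultaneously. Fix a metric $d$ inducing the topology, and let $\{F_i\}_{i\in I}$ be an arbitrary discrete family of closed subsets (with $|I|=\kappa$). The idea is to enlarge each $F_i$ by a union of open balls whose radii shrink as one approaches the \emph{rest} of the family, and then to verify that the resulting open sets are pairwise disjoint.

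First I would record the preliminary facts that make the radii well defined. Since $\{F_i\}_{i\in I}$ is discrete and consists of closed sets, so is each subfamily $\{F_j\}_{j\ne i}$; hence by Proposition~\ref{unionclosed} the set $C_i=\tbigcup_{j\ne i}F_j$ is closed. By Lemma~\ref{propertiesdisjointdiscrete}(i) the family is pairwise disjoint, so for $x\in F_i$ we have $x\notin C_i$, and therefore Lemma~\ref{distcontclosed} gives $d(x,C_i)>0$ (with the convention $d(x,\varnothing)=1$ covering the case $|I|=1$). I then define
$$U_i=\tbigcup_{x\in F_i} B\!\left(x,\tfrac{1}{3}\,d(x,C_i)\right),\qquad i\in I.$$
Each $U_i$ is open, being a union of open balls, and $F_i\subseteq U_i$ because every $x\in F_i$ lies in its own ball; so the only thing left is disjointness.

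The heart of the argument --- and the step I expect to require the most care --- is to show that $U_i\cap U_j=\varnothing$ whenever $i\ne j$. Suppose $z\in U_i\cap U_j$; then there are $x\in F_i$ and $y\in F_j$ with $d(z,x)<\tfrac{1}{3} d(x,C_i)$ and $d(z,y)<\tfrac{1}{3} d(y,C_j)$. Assume without loss of generality that $d(x,C_i)\le d(y,C_j)$. The triangle inequality then gives
$$d(x,y)\le d(x,z)+d(z,y)<\tfrac{1}{3} d(x,C_i)+\tfrac{1}{3} d(y,C_j)\le \tfrac{2}{3}\,d(y,C_j).$$
The key observation is that $x\in F_i\subseteq C_j$ (because $i\ne j$), whence $d(y,C_j)\le d(y,x)=d(x,y)$; substituting yields $d(x,y)<\tfrac{2}{3} d(x,y)$. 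Since $F_i\cap F_j=\varnothing$ forces $x\ne y$ and hence $d(x,y)>0$, this is a contradiction. Therefore the family $\{U_i\}_{i\in I}$ is pairwise disjoint, which establishes $\kappa$-collectionwise normality for the given (arbitrary) $\kappa$ and completes the proof.
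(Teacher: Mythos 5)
Your proof is correct, and it is worth noting that the paper itself gives no argument here: it states the proposition and explicitly omits the proof, deferring to Engelking (where the result is usually obtained by the heavier route metrizable $\Rightarrow$ paracompact $\Rightarrow$ collectionwise normal). Your direct construction is the standard elementary one and it genuinely fills the gap, using only tools already established in the paper: Proposition~\ref{unionclosed} to see that $C_i=\tbigcup_{j\ne i}F_j$ is closed (this is exactly where discreteness, as opposed to mere pairwise disjointness, is indispensable --- compare the rational singletons example), Lemma~\ref{distcontclosed} to get $d(x,C_i)>0$ for $x\in F_i$, and the convention $d(x,\varnothing)=1$ for the degenerate case. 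The disjointness computation is sound: with $d(x,C_i)\le d(y,C_j)$ and $x\in F_i\subseteq C_j$ one gets $d(x,y)<\tfrac{2}{3}d(y,C_j)\le\tfrac{2}{3}d(x,y)$, which is absurd since $x\ne y$. One could even observe that the radius factor $\tfrac{1}{3}$ could be relaxed to $\tfrac{1}{2}$ by the same estimate, but that is cosmetic. What your approach buys over the textbook citation is a short, self-contained, cardinality-independent argument that makes transparent exactly which hypotheses (closedness of the complementary union, hence discreteness) are doing the work.
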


We have omitted the proof since this last result is unuseful for our purpose (cf.\ \cite[page\ 333, Theorem 5.4.8]{Eng}).\\[2mm]
We are now interested in determining whether a topological space is  $\kappa$-collectionwise normal or not. The metric hedgehog will be the key to generalize the well known Tietze's extension theorem, and using the hedgehog we will finally provide a characterization for  $\kappa$-collectionwise normal spaces (see Theorem~\ref{colnorTh}). First we need to show that collectionwise normality is hereditary with respect to $F_\sigma$-sets. 

\begin{theorem}\label{Fsigmahereditary}
$\kappa$-collectionwise normality is hereditary with respect to $F_{\sigma}$-sets.
\end{theorem}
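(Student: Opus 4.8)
The plan is to exploit the fact that an $F_\sigma$-subset $A$ of $X$ can be written as $A=\tbigcup_{n\in\N}C_n$ with each $C_n$ closed in $X$ and, after replacing $C_n$ by $C_1\cup\dots\cup C_n$, with the chain increasing. Given a discrete family $\{F_i\}_{i\in I}$ of closed subsets of $A$ with $|I|=\kappa$, the goal is to produce a pairwise disjoint family $\{U_i\}_{i\in I}$ of open subsets of $A$ with $F_i\subseteq U_i$. The guiding difficulty is that, although $\{F_i\}_{i\in I}$ is discrete in $A$, the $X$-closures $\overline{F_i}^{X}$ need not be discrete---nor even disjoint---in $X$, so one cannot separate the $F_i$ directly inside $X$. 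Instead I would work level by level and then glue.

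First I would record the two basic facts that make the level-wise approach work: that $X$ is normal (this is part of Remarks~\ref{impliesnormal}), and that for every fixed $n$ the family $\{F_i\cap C_n\}_{i\in I}$ is a \emph{discrete} family of \emph{closed} subsets of $X$. The latter holds because $F_i\cap C_n$ is closed in the $X$-closed set $C_n$; for discreteness, a point outside $C_n$ is separated by the open set $X\smallsetminus C_n$, and a point of $C_n\subseteq A$ is handled by restricting a witnessing $A$-neighbourhood coming from the discreteness of $\{F_i\}_{i\in I}$ in $A$.

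The heart of the proof is to expand each level while controlling closures. For fixed $n$ and each $j$, I would first separate $F_j\cap C_n$ from the closed set $\overline{\tbigcup_{i\neq j}F_i}^{X}$: these are disjoint because $\overline{\tbigcup_{i\neq j}F_i}^{X}$ meets $A$ exactly in the $A$-closed set $\tbigcup_{i\neq j}F_i$ (Proposition~\ref{unionclosed}), which misses $F_j$. Normality of $X$ then yields an open $O_j^n$ with $F_j\cap C_n\subseteq O_j^n$ and $\overline{O_j^n}^{X}\cap\tbigcup_{i\neq j}F_i=\varnothing$. Simultaneously, applying $\kappa$-collectionwise normality of $X$ through Lemma~\ref{alsodisc} to $\{F_i\cap C_n\}_{i\in I}$ gives a discrete open family $\{\widetilde G_i^n\}_{i\in I}$ expanding it. Setting $G_i^n=\widetilde G_i^n\cap O_i^n$ I obtain, for each $n$, a discrete open family $\{G_i^n\}_{i\in I}$ with $F_i\cap C_n\subseteq G_i^n$ and the crucial property
\[
\overline{G_i^n}^{X}\cap F_j=\varnothing\qquad\text{whenever }j\neq i .
\]
By Lemma~\ref{closdiscrete} the closures $\{\overline{G_i^n}^{X}\}_{i\in I}$ are again discrete, so the partial unions appearing below are closed by Proposition~\ref{unionclosed}.

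Finally I would glue the levels by a telescoping subtraction: put
\[
U_i=A\cap\tbigcup_{n\in\N}\Bigl(G_i^n\smallsetminus\tbigcup_{m<n}\tbigcup_{j\neq i}\overline{G_j^m}^{X}\Bigr).
\]
Each $U_i$ is open in $A$ since the subtracted set is a finite union of closed sets. Pairwise disjointness is routine: if $x$ lay in $U_i\cap U_j$ it would be caught at some levels $n\le n'$, the case $n=n'$ contradicting the level-$n$ disjointness of the discrete family $\{G_k^n\}_k$, and the case $n<n'$ contradicting that the point $x\in\overline{G_i^n}^{X}$ has been removed from the level-$n'$ term. The only delicate point---and the step I expect to be the main obstacle---is coverage: to see $F_i\subseteq U_i$ one takes $x\in F_i$ and the least index $n_0$ with $x\in C_{n_0}$, so that $x\in F_i\cap C_{n_0}\subseteq G_i^{n_0}$; the displayed closure property then guarantees $x\notin\overline{G_j^m}^{X}$ for \emph{every} $m$ and \emph{every} $j\neq i$, so nothing subtracts $x$ and $x\in U_i$. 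It is precisely to salvage coverage across different levels that the extra normality-based shrinking producing the closure-control property is engineered; getting that property in place (rather than the gluing itself) is where the argument must be careful.
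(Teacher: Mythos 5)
Your proof is correct, and it follows the same level-by-level skeleton as the paper (for each $n$, observe that $\{F_i\cap C_n\}_{i\in I}$ is a discrete family of closed sets in $X$, apply $\kappa$-collectionwise normality via Lemma~\ref{alsodisc}, and use normality of $X$ to keep the closures of the resulting open sets away from $\overline{\tbigcup_{j\ne i}F_i}$), but the gluing mechanism is genuinely different. The paper runs a true induction in which each level absorbs the previous one: it constructs $U_i^n$ with $(A_i\cap F_n)\cup\overline{U_i^{n-1}}\subseteq U_i^n\subseteq\overline{U_i^n}\subseteq V_i^n\smallsetminus\overline{\tbigcup_{j\ne i}A_j}$, so the final sets are plain unions $F\cap\tbigcup_n U_i^n$ and disjointness falls out of the nesting; the price is a nontrivial verification (via Lemma~\ref{charactDisc}) that the augmented family $\{(A_i\cap F_{n+1})\cup\overline{U_i^n}\}_{i\in I}$ is still discrete at each step, and there the closure-avoidance condition is what keeps the induction alive. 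You instead keep the levels independent and glue by the telescoping subtraction $G_i^n\smallsetminus\tbigcup_{m<n}\tbigcup_{j\ne i}\overline{G_j^m}$, which is the standard trick for combining countably many discrete families; in your version disjointness is what comes for free, and the closure-avoidance property $\overline{G_j^m}\cap F_i=\varnothing$ ($j\ne i$) is needed precisely to rescue coverage, since a point of $F_i$ must survive all the subtractions. Both routes use exactly the same ingredients (Proposition~\ref{unionclosed}, Lemma~\ref{closdiscrete}, normality, and Lemma~\ref{alsodisc}); yours trades the delicate discreteness check of the augmented families for a slightly more involved definition of the final open sets, and is arguably the cleaner of the two since each level is constructed in one shot rather than recursively.
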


\begin{proof}
Let $X$ be $\kappa$-collectionwise normal and $F=\tbigcup_{n\in\N} F_n$ an $F_{\sigma}$-set, where $F_n$ is closed in $X$ for all $n\in\N$. Our goal is to show that $F$ is also $\kappa$-collectionwise normal. Let $\{A_i\}_{i\in I}$ be a discrete family in $F$ of closed sets in $F$ with $| I | =\kappa$. First, for each $n\in \N$, we will inductively build discrete families (in $X$)  $\{U_{i}^{n} \}_{i\in I}$ and $\{V_{i}^{n}\}_{i\in I}$ of open sets in $X$ satisfying
\begin{equation} (A_i \cap F_m) \cup \overline{U_{i}^{m-1}} \subseteq U_{i}^{m} \subseteq \overline{ U_{i}^{m}  } \subseteq V_{i}^{m} \smallsetminus \overline{\tbigcup_{j\ne i} A_j}\quad \forall i\in I,\ \forall m=1,\dots,n\tag{P$_n$}\label{Pnstone} \end{equation}
(we define $U_{i}^{0}=\varnothing$).
For $n=1$, we have to construct discrete families $\{U_{i}^{1}\}_{i\in I}$ and $\{V_{i}^{1} \}_{i\in I}$ of open sets in $X$  satisfying 
$$A_i \cap F_1 \subseteq U_{i}^{1} \subseteq \overline{U_{i}^{1}} \subseteq V_{i}^{1}\smallsetminus \overline{\tbigcup_{j\ne i} A_j} \quad \forall i\in I.$$
Since $\{A_i\}_{i\in I}$ is discrete in $F$, it is straightforward to verify that $\{A_i\cap F_1\}_{i\in I}$ is discrete in $X$. Thus $\{ A_i \cap F_1\}_{i\in I}$ is a discrete family of closed sets in $X$. By $\kappa$-collectionwise normality and Lemma \ref{alsodisc}, there is a discrete  family $\{V_{i}^{1}\}_{i\in I}$ of open sets in $X$ satisfying $A_i\cap F_1\subseteq V_{i}^{1}$ for each $i\in I$.  Further, one has $ A_i\cap F_1 \subseteq V_{i}^{1}\smallsetminus \overline{\tbigcup_{j\ne i} A_j}$ for every $i\in I$. By way of contradiction assume that $x\in A_i\cap F_1$ and $x\in \overline{\tbigcup_{j\ne i} A_j} $. Note that
$$x\in F\cap \overline{\tbigcup_{j\ne i} A_j} = \overline{\tbigcup_{j\ne i} A_j}^{F}=\tbigcup_{j\ne i} A_j,$$
where the last equality follows from Proposition \ref{unionclosed} and the fact that $\{A_i\}_{i\in I}$ is a discrete family of closed sets in $F$. Thus $x\in A_i\cap A_j$ for some $i\ne j$, which contradicts the discreteness of $\{A_i\}_{i\in I}$. Now, since $X$ is normal, $A_i\cap F_1$ is closed in $X$ and  $ V_{i}^{1}\smallsetminus \overline{\tbigcup_{j\ne i} A_j} $ is open in $X$  for each $i\in I$, there is an open set $U_{i}^{1}$ in $X$ such that
$$A_i \cap F_1 \subseteq U_{i}^{1} \subseteq \overline{U_{i}^{1}} \subseteq V_{i}^{1}\smallsetminus \overline{\tbigcup_{j\ne i} A_j}.$$
The observation that $\{U_{i}^{1}\}_{i\in I}$ is also discrete (because so is $\{V_{i}^{1}\}_{i\in I}$ and $U_{i}^{1}\subseteq V_{i}^{1}$ for every $i\in I$) finishes the proof of the case $n=1$.\\[3mm]
Now suppose that we have built discrete families (in $X$) $\{U_{i}^{m} \}_{i\in I}$ and $\{V_{i}^{m}\}_{i\in I}$ of open sets in $X$ satisfying (\ref{Pnstone}) for each $m=1,\dots,n$.  Let us show that the family 
$$\bigl\{ (A_i\cap F_{n+1} ) \cup \overline{U_{i}^{n}}\bigr\}_{i\in I}$$
 is discrete in $X$. We shall check that both conditions in Lemma \ref{charactDisc} are verified:\\[2mm]
(1) Let $x\in X$. Discreteness of $\{\overline{U_{i}^{n}} \}_{i\in I}$ yields $N\in\NN_x$ such that $N$ meets at most one member of $\{\overline{U_{i}^{n}} \}_{i\in I}$. We distinguish two cases: if $x\not\in F_{n+1}$, $M=N\cap(X\smallsetminus F_{n+1})$ is a neighborhood of $x$ in $X$ such that $M$ meets at most one member of $\bigl\{ (A_i\cap F_{n+1} ) \cup \overline{U_{i}^{n}}\bigr\}_{i\in I}$. Otherwise, if $x\in F_{n+1}\subseteq F$, discreteness of $\{A_i\}_{i\in I}$ (in $F$) implies the existence of a neighborhood $F\cap M$ of $x$ in $F$ (where $M$ is a neighborhood of $x$ in $X$) such that $F\cap M$ meets at most one member of the family $\{A_i\}_{i\in I}$. Now, one easily verifies that $M' = N\cap M$ is a neighborhood of $x$ in $X$ which intersects at most 2 members in the family $\bigl\{ (A_i\cap F_{n+1} ) \cup \overline{U_{i}^{n}}\bigr\}_{i\in I}$, which proves that it is locally finite. \\[2mm]
(2) Let $i\ne j$ in $I$. We will show that $\overline{(A_i\cap F_{n+1} ) \cup \overline{U_{i}^{n}} }\cap \overline{ (A_j\cap F_{n+1} ) \cup \overline{U_{j}^{n}} } = \varnothing.$
Indeed, we have
\begin{align*}& \overline{(A_i\cap F_{n+1} ) \cup \overline{U_{i}^{n}} }\cap \overline{ (A_j\cap F_{n+1} ) \cup \overline{U_{j}^{n}} } 
\\& \qquad  = \bigl(  \overline{A_i\cap F_{n+1} } \cup \overline{U_{i}^{n}} \bigr) \cap \bigl( \overline{ A_j\cap F_{n+1} } \cup \overline{U_{j}^{n}}\bigr)  
\\&\qquad \subseteq     \bigl(  (\overline{A_i}\cap F_{n+1} ) \cup \overline{U_{i}^{n}} \bigr) \cap \bigl(( \overline{ A_j}\cap F_{n+1} )\cup \overline{U_{j}^{n}}\bigr)   
\\&\qquad=  \bigl( \overline{A_i}\cap \overline{A_j}\cap F_{n+1} \bigr) \cup \bigl(  \overline{U_{i}^{n}} \cap  \overline{U_{j}^{n}} \bigr) \cup  \bigl( \overline{A_i}\cap F_{n+1} \cap \overline{U_{j}^{n}}\bigr) \cup \bigl( \overline{A_j}\cap F_{n+1} \cap \overline{U_{i}^{n}}\bigr)  .
\end{align*}
Observe that $ \overline{A_i}\cap \overline{A_j}\cap F_{n+1} \subseteq \overline{A_i}^{F}\cap \overline{A_j}^{F} =\varnothing$ because of discreteness of $\{A_i\}_{i\in I}$ in $F$. Similarly, $ \overline{U_{i}^{n}} \cap  \overline{U_{j}^{n}}=\varnothing$ because of discreteness of $\{U_{i}^{n}\}_{i\in I}$ in $X$. Besides, one has  
$$ \overline{A_i}\cap F_{n+1} \cap \overline{U_{j}^{n}} \subseteq \overline{A_i} \cap \overline{U_{j}^{n}} \subseteq \overline{A_i} \cap \bigl(V_{j}^{n} \smallsetminus \overline{\tbigcup_{k\ne j} A_k}\bigr) \subseteq \overline{A_i} \cap (V_{j}^{n} \smallsetminus \overline{A_i})=\varnothing.$$
Similarly, one has $\overline{A_j}\cap F_{n+1} \cap \overline{U_{i}^{n}}=\varnothing$.  Thus, condition (2) is also satisfied.\\[3mm]
Now, $\bigl\{ (A_i\cap F_{n+1} ) \cup \overline{U_{i}^{n}}\bigr\}_{i\in I}$  is a discrete family of closed subsets in $X$ and hence, by $\kappa$-collectionwise normality of $X$ and Lemma \ref{alsodisc}, there is a discrete family (in $X$) $\{V_{i}^{n+1}\}_{i\in I}$ of open sets in $X$ satisfying $(A_i\cap F_{n+1} ) \cup \overline{U_{i}^{n}} \subseteq V_{i}^{n+1}$ for every $i\in I$. In fact, we also have 
$$(A_i\cap F_{n+1} ) \cup \overline{U_{i}^{n}} \subseteq V_{i}^{n+1} \smallsetminus  \overline{\tbigcup_{j\ne i} A_j}.$$
By way of contradiction, assume that $x\in (A_i\cap F_{n+1} ) \cup \overline{U_{i}^{n}} $ and $x\in \overline{\tbigcup_{j\ne i} A_j}$. First, note that necessarily $x\in A_i \cap F_{n+1}$ (since $x\in \overline{U_{i}^{n}}$ would imply  $x\in V_{i}^{n} \smallsetminus  \overline{\tbigcup_{j\ne i} A_j}$, thus $x\not\in  \overline{\tbigcup_{j\ne i} A_j}$). Since $x\in F_{n+1}\subseteq F$, one has 
$$x\in F\cap \overline{\tbigcup_{j\ne i} A_j} = \overline{\tbigcup_{j\ne i} A_j}^{F}= \tbigcup_{j\ne i} A_j,$$
where the last equality follows from Proposition \ref{unionclosed} and the fact that $\{A_i\}_{i\in I}$ is a discrete family of closed sets in $F$. Thus $x\in A_i\cap A_j$ for some $j\ne i$ which contradicts the pairwise disjointness of $\{A_i\}_{i\in I}$.\\[2mm]
By normality of $X$, for  each $i\in I$ there is an open set $U_{i}^{n+1}$ in $X$ verifying
$$(A_i\cap F_{n+1} ) \cup \overline{U_{i}^{n}}\subseteq U_{i}^{n+1}\subseteq \overline{U_{i}^{n+1}} \subseteq V_{i}^{n+1} \smallsetminus  \overline{\tbigcup_{j\ne i} A_j}.$$
Observe that the family $\{U_{i}^{n+1}\}_{i\in I}$ is also discrete because $\{ V_{i}^{n+1} \}_{i\in I}$ is discrete and $U_{i}^{n+1} \subseteq V_{i}^{n+1}$ for every $i\in I$. Thus the existence of the desired families $\{ U_{i}^{n+1}\}_{i\in I}$ and $\{ V_{i}^{n+1} \}_{i\in I}$ is proved. Finally, set 
$$ U_i = F\cap  \tbigcup_{n\in\N} U_{i}^{n}$$
 for each $i\in I$. Clearly $\{U_i\}_{i\in I}$ is a family of open sets in $F$ with the property that $A_i\subseteq U_i$ for every $i\in I$. We finish the proof if we show that the family is also pairwise disjoint. Assume that $i\ne j$ in $I$. By way of contradiction assume that there is $x\in U_i \cap U_j$. Then $x\in U_{i}^{n_1} \cap U_{j}^{n_2}$ for some $n_1,n_2\in \N$. Without loss of generality we may assume that $n_1\leq n_2$.  Property (\ref{Pnstone}) implies that $U_{i}^{n_1}\subseteq U_{i}^{n_2}$, and hence $x\in U_{i}^{n_2}\cap U_{j}^{n_2}$, which contradicts the discreteness of the family $\{U_{i}^{n_2}\}_{i\in I}$. Hence $F$ is $\kappa$-collectionwise normal.
\end{proof}

We now present the announced characterization of $\kappa$-collectionwise normality. We give a fully detailed version of the proof, based on the outlines of the proof provided in \cite{Prz} and \cite[page 337, Problem 5.5.1(c)]{Eng}.

\begin{theorem}\label{colnorTh}
Let $\kappa > 1$ be some cardinality and $X$ a topological space. The following are equivalent:
\begin{enumerate}[\normalfont (i)]
\item $X$ is $\kappa$-collectionwise normal;
\item For every continuous mapping $f\colon A\longrightarrow MJ(\kappa)$ of a closed subspace $A$ of $X$ into the metric hedgehog, there exists a continuous mapping $F\colon X\longrightarrow MJ(\kappa)$ such that $F|_A =f$.
\end{enumerate}
\end{theorem}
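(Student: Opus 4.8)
The plan is to prove both implications, the bulk of the work lying in (i)$\Rightarrow$(ii).

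\emph{The direction} (ii)$\Rightarrow$(i) \emph{is short.} Let $\{F_i\}_{i\in I}$ be a discrete family of closed sets with $|I|=\kappa$, and identify $I$ with the spine index set of $MJ(\kappa)$. I would put $A=\tbigcup_{i\in I}F_i$, which is closed by Proposition~\ref{unionclosed}, and let $f\colon A\longrightarrow MJ(\kappa)$ be the combination of the constant maps $F_i\ni x\mapsto (1,i)$. Since $\{F_i\}_{i\in I}$ is a closed discrete cover of $A$, Proposition~\ref{propcombinedmap} shows $f$ is continuous. Extending $f$ to some $F\colon X\longrightarrow MJ(\kappa)$ by (ii) and setting $U_i=F^{-1}\bigl(p((0,1]\times\{i\})\bigr)$, the sets $U_i$ are open (by Fact~\ref{SubBaseMetric}), pairwise disjoint (the open spines are pairwise disjoint), and satisfy $F_i\subseteq U_i$; hence $X$ is $\kappa$-collectionwise normal.

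\emph{For} (i)$\Rightarrow$(ii), \emph{I would build the extension through its coordinates.} By Proposition~\ref{inittop}, a map $F\colon X\to MJ(\kappa)$ is continuous once the families $\{\pi_i\circ F\}_{i\in I}$ and $\pi_\kappa\circ F$ are continuous; so it suffices to produce continuous functions $u_i\colon X\to[0,1]$ ($i\in I$) and $u\colon X\to[0,1]$ with at most one $u_i(x)>0$ at each point, $u=\tbigsum_{i}u_i$, and $u_i|_A=\pi_i\circ f$, $u|_A=\pi_\kappa\circ f$; then $F(x)=(u(x),i)$ (where $u(x)=u_i(x)>0$, or $\zerohedgehog$ if $u(x)=0$) is the desired extension. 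The starting point is the continuous height $g=\pi_\kappa\circ f$, which extends to $G\colon X\to[0,1]$ by Tietze ($X$ is normal by Remarks~\ref{impliesnormal}). For each $i\in I$ and $n\in\N$ set $A_{i,n}=[\pi_i\circ f\ge 1/n]\subseteq A$; using the continuity of $f$ one checks as in the discreteness arguments of Section~\ref{ch1} that $\{A_{i,n}\}_{i\in I}$ is a discrete family of closed sets in $X$ (closedness in $A$ plus $A$ closed gives closedness in $X$). Invoking $\kappa$-collectionwise normality together with Lemma~\ref{alsodisc} and shrinking with normality, I would construct discrete open families $\{U_{i,n}\}_{i\in I}$ with $A_{i,n}\subseteq U_{i,n}\subseteq\overline{U_{i,n}}\subseteq U_{i,n+1}$. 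Because the families are discrete for each fixed $n$ and nested in $n$, the sets $W_i=\tbigcup_{n}U_{i,n}=\tbigcup_{n}\overline{U_{i,n}}$ are \emph{pairwise disjoint} open cozero sets.

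\emph{Next I would define the coordinate functions.} Pick Urysohn functions $\theta_{i,n}$ equal to $1$ on $\overline{U_{i,n}}$ and to $0$ off $U_{i,n+1}$, choose weights $a_n\downarrow 0$ with $a_n\ge 1/(n-1)$, and set $\rho_i=\sup_n a_n\theta_{i,n}$. Since $\sup_{n\ge N}a_n\theta_{i,n}\le a_N\to0$ uniformly, $\rho_i$ is a uniform limit of the continuous functions $\max_{n<N}a_n\theta_{i,n}$, hence continuous, with $\{\rho_i>0\}=W_i$. Then I put $u_i=\min(G,\rho_i)$ and $u=\sup_i u_i=\tbigsum_i u_i$ (the sum is legitimate as the $W_i$ are disjoint). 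The weight choice guarantees $\rho_i\ge G$ on each $A_{i,n}$, so $u_i=G=\pi_i\circ f$ on the part of $A$ lying in spine $i$; off $W_i$ one has $u_i=0$, matching $\pi_i\circ f$ on the other spines and at center points (where $G=0$). Thus $u_i|_A=\pi_i\circ f$ and $u|_A=g$.

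\emph{The main obstacle is the continuity of} $u=\sup_i u_i$, a supremum of possibly uncountably many functions. At a point $x_0\in W_{i_0}$ continuity is clear since $u=u_{i_0}$ on the neighbourhood $W_{i_0}$. The delicate case is $x_0\notin\tbigcup_i W_i$ with $G(x_0)>0$, where $u\le G$ does not by itself force $u(x_0^{-})\to0$. Here I would argue by contradiction: if $y_k\to x_0$ with $u_{i_k}(y_k)\ge\varepsilon$, then $\rho_{i_k}(y_k)\ge\varepsilon$ forces $a_n\ge\varepsilon$ for some $n$, bounding $n\le N(\varepsilon)$ and giving $y_k\in U_{i_k,N+1}$; discreteness of the single family $\{U_{i,N+1}\}_{i\in I}$ then pins $i_k$ to one index $i^\ast$ near $x_0$, whence $x_0\in\overline{U_{i^\ast,N+1}}\subseteq W_{i^\ast}$, a contradiction. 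This shows $u$ is continuous everywhere. With each $u_i=\min(G,\rho_i)$ continuous, at most one positive per point, and $u=\tbigsum_i u_i$, the map $F$ above is continuous by Proposition~\ref{inittop} and restricts to $f$ on $A$, completing the proof.
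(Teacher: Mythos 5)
Your (ii)$\Rightarrow$(i) direction is the same as the paper's. The (i)$\Rightarrow$(ii) direction takes a genuinely different route (coordinate functions $u_i$ built from Urysohn functions and a weighted supremum, versus the paper's two Tietze extensions combined with Theorem~\ref{Fsigmahereditary} applied to the $F_\sigma$-set $G^{-1}((0,1])$ and the verification that $\pi_i\circ F=\chi_{U_i}\wedge H$ is continuous), and most of it is sound: the families $\{A_{i,n}\}_{i\in I}$ are indeed discrete and closed in $X$, the weight condition $a_n\ge 1/(n-1)$ does force $\rho_i\ge G$ on $A_{i,n}\smallsetminus A_{i,n-1}$ so that $u_i|_A=\pi_i\circ f$ (and the disjointness of the $W_i$ rules out the dangerous case $x\in A\cap W_i$ with $f(x)$ in another spine), and the continuity of $\sup_i u_i$ at points outside $\tbigcup_i W_i$ via discreteness of the single family $\{U_{i,N+1}\}_{i\in I}$ is correct in substance, though you must phrase it with neighborhoods rather than sequences since $X$ is an arbitrary topological space.

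The genuine gap is in the sentence ``I would construct discrete open families $\{U_{i,n}\}_{i\in I}$ with $A_{i,n}\subseteq U_{i,n}\subseteq\overline{U_{i,n}}\subseteq U_{i,n+1}$.'' The inductive step requires applying $\kappa$-collectionwise normality to the family $\bigl\{A_{i,n+1}\cup\overline{U_{i,n}}\bigr\}_{i\in I}$, and this family need not be discrete merely because $\{A_{i,n+1}\}_{i\in I}$ and $\{\overline{U_{i,n}}\}_{i\in I}$ are each discrete: the elementwise union of two discrete families indexed by the same set can fail even to be pairwise disjoint, and nothing you have imposed prevents $\overline{U_{j,n}}$ from meeting $A_{i,n+1}\smallsetminus A_{i,n}$ for some $i\ne j$ (it is only required to contain $A_{j,n}\cup\overline{U_{j,n-1}}$ and to belong to some discrete family). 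If that intersection is nonempty, no separating family $\{U_{i,n+1}\}_{i\in I}$ can exist. The repair is to carry an extra inductive hypothesis, namely $\overline{U_{i,n}}\cap\overline{\tbigcup_{j\ne i}[\pi_j\circ f>0]}=\varnothing$, which can be arranged at each stage because $A_{i,n+1}$ itself avoids that closure; this is exactly the condition (P$_n$) maintained in the paper's proof of Theorem~\ref{Fsigmahereditary}. Alternatively, you can avoid redoing that induction altogether by quoting Theorem~\ref{Fsigmahereditary} for $G^{-1}((0,1])$, as the paper does; its remaining device is then a second Tietze extension $H$ of $\pi_\kappa\circ f$ chosen to vanish off $\tbigcup_i U_i$, which replaces your entire $\rho_i$/weights apparatus.
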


\begin{proof}
(i)$\implies$(ii). Let $A$ be a closed subspace of a collectionwise normal space $X$ and let $f\colon A\longrightarrow MJ(\kappa)$ be a continuous mapping. The composition $\pi_\kappa \circ f\colon A\longrightarrow [0,1]$ is continuous (note that $\pi_\kappa$ is continuous  because of Proposition~\ref{inittop}). Since we are in a normal space (see Remark~\ref{impliesnormal}),  Tietze's extension theorem yields the existence of a continuous $G\colon X \longrightarrow [0,1]$ such that $G|_A = \pi_\kappa \circ f$. For each $i\in I$, we set $F_i = f^{-1} \left( p\left( (0,1]\times\{i\} \right) \right).$ We will now show that $\{F_i\}_{i\in I}$ is a discrete family of closed sets in $G^{-1}\left( \left(0,1\right] \right)$. First note that for each $i\in I$ one has
\begin{align*} F_i &= \bigl\{ x\in A \mid f(x) \in  p\left( (0,1]\times\{i\} \right)\bigr\} \\
& =  \bigl\{ x\in A \mid f(x) \in  p\left( [0,1]\times\{i\} \right), (\pi_\kappa \circ f)(x)\in (0,1]\bigr\}  \\
& =  \bigl\{ x\in A \mid f(x) \in  p\left( [0,1]\times\{i\} \right),G(x)\in (0,1]\bigr\}  \\
& = f^{-1} \left( p\left( [0,1]\times\{i\} \right) \right)\cap G^{-1}\left( (0,1] \right),
\end{align*}
which shows that $F_i$ is closed in $G^{-1}((0,1])$. Now we prove the discreteness. Let $x\in G^{-1} ((0,1])$. We distinguish two cases. First, if $x\not\in A$, we have that $x \in N= G^{-1} ( (0,1] ) \cap (X\smallsetminus A)$, which is an  open neighborhood of $x$ in $G^{-1} ( (0,1] )$. Since $F_i \subseteq A$ for all $i\in I$, it follows that $N$ does not intersect any member in the family $\{F_i\}_{i\in I}$.  Now assume that $x\in A$. Since $x\in G^{-1}((0,1])$, we have  $f(x) \in p\left( (0,1]\times \{i_0\} \right)$ for some $i_0\in I$. Hence, $x\in N=F_{i_0}$, which is an open neighborhood of $x$ in $G^{-1}((0,1])$. Observe that  $F_i \cap N =\varnothing$ for all $i\ne i_0$. Hence $\{F_i\}_{i\in I}$ is discrete in $G^{-1}( (0,1])$. \\[2mm]
Note that $G^{-1}( (0,1] )= \tbigcup_{n\in \N} G^{-1}\left( [1/n, 1] \right)$ is an $F_\sigma$-set, and by Theorem~\ref{Fsigmahereditary} it follows that  $G^{-1} ((0,1])$ is $\kappa$-collectionwise normal (because so is $X$). Therefore, there exists a family $\{U_i\}_{i\in I}$ of pairwise disjoint open sets in $G^{-1}( (0,1])$ (and thus they are also open in $X$, because by continuity of $G$, one has that $G^{-1}( (0,1])$ is open in $X$) such that $F_i \subseteq U_i$ for every $i\in I$. 

Note that the sets $A$ and $X\smallsetminus \tbigcup_{i\in I}U_i$ are closed in $X$. We also have
\begin{align*} A \cap \bigl( X \smallsetminus \tbigcup_{i\in I}U_i \bigr) & =  A \smallsetminus \tbigcup_{i\in I}U_i 
\subseteq A \smallsetminus \tbigcup_{i\in I}F_i \\& = f^{-1} \bigl( MJ(\kappa) \smallsetminus \tbigcup_{i\in I} p \left( (0,1] \times \{i\} \right)\bigr) = f^{-1} \left( \left\{\zerohedgehog \right\}\right). \end{align*}

Hence, if $x$ is in the intersection above, one has $(\pi_\kappa \circ f)(x)=0$, and so it is ensured that the following combined map is well-defined:
 \[\begin{array}{rcclll}
h \colon&A\cup \bigl(X\smallsetminus \tbigcup_{i\in I} U_i\bigr)&\longrightarrow&[0,1]&&\cr
 &x&\longmapsto&h(x)  = \begin{cases} (\pi_\kappa \circ f)(x)  & \textrm{if } x\in A;\\
 0 & \textrm{if } X\smallsetminus \tbigcup_{i\in I} U_i.
 \end{cases}
 \end{array}\]
 Clearly, $h$ is continuous in each of the closed sets $A$ and $X\smallsetminus \tbigcup_{i\in I_n} U_i$, and therefore the Pasting Lemma yields the continuity of $h$. By virtue of the Tietze's extension theorem again, we extend $h$ to a continuous mapping $H\colon X \longrightarrow [0,1]$. Finally, since the family $\{U_i\}_{i\in I}\cup\{ X\smallsetminus \tbigcup_{i\in I} U_i\}$ is a pairwise disjoint cover of $X$, we define a new map as follows:
  \[\begin{array}{rcclll}
F \colon&X&\longrightarrow&MJ(\kappa)&&\cr
 &x&\longmapsto&F(x)  = \begin{cases} (H(x),i)  & \textrm{if } x\in U_i;\\
 \zerohedgehog & \textrm{if } x\in X\smallsetminus \tbigcup_{i\in I} U_i.
 \end{cases}
 \end{array}\]
 
 We will show that $F$ is the desired continuous extension of $f$.  By Proposition~\ref{univpropinitial}, we know that $F$ is continuous if and only if $\pi_\kappa \circ F$ and $\pi_i \circ F $ are continuous for every $i\in I$. Note that $\pi_\kappa \circ F = H$ and $\pi_i \circ F = \chi_{U_i} \wedge H$ for every $i\in I$. Since $H$ is already continuous, it is enough to show that $\chi_{U_i} \wedge H$ is continuous for each $i\in I$. Let $i \in I$ and $t\in [0,1)$. We have
$$[\chi_{U_i} \wedge H> t]= \{ x\in X \mid \chi_{U_i} (x) >t ,\ H(x)>t\} = U_i \cap [H>t],$$
which is open in $X$ because it is the intersection of two open sets.  

Similarly,  for every $i\in I$ and $t\in (0,1]$ one has
\begin{align*} [\chi_{U_i} \wedge H< t] &=  \{ x\in X \mid \chi_{U_i} (x) <t \textrm{ or } H(x)<t\}= (X\smallsetminus U_i) \cup [H<t] \\& =\tbigcup_{j\ne i} U_j \cup \bigl( X\smallsetminus \tbigcup_{j\in I} U_j \bigr) \cup [H<t].
\end{align*}
Observe that $H\left( X\smallsetminus \tbigcup_{j\in I} U_j  \right) = h\left( X\smallsetminus \tbigcup_{j\in I} U_j  \right) = \{0\}$, and therefore we have $X\smallsetminus \tbigcup_{j\in I} U_j  \subseteq [H<t]$. Thus, we obtain
 $$ [\chi_{U_i} \wedge H< t] = \tbigcup_{j\ne i} U_j \cup [H<t],$$
which is open. Thus $F$ is continuous. Finally, we check that $F|_A =f$. Let $x\in A$. If $x\in X\smallsetminus \tbigcup_{i\in I} U_i$,  we have $f(x)=\zerohedgehog = F(x)$. Otherwise, if $x\in U_i$ for some $i\in I$, we get $F(x) = (H(x),i) = ((\pi_\kappa \circ f)(x), i) = f(x)$, as desired. \\[3mm]
(ii)$\implies$(i).  Let $\{F_i\}_{i\in I}$ be a discrete family of closed subsets with $| I | =\kappa$. We define 
 \[\begin{array}{rcclll}
f \colon& \tbigcup_{i\in I} F_i&\longrightarrow&MJ(\kappa)&&\cr
 &x&\longmapsto&f(x)  = (1,i) \quad \textrm{if } x\in F_i. 
 \end{array}\]
 
 By Proposition~\ref{propcombinedmap}, we have that $f$ is continuous, and thus it admits a continuous extension $F\colon X \longrightarrow MJ(\kappa)$.
 For every $i\in I$, set $U_i = F^{-1} \left( p( (0,1]\times\{i\} \right)$.  Clearly $\{U_i\}_{i\in I}$ is a pairwise disjoint family of open sets such that $F_i \subseteq U_i$ for every $i\in I$.
\end{proof}

\begin{corollary}
Let $X$ be a topological space. The following are equivalent:
\begin{enumerate}[\normalfont (i)]
\item $X$ is collectionwise normal;
\item For each cardinal $\kappa$ and every continuous mapping $f\colon A\longrightarrow MJ(\kappa)$ of a closed subspace $A$ of $X$ into the metric hedgehog, there exists a continuous mapping $F\colon X\longrightarrow MJ(\kappa)$ such that $F|_A =f$.\end{enumerate}
\end{corollary}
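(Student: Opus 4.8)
The plan is to reduce this corollary to a cardinal-by-cardinal application of Theorem~\ref{colnorTh}. Recall that, by definition, $X$ is collectionwise normal precisely when it is $\kappa$-collectionwise normal for every cardinal $\kappa$; and $\kappa$-collectionwise normality is only defined for $\kappa\ge 2$, which is exactly the range covered by the hypothesis $\kappa>1$ of Theorem~\ref{colnorTh}. Hence both conditions (i) and (ii) in the corollary are in effect universally quantified statements over the cardinals $\kappa\ge 2$, and it suffices to establish the equivalence at each fixed such $\kappa$ and then quantify.

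First I would prove (i)$\Rightarrow$(ii). Assuming $X$ is collectionwise normal and fixing an arbitrary cardinal $\kappa\ge 2$, the definition gives that $X$ is $\kappa$-collectionwise normal. Theorem~\ref{colnorTh}, implication (i)$\Rightarrow$(ii), then furnishes, for every continuous $f\colon A\longrightarrow MJ(\kappa)$ defined on a closed subspace $A\subseteq X$, a continuous extension $F\colon X\longrightarrow MJ(\kappa)$ with $F|_A=f$. Since $\kappa$ was arbitrary, condition (ii) of the corollary follows.

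Conversely, for (ii)$\Rightarrow$(i), fix an arbitrary cardinal $\kappa\ge 2$. The statement in (ii), specialized to this single $\kappa$, is verbatim hypothesis (ii) of Theorem~\ref{colnorTh}. Applying that theorem's implication (ii)$\Rightarrow$(i) yields that $X$ is $\kappa$-collectionwise normal. As $\kappa$ ranges over all cardinals $\ge 2$, the definition of collectionwise normality is met, so $X$ is collectionwise normal.

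The proof is therefore essentially a bookkeeping argument, and I do not expect any genuine obstacle: all the substance lives in Theorem~\ref{colnorTh}. The one point deserving a word of care — hardly a difficulty — is to confirm that the range of cardinals implicit in the definition of collectionwise normality coincides with the hypothesis $\kappa>1$ of the theorem; since both quantify precisely over $\kappa\ge 2$, the two quantifications match exactly and no case is overlooked.
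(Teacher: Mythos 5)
Your proof is correct and is exactly the intended argument: the paper states this corollary without proof as an immediate consequence of Theorem~\ref{colnorTh}, obtained by quantifying that theorem over all cardinals $\kappa\ge 2$, which is precisely your bookkeeping reduction. Your remark about matching the range $\kappa>1$ of the theorem with the cardinals appearing in the definition of collectionwise normality is the only point of substance, and you handle it correctly.
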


\begin{remark}
We already know that $MJ(2) \cong [0,1]$. Thus, by letting $\kappa=2$,  Theorem~\ref{colnorTh} yields Tietze's extension theorem as a particular case.
\end{remark}

\begin{remark}For every $2\leq  \kappa \leq \aleph_0$, we know that $\kappa$-collectionwise normality is equivalent to normality. Further, if $\kappa > 2$,  one has that the spaces $MJ(\kappa)$ and $MJ(\lambda)$ are non-homeomorphic for every cardinality $\lambda$. Thus, if we take $2< \kappa \leq \aleph_0$ in Theorem~\ref{colnorTh}, we get infinitely many different (topologically) Tietze-type characterizations of normality.
\end{remark}
\newpage

\section{A summary of topological properties of the three hedgehogs}
\vskip+10mm
The following table summarizes all the previously proved properties of the three hedgehogs.\\[10mm]

\begin{table}[htbp]
\begin{adjustbox}{center}
\renewcommand{\arraystretch}{1.5}
\begin{tabu}{|r|[2pt]c|c|c|[2pt]c|c|c|[2pt]c|c|c|}
\hline \multicolumn{1}{|c|[2pt]}{ } &
\multicolumn{3}{c|[2pt]}{Quotient}&
\multicolumn{3}{c|[2pt]}{Metric}&
\multicolumn{3}{c|}{Compact}
 \\ \hline
 & $\kappa < \aleph_0$ & $\kappa = \aleph_0$ & $\kappa > \aleph_0$ & $\kappa < \aleph_0$ & $\kappa = \aleph_0$ & $\kappa > \aleph_0$ & $\kappa < \aleph_0$ & $\kappa = \aleph_0$ &$\kappa > \aleph_0$ \\\tabucline[2pt]{-}
 Arcwise connected & + & + &+ &+&+&+& +&+ &+  \\ \hline
 Compact & + & - & - & + & - & - & + & + & + \\ \hline
  Complete & + &  &  &+ & + & + & +  & + &  \\ \hline
 First countable & + & - & - & + & + & + &+ & + & - \\  \hline
  Fr\'echet--Urysohn & + & + & + & + & + & + & + & + & +  \\  \hline
 Hausdorff & + & + & + & + & + & + & +& +& + \\ \hline
 Locally compact & +& - & - & + & - & - &+ &+ &+  \\ \hline
 Metrizable & +  & - & - & +  & + & + & + &+  &-  \\ \hline
 Normal  & + & + &+ &+&+&+& +& +&+  \\ \hline 
 Regular  & + & + &+ &+&+&+&+ &+ &+  \\ \hline 
 Second countable & + & - & - & + & + & -& + &+ &- \\ \hline
 Separable & + & + & - & + & + & - & + & + & - \\ \hline
  $T_1$ & + & + & + & + & + & + & +&+ &+  \\  \hline
 Totally bounded & + & & & + & - & - & + &+ &  \\\hline
\end{tabu}
\end{adjustbox}
\label{tab:multicol}
\end{table} 

\newpage

\end{document}